\documentclass[10pt,a4paper]{amsart}
\usepackage[toc,page]{appendix}
\usepackage{a4}
\usepackage[active]{srcltx}
\usepackage{amsmath,bbm,esint}
\usepackage{amssymb}
\usepackage{amsfonts}
\usepackage{mathrsfs} 
\usepackage{graphicx}
\usepackage{tikz}
\usepackage{color}
\usepackage[colorlinks=true,urlcolor=blue,
citecolor=red,linkcolor=blue,linktocpage,pdfpagelabels,bookmarksnumbered,bookmarksopen]{hyperref}

\topmargin -0.1cm
\oddsidemargin 0.1in
\evensidemargin 0.1in
\textwidth 6.2 truein
\textheight 9.12 truein


\newcounter{hyp}
\setcounter{hyp}{1}


\def\a{\alpha}
\def\b{\beta}
\def\d{\delta}
\def\g{\gamma}

\def\l{\lambda}

\def\s{\sigma}

\def\t{\tau}
\def\e{\varepsilon}
\def\Om{\Omega}
\def\om{\omega}




\newcommand{\qo}{{\overline q}}

\newcommand{\cA}{{\mathcal A}}
\newcommand{\cB}{{\mathcal B}}

\newcommand{\cH}{{\mathcal H}}
\newcommand{\cF}{{\mathcal F}}

\newcommand{\cL}{{\mathcal L}}
\newcommand{\cK}{{\mathcal K}}

\newcommand{\cP}{{\mathcal P}}
\newcommand{\cN}{{\mathcal N}}
\newcommand{\cU}{{\mathcal U}}
\newcommand{\cV}{{\mathcal V}}




\newcommand{\B}{{\mathbb B}}
\newcommand{\bB}{{\mathbb B}}
\newcommand{\bH}{{\mathbb H}}

\newcommand{\R}{{\mathbb R}}

\newcommand{\N}{\mathbb{N}}
\newcommand{\M}{{\mathbb M}}

\newcommand{\ov}{\overline}


\newcommand{\weakly}{\ensuremath{\rightharpoonup}}

\newcommand{\da}{\ensuremath{\downarrow}}

\newcommand{\mres}{\mathbin{\vrule height 1.6ex depth 0pt width
0.13ex\vrule height 0.13ex depth 0pt width 1.3ex}}


\newcommand{\sL}{\mathscr{L}}

\newcommand{\sP}{{\mathcal P}}



\newcommand{\ds}{\displaystyle}

\newcommand{\spt}{{\rm{spt}}}


\newcommand{\dd}{d} 

\newcommand{\id}{{\rm id}}



\newcommand{\bz}{\bold z}


\newcommand{\be}{\begin{equation}}
\newcommand{\ee}{\end{equation}}

\newcommand{\ba}{\begin{array}}
\newcommand{\ea}{\end{array}}

\newtheorem{theorem}{Theorem}[section]
\newtheorem{definition}[theorem]{Definition}
\newtheorem{lemma}[theorem]{Lemma}
\newtheorem{corollary}[theorem]{Corollary}
\newtheorem{proposition}[theorem]{Proposition}

\newtheorem{remark}[theorem]{Remark}
\newtheorem{property}[theorem]{\textbf{Property}}



\providecommand{\customgenericname}{}
\newcommand{\newcustomtheorem}[2]{%
  \newenvironment{#1}[1]
  {%
   \renewcommand\customgenericname{#2}%
   \renewcommand\theinnercustomgeneric{##1}%
   \innercustomgeneric
  }
  {\endinnercustomgeneric}
}

\newcustomtheorem{customthm}{Framework}

\numberwithin{equation}{section}

\title[Deterministic displacement convex potential games]{Global well-posedness of Master equations for deterministic displacement convex potential mean field games}  

\author[W. Gangbo]{Wilfrid Gangbo} 
\address{Department of Mathematics, UCLA, California, USA}
\email{wgangbo@math.ucla.edu}

\author[A.R. M\'esz\'aros]{Alp\'ar R. M\'esz\'aros}  
\date{\today}
\address{Department of Mathematical Sciences, University of Durham, Durham DH1 3LE, England}
\email{alpar.r.meszaros@durham.ac.uk} 
\thanks{{\it Keywords and phrases}: mean field games; potential games; master equation; displacement convexity}

\begin{document}
\maketitle

\begin{abstract} This manuscript constructs global in time solutions to {\em master equations} for potential Mean Field Games. The study concerns a class of Lagrangians and initial data functions, which are {\em displacement convex} and so, it may be in dichotomy with the class of so--called {\em monotone} functions,  widely considered in the literature. We construct solutions to  both the scalar and vectorial master equations in potential Mean Field Games, when the  underlying space is the whole space $\R^d$ and so, it is not compact.
\end{abstract}

\tableofcontents
%
%
\section*{Introduction} 
In this manuscript, we study a Hamilton--Jacobi equation on $\cP_2(\R^d)$, the set of Borel probability measures on $\R^d$ of finite second moments. This allows to make inferences on the {\em master equation} in  Mean Field Games, introduced by P.-L. Lions in \cite{Lions:course}. Our study relies on an special notion of convexity, the so--called {\em displacement convexity}, which is natural for functions $\cV: \cP_2(\R^d) \rightarrow \R$. It differs from the classical notion of convexity on the set of measures, which corresponds to the so--called {\em Lasry--Lions monotonicity} condition, central in most prior works aiming to study global in time solutions to the master equation. A comparison between the classical notion of convexity and displacement convexity can already be made by considering ways of interpolating Dirac masses. Given two Dirac masses $\delta_{q_0}$ and $\delta_{q_1}$ the paths 
$$
[0,1]\ni t \mapsto \sigma_t:=(1-t)\delta_{q_0}+t \delta_{q_1}, \quad [0,1]\ni t  \mapsto \sigma^*_t :=\delta_{(1-t)q_0+t q_1}
$$ 
provide two distinct interpolations, these two elements of  $\cP_2(\R^d)$.   The function $\cV$ is called convex in the classical sense if it is convex along classical interpolation, which in particular  implies $t \mapsto \cV( \sigma_t)$ is a convex function on $[0, 1]$. The function is called {\em displacement convex} \cite{McCann1997}  if its restriction to any $W_2$--geodesics is convex, which in particular means $t \mapsto \cV( \sigma_t^*)$ is a convex function on $[0, 1]$. 

A blatant example which shows that convexity and displacement convexity cannot be the same is when 
$$2\cV(\mu)=  \int_{\R^{2d}}|q-q'|^2 \mu(dq)\mu(dq'), \qquad \mu \in \cP_2(\R^d).$$ 
In this case, it has been long known  that  $\cV$ is concave in the classical sense while $\cV$ is obviously displacement convex. However, for the purpose of our study, we need to come up with a richer class of examples consistent with our analysis. For instance, let us consider two functions $\phi, \phi_1 \in C^2(\R^d)$ with bounded second derivatives and such that $\phi_1$ is even and define 
\[
2\cV(\mu):= \int_{\R^d}\Big( 2\phi(q)+ \phi_1 * \mu(q)\Big)\mu(dq), \qquad \mu \in \cP_2(\R^d).
\]
Let us recall that (see Lemma \ref{lem:fourier-of-phi1}) the function $\cV$ is convex in the classical sense if and only if  $\hat \phi_1$ -- the  Fourier transform of $\phi_1$ -- is nonnegative, independently of whether or not additional requirements are imposed on $\phi.$  Suppose for instance $\phi$ is $2\lambda$--convex for some $\lambda>0.$ If $\phi_1$ is $\lambda_1$--convex for some $2\lambda_1\in (-\lambda, \lambda)$ then  $\cV$ is displacement convex. As discussed in Subsections \ref{ex:example} and \ref{subsec:conv_displ},  we can choose $\phi_1$ such that $\hat \phi_1$ changes sign, so that $\cV$ fails to be convex in the classical sense.

The theory of well--posedness of the master equation in Mean Field Games is well developed on the set of probability measures \cite{CardaliaguetDLL} (for a probabilistic approach to study such equations we refer to \cite{CarmonaD-II}), under the Lasry-Lions monotonicity condition  \cite{Cardaliaguet} \cite{LasryL0} \cite{LasryL1} \cite{LasryL2}, for games where the individual and/or  common noises are essential mechanisms governing the games. In the same setting of monotone data, global solutions were also constructed in \cite{ChassagneuxCD}, where the authors can handle even degenerate diffusions in the equations. In the same context, \cite{MouZ} improves the regularity restrictions on the data, which need to be still monotone, and propose a notion of weak solutions for the master equation. When the monotonicity condition fails (even in the presence of the noise), only short time existence results for the scalar master equation were achieved (in the deterministic case we refer to \cite{Bessi, GangboS2015, Mayorga}; in the presence of noise we refer to \cite{ChassagneuxCD} and \cite{CarmonaD-II}). For classical mean field games systems the smallness of the time horizon sometimes can be replaced by a smallness condition on the data (see for instance \cite{Ambrose1,Ambrose2}). Via a ``lifting procedure'', it is possible to study master equations on a Hilbert space of square integrable random variables. The main benefit of this process is to instead use the more familiar Fr\'echet derivatives on flat spaces and bypass the differential calculus on the space of probability measures, which is a curved infinite dimensional manifold. Such analysis were carried out for a special class of mechanical Lagrangians and 
for potential games, either in deterministic setting \cite{BensoussanY} or in the presence of individual noise in \cite{BensoussanGY, BensoussanGY-2}. 
Furthermore, the authors needed to impose higher than second order Fr\'echet differentiability on the data functions. It turns out (see below) that this may sometimes be a too severe restriction. Therefore, from this point of view the Hilbert space approach has a serious drawback.

This manuscript constructs global solutions to potential mean field games  master equations, where the widely used Lasry--Lions monotonicity condition is replaced by  {\em displacement convexity}, a concept which appeared in optimal transport theory in the early 90's. The use of displacement convexity in mean field control problems and mean field games goes back to \cite{CarmonaD2015}, where the authors study control problems of McKean-Vlasov type. In the case of mean field game systems with common noise, we refer the reader to  \cite{Ahuja,ARY}, where their so-called {\it weak monotonicity condition} assumed on the data, is equivalent to displacement convexity in the potential game case. As mentioned before, in \cite{BensoussanGY, BensoussanGY-2}, this condition is used in the Hilbertian setting. In the study of master equation arising in control problems of McKean-Vlasov type (in the presence of individual noise), \cite{ChassagneuxCD} seems to be the first work in the literature which imposed displacement convexity on their data to obtain well-posedness of a master equation in the spirit of \cite{CarmonaD2015}.

In potential mean field games, one considers smooth enough real valued functions $\cU_0, \cF$ defined on $\cP_2(\R^d)$. We assume that there are smooth real valued functions $u_0, f$ defined on $\R^d \times \cP_2(\R^d)$ which are related to $\cU_0, \cF$ in the following sense: the Wasserstein gradient of $\cU_0$ at $\mu \in \cP_2(\R^d)$ equals the finite dimensional gradient $D_q u_0(\cdot, \mu)$ and  the Wasserstein gradient of $\cF$ at $\mu \in \cP_2(\R^d)$ equals the finite dimensional gradient $D_q f(\cdot, \mu)$. Given a Hamiltonian $H \in C^3(\R^{2d})$ the master equation consists in finding a real valued function $u$ defined on $[0,\infty) \times \R^d \times \cP_2(\R^d),$ solution to the non--local equation 
\[
\left\{
\begin{array}{ll}
\ds\partial_t u+H(q,D_q u)+\cN_\mu\big[D_q u(t, \cdot, \mu), \nabla_w {u}(t, q, \mu)(\cdot)\big]=f(x,\mu), & (0,T)\times\R^d\times\sP_2(\R^d),\\[5pt]
u(0, \cdot,\cdot)=u_0, & \R^d\times\sP_2(\R^d).
\end{array}
\right.
\] 
Here, $\cN_\mu:L^2(\mu)\times L^2(\mu)\to\R$ is the non--local operator defined as 
\begin{equation}\label{eq:defineNmu}
\cN_\mu[\eta, \theta]:= \int_{\R^d} D_p H(c, \eta(c)) \cdot \theta(c) \mu(dc). 
\end{equation}
Let $L(q, \cdot)$ be the Legendre transform of $H(q, \cdot)$ and assume $L$ is strictly convex and both functions have bounded second order derivatives.  Under the assumption that $\cU_0$ and $\cF$ are displacement convex (convex along the Wasserstein geodesics), we construct classical solutions and weak solutions to the master equation, depending on the regularity properties imposed on the data. Following \cite{GangboT2017}, the starting point of our study relies on the point of view that the differential structure on $(\cP_2(\R^d),W_2)$ is inherited from the differential structure on the flat space  $\bH:=L^2((0,1)^d, \R^d)$ and the former space can be viewed as the quotient space of the latter.   The functions $\cU_0, \cF$ are lifted to obtain functions $\tilde \cU_0, \tilde \cF$ defined on the Hilbert space  $\bH$, with the property that they are rearrangement invariant. What we mean by rearrangement invariant is that  $\tilde \cU_0(x)=\tilde \cU_0(y)$ whenever the push forward of Lebesgue measure restricted to $(0,1)^d$ by $x, y \in \bH$ coincide. In this case, we sometimes say that $x$ and $y$  have the same law. The Hamiltonian $H$ is used to define  on the co--tangent bundle $\bH^2,$ another Hamiltonian denoted  
\[
\tilde \cH(x, b):= \int_{(0, 1)^d} H(x(\omega), b(\omega))d\omega- \tilde \cF(x). 
\]
The corresponding Lagrangian $\tilde \cL$ is on  $\bH^2,$ the tangent bundle, and is 
\[
\tilde \cL(x, a):= \int_{(0, 1)^d} L(x(\omega), a(\omega))d\omega+ \tilde \cF(x).
\]

Both the Lagrangian and the Hamiltonian are invariant under the action of the group of bijections of $(0, 1)^d$ onto $(0, 1)^d,$ which preserve the Lebesgue measure.  We are interested in regularity properties of $\tilde \cU: (0, \infty) \times \bH \rightarrow \R$ solutions to the Hamilton--Jacobi equation 
\[
\left\{
\begin{array}{l}
\partial_t \tilde \cU+\tilde \cH\bigl(\cdot, \nabla_x \tilde \cU\bigr)=0,\quad\mbox{in}\,\,(0,\infty)\times \bH,
\\
\tilde  \cU(0,\cdot)=\tilde \cU_0\qquad \qquad \quad \;\; \mbox{on}\,\,\bH.
\end{array}
\right.
\]
The characteristics of this infinite dimensional PDE and the smoothness properties of $\tilde \cU$ will play an essential role in the application of our study to mean field games. They allow us to obtain an explicit representation formula of the solution to the master equation for arbitrarily large times. Similar observations were made also by P.-L. Lions during a recorded seminar talk \cite{Lions:course}. This lecture seems to suggest that is was not clear at all how far the displacement convexity assumptions on the data could be used to advance the study of the global in time well--posedness of master equations.

Under appropriate growth and convexity conditions on the data, the classical theory of Hamilton--Jacobi equations on Hilbert spaces ensures  that $\tilde \cU(t, \cdot)$ is of class $C^{1,1}_{\rm loc}(\bH)$. Our Hamiltonian and Lagrangian being rearrangement invariant, by the uniqueness theory of Hamilton--Jacobi equation, $\tilde \cU(t, \cdot)$ is rearrangement invariant. This allows to define a function $\cU(t, \cdot)$ on $\cP_2(\R^d)$ such that $\cU(t, \mu)=\tilde \cU(t, x)$ whenever $x \in \bH$ has $\mu$ as its law. In the same time, $\cU$ will be the unique classical solution to the corresponding Hamilton--Jacobi equation set on $\cP_2(\R^d)$.

By Lemma \ref{lem:c11_equiv}, a function $\cV: \cP_2(\R^d) \rightarrow \R$ is of class $C^{1,1}_{\rm loc}$ on the Wasserstein space if and only if its lift  $\tilde \cV: \bH \rightarrow \R$ is of class $C^{1,1}_{\rm loc}$ on the Hilbert space. Since the Hilbert space theory ensures that $\tilde \cU(t, \cdot)$  is of class $C^{1,1}_{\rm loc}$ on the Hilbert space, we obtain as a by--product that $\cU(t, \cdot)$ is of class $C^{1,1}_{\rm loc}$ on the Wasserstein space. This is how far one could push the Hilbert approach in terms of regularity theory if one would like to make useful inference in mean field games.  Indeed, imposing that a rearrangement invariant function $\tilde \cV:\bH \rightarrow \R$ is of class $C^2$ (twice Fr\'echet differentiable) is too  stringent for the purpose of mean field games. For instance, if $\phi \in C_c^\infty(\R^d),$ unless $\phi \equiv 0$, the function $\tilde \cV$ defined on $\bH$ by 
$$\tilde \cV(x):= \int_{(0,1)^d} \phi(x(\omega))d\omega,$$ does not belong to $C^2(\bH)$   (cf. Proposition \ref{prop:finite-d-regularity}). The reader should compare this to another  subtlety in \cite[Section 2]{BuckdahnLPR}. Similar conclusions can be drawn on other functionals with a local representation such as 
$$ 
 \bH\ni x \mapsto \tilde \cV(x):= \int_{(0,1)^{nd}} \phi(x(\omega_1), \cdots, x(\omega_n))d\omega_1 \cdots d\omega_n,
$$ 
when $\phi \in C^3(\R^{nd})$ is symmetric and has bounded second and third order derivatives (cf. Proposition \ref{prop:locald}). Pursuing a deeper analysis, we assume $\alpha \in (0,1],$ $\tilde \cV \in C^{2,\alpha}_{\rm loc}\big(\bH \big)$ is rearrangement invariant so that it is the lift of a function $\cV:\cP_2(\R^d) \rightarrow \R.$  We show in Lemma \ref{lem:weak-Holder} that if \eqref{eq:jan19.2020.1} holds for all $h, h_* \in \mathbb H$ then $D_{q}\big(\nabla_w \cV(\mu) \big)$ is constant function on ${\rm spt}( \mu)$. 

A final argument to support the fact that we need a new concept of higher order derivatives on the set of probability measures is the following. When $k \geq 3$, making assumptions on  $k$--order differentials of Hamiltonians $\tilde \cH: \bH^2 \rightarrow \R$ and treating them as continuous multi--linear forms on cartesian products of $\bH^2$ is too restrictive for a theory in mean field games. Indeed,  frequently used Hamiltonians in mean field games theory are of the form 
\[
\tilde \cH(x, b)= \tilde \cH_H(x, b)-\tilde \cF(x), \qquad \tilde \cH_H(x, b)\equiv \int_{(0,1)^d} H(x(\omega), b(\omega)) d\omega
\]
where $H \in C^{3}(\R^{2d})$ is such that $D^2H$ is bounded. Let $\alpha \in (0, 1]$. Even if $ C^{2,\alpha}_{\rm loc}(\bH^2)$  is an infinite dimensional space, its intersection with the set of functions which have a local representation is contained in a finite dimensional space. For instance,  
\begin{equation}\label{eq:finite-d}
{\rm dim}\Big(C^{2,\alpha}_{\rm loc}(\bH^2) \cap \big\{ \tilde \cH_H \; : \; H\in C^{2, \alpha}_{\rm loc}(\R^{2d}), \;\; D^2 H \;\; \text{is bounded}\big\}\Big)<\infty.
\end{equation}

In this manuscript, to write a meaningful master equation, we are interested in functions  $\cV: \cP_2(\R^d) \rightarrow \R$ which satisfy higher regularity properties than being of $C^{1,1}_{\rm loc}$. We assume at least that their lifts $\tilde \cV: \bH \rightarrow \R$ are such that $\nabla \cV$ is G\^ateaux differentiable with bounded second order differential is a sense to be made precise. Due to the rearrangement invariance property of $\tilde \cV$, $\nabla^2 \tilde \cV$ must have a special form. Given $x \in \bH$, there exist  matrix valued maps 
$$A_{{12}} ^* \in L^\infty((0,1)^d; \R^{d \times d}), \quad A_{{22}}^* \in L^\infty((0,1)^{2d}; \R^{d \times d})$$  
such that $A_{12}^*$ is symmetric almost everywhere, $A^*_{22}(\omega, o)=A^*_{22}(o, \omega)^\top$ almost everywhere and  the operator $ \bH\ni \zeta \mapsto \nabla^2 \tilde \cV(x) \zeta $ can be written as
\begin{equation}\label{eq:intro-frechet0}
\big(\nabla^2 \tilde \cV(x) \zeta\big)(\omega)= A_{12}^*(\omega)\zeta(\omega) + \int_{(0,1)^d} A^*_{22}(\omega, o)\zeta(o) do.
\end{equation} 
In fact, as observed in \cite{BuckdahnLPR} (cf. also \cite{CardaliaguetDLL,CarmonaD-I,CarmonaD-II, ChassagneuxCD, ChowGan}), there exists a matrix field  $A_{12}$ defined on $R(x)$, the range of $x$ and a matrix field $A_{22}$ defined on $R(x) \times R(x)$, such that the following factorization holds: 
\[
A_{12}^*(\omega)=A_{12}\big(x(\omega) \big), \quad A_{22}^*(\omega, o)=A_{22}\big(x(\omega) , x(o)\big).
\]
We argue in Remark \ref{rem:whessian} that $A_{12}$ can be interpreted as $D_q\big( \nabla_w \cV(\mu)(q)\big)$ and indicate the relation between $A_{22}$ and the Wasserstein gradient of $\nabla_w \cV.$ 

When $\cB \subseteq \cP_2(\R^d)$ is an open set, we introduce vector spaces of functions $C^{2, \alpha, w}(\cB),$ as  substitutes for the spaces $C^{2, \alpha}(\bH).$ These spaces are such that whenever $\cV \in C^{2, \alpha, w}(\cB)$, its restrictions 
$$ 
\R^{nd}\ni (q_1, \cdots, q_n) \mapsto \cV\bigg({1 \over n}\sum_{i=1}^n \delta_{q_i}\bigg)
$$ 
belong to $C^{2, \alpha}_{\rm loc}(\R^{nd})$. The precise definition of this space can be found in Definition \ref{def:c21_wasserstein}. At least we require that if $\cV \in C^{2, \alpha, w}(\cB)$, since the second order G\^ateaux differential of its lift $\tilde \cV$ exists, it must satisfy the property 
\begin{equation}\label{eq:intro-frechet}
\Big|\nabla \tilde \cV(y)(\omega) -\nabla \tilde \cV(x)(\omega) - \nabla^2 \tilde \cV(x)(\omega)\big((y(\omega)-x(\omega)\big)\Big| \leq C\Big( |y(\omega)-x(\omega)|^\alpha+\|x-y\|^\alpha\Big)
\end{equation} 
whenever $x, y \in \bH$, $x$ pushes $\cL^d_{(0, 1)^d}$ forward to $\mu,$ $y$ pushes $\cL^d_{(0, 1)^d}$ forward to $\nu$ and $\|x-y\|=W_2(\mu, \nu).$ In fact, spaces of type $C^{2,1}(\cP_2(\M))$ have already been considered in the framework of mean field models in \cite{BuckdahnLPR}, based on a construction very similar to ours in Definition \ref{def:c21_wasserstein}.

A discretization approach (which consists in restricting our study to the subsets of $\cP_2(\M)$ which are averages of Dirac masses)  greatly facilitates the task to show \eqref{eq:intro-frechet0}, with $\tilde\cV$ replaced by the solution to the Hamilton--Jacobi equation we constructed on the Hilbert space. This helps us show that 
$$ 
A_{12}\in L^\infty(R(x); \R^{d \times d}), \ A_{22}\in L^\infty(R(x)\times R(x); \R^{d \times d})
$$ 
and for $\varphi \in C_c^\infty(\R^d)$ and $h:= D \varphi \circ x$,  
\[
D^2 \tilde \cV(x)(h, h)= \int_{(0,1)^d} A_{12}(x(\omega)) h(\omega) \cdot h(\omega) d\omega+  \int_{(0,1)^{2d}} A_{22}(x(\omega_1), x(\omega_2)) h(\omega_1) \cdot h(\omega_2) d\omega_1 d\omega_2.
\] 
This allows us to make inference beyond an estimate such as
$$
\sup_{x, h \in \bH} \Big\{|D^2 \tilde \cU(t, x)(h, h)|\; : \; \|h\| \leq 1, \; \|x\| \leq r\Big\} <+\infty \qquad \forall r>0.
$$ 
Unlike studies of the master equation in compact  settings such as the periodic setting $\R^d/\mathbb Z^d$, the fact that the range of $\tilde \cU$ is certainly unbounded, is a source of additional complications in our study, 

When $\nabla \tilde \cH$ is Lipschitz, the characteristics of the Hamilton--Jacobi equation are the Hamiltonian flow $\Sigma=(\Sigma^1, \Sigma^2):[0,\infty) \times \bH^2 \rightarrow \bH^2,$ uniquely defined by the solution of
\begin{equation}\label{eq:Hamiltonian-Flow}
\left\{
\begin{array}{ll}
\dot \Sigma^1(t, \cdot )=& \hfill \nabla_b \tilde \cH \big(\Sigma(t, \cdot ) \big),\quad \mbox{in}\,\,(0,\infty)\times \bH^2,\\
\dot \Sigma^2(t, \cdot )=&- \nabla_x \tilde \cH \big(\Sigma(t, \cdot ) \big),\quad\mbox{in}\,\,(0,\infty)\times \bH^2,\\
 \Sigma(0, \cdot)=&  \;\;\; \id_{\bH^2}.
\end{array}
\right.
\end{equation} 
The vector field $\nabla^\perp\tilde \cH$ is the velocity in Eulerian coordinates for the trajectory $\Sigma$ on the cotangent bundle $\bH^2.$
We denote as $$(\tilde \xi, \tilde \eta):[0,\infty) \times \bH \rightarrow \bH^2$$ the restriction of $\Sigma$ to the graph of $\nabla \tilde \cU_0$, i.e.
\begin{equation}\label{eq:Hamiltonian-Flow2}
  (\tilde \xi, \tilde \eta) := \Sigma \big(\cdot, \cdot, \nabla \tilde \cU_0 \big). 
\end{equation} 
When $\tilde \cL$  and $\tilde \cU_0$ are convex, under appropriate standard conditions on $\tilde \cL$ and $\tilde \cH$, differentiability properties of $\tilde \cU$ are obtained by standard methods. A strict convexity property of $\tilde \cL$ ensures that for any fixed $t\geq 0$, $\tilde\xi(t, \cdot)$ is a bijection of $\bH$ onto $\bH.$ The trajectories 
\[
[0, t]\ni s\mapsto \tilde S_s^t[x]:= \tilde\xi\Big(s,  \tilde\xi^{-1}(t,x)\Big) \in \bH
\]
are useful to write the representation formula  
\[
\tilde \cU(t, x)= \tilde \cU_0(\tilde S_0^t[x])+ \int_0^t \tilde \cL\big( \tilde  S_s^t[x],   \partial_s \tilde S_s^t[x]\big) ds.
\] 
The identity  
\begin{equation}\label{grad:rep}
 \nabla \tilde \cU(t, \cdot) =\tilde \eta(t, \tilde S_0^t)   
\end{equation}
suggests that the smoothness properties of $\tilde \cU$ rest on the smoothness properties of $\tilde S_0^t$  and $\tilde \eta$. While strict convexity of $\tilde \cL$ is sufficient to get that the restriction of $\tilde\xi(t, \cdot)^{-1}$ to appropriate finite dimensional spaces is continuously differentiable, it becomes much harder to show that  $\tilde\xi(t, \cdot)^{-1}$ is continuous on the whole space $\bH$ unless appropriate convexity properties are imposed on the data. 

Let us consider the vector field 
\[B(t, \cdot):= \nabla_b \tilde \cH\big(\cdot,  \tilde \eta(t, \tilde S_0^t)   \big) \]
which helps to study  the second order derivatives of $\tilde \cU$ and which represents the velocity of the flow $\tilde \xi$ in physical space, since $\dot {\tilde \xi} =B(s, \tilde \xi)$. When $\tilde \cU(t, \cdot)$ is twice differentiable then $\nabla^2 \tilde \cU(t, x), \nabla B(t, x): \bH^2 \rightarrow \R$ are bilinear forms which satisfy the relation   
\[
\nabla B(t, x)(h, a)=\nabla^2 \tilde \cU(t, x)\Big(a, D_{pp}^2H\big(x, \nabla \tilde \cU(t, x)\big) h \Big)  +   
 \int_{(0, 1)^d} \Big(D_{qp}^2 H\big( x, \nabla \tilde \cU(t, x)\big)\; a\Big) \cdot h d\omega, \quad (\forall h, a \in \bH).
\]

%
%
\subsection*{Summary of our main results}
Coming back to the description of our main results, after having provided the $C^{1,1}_{\rm{loc}}$ regularity for the viscosity solutions $\cU$ to the corresponding Hamilton--Jacobi equations on $\cP_2(\R^d)$, we completely abandon the setting of the Hilbert space and via the mentioned discretization approach we show that $\cU(t,\cdot)$ is actually of class $C^{2,1,w}_{\rm{loc}}$. We note that our approach seems to be novel and, although similar in flavor, it is completely different from the ones developed in \cite{GangboS2015} and \cite{Mayorga}. It relies on fine quantitative derivative estimates with respect to $m\in\N$ on the Hamiltonian flow for $m$-particles, then these in turn translate to higher regularity estimates on $\cU$ by carefully differentiating the identity \eqref{grad:rep}, written for the restriction of $\cU$ to the set of averages of Dirac masses. Let us emphasize that this finite dimensional projection of the value function solves the corresponding optimization problem but driven by the finite dimensional projections of the cost coefficients (see Remark \ref{rem:discrete-cont}); this is in fact what allows for a preliminary analysis of the optimal trajectories of the mean field control problem when restricting initial states of the population to uniform finite distributions. A key point is then to obtain regularity estimates that are independent of the cardinality of those finite distributions. This is one crucial step where the convexity structure plays a key role. This idea is in fact the heart of our analysis and works only for deterministic mean field games; the approach in this manuscript is entirely different from the existing ones to tackle mean field games master equations: most of them consist in working directly at the level of PDE system of mean field games.

Having $\cU(t,\cdot)\in C^{2,1,w}_{\rm{loc}}(\cP_2(\R^d))$ allows us to obtain {\it weak solutions} (see in Theorem \ref{thm:main-vector}) $\cV:[0,T]\times\sP_2(\R^d)\times\R^d\to\R^d$ to the so-called {\it vectorial master equation},

\begin{equation}\label{intro:master_vector}
\left\{
\begin{array}{r}
\ds\partial_t\cV+D_q H(q,\cV(t,\mu,q))+D_q\cV(t,\mu,q)\nabla_p H(q,\cV(t,\mu,q))+\overline \cN_\mu\big[\cV, \nabla_{w}^\top \cV \big](t, \mu, q)\\ 
\ds =\nabla_w\cF(\mu)(q)\\[5pt]
\ds\cV(0,\mu,\cdot)=\nabla_w\cU_0(\mu)(\cdot),
\end{array}
\right.
\end{equation}  
where for $\cV:\cP_2(\R^d)\times\R^d\to\R^d$ we define 
\[
\overline \cN_\mu\big[\cV, \nabla_{w}^\top \cV \big](t, \mu, q):=\int_{\R^d}\nabla_{w}^\top \cV(t, \mu,q)(b)  D_pH\big(b,  \cV(t, \mu, b)\big) \mu(db)
\]

This equation can be seen as a vectorial conservation law on $(0,T)\times\cP_2(\R^d)\times\R^d$ and can be derived formally by taking the Wasserstein gradient of the Hamilton-Jacobi equation satisfied by $\cU$. Such method is possible in the setting of the Hilbert space as well (provided one has the sufficient regularity to justify the differentiation), and this is done for instance in \cite{BensoussanGY} and \cite{BensoussanY} for short time and special Hamiltonians. Let us emphasize that there is a subtlety in this derivation and in particular at a first glance the vectorial master equation in the setting of $\cP_2(\R^d)$  is satisfied pointwise only on $(0,T)\times\bigcup_{\mu\in\cP_2(\R^d)}\{\mu\}\times\spt(\mu)$. Therefore, we refer to such solution as {\it weak solution}. Thus, additional effort is needed to extend the vectorial master equation to $(0,T)\times\cP_2(\R^d)\times\R^d$ and actually, this is possible through the solution to the scalar master equation. One cannot observe this phenomenon in the setting of $\bH$, because $\nabla\tilde\cU(t,x)$, as an element of $\bH$, does not carry explicitly the dependence on the range of $x\in\bH$. 

Let us stress that even though there is a deep connection between the vectorial and scalar master equations, while formally speaking the former one is the Wasserstein gradient of a Hamilton-Jacobi equation, additional effort is needed to justify the well-posedness of the latter one. And in particular, this is not a simple consequence of the well-posedness of the vectorial equation at all. In the same time, while the vectorial master equation might have physical relevance as a vectorial conservation law, in the theory of mean field games the scalar master equation is the one which has profound significance. One of the reasons for this is that this equation deeply carries the features of $N$--player differential games. In particular, as we can see this in \cite{CardaliaguetDLL}, it provides an important tool to prove the convergence of Nash equilibria of $N$--player differential games to the mean field games system, as $N\to+\infty$. In the same time, typically it provides quantified rates on propagation of chaos. Therefore, such equations are very natural, and they were successfully used in the literature in the context of mean field limits of large particle system (see for instance in \cite{MischlerM, ChassagneuxST}).

The candidate for the solution of the scalar master equation is constructed as follows. Given $t \in [0,T]$, $q \in \R^d$ and $\mu \in \cP_2(\R^d)$ we define 
\begin{equation}\label{intro:u_alt}
u(t, q, \mu):= \inf_{\gamma} \biggl\{u_0(\gamma_0, \sigma_0^t[\mu]) +\int_0^t \Big(L(\gamma_s, \dot \gamma_s) + f(\gamma_s, \sigma_s^t[\mu])\Big)ds\; : \; \gamma \in W^{1, 2}([0,t], \R^d), \gamma_t=q \biggr\},
\end{equation}
where the curve $(\s^t_s[\mu])_{s\in[0,t]}$ is the projection of the Hamiltonian flow onto $\cP_2(\R^d)$. We underline the important fact that the previous formula defines $u(t,\cdot,\mu)$ for every $q\in\R^d$ (and not just for $q\in\spt(q)$).

After obtaining the sufficient regularity of the mapping $\mu\mapsto \s^t_s[\mu]$ (using also the fact that $\cU(t,\cdot)\in C^{2,1,w}_{\rm{loc}}(\cP_2(\R^d))$), we show that $u$ is of class $C^{1,1}_{\rm{loc}}([0,T]\times\R^d\times\cP_2(\R^d))$ (see Lemma \ref{lem:u_alt-reg}).
The connection between $u$ and $\cU$ is that $D_q u(t,\cdot,\mu)=\nabla_w\cU(t,\mu)(\cdot)$ on $\spt(\mu)$. This is an important remark, since it means that $D_q u(t,\cdot,\mu)$ provides the natural Lipschitz continuous extension for $\nabla_w\cU(t,\mu)(\cdot)$ to $\R^d$. By these arguments we can prove Theorem \ref{thm:main-scalar}, the main theorem of this manuscript, which states that under our standing assumptions $u$ defined in \eqref{intro:u_alt} is the unique classical solution to the scalar master equation which is of class $C^{1,1}_{\rm{loc}}([0,T]\times\R^d\times\cP_2(\R^d))$.

Theorem \ref{thm:main-scalar} has several implication. First, the obtained regularity of $u$ and the fact that 
$$D_q u(t,\cdot,\mu)=\nabla_w\cU(t,\mu)(\cdot)\qquad \text{on} \quad \spt(\mu),$$ 
allow us to deduce that $D_q u$ is a solution to the vectorial master equation and \eqref{intro:master_vector} is satisfied for all $(t,\mu)\in(0,T)\times\cP_2(\R^d)$ and for $\cL^d$--a.e. $q\in\R^d$. Second, since the scalar master equation, and in particular our definition \eqref{intro:u_alt} possess the features of $N$--player differential games, we could easily deduce that $u(t,\cdot,\cdot)$, when restricted to $\bigcup_{q\in\R^{Nd}}\mu_q^{(N)}\times\spt(\mu_q^{(N)})$, provides approximate solutions to a system of  Hamilton--Jacobi equation, characterizing the Nash equilibria of the associated $N$--player differential game (such a construction would be similar to the ones in \cite{CardaliaguetDLL}, \cite{DelarueLR:19,DelarueLR:20}, so we omit the details on this). In the same time, the regularity of $u$ would allow us to deduce the local convergence of Nash equilibria as $N\to+\infty$, provided we know that the $N$--player Nash system of Hamilton-Jacobi equations has a smooth enough classical solution. In such a fortunate scenario, the proof of this result, even in the deterministic setting, would follow similar ideas as the ones in \cite{CardaliaguetDLL}, \cite{DelarueLR:19,DelarueLR:20}. However, let us emphasize that the well-posedness question of systems of Hamilton-Jacobi equations, in the deterministic setting seems to be widely open in the literature. It worth mentioning the recent work \cite{FischerS} which studies this convergence question in the deterministic setting in a suitable weak sense, without relying on the well-posedness neither of the Nash system nor the master equation.


The structure of the rest of the paper is the following. In Section \ref{sec:preliminaries} we provide the first part of our standing assumptions, we present the discretization approach and show a direct argument which provides $C^{1,1}_{\rm{loc}}$ regularity for solutions to a class of Hamilton--Jacobi equations set on Hilbert spaces. 

Section \ref{sec:app-reg} contains the important quantitative estimates with respect to $m$ on the Hamiltonian flows of $m$--particle systems and the corresponding derivative estimates of the solutions to Hamilton--Jacobi equations set on $\R^{md}$.

In Section \ref{sec:reg_P2_H_Pm} we compare notions of convexity and regularity for functions defined on $\cP_2(\R^d)$, their lifts defined on $\bH$ and their restrictions to discrete measures. Here we also show how can we deduce regularity estimates for functions on $\cP_2(\R^d)$ from precise quantitative derivative estimates on their restrictions to discrete measures.

Section \ref{sec:master_equations} is the core of the manuscript where we investigate the well-posedness of both vectorial and scalar master equations. Additional assumptions need to be imposed to establish the well-posedness of the scalar master equation. These are listed in this section. 

In Section \ref{sec:further} we have collected an important implication of the scalar master equation. We use scalar master equations  to improve the notion of weak solution for the vectorial equations. 

To facilitate the reading of the main text, our manuscript has several appendices. In Appendix \ref{appendixE} we demonstrate the limitations of the Hilbert space approach, when studying or assuming $C^{2,\alpha}$ type regularity on rearrangement invariant functionals having local representations. 

In Appendix \ref{app:convex_dispalcement} we emphasize how our setting by imposing displacement convexity of the data can replace the more standard monotonicity assumptions imposed typically in the mean field games literature. Here we provide examples of functionals which produce non-monotone coupling functions and an example of a Hamilton--Jacobi equation on $\cP_2(\R^d)$, for which the data provides the standard monotonicity condition, yet its classical solution ceases to exist after finite time.

In Appendix \ref{appendixC} we have collected some standard results on Hamiltonian flows on Hilbert spaces and we explained how the regularity of these flows can be used to show regularity of solution to a Hamilton-Jacobi equations. 

\medskip
{\sc Acknowledgements} The research of WG was supported by NSF grant DMS--1700202. Both authors acknowledge the support of Air Force grant FA9550-18-1-0502. The authors would like to express their gratitude to P. Cannarsa for the discussions and for pointing out important references on regularity properties of solutions to Hamilton--Jacobi equations on $\R^d$. They wish to thank A. \'Swiech for the discussions on regularity of solutions to Hamilton--Jacobi equations on Hilbert spaces. The feedback of P. Cardaliaguet on the manuscript is also greatly appreciated. The authors wish to thank the anonymous referee for making pertinent suggestions which improved the  manuscript.

 



%
%
\section{Preliminaries}\label{sec:preliminaries}
We start this section with some well--known definitions in the Hilbert setting as well as in the Wasserstein space. We denote as $\Om:=(0,1)^d\subset \R^d$ the unit cube  and as $\cL^d_\Om$  the Lebesgue measure restricted to $\Om.$  We sometimes refer to any Borel map of $\Om$ to $\M$ as a random variable. We shall work on the Hilbert space 
$$\bH:= L^2(\Om;\R^d),$$
the set of square integrable Borel vector fields with respect to $\Om$. 

Since it is more convenient to write $\M^m$ instead of $(\R^d)^m$, we shall use write $\M$ in place of $\R^d.$ Letters $x,y$ are typically used for elements of $\bH$, while elements of $\M$ are typically denoted by $q,p,v$. Sometimes, we also use the notation $\R_+:=[0,+\infty)$.

Given two topological spaces $\mathbb S_1$ and $\mathbb S_2$, a Borel measure $\mu$ on $\mathbb S_1$  and a Borel map $X: \mathbb S_1 \rightarrow \mathbb S_2$, $X_\sharp \mu$ is the measure on $\mathbb S_2$ defined as  $X_\sharp \mu(B)= \mu\bigl(X^{-1}(B) \bigr)$ for $B \subset \mathbb S_2$. 

The canonical projections $\pi^1, \pi^2: \M \times \M \rightarrow \M$ are defined as 
$$\pi^1(q_1, q_2)=q_1, \quad \pi^2(q_1, q_2)=q_2 \qquad \forall q_1, q_2 \in \M.$$   

Given $\mu_0, \mu_1 \in \cP_2(\M)$, we denote as $\Gamma(\mu_0, \mu_1)$ the set of Borel probability measures $\gamma$ on $\M \times \M$ such that $\pi^1_\sharp \gamma=\mu_0$ and $\pi^2_\sharp \gamma=\mu_1$. We denote as $\Gamma_o(\mu_0, \mu_1)$ the set of $\gamma\in \Gamma(\mu_0, \mu_1)$  such that 
\[
W_2^2(\mu_0, \mu_1)= \int_{\R^{2d}} |q_1-q_2|^2 \gamma(dq_1, dq_2).
\]
The law of $x \in \bH$ is the Borel probability measure $\sharp(x):=x_\sharp \cL^d_\Om.$ The map $\sharp$ maps  $\bH$ onto $\cP_2(\M)$, the set of Borel probability measure on $\M$ of finite second moments. One basic result in measure theory is that as $\Om$ has no atoms, any Borel probability measure on $\R^d$ is the law of a Borel map $z:\Om \rightarrow \R^d.$

If $\mu \in \cP_2(\M)$, the set of Borel vector fields $\xi: \M \rightarrow \M$ which are square integrable is denoted as $L^2(\mu).$  The tangent space to $\cP_2(\M)$ at $\mu$ denoted as $T_\mu \cP_2(\M)$ is closure of $\nabla C_c^\infty(\M)$ in $L^2(\mu)$. 

If $\tilde\cU:\bH\to\R$ is differentiable at $x\in\bH$, we use the notations $\nabla\tilde\cU(x)$ or $\nabla_x \tilde\cU(x)$ to denote its Fr\'echet derivative at $x$ (as element of $\bH$). If $\tilde\cU$ is twice differentiable at $x$, we use the notations $\nabla^2\tilde\cU(x)$ or $\nabla_{xx}^2\tilde\cU(x)$ to denote its Hessian (as bi-linear form on $\bH\times\bH$). If $u:\M\to\R$ is differentiable at $q\in\M$, we use the notation $Du(q)$ or $D_q u(q)$ to denote its gradient at $q$. If it is twice differentiable at $q$, we use the notations $D^2 u(q)$ or $D^2_{qq}u(q)$ to denote its Hessian matrix at $q$.

For $r>0$, we define $\cB_r$ to be the closed ball in $(\sP_2(\M),W_2)$, centered at  $\delta_0$ and of radius $r.$ $\B_r(0)$ stands for the closed ball in $\bH$ centered at $0$ and of radius $r$.

For any integer $m>1$ we fix $(\Omega_i^m)_{i=1}^m$ to be a partition of $\Omega$ into Borel sets of same volume. Given 
\[
q:=(q_1, \cdots, q_m),  \; p:= (p_1, \cdots, p_m) \in \M^m, \quad 
\]
we set 
\begin{equation}\label{eq:average-q}
M^q:= \sum_{i=1}^m q_i \chi_{\Omega_i^m}, \quad  M^{mp}:= \sum_{i=1}^m (mp_i) \chi_{\Omega^m_i}\equiv m M^p \quad \text{and} \quad \mu^{(m)}_q:= {1\over m} \sum_{i=1}^m \delta_{q_i} .
\end{equation} 
We  set 
\[
\bB^m_r:=\Bigl\{q \in \M^m \; : \; m^{-1} \sum_{j=1}^m |q_j|^2 \leq r^2 \Bigr\}.
\]
and 
\[ 
\sP_{2}^{(m)}(\M):=\left\{{1\over m} \sum_{i=1}^m \delta_{q_i} \; : \; q \in \M^m \right\}.
\]

\vskip0.40cm
\subsection{Assumptions}\label{subsect:assumptions} 

Throughout this manuscript $N \geq 1$ is an integer, $m_*, \lambda_0 \in \R$  and $\kappa_0, \lambda_1, \kappa_3>0.$  We shall denote as $\ov \kappa$ a generic constant depending on $m_*, \kappa_0, r_2, \kappa_3>0.$ 

Let $-\infty< s<t<\infty$ and let $m>1$ be an integer. 

When $\mathbb S$ is a metric space, we denote as $AC_2(s, t; \mathbb S)$ the set of $S: [s, t] \rightarrow \mathbb S$ which are $2$--absolutely continuous. When $\tau \in [s, t]$, when convenient, we write $S_\tau$ in place of $S(\tau).$ We are imposing the following standing assumptions throughout the paper.

Suppose  
\begin{align}\label{ass:F-U0-C11new1} 
\tilde \cF, \; \tilde \cU_0 \in C^{1,1}(\bH), \quad \tilde \cF \geq 0,\; \tilde \cU_0 \geq m_*,  \tag{H\arabic{hyp}}
\end{align} 
and are rearrangement invariant in the sense that if $x, y \in \bH$ have the same law, then $\tilde \cF(x)=\tilde \cF(y)$ and $\tilde \cU_0(x)=\tilde \cU_0(y).$ Note that \eqref{ass:F-U0-C11new1} implies in particular that there exists $\kappa_0>0$ such that
and
\begin{equation}\label{ass:LipschitzUandF}
\nabla \tilde  \cF, \nabla \tilde  \cU_0 \quad \text{are}\;\; \kappa_0 \text{-Lipschitz continuous}.
\end{equation} 
We assume \stepcounter{hyp}
\begin{align}\label{ass:F-U0-C11new2} 
\tilde  \cU_0 \; \text{is convex.} \tag{H\arabic{hyp}}
\end{align} 

Let \stepcounter{hyp}
\begin{equation}\label{ass:Hamiltonian}
H, L \in C^{N+1}(\M \times \R^d), \quad L \geq 0,
\tag{H\arabic{hyp}}
\end{equation} 
such that  $L(q, \cdot)$ and $H(q, \cdot)$ are Legendre transforms of each other for any $q \in \M.$ We assume \stepcounter{hyp}
\begin{equation}\label{ass:Hessianv-v}
D^2_{vv} L \geq \kappa_3 I_d,\quad  D^2_{pp} H>0, \tag{H\arabic{hyp}}
\end{equation} 
and \stepcounter{hyp}
\begin{equation}\label{ass:LipschitzH}
D H,\;\; D L  \;\; \text{are}\;\; \kappa_0\text{-Lipschitz continuous}. \tag{H\arabic{hyp}}
\end{equation} 
We further assume \stepcounter{hyp}
\begin{equation}\label{ass:UpperboundonL}
\lambda_1|v|^2 + \lambda_0 \leq L(q, v). \tag{H\arabic{hyp}}
\end{equation} 

We set 
\[
\tilde \cL(x, a)= \int_\Omega L\bigl(x(\omega), a(\omega) \bigr)d\omega +\tilde \cF(x), \qquad \tilde \cH(x, b)= \int_\Omega H\bigl(x(\omega), b(\omega) \bigr)d\omega -\tilde \cF(x)
\]
for $x, a, b \in \bH$ and assume \stepcounter{hyp}
\begin{equation}\label{ass:convexity-on-tildeL}
\tilde \cL \quad \text{is jointly  strictly convex in both variables}. \tag{H\arabic{hyp}}
\end{equation} 
\stepcounter{hyp}
Observe that a sufficient condtion for \eqref{ass:convexity-on-tildeL} to be satisfied is to assume existence of a constant $\kappa_1>0$ such that $\tilde  \cF$ is $\kappa_1$-convex and  that there exists $\kappa_2>0$ such that 
\begin{equation}\label{ass:convexity-on-tildeLC2}
D^2 L(\overline q, \overline{v}) \begin{pmatrix}q\\ v \end{pmatrix}\cdot \begin{pmatrix}q\\ v \end{pmatrix}\geq \kappa_2 |v|^2 \qquad \forall q, \overline q, v, \overline v \in \R^d.
\end{equation} 
In  this case, the strict convexity of $\tilde\cL$ would follow from the fact that   
\begin{equation}\label{ass:convexity-on-tildeLC3}
{d^2 \over dt^2} \tilde \cL(\overline x+t x, \overline a+t a)\Big|_{t=0}\geq \kappa_1 \|x\|^2+\kappa_2 \|a\|^2 \qquad \forall x, a, \overline x, \overline a \in \bH.
\end{equation} 

The regularity assumptions \eqref{ass:F-U0-C11new1} and \eqref{ass:Hamiltonian} will be important to derive regularity estimates on the classical solution $\tilde\cU$ to the corresponding Hamilton-Jacobi equation. At a first glance these are sufficient to obtain well-known semi-concavity and Lipschitz estimates on this solution. The convexity of $\tilde\cL$ in \eqref{ass:convexity-on-tildeL} and of $\tilde\cU_0$ in \eqref{ass:F-U0-C11new2} will then imply that $\tilde\cU(t,\cdot)$ (as a value function in an optimal control problem) is convex. Together with the previous properties this will lead to the $C^{1,1}$ regularity on $\tilde\cU(t,\cdot)$. To be able to achieve higher regularity estimates on $\tilde\cU(t,\cdot)$ that will be necessary to derive the corresponding master equations, additional assumptions will be introduced in Section \ref{sec:master_equations}. The combination of  \eqref{ass:F-U0-C11new1} and \eqref{ass:LipschitzH} ensures that the underlying Hamiltonian flow is globally well--posed. We combine \eqref{ass:UpperboundonL} and \eqref{ass:convexity-on-tildeL} to obtain existence and uniqueness of solutions to the  optimal control problems associated to $\tilde\cU(t,\cdot)$. Finally, the strict convexity assumptions in \eqref{ass:Hamiltonian} will help us to deduce the invertibility of the Hamiltonian flow and by this linking it to the optimal curve in the definition of $\tilde\cU(t,\cdot)$.

For any $S \in AC_2(s, t; \bH)$ we set 
\[
\tilde \cA_s^t(S):= \int_s^t \tilde \cL(S, \dot S) d\tau.
\]
When $x, y \in \bH$ we set 
\[
\tilde C_s^t(x, y):= \inf_S \Bigl\{ \tilde \cA_s^t(S) : S(0)=x, S(t)= y, \; S \in AC_2(s, t; \bH) \Bigr\}
\]
and define for $t>0$, 
\begin{equation}\label{eq:defineUtilde}
\tilde \cU(t, y)= \inf_{z \in \bH}\left\{ \tilde C_0^t(z, y) +\tilde \cU_0(z)\right\}.
\end{equation}
We denote as $AC_2(0, t; \bH_y)$  the set of $S \in AC_2(0, t; \bH)$ such that $A_0^t(S)<\infty$ and $S(t)=y.$ Strict convexity of $\tilde \cA_s^t$ is ensured by \eqref{ass:convexity-on-tildeL}. 

%
\begin{remark}\label{rem:new-bound-onHL}  The following hold. 
\begin{enumerate}
\item[(i)] Using \eqref{ass:LipschitzH}, we obtain that $|H|$ and $|L|$ are bounded above by quadratic forms.
\item[(ii)] Note that by \eqref{ass:F-U0-C11new1} and \eqref{ass:UpperboundonL}, 
$$\tilde \cA_0^t(S) \geq \lambda_1 \int_0^t \|\dot S\|^2 d\tau + \lambda_0 t+m_*.$$ 
This ensures a pre--compactness property to the sub-level sets of $\tilde \cA_0^t$ when they are contained in  $AC_2(0, t; \bH_y)$ for some $y \in \bH.$  
\item[(iii)] The  functions $D L$, $D H$,  $\nabla \tilde \cU_0$ and $\nabla \tilde \cF$ being Lipschitz,  there is a constant $\ov \kappa$ such that  
\[
|D L(q, v)| \leq \ov {\kappa} (|v|+|q|+1), \; |D H(q, p)| \leq \ov {\kappa} (|p|+|q|+1), \;\; \|\nabla \tilde \cU_0(x)\|+ \|\nabla \tilde \cF(x)\| \leq \ov {\kappa} (\|x\|+1).
\]
\end{enumerate}
\end{remark}

The assumptions imposed on $H$ and $\tilde \cF$ ensure $\nabla \tilde \cH: \bH^2 \rightarrow \R$ is Lipschitz and so, there exists a unique Hamiltonian flow $\Sigma: \R \times \bH^2 \rightarrow \bH^2$ on the phase space, solution to the initial value problem \eqref{eq:Hamiltonian-Flow}.
By Remark \ref{rem:new-bound-onHL} (iii) there exists a constant $\tilde \kappa>\overline \kappa$ depending only on $\ov \kappa$ such that 
\begin{equation}\label{eq:Hamiltonian-Flow3}
\|\Sigma(t, x, b)\|+1 \leq \big(\|(x, b)\| +1 \big) e^{\tilde \kappa t}  
\end{equation} 
for any $t>0$ and $x, b \in \bH$. The restriction of $\Sigma$ to the graph of $\nabla \tilde \cU_0$ is the flow map denoted as $(\tilde \xi, \tilde \eta)$ (defined in \eqref{eq:Hamiltonian-Flow2}) on the spatial space, with values in the cotangent bundle. We combine \eqref{ass:LipschitzUandF} and \eqref{eq:Hamiltonian-Flow3} to find $c_5>0$ depending only on $\kappa_0$ and $\|\nabla \tilde \cU_0(0)\|$ such that 
\begin{equation}\label{eq:Hamiltonian-Flow4}
 \| (\tilde \xi, \tilde \eta)\| +1 \leq c_5 \big(\|x\| +1 \big) e^{\tilde \kappa t}. 
\end{equation} 
We discuss some more classical properties of the Hamiltonian flow in the setting of Hilbert spaces in Appendix \ref{appendixC}.

%
%
%
%
%
%
\vskip0.40cm
\subsection{Discretization}\label{subsec:discretization}
Fix a natural number $m>1$.   For $q, v, p \in \M^m$ we define 
\[
 L^{(m)}(q, v):= \int_\Omega L(M^q, M^v) d\omega= {1 \over m}\sum_{i=1}^m L(q_i, v_i), \quad F^{(m)}(q):= \tilde \cF \bigl(M^q \bigr) 
\]
and 
\[
 H^{(m)}(q, p):= \int_\Omega H(M^q, M^{mp}) d\omega={1 \over m}\sum_{i=1}^m H(q_i, m p_i). 
\]
Then we set 

\[
 \cL^m(q, v):= L^{(m)}(q, v) + F^{(m)}(q), \quad  \cH^m(q, p):=  H^{(m)}(q, p)-F^{(m)}(q), \quad U^{(m)}(t, q):=\tilde \cU(t, M^q). 
\]


One checks that for each $j \in \{1, \cdots, m\}$, $\nabla \tilde \cU(t, M^q)$ is constant on $\Omega_j^m$ and the following useful identities (see for instance \cite{CarmonaD-I, GangboS2015}) hold:  
\begin{equation}\label{eq:discretegra1}
D_{q_j}  U^{(m)}(t, q_1, \cdots, q_m)= {1\over m}  \nabla \tilde \cU(t, M^q)|_{\Omega^m_j}  .
\end{equation} 
Note this means in particular, 
\begin{equation}\label{eq:finite-maps} 
\nabla \tilde \cU_0 : \{M^q\; : \; q \in \M^m \} \rightarrow \{M^q\; : \; q \in \M^m \}.
\end{equation} 

We infer   
\begin{equation}\label{eq:discretegra1b}
\nabla \tilde \cU(t, M^q) = m \sum_{j=1}^m \chi_{\Omega_j^m} D_{q_j} U^{(m)}(t, q).
\end{equation} 
Observe 
\begin{equation}\label{eq:discretegra2}
D_{q_j} \cL^m( q, v)= {1\over m}  \nabla_x \tilde \cL(M^q, M^v)|_{\Omega_j^m}, \quad  D_{v_j} \cL^m(q, v)= {1\over m}  \nabla_a \tilde \cL(M^q, M^v)|_{\Omega_j^m}, 
\end{equation} 
and so,  
\begin{equation}\label{eq:discretegra2new}
\nabla_x \tilde \cL(M^q, M^v)=m  \sum_{j=1}^m \chi_{\Omega_j^m} D_{q_j} \cL^m( q, v), \quad   \nabla_a \tilde \cL(M^q, M^v)=m  \sum_{j=1}^m \chi_{\Omega_j^m} 
  D_{v_j} \cL^m(q, v).
\end{equation} 
Similarly,  
\begin{equation}\label{eq:discretegra5}
D_{q_j} \cH^m(q, p)= {1\over m}  \nabla_x \tilde \cH(M^q, M^{mp})|_{\Omega_j}^m, \quad  D_{p_j} \cH^m(q, p)=   \nabla_b \tilde \cH(M^q, M^{mp})|_{\Omega_j^m}.
\end{equation} 
Note that the fact that the coefficient in front of $\nabla_b \tilde \cH(M^q, M^{mp})$ is not divided by $m$ is not a misprint. However, we have 
\begin{equation}\label{eq:discretegra6}
D_{q_j} \cH^m\Bigl(q, D_q U^{(m)}(t, q)\Bigr)={1 \over m} \nabla_x \tilde \cH\Bigl(M^q, \nabla \tilde \cU(t, M^q)\Bigr){ |_{\Omega_j^m}}, 
\end{equation} 
and so, 
\begin{equation}\label{eq:discretegra7}
{1 \over m} \nabla_x  \tilde \cH\Bigl(M^q, \nabla \tilde \cU(t, M^q)\Bigr)= \sum_{j=1}^m D_{q_j} \cH^m\Bigl(q, D_q U^{(m)}(t, q)\Bigr) \chi_{\Omega_j^m}. 
\end{equation} 

For any natural number $m$ denote as $(\Sigma^m_1, \Sigma^m_2): \R \times \M^{2m} \rightarrow \M^{2m}$ the Hamiltonian flow for $\cH^m$. 
For $x\in \bH$ such that $\sharp(x)=\mu^{(m)}_q$ (i.e. $x=M^q$), 
we consider the spatially discretized flows 
\begin{equation}\label{eq:define-xi-eta}
\xi^m_i(s,q):=\tilde\xi_s[x]|_{\Om_i^m}, \quad \eta^m_i(s,q)={1\over m}\tilde\eta_s[x]|_{\Om_i^m}.
\end{equation}
Using the notation $(\xi^m, \eta^m)=(\xi^m_1, \cdots, \xi^m_m, \eta^m_1, \cdots,  \eta^m_m)$, these flows are uniquely defined to satisfy 
\begin{equation}\label{eq:app_Hamiltonian_sys-discrete} 
\left\{
\begin{array}{ll}
\dot \xi^m_i(s, q)&= \hfill D_{p_i}  \cH^m \big(\xi^m_i(s, q),  \eta^m_i(s, q) \big),\quad \;\, \mbox{for}\,\,(s, q) \in (0,\infty)\times \M^m,\\
\dot \eta^m_i(s,q)& =- D_{q_i}  \cH^m  \big(\xi^m_i(s, q),  \eta^m_i(s, q) \big),\;\;\; \mbox{for} \; (s, q) \in (0,\infty)\times \M^m,\\
\big(\xi^m(0, q),\eta^m(0, q) \big)&=  \;\;\; \big(q, D_q U_0^{(m)}(q)\big) ,\qquad \qquad \quad \, \;\, \mbox{for}\; q \in \M^m.
\end{array}
\right.
\end{equation}

%
%
%
%
%
\vskip0.40cm
\subsection{Direct arguments for $C^{1,1}_{\rm loc}$--regularity in Hilbert setting} \label{subsec:standardresults} Throughout this subsection, we impose \eqref{ass:F-U0-C11new1}-\eqref{ass:convexity-on-tildeL}. We rely on the theory of existence of solutions to Hamilton--Jacobi equations on Hilbert spaces  developed in  \cite{CrandallLions} and \cite{CL2}. The function $\tilde U$ defined in \eqref{eq:defineUtilde} is the unique viscosity solution to 
\begin{equation}\label{eq:Hamilton-JacobiHil}
\left\{
\begin{array}{ll}
&
\partial_t \tilde \cU+\tilde \cH\bigl(x, \nabla \tilde \cU\bigr)=0,\quad\mbox{in}\,\,(0,\infty)\times \bH,
\\
&
\tilde  \cU(0,\cdot)=\tilde \cU_0\qquad \qquad \quad \;\; \mbox{on}\,\,\bH.
\end{array}
\right.
\end{equation}

In this subsection, basic analytical tools are used to verify that $\tilde \cU$ is of class $C^{1,1}_{\rm loc}.$ We refer the reader to \cite{GomesN2014} for instance for the proof of the following proposition.
\begin{proposition}\label{prop:semi-convex-time_Lip1}  There exists $e_1 \in C(\R_+, \R_+)$ monotone nondecreasing  such that the following hold for $T>0$, and $r>0$.
\begin{enumerate}
\item[(i)] $\tilde \cU$ is $e_1\bigl(r(T+1) \bigr)$--Lipschitz on $[0, T] \times \bB_r(0).$ 
\item[(ii)] $\tilde \cU(t, \cdot)$ is $e_1\bigl(r(t+1) \bigr)$--semiconcave on $\bB_r(0)$ for $t \in [0, T].$ 
\end{enumerate}
\end{proposition}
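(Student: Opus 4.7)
The proof plan follows the classical optimal control route, adapted to the Hilbert space setting, with constants tracked through the bounds already established in Remark \ref{rem:new-bound-onHL}.

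\textbf{Step 1 (A priori bounds on minimizers).} First I would establish that there is a minimizer $S^*$ for $\tilde\cU(t,y)$ and that its total energy can be controlled in terms of $r$ and $T$ when $y\in\bB_r(0)$. The upper bound on $\tilde\cU(t,y)$ comes from testing \eqref{eq:defineUtilde} against the constant path $S\equiv y$ and the initial value $z=y$, which yields
\[
\tilde\cU(t,y)\le \tilde\cU_0(y)+t\,\tilde\cL(y,0)\le C_1\bigl(1+r^2\bigr)(1+T),
\]
using Remark \ref{rem:new-bound-onHL}(i) together with $\tilde\cF\ge 0$ and \eqref{ass:LipschitzUandF}. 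Combined with the coercivity bound $\tilde\cA_0^t(S)\ge \lambda_1\int_0^t\|\dot S\|^2\,d\tau+\lambda_0 t+m_*$ in Remark \ref{rem:new-bound-onHL}(ii), and the lower bound $\tilde\cU_0\ge m_*$, any minimizing $S^*$ satisfies $\int_0^t\|\dot S^*\|^2\,d\tau\le K_1(r,T)$ and $\|S^*(0)\|\le K_2(r,T)$, from which, by Cauchy-Schwarz, $\sup_{\tau\in[0,t]}\|S^*(\tau)\|\le K_3(r,T)$ for a continuous increasing function of $r(T+1)$.

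\textbf{Step 2 (Spatial Lipschitz bound).} For $y_1,y_2\in\bB_r(0)$, let $S^*$ be optimal for $\tilde\cU(t,y_1)$, and define the competitor $S(\tau):=S^*(\tau)+(\tau/t)(y_2-y_1)$, which satisfies $S(0)=S^*(0)$ and $S(t)=y_2$. Then
\[
\tilde\cU(t,y_2)-\tilde\cU(t,y_1)\le \int_0^t\!\!\Bigl(\tilde\cL(S,\dot S)-\tilde\cL(S^*,\dot S^*)\Bigr)d\tau.
\]
Since $DL$ and $\nabla\tilde\cF$ are $\kappa_0$-Lipschitz and $S,S^*,\dot S,\dot S^*$ remain in balls of radius $K(r,T)$ by Step 1, the pointwise integrand is controlled by a constant multiple of $\|S-S^*\|+\|\dot S-\dot S^*\|$ times $(1+\|(S^*,\dot S^*)\|+\|(S,\dot S)\|)$. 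Integrating and using $\|S-S^*\|_{L^\infty}\le\|y_2-y_1\|$, $\|\dot S-\dot S^*\|_{L^2}\le t^{-1/2}\|y_2-y_1\|$, one obtains a Lipschitz estimate with constant of the form $e_1\bigl(r(T+1)\bigr)$ after symmetrization.

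\textbf{Step 3 (Time Lipschitz bound).} For $0\le s<t\le T$ and $y\in \bB_r(0)$, the dynamic programming principle gives $\tilde\cU(t,y)=\tilde\cU(s,S^*(s))+\int_s^t\tilde\cL(S^*,\dot S^*)\,d\tau$ for $S^*$ optimal at $(t,y)$. Writing
\[
\tilde\cU(t,y)-\tilde\cU(s,y)=\bigl[\tilde\cU(s,S^*(s))-\tilde\cU(s,y)\bigr]+\int_s^t\tilde\cL(S^*,\dot S^*)\,d\tau,
\]
I use Step 2 and $\|S^*(s)-y\|\le (t-s)^{1/2}\bigl(\int_s^t\|\dot S^*\|^2\bigr)^{1/2}\le K_1(r,T)^{1/2}(t-s)^{1/2}$ for the first bracket, and the bound on $|\tilde\cL|$ along bounded trajectories for the integral. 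The symmetric direction uses the constant trajectory $S\equiv y$ on $[s,t]$ prolonging an optimizer for $\tilde\cU(s,y)$. Together these yield Lipschitz dependence in $t$ with a constant of the claimed form, completing (i).

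\textbf{Step 4 (Semiconcavity).} For $y_1,y_2\in\bB_r(0)$ and $y:=(y_1+y_2)/2$, pick an optimizer $S^*$ for $\tilde\cU(t,y)$ and set $S_i(\tau):=S^*(\tau)+(\tau/t)(y_i-y)$, so $S_i(0)=S^*(0)$, $S_i(t)=y_i$. Then
\[
\tilde\cU(t,y_1)+\tilde\cU(t,y_2)-2\tilde\cU(t,y)\le\int_0^t\!\!\Bigl(\tilde\cL(S_1,\dot S_1)+\tilde\cL(S_2,\dot S_2)-2\tilde\cL(S^*,\dot S^*)\Bigr)d\tau,
\]
where the $\tilde\cU_0$ contributions cancel exactly. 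Because $DL$ and $\nabla\tilde\cF$ are $\kappa_0$-Lipschitz, $\tilde\cL$ is $C^{1,1}_{\mathrm{loc}}(\bH^2)$ and hence semiconcave on bounded sets. Since $\|(S_i,\dot S_i)\|$ remain in balls of radius $K(r,t)$ and $(S_1+S_2)/2=S^*$, $(\dot S_1+\dot S_2)/2=\dot S^*$, the integrand is bounded by a constant times $\|y_1-y_2\|^2/t$, giving the $e_1\bigl(r(t+1)\bigr)$-semiconcavity after integrating over $[0,t]$.

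\textbf{Main obstacle.} The only delicate point is the careful bookkeeping in Step 1: all later estimates rest on the a priori bound $K(r,T)$ on minimizing trajectories, and one must verify that this bound depends only on $r(T+1)$ through a single continuous nondecreasing function, so that $e_1$ as a single function works uniformly in (i) and (ii). The existence of minimizers itself uses the precompactness of sublevel sets of $\tilde\cA_0^t$ in $AC_2(0,t;\bH_y)$ noted in Remark \ref{rem:new-bound-onHL}(ii), together with lower semicontinuity from the convexity of $\tilde\cL$ in the velocity variable granted by \eqref{ass:convexity-on-tildeL}.
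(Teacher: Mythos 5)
The paper does not reproduce a proof but refers to \cite{GomesN2014}, so your argument must stand on its own. The overall optimal-control route is the right one, and Step 1 and Step 3 are essentially sound. However, there is a genuine gap in Steps 2 and 4: the time-proportional competitor $S(\tau)=S^*(\tau)+(\tau/t)(y_2-y_1)$ produces a velocity perturbation $\dot S-\dot S^*=(y_2-y_1)/t$ whose norm scales like $1/t$. As a consequence the Lipschitz bound you extract in Step~2 contains a factor like $\|y_2-y_1\|/\sqrt{t}$ (from the term $\int_0^t\|\dot S^*\|\,\|\dot S-\dot S^*\|\,d\tau$) and additionally a term of order $\|y_2-y_1\|^2/t$, and the semiconcavity bound in Step~4 is $C\|y_1-y_2\|^2(t+1/t)$ after integration, not $C\|y_1-y_2\|^2$. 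Both blow up as $t\downarrow 0$, which is inconsistent with the claimed constants $e_1(r(T+1))$ and $e_1(r(t+1))$: since $e_1$ is continuous and nondecreasing, these are finite at $t=0$, consistent with $\tilde\cU(0,\cdot)=\tilde\cU_0\in C^{1,1}(\bH)$ being Lipschitz and semiconcave with finite constants. Relatedly, in both steps you assert that $\dot S,\dot S^*$ ``remain in balls of radius $K(r,T)$'', but Step~1 only yields an $L^2$ bound $\int_0^t\|\dot S^*\|^2\le K_1(r,T)$; no pointwise bound on the velocity is available at this stage (such a bound is part of Proposition~\ref{theorem:hilbert-smooth}(v), which comes \emph{after} this statement, so using it here would be circular).

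The standard fix, which should replace your Steps 2 and 4, is to use a constant-in-time perturbation $S(\tau):=S^*(\tau)+(y_2-y_1)$. Then $\dot S=\dot S^*$, so no $1/t$ factor appears, and the burden shifts onto the regularity of $\tilde\cU_0$, which is exactly what \eqref{ass:F-U0-C11new1} provides. For (i): $\tilde\cU(t,y_2)-\tilde\cU(t,y_1)\le[\tilde\cU_0(S(0))-\tilde\cU_0(S^*(0))]+\int_0^t\bigl(\tilde\cL(S,\dot S^*)-\tilde\cL(S^*,\dot S^*)\bigr)d\tau$; the first bracket is $O(\|y_2-y_1\|)$ by local Lipschitz continuity of $\tilde\cU_0$, and the integrand involves only $\nabla_x\tilde\cL$, which by Remark~\ref{rem:new-bound-onHL}(iii) has linear growth, so the integral is $\le C(r,T)\|y_2-y_1\|$ after using $\int_0^t\|\dot S^*\|\le t^{1/2}K_1^{1/2}$. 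For (ii): take $S_i(\tau):=S^*(\tau)+(y_i-y)$ so that $(S_1+S_2)/2=S^*$ and $\dot S_i=\dot S^*$; the $\tilde\cU_0$ terms contribute $\tilde\cU_0(S_1(0))+\tilde\cU_0(S_2(0))-2\tilde\cU_0(S^*(0))\le\kappa_0\|y_1-y_2\|^2/4$ by semiconcavity of $\tilde\cU_0$ (granted by $\tilde\cU_0\in C^{1,1}$), while the integral term is $\le C(r,T)\,t\,\|y_1-y_2\|^2$ by the $C^{1,1}_{\rm loc}$ property of $\tilde\cL$. Both pieces are bounded uniformly as $t\downarrow 0$, giving the claimed $e_1(r(t+1))$-semiconcavity.
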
 
%

\begin{proposition}\label{prop:semi-convex1} There is an increasing function  $e_1 \in C(\R_+, \R_+)$ such that if $t>0$ then     
\begin{enumerate}
\item[(i)] $\tilde  \cU(t, \cdot)$ is rearrangement invariant.
\item[(ii)] $\tilde  \cU(t, \cdot)$ is convex and so, it is differentiable and $\nabla \tilde  \cU(t, \cdot)$ is $e_1\bigl(r(t+1) \bigr)$--Lipschitz on $ \bB_r(0).$
\end{enumerate}
\end{proposition}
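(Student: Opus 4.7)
My plan is to handle the three claims in turn, using only the assumptions \eqref{ass:F-U0-C11new1}--\eqref{ass:convexity-on-tildeL} and the semiconcavity/Lipschitz properties already stated in Proposition \ref{prop:semi-convex-time_Lip1}.

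For (i), I would exploit the invariance of the action under the group of measure-preserving bijections of $\Omega$. The functional $\tilde \cF$ and the initial datum $\tilde \cU_0$ are rearrangement invariant by \eqref{ass:F-U0-C11new1}, and the integral $\int_\Omega L(S(\tau,\omega),\dot S(\tau,\omega))\,d\omega$ is manifestly invariant under the substitution $\omega\mapsto\sigma(\omega)$ whenever $\sigma$ preserves $\cL^d_\Omega$, by a change of variables. Consequently, if $\sigma$ is such a bijection and $S\in AC_2(0,t;\bH)$ with $S(0)=z$, $S(t)=y$, then $S\circ\sigma\in AC_2(0,t;\bH)$ realizes the same action and connects $z\circ\sigma$ to $y\circ\sigma$. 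It follows that $\tilde C_0^t(z,y)=\tilde C_0^t(z\circ\sigma,y\circ\sigma)$, and since $z\mapsto z\circ\sigma$ is a bijection of $\bH$ leaving $\tilde \cU_0$ invariant, the infimum defining $\tilde\cU(t,y)$ is unchanged under $y\mapsto y\circ\sigma$. Finally, $(\Omega,\cL^d_\Omega)$ being nonatomic, any two elements of $\bH$ with the same law are related by such a measure-preserving bijection (up to null sets), and this transfers the invariance under the group action to rearrangement invariance in the law.

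For the convexity assertion in (ii), the plan is to establish joint convexity of $(z,y)\mapsto \tilde C_0^t(z,y)$ and then take the infimum in $z$. Given $(z_i,y_i)$ and minimizing (or near-minimizing) curves $S_i\in AC_2(0,t;\bH)$ with $S_i(0)=z_i$, $S_i(t)=y_i$, the interpolated curve $S_\lambda:=(1-\lambda)S_0+\lambda S_1$ satisfies $S_\lambda(0)=(1-\lambda)z_0+\lambda z_1$ and $S_\lambda(t)=(1-\lambda)y_0+\lambda y_1$. By the joint convexity of $\tilde\cL$ in \eqref{ass:convexity-on-tildeL}, $\tilde\cA_0^t(S_\lambda)\le(1-\lambda)\tilde\cA_0^t(S_0)+\lambda\tilde\cA_0^t(S_1)$, giving joint convexity of $\tilde C_0^t$. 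Adding the convex functional $\tilde\cU_0(z)$ (via \eqref{ass:F-U0-C11new2}) and taking the infimum over $z$ preserves convexity in $y$, so $\tilde\cU(t,\cdot)$ is convex.

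For the $C^{1,1}_{\rm loc}$ assertion, I combine convexity with the semiconcavity estimate from Proposition \ref{prop:semi-convex-time_Lip1}(ii). Set $c:=e_1(r(t+1))$. At any $x\in\bB_r(0)$, the superdifferential of $\tilde\cU(t,\cdot)$ is nonempty by semiconcavity and the subdifferential is nonempty by convexity; the two-sided inequality
\[
0 \;\le\; \tilde\cU(t,x+h)-\tilde\cU(t,x)-\langle p,h\rangle \;\le\; \tfrac{c}{2}\|h\|^2
\]
for any $p$ in either the sub- or super-differential forces these to coincide in a single element, which is then the Fréchet derivative $\nabla\tilde\cU(t,x)$ and satisfies the quadratic remainder estimate. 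To upgrade this to a Lipschitz bound on $\nabla\tilde\cU(t,\cdot)$, I apply the semiconcavity inequality at $x_1$ to bound $\tilde\cU(t,y)$ from above, and convexity at $x_2$ to bound it from below, obtaining
\[
\langle p_2-p_1,y\rangle \;\le\; C+\tfrac{c}{2}\|y-x_1\|^2 \quad \forall\,y\in\bH,
\]
where $p_i:=\nabla\tilde\cU(t,x_i)$ and $C$ is independent of $y$. Substituting $y=x_1+\lambda(p_2-p_1)$ and optimizing in $\lambda$ produces $\|p_2-p_1\|^2\le 2c\bigl(C-\langle p_2-p_1,x_1\rangle\bigr)$, and estimating the right-hand side via convexity at $x_1$ yields $\|p_2-p_1\|\le 2c\|x_2-x_1\|$. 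Redefining $e_1$ to absorb the factor $2$ gives the claimed Lipschitz constant on $\bB_r(0)$.

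The main obstacle I anticipate is not any single estimate but rather the careful bookkeeping in the last step: passing from the pointwise quadratic estimate (which is really a weak form of $C^{1,1}$ at a single base point) to a uniform two-point Lipschitz bound on $\nabla\tilde\cU(t,\cdot)$, while keeping constants depending only on $r$ and $t+1$ in a monotone way that can be encoded into a single function $e_1$. Care is also needed to confirm that the elements of $\partial\tilde\cU(t,x)$ and $\partial^+\tilde\cU(t,x)$ furnished respectively by convexity and semiconcavity do indeed coincide as Fréchet gradients rather than merely Gateaux ones, which is why I rely on the quantitative two-sided inequality rather than a soft argument.
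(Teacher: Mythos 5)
Your treatment of (ii) is sound and follows essentially the same route as the paper: convexity of $\tilde\cA_0^t$ from \eqref{ass:convexity-on-tildeL}, convexity of $\tilde\cU_0$ from \eqref{ass:F-U0-C11new2}, then infimum over the initial point preserves convexity, and convexity combined with the semiconcavity estimate of Proposition \ref{prop:semi-convex-time_Lip1}(ii) gives the $C^{1,1}_{\rm loc}$ Lipschitz bound on the gradient. Your two-point argument for the Lipschitz estimate is carried out correctly; the paper simply states this as well-known, while you spell it out.

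There is, however, a genuine gap in (i). You assert that ``$(\Omega,\cL^d_\Omega)$ being nonatomic, any two elements of $\bH$ with the same law are related by such a measure-preserving bijection (up to null sets).'' This is false. Take $d=1$, $\Omega=(0,1)$, $x(\omega)=\omega$ and $y(\omega)=2\omega \pmod 1$; both push $\cL^1_\Omega$ to the uniform law, yet any $\sigma$ with $y=x\circ\sigma$ a.e.\ would have to equal $y$ itself, which is essentially $2$-to-$1$ and therefore not a bijection. The correct statement (used explicitly in the paper in the proof of Lemma \ref{lem:unique-path}, citing \cite{GangboT2017}) is an approximation: if $\sharp(x)=\sharp(y)$ then there exist Borel measure-preserving bijections $S_n$ of $\Omega$ with $\|y-x\circ S_n\|\to 0$. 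Your change-of-variables computation proves invariance of $\tilde C_0^t$ and hence of $\tilde\cU(t,\cdot)$ under the group of measure-preserving bijections, but to pass from invariance under that group action to genuine rearrangement invariance (invariance under equality of laws) you must combine this with the density of the orbit and the continuity of $\tilde\cU(t,\cdot)$, which you have at hand from Proposition \ref{prop:semi-convex-time_Lip1}(i). This is exactly why the paper's one-line proof of (i) inserts the phrase ``Since $\tilde\cL$ is further continuous''; as written your argument skips the approximation step and relies on a false equivalence.
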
 
\proof{} (i) The invariance property imposed on $\tilde \cU_0$ and $\tilde \cF$ implies $\tilde \cL$ satisfies the invariance property 
\[
\tilde \cL(x, a)= \tilde \cL(x \circ E, a \circ E)
\]
for $x, a \in \bH$, $E:\Om\to\Om$ such that $E$ preserves Lebesgue measure. Since $\tilde \cL$ is further continuous, we conclude that $\tilde  \cU(t, \cdot)$ is rearrangement invariant for $t\geq 0$ (cf. \cite{GangboT2017}).

(ii) The convexity of $A_0^t$ on $AC_2(0, t; \bH)$ {and \eqref{ass:F-U0-C11new2}} yields the convexity of $\tilde  \cU(t, \cdot)$ on $\bH.$ This, together with Proposition \ref{prop:semi-convex-time_Lip1} (ii) completes the proof.\endproof
\begin{remark}\label{rem:discrete-cont}
Let $q \in \M^m.$  Note $\sigma \mapsto \int_0^t \cL^m(\sigma, \dot \sigma)d\tau+  U_0^{(m)}( \sigma(0))$ is strictly convex on $AC_2\bigl(0, t; q; \mathbb R^{md}\bigr),$  the set of paths $\sigma \in AC_2\bigl(0, t; \mathbb R^{md}\bigr),$ such that $\sigma(t)=q.$ Since $\cL^m$ is of class $C^2$ and satisfies the assumptions in Subsection \ref{subsect:assumptions},  standard  results of the calculus of variations ensure that $\int_0^t \cL^m(\sigma, \dot \sigma)d\tau+   U^{(m)}_0(\sigma(0))$ admits a unique minimizer $\sigma^m$ on $AC_2\bigl(0, t; q; \M^m\bigr).$ The minimizer is completely characterized by the Euler--Lagrange equations 
\begin{equation}\label{eq:discretegra1c}
{d \over d\tau} \Bigl(D_v  \cL^m(\sigma^m, \dot \sigma^m )  \Bigr) = D_q  \cL^m(\sigma^m, \dot \sigma^m ), \quad \sigma^m(t)=q, \quad D_{q} U^{(m)}_0(\sigma^m(0))= D_q  \cL^m(\sigma^m(0), \dot \sigma^m(0) ).
\end{equation} 
Define 
\[
U^m(t, q):= \int_0^t \cL^m(\sigma^m, \dot \sigma^m) d\tau+  U_0^{(m)}( \sigma^m(0)).
\]
Then  it is well--known that  $U^m$ is the unique continuous viscosity solution to 
\begin{equation}\label{eq:discrte-visc1}
\partial_t  U^m + \cH^m\bigl(q, D_q    U^m \bigr)=0, \qquad \text{on} \quad (0,\infty) \times \M^{m}, \qquad U^m(0, \cdot)=U_0^{(m)}.
\end{equation}

Setting $S:=M^{\sigma^m }$, we have $\dot S=M^{\dot \sigma^m }.$ We use \eqref{eq:discretegra1b} at $t=0$, then use \eqref{eq:discretegra2new} and \eqref{eq:discretegra1c} to obtain  
\[
{d \over d\tau} \Bigl(\nabla_{a}  \tilde \cL(S, \dot S )  \Bigr) = \nabla_x  \tilde \cL(S, \dot S ), \quad \nabla \tilde \cU_0(S(0))= \nabla_a  \tilde \cL(S(0), \dot S(0) ).
\]
This means $S$ is a critical point of $A_0^t$ over $AC_2(0, t; \bH_y)$ if we set $y:=M^q.$ Since $A_0^t$ is convex over $AC_2(0, t; \bH_y)$, we conclude that $S$ is a minimizer of $A_0^t$ over $AC_2(0, t; \bH_y).$ Thus,
\begin{equation}\label{eq:crucial}
 U^m(t, q)= A_0^t(S)= \tilde \cU(t, M^q)=  U^{(m)}(t, q).
\end{equation}
Consequently, $U^{(m)}$ is the unique viscosity solution to \eqref{eq:discrte-visc1}. We emphasize that the observation \eqref{eq:crucial} is crucial in our consideration and in fact represents the heart of our analysis. This is a feature of the deterministic setting and so, this approach might not be applicable to stochastic Hamiltonian systems.
\end{remark}
%

%
%

The proof of the following proposition will be provided in the Appendix \ref{subsec:theorem:hilbert-smooth2}.

\begin{proposition}\label{theorem:hilbert-smooth}  There exists $e_0:[0,\infty) \rightarrow [0,\infty)$, monotone non--decreasing  such that the following hold.       
\begin{enumerate}
\item[(i)] If $0\leq t_1<t_2 \leq T$ then  
\[
\tilde \cU(t_2, y)- \tilde \cU(t_1, y)= - \int_{t_1}^{t_2} \tilde \cH\bigl(y, \nabla \tilde \cU(\tau, y) \bigr) d\tau \qquad \forall y \in \bH.
\]
\item[(ii)] $\tilde \cU$ is continuously differentiable on $(0,\infty) \times \bH$ and $\partial_t \tilde \cU,$ $\nabla \tilde \cU$ are Lipschitz on  $[0,T] \times \bB_r(0)$. 
\item[(iii)] For any $y \in \bH,$ there exists a unique $S \in AC_2(0,t; \bH_y)$ such that $\tilde \cU(t, y)= \tilde \cA_0^t(S)+\tilde \cU_0(S(0)).$   
\item[(iv)]  Let $S$ be as in (iii) and set $P:= \nabla_a \tilde \cL(S, \dot S)$. Then $S, P \in C^2([0, t]; \bH)$,    
\begin{equation}\label{eq:Euler-Lagrange-L}
\dot S=\nabla_b \tilde \cH(S, P), \quad \dot P= \nabla_x \tilde \cL(S, \dot S)=-\nabla_x \tilde \cH(S, P), \quad  \nabla \tilde \cU(\cdot, S)= \nabla_{a} \tilde \cL(S, \dot S) \quad \text{on} \;\; [0,t].
\end{equation} 
In particular,  
\begin{equation}\label{eq:Euler-Lagrange-L-bdry}
\nabla \tilde \cU_0(S(0))= \nabla_a \tilde \cL(S(0), \dot S(0)). 
\end{equation} 
\item[(v)] We have 
\[
\tilde C_0^t(S(0), y), \;  \|\dot S(\tau)\| \leq  e_0\bigl((t+1)\|y\|\bigr), \quad \|S(\tau)\| \leq \|y\|+te_0\bigl((t+1)\|y\|\bigr) \qquad \forall \tau \in [0,t].
\] 

\end{enumerate}
\end{proposition}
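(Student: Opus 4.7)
The plan is to prove the five items in the order (iii), (iv), (v), then derive the gradient identification $\nabla\tilde\cU(t,y)=P(t)$ and use it to obtain (i)--(ii).

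First, for part (iii), I would run the direct method of the calculus of variations on the set $AC_2(0,t;\bH_y)$. By Remark \ref{rem:new-bound-onHL}(ii) and the lower bound $\tilde\cU_0\geq m_*$, a minimizing sequence $\{S_n\}$ of $S\mapsto\tilde\cA_0^t(S)+\tilde\cU_0(S(0))$ satisfies a uniform bound on $\|\dot S_n\|_{L^2(0,t;\bH)}$ and, since $S_n(t)=y$ is fixed, a uniform bound on $\sup_\tau \|S_n(\tau)\|$. Extract a subsequence with $\dot S_n \weakly \dot S$ weakly in $L^2(0,t;\bH)$ and $S_n(\tau) \to S(\tau)$ in $\bH$ for every $\tau$ (in fact uniformly, by equicontinuity); since $\tilde \cL$ is convex in $a$ and $\tilde \cU_0$ is continuous, lower semicontinuity of $\tilde\cA_0^t+\tilde\cU_0\circ(\cdot)(0)$ yields existence. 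Strict convexity from \eqref{ass:convexity-on-tildeL} and \eqref{ass:F-U0-C11new2} gives uniqueness.

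Next, for (iv), from the minimality of $S$ one derives the Euler--Lagrange equation $\frac{d}{d\tau}\nabla_a\tilde\cL(S,\dot S)=\nabla_x\tilde\cL(S,\dot S)$ together with the transversality condition \eqref{eq:Euler-Lagrange-L-bdry}, using admissible variations $S+\varepsilon\varphi$ with $\varphi\in C^1([0,t];\bH)$, $\varphi(t)=0$. Setting $P:=\nabla_a\tilde\cL(S,\dot S)$ and applying Legendre duality (since $L(q,\cdot)$ and $H(q,\cdot)$ are conjugate in the second argument, and $\tilde \cF$ is common to both $\tilde \cL$ and $-\tilde \cH$) gives $\dot S=\nabla_b\tilde\cH(S,P)$ and $\dot P=-\nabla_x\tilde\cH(S,P)$, so that $(S,P)$ is a trajectory of the Hamiltonian system with Lipschitz right-hand side (by \eqref{ass:LipschitzH} and \eqref{ass:LipschitzUandF}); this immediately yields $(S,P)\in C^1$, and the identities then upgrade to the stated $C^2$ regularity using $H\in C^{N+1}$ and the chain rule. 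For (v), use the constant curve $\tilde S\equiv y$ as competitor to obtain $\tilde C_0^t(y,y)+\tilde\cU_0(y)\leq t\big(\int_\Omega L(y,0)d\omega+\tilde\cF(y)\big)+\tilde\cU_0(y)$, which combined with the coercivity in Remark \ref{rem:new-bound-onHL}(ii) controls $\int_0^t\|\dot S\|^2d\tau$, and then $\|S(0)\|$ by $\|y\|+\sqrt{t}\,\|\dot S\|_{L^2}$; feeding this as initial datum into the bound \eqref{eq:Hamiltonian-Flow3} for $\Sigma$ (applied to the reversed Hamiltonian system starting at $(S(0),P(0))$) produces the pointwise bounds $\|(S,P)(\tau)\|\leq e_0((t+1)\|y\|)$, and then $\|\dot S(\tau)\|\leq e_0((t+1)\|y\|)$ via $\dot S=\nabla_b\tilde \cH(S,P)$ and the Lipschitz control on $\nabla_b\tilde\cH$.

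Finally, I would identify $\nabla\tilde\cU(t,y)=P(t)$ and from there deduce (i) and (ii). By Proposition \ref{prop:semi-convex1}(ii), $\tilde\cU(t,\cdot)$ is convex and differentiable. Comparing optimal values at $y$ and $y+\varepsilon h$ via the admissible curve $\tilde S(\tau):= S(\tau)+(\tau/t)\varepsilon h$ gives $\tilde\cU(t,y+\varepsilon h)\leq \tilde\cA_0^t(\tilde S)+\tilde\cU_0(\tilde S(0))$; Taylor expanding $\tilde\cL$ and $\tilde\cU_0$, integrating by parts using the Euler--Lagrange equations of (iv), and using the transversality condition leads to $\langle\nabla\tilde\cU(t,y),h\rangle=\langle P(t),h\rangle$ in the limit $\varepsilon\to0^+$. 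Because $\tilde\cU$ is the viscosity solution of \eqref{eq:Hamilton-JacobiHil} (from \cite{CrandallLions,CL2}) and $\tilde\cU(t,\cdot)$ is differentiable, the HJ equation is satisfied classically in the $x$-variable at every $(t,y)$. For the time variable, the dynamic programming principle combined with the competitor $\sigma\equiv y$ gives $\tilde\cU(t_2,y)-\tilde\cU(t_1,y)\leq (t_2-t_1)\tilde\cL(y,0)$, while using the optimal $S$ of the $(t_2,y)$-problem restricted to $[t_1,t_2]$ and inserting the Legendre duality $\tilde\cL(y,\dot S(t))-\langle P(t),\dot S(t)\rangle=-\tilde\cH(y,P(t))=-\tilde\cH(y,\nabla\tilde\cU(t,y))$ yields the matching lower bound. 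Integrating these one-sided inequalities and using continuity of $\tau\mapsto\tilde\cH(y,\nabla\tilde\cU(\tau,y))$ (from Proposition \ref{prop:semi-convex1}(ii) applied at every $\tau$) gives (i); continuous differentiability in $(t,x)$ and the local Lipschitz bounds follow by combining (i) with the local Lipschitz estimate for $\nabla\tilde\cU(t,\cdot)$ from Proposition \ref{prop:semi-convex1}(ii). The main obstacle is the identification $\nabla\tilde\cU(t,y)=P(t)$ in the Hilbert setting: the limit $\varepsilon\to0^+$ in the difference quotient must be justified uniformly using the $C^{1,1}_{\rm loc}$ estimates of the previous propositions and the explicit bounds on $S,P$ in (v), because the standard finite-dimensional inner-variation argument does not give closed compact sublevel sets for free.
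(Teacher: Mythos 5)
Your route is genuinely different from the paper's. The paper proves (i) by discretization: by Remark~\ref{rem:discrete-cont}, the restriction $U^{(m)}$ is the viscosity solution of the finite-dimensional equation~\eqref{eq:discrte-visc1}, where the fundamental-theorem-of-calculus identity is standard; the identity is then transferred to $\tilde\cU$ on $\{M^q\,:\,q\in\M^m\}$ via~\eqref{eq:discretegra1b} and extended by density and continuity of $\nabla\tilde\cU$. For (iii)--(v) the paper simply cites~\cite{GomesN2014}. You instead prove (iii)--(v) from scratch by the direct method plus Euler--Lagrange analysis, then identify $\nabla\tilde\cU(t,y)=P(t)$ and deduce (i)--(ii) from dynamic programming. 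What your approach buys is self-containedness and a hands-on handle on the optimal characteristics; what the paper's discretization buys is that the delicate infinite-dimensional compactness and variational issues are swept into the already-established finite-dimensional theory (which is also in keeping with the discretization philosophy that runs through the entire manuscript).

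There is, however, one genuine gap in your argument for (iii). You claim that, along a minimizing sequence $\{S_n\}$, one has $S_n(\tau)\to S(\tau)$ \emph{in norm} in $\bH$ for every $\tau$, ``in fact uniformly, by equicontinuity.'' This is false in an infinite-dimensional Hilbert space: equicontinuity plus boundedness does not give norm-relatively-compact trajectories, and Arzel\`a--Ascoli fails without pointwise (norm) relative compactness. Since your stated reasons for lower semicontinuity (``$\tilde\cL$ convex in $a$ and $\tilde\cU_0$ continuous'') presuppose strong convergence of $S_n$, they do not apply. The correct statement is that $\dot S_n\weakly\dot S$ weakly in $L^2(0,t;\bH)$ together with $S_n(t)=y$ gives $S_n(\tau)\weakly S(\tau)$ weakly for each $\tau$, and one must then invoke the \emph{joint} convexity of $\tilde\cL$ from~\eqref{ass:convexity-on-tildeL} and the convexity-plus-continuity of $\tilde\cU_0$ from~\eqref{ass:F-U0-C11new1}--\eqref{ass:F-U0-C11new2}: a convex, strongly lower semicontinuous functional is weakly lower semicontinuous, so the whole functional $S\mapsto\tilde\cA_0^t(S)+\tilde\cU_0(S(0))$ is weakly lsc on the weakly closed set $AC_2(0,t;\bH_y)$. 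With that repair (iii) is fine, and the remainder of your outline, including the Legendre-duality derivation of the Hamiltonian system in (iv), the coercivity estimate in (v) via the bound in Remark~\ref{rem:new-bound-onHL}(ii) and~\eqref{eq:Hamiltonian-Flow3}, and the sub/super-differential comparison identifying $\nabla\tilde\cU(t,y)=P(t)$, hangs together; the one-sided DPP inequalities for (i) do close, provided you expand $\tilde\cL$ at $(S(\tau),\dot S(\tau))$ rather than at $(y,\dot S(t))$ and take the $\delta\to0$ limit carefully, which you acknowledge.
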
 
%
%
%
%
%
\begin{remark}\label{rem:uniquestrict-c1}  (i) We denote the unique  $S$ which appears in Proposition \ref{theorem:hilbert-smooth} (iii) as 
\[
\tilde S_s^t[y](\omega):=S(s, \omega), \qquad 0\leq s \leq t, \quad \omega \in \Omega.
\]
It is uniquely characterized by the equation 
\begin{equation}\label{eq:characterizeS} 
\tilde \cU(t, y)=\int_0^t \tilde \cL\Big(\tilde S_s^t[y], \partial_s \tilde S_s^t[y]\Big) ds +\tilde \cU_0\big(\tilde S_0^t[y]\big), \qquad S_t^t[y]=y.
\end{equation}
Defining 
\[
\tilde P_s^t[y]= \nabla_a \tilde \cL\big( \tilde S_s^t[y], \partial_s \tilde S_s^t[y]\big),
\]
we have 
\begin{equation}\label{eq:app_Hamiltonian_sys_1} 
\left\{
\begin{array}{ll}
\partial_s \tilde S_s^t[y] &= \hfill \nabla_b \tilde \cH \big(\tilde S_s^t[y], \tilde P_s^t[y] \big),\quad \; \mbox{for}\,\, (s,y) \in (0,t)\times \bH,\\
\partial_s \tilde P_s^t[y]  &=- \nabla_x \tilde \cH  \big(\tilde S_s^t[y], \tilde P_s^t[y] \big),\;\mbox{for}\,\, (s, y) \in (0,t)\times \bH\\
\big( \tilde S_t^t[y],  \tilde P_0^t[y]\big) &= \;\;  \big(y, \nabla \tilde \cU_0(y)\big),\qquad \quad \,\mbox{for}\,\,y \in \bH.
\end{array}
\right.
\end{equation} 


(ii) For any natural number $m$ and $q\in \M^m$,  we have 
\begin{equation}\label{eq:relation-m-infty}  
\tilde S_s^t\big[M^q\big]=M^{\sigma_s^{t, m}[q]},
\end{equation}
where $(\sigma_s^{t,m}[q])_{s\in(0,t)}$ is the optimizer discussed in Remark \ref{rem:discrete-cont}. Let us emphasize only in the deterministic hamiltonian systems like ours, \eqref{eq:relation-m-infty} provides us with characteristics not only the viscosity solutions of the Hamilton-Jacobi equation on $\bH$ but also the one on $\M^m$.

(iii) When the conditions in Remark \ref{rem:uniquestrict-c1} are satisfied,  we define the vector field  
\begin{equation}\label{eq:define-velocity} 
B(t, \cdot):= \nabla_b \tilde \cH\big(\cdot,  \tilde \eta(t, \tilde S_0^t)   \big).
\end{equation}
which will turn out to be the velocity in Eulerian coordinates for the trajectory $\tilde\xi.$ 
\end{remark}

%
%
\section{Regularity estimates for HJEs and Hamiltonian systems for systems of $m$ particles.}\label{sec:app-reg} 
In this section, we assume  that (\ref{ass:Hamiltonian}) - (\ref{ass:UpperboundonL}) hold. Let $u_0\in C^N(\M)$ be a convex function with bounded second derivatives. Let $F\in C^N(\M)$ and $L$ be such that the corresponding Lagrangian action, as in \eqref{ass:convexity-on-tildeL}, is strictly convex.  We fix $T>0.$ We shall show that classical solutions to Hamilton-Jacobi equations set on $\M^m,$ possess higher derivative estimates that we precisely quantify in terms of $m$. As we will see in the next sections, when $m\to+\infty$, these estimates will provide the necessary regularity estimates on $\cU$, the solution to the corresponding Hamilton-Jacobi equation set on $\cP_2(\M).$

\vskip0.40cm
\subsection{One particle Hamiltonian flow} \label{subsec:one-d} 
We study the regularity of viscosity solutions $u:[0,T]\times\M\to \R$ of Cauchy problems of the form
\begin{equation}\label{eq:hj_appendix}
\left\{
\begin{array}{ll}
\partial_t u + H(q,\nabla u)-F(q)=0, & (0,T)\times\M,\\
u(0,\cdot)=u_0, & \M.
\end{array}
\right.
\end{equation}
%
%
Given  $t\in(0,T]$, we consider the Hamiltonian system 
\begin{equation}\label{eq:app_Hamiltonian_sys_1new2}
\left\{
\begin{array}{ll}
\dot S(s,q)=D_p H(S(s,q),P(s,q)), & s\in(0,t), \; q\in \M,\\[5pt]
\dot P(s,q)=-D_q H(S(s,q),P(s,q))+D_q F(Q(s,q)), & s\in(0,t), \; q\in \M,\\[5pt]
S(t,q)=q,\ P(0,q)=D u_0(S(0,q)),  \; q\in \M.
\end{array}
\right.
\end{equation} 
Such a flow has been considered in a greater generality in Remark \ref{rem:uniquestrict-c1}.  
Recall $S$ is the unique optimizer in 
\begin{align}\label{prob:action-min}
u(t,x):=\inf\left\{ u_0(\g(0))+\int_0^{t} L(\g(s),\dot\g(s))+F(\g(s))\dd s:\ \g(t)=x \right\}.
\end{align}
Similarly, we shall use the flow 
\begin{equation}\label{eq:app_Hamiltonian_sys}
\left\{
\begin{array}{ll}
\dot\xi(s,z)=D_p H(\xi(s,z),\eta(s,z)), & s\in(0,t), \; z\in\M\\[5pt]
\dot \eta(s,z)=-D_q H(\xi(s,z),\eta(s,z))+\nabla_q F(\xi(s,z)), & s\in(0,t),  \; z\in\M\\[5pt]
\xi(0,z)=z,\ \eta(0,z)=D u_0(z), \qquad  \; z\in\M
\end{array}
\right.
\end{equation}
denoted as $(\tilde\xi, \tilde\eta)$ in \eqref{eq:Hamiltonian-Flow} when our Hilbert space reduces to $\M.$
\begin{lemma}\label{lem:app_flow-regularity} Let  $t\in[0,T]$. 
\begin{itemize}
\item[(1)] The map $\xi_t: \M \rightarrow \M$ is a homeomorphism $S_s:=\xi_s \circ\xi^{-1}_t$ and $P_s:=\eta_s\circ \xi^{-1}_t.$ 
We have $\xi_t,\eta_t \in C^{N-1}(\M).$ 
\item[(2)] If we further assume $N\ge 2,$ then $u\in C^{1,1}_{\rm{loc}}([0,T]\times\M)$ is classical solution to \eqref{eq:hj_appendix} and $z\mapsto\xi(t,z)$ is a $C^{N-1}$ diffeomorphism from $\M$ onto itself. 
\end{itemize}
\end{lemma}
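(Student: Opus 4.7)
My strategy is to adapt the arguments of Subsection~\ref{subsec:standardresults} to the finite-dimensional setting $\bH=\M$, and then to obtain the diffeomorphism property through a Jacobi field / second variation argument based on strict convexity of the action. The right-hand side of \eqref{eq:app_Hamiltonian_sys} is a $C^{N-1}$ vector field on $\M^2$ (since $H\in C^{N+1}$ and $F\in C^N$), and the initial curve $z\mapsto(z, Du_0(z))$ is of class $C^{N-1}$ because $u_0\in C^N$. Classical ODE theory then gives $z\mapsto(\xi(t,z),\eta(t,z))\in C^{N-1}_{\rm loc}$ for each $t\in[0,T]$.

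For part (1), the direct method on \eqref{prob:action-min}, using coercivity \eqref{ass:UpperboundonL} and strict convexity of the action \eqref{ass:convexity-on-tildeL}, yields for every $q\in\M$ a unique minimizer $\gamma$ whose Euler--Lagrange equations, together with the transversality condition $D_v L(\gamma(0),\dot\gamma(0))=Du_0(\gamma(0))$, force $\gamma=\xi(\cdot,z)$ with $z:=\gamma(0)$; hence $\xi_t(z)=q$ and $\xi_t$ is surjective. Injectivity follows from the same uniqueness: if $\xi_t(z_1)=\xi_t(z_2)=q$, both paths $\xi(\cdot,z_i)$ minimize \eqref{prob:action-min}. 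Brouwer's invariance of domain theorem, applied to the continuous injection $\xi_t:\M\to\M$, then ensures $\xi_t^{-1}$ is continuous, and the identities $S_s=\xi_s\circ\xi_t^{-1}$, $P_s=\eta_s\circ\xi_t^{-1}$ are immediate from the characterization of the minimizer.

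For the $C^{1,1}_{\rm loc}$ statement in part (2), I would mimic Propositions~\ref{prop:semi-convex-time_Lip1}--\ref{prop:semi-convex1}: the $C^{N+1}$ regularity of $H$ together with the $C^N$ regularity of $F,u_0$ yields the standard semiconcavity and Lipschitz estimates on $u(t,\cdot)$, while convexity of $u_0$ combined with strict convexity of the action gives convexity of $u(t,\cdot)$, hence semiconvexity. A simultaneously convex and semiconcave function is of class $C^{1,1}_{\rm loc}$, and the Hamilton--Jacobi semigroup identity (analogue of Proposition~\ref{theorem:hilbert-smooth}(i)) extends this to joint $C^{1,1}_{\rm loc}$ regularity in $(t,q)$, so that $u$ is a classical solution of \eqref{eq:hj_appendix}.

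The main obstacle is upgrading $\xi_t$ to a $C^{N-1}$ diffeomorphism. Since $\xi_t\in C^{N-1}$, by the inverse function theorem it suffices to prove that $D\xi(t,z)$ is invertible at every $z\in\M$. Suppose, for contradiction, $D\xi(t,z)v=0$ for some $v\neq 0$, and let $Y(s):=\partial_\e \xi(s,z+\e v)|_{\e=0}$ be the associated Jacobi field along $S=\xi(\cdot,z)$; then $Y$ solves the linearization of \eqref{eq:app_Hamiltonian_sys} with $Y(0)=v$ and, by assumption, $Y(t)=0$. On the space $\{Y\in AC_2(0,t;\M)\,:\,Y(t)=0\}$ introduce the second-variation functional
\[
\Phi(Y):=\int_0^t\!\Bigl(D_{qq}^2 L\, Y \cdot Y + 2\, D_{qv}^2 L\, \dot Y\cdot Y + D_{vv}^2 L\, \dot Y\cdot \dot Y + D_{qq}^2 F\, Y\cdot Y\Bigr)ds + D^2 u_0(z)\,Y(0)\cdot Y(0),
\]
with coefficients evaluated along $(S,\dot S)$. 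The Jacobi equation, together with the transversality condition obtained by differentiating $\eta(0,z)=Du_0(z)$ in $z$ (which produces exactly $D_{vv}^2 L\,\dot Y(0)+D_{qv}^2 L^\top Y(0)=D^2u_0(z)Y(0)$), constitutes the Euler--Lagrange system of $\Phi$, so $Y$ is a critical point of $\Phi$. However, the strict convexity of the action \eqref{ass:convexity-on-tildeL} together with convexity of $u_0$ makes $\Phi$ strictly convex with unique critical point $Y\equiv 0$, contradicting $Y(0)=v\neq 0$. Thus $D\xi(t,z)$ is invertible on all of $\M$, and the inverse function theorem delivers the $C^{N-1}$ diffeomorphism.
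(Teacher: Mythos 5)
For part (1) you take the same route as the paper: appeal to Proposition~\ref{prop:homeo1} (uniqueness of the minimizer of the strictly convex action gives injectivity/surjectivity of $\xi_t$, invariance of domain gives continuity of the inverse), and $C^{N-1}$ regularity of $(\xi_t,\eta_t)$ follows from smooth dependence on data. Part (2) is where you genuinely diverge. The paper passes to the closed flow equation $\dot\xi=D_pH(\xi,Du(s,\xi))$ (available once $u\in C^{1,1}_{\rm loc}$), linearizes it to get $\partial_s D_z\xi=A(s,z)D_z\xi$ with $A=D^2_{qp}H+D^2_{pp}H\,D^2u$ locally bounded, and applies Jacobi's formula $\det D_z\xi(s,z)=\exp\bigl(\int_0^s{\rm tr}\,A\,d\tau\bigr)>0$ to conclude $D_z\xi$ is everywhere invertible; invertibility plus bijectivity give the $C^{N-1}$ diffeomorphism. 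Your argument instead works with the full Hamiltonian system \eqref{eq:app_Hamiltonian_sys}: if $D\xi(t,z)v=0$ with $v\neq0$, the Jacobi field $Y(s)=D_z\xi(s,z)v$ is a nontrivial critical point of the second-variation quadratic form $\Phi$ on $\{Y(t)=0\}$, which is impossible since $\Phi$ is positive definite there. Both arguments are correct. Yours has the advantage that the linearized Hamiltonian system has continuous coefficients, whereas the paper's $A(s,z)$ contains $D^2u(s,\xi(s,z))$, a priori only defined almost everywhere when $u$ is $C^{1,1}$ (the paper quietly resolves this by bootstrapping regularity of $u$ via \cite{CannarsaS}); it also makes the role of the convexity hypotheses explicit, which is the theme of the paper. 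The paper's Jacobi-formula computation is shorter once $C^{1,1}$ regularity is in hand and gives the extra output $\det D_z\xi(s,z)>0$ for every $s$. One caveat on your write-up: you should not assert positive-definiteness of $\Phi$ directly from ``strict convexity of the action'' (a strictly convex functional can have a degenerate second variation at a point); the clean argument uses \eqref{ass:Hessianv-v} and a Schur-complement inequality, $D^2(L+F)(Y,\dot Y)\cdot(Y,\dot Y)\geq\kappa_3\bigl|\dot Y+(D^2_{vv}L)^{-1}(D^2_{qv}L)^\top Y\bigr|^2$, so that $\Phi(Y)=0$ with $Y(t)=0$ forces $\dot Y=-(D^2_{vv}L)^{-1}(D^2_{qv}L)^\top Y$ and hence $Y\equiv0$ by Gronwall.
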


\begin{proof}
(1) The existence and smooth dependence on the data of the solution of \eqref{eq:app_Hamiltonian_sys_1new2} is classical, Proposition \ref{prop:homeo1} ensures $\xi_t: \M \rightarrow \M$ is a homeomorphism and $S(s,\cdot):=\xi_s\circ \xi^{-1}_t,$ $ P(s,\cdot):=\eta_s\circ \xi^{-1}_t.$

(2) By Proposition \ref{theorem:hilbert-smooth},  $u\in C^{1,1}_{\rm{loc}}([0,T]\times\M)$ and is classical solution to \eqref{eq:hj_appendix}. Let us show that $z\mapsto\xi(t,z)$ is a global $C^{N-1}$ diffeomorphism. Recall that by Proposition \ref{prop:homeo1}, $\xi$ is a solution to
\begin{align*}
\left\{
\begin{array}{ll}
\dot\xi(s,z)=D_p H(\xi(s,z),D u(s,\xi(s,z))), & s\in(0,t),\\
\xi(0,z)=z,
\end{array}
\right.
\end{align*}
from where one has
\begin{align*}
\left\{
\begin{array}{ll}
\partial_s D_z\xi(s,z)=A(s,z)D_z\xi(s,z), & s\in(0,t),\\
D_z\xi(0,z)=I_d.
\end{array}
\right.
\end{align*}
Here we used the notation 
$$A(s,z):=D^2_{xp}H(\xi(s,z),D u(s,\xi(s,z)))+D^2_{pp}H(\xi(s,z),D u(s,\xi(s,z)))D^2u(s,\xi(s,z)).$$ 
Since $A(s,z)$ is locally uniformly bounded, we have that for $s>0$ small enough $D_z\xi(s,z)$ is invertible. Therefore, Jacobi's formula yields
$$\det(D_z\xi(s,z))=\exp\left(\int_0^s{\rm{tr}}(A(\t,z))\dd\t\right).$$
Since $A(\t,\cdot)\in L^\infty_{\rm{loc}}(\M)$, uniformly with respect to $\t\in[0,t]$, we have that $\det(D_z\xi(s,z))>0$ for all $z\in\M$, uniformly with respect to $s$. Therefore, $D_z\xi(s,z)$ is invertible for any $z\in\M$ and for any $s\in[0,t]$. Thus, by the fact that $\xi(t,\cdot)\in C^{N-1}(\M)$ and the that $\xi(t,\cdot)$ is bijective, we conclude that $z\mapsto\xi(t,z)$ is a global $C^{N-1}$ diffeomorphism of $\M$ onto itself.
\end{proof}

\vskip0.40cm
\subsection{$m$-particles Hamiltonian flow} \label{subsec:multi-d}  Throughout this subsection, we assume to be given a positive monotone nondecreasing function $C_0:(0,\infty) \rightarrow (0,\infty).$  Furthermore, we impose that in the assumption \eqref{ass:Hamiltonian} $N\ge 2$ and $F^{(m)},U_0^{(m)}\in C^3(\M^m)$.


As in Subsection \ref{subsec:discretization} we define 
$$U_0^{(m)}(q):=  \cU_0\bigg({1\over m} \sum_{i=1}^m \delta_{q_i} \bigg), \quad F^{(m)}(q) :=\cF\bigg({1\over m} \sum_{i=1}^m \delta_{q_i} \bigg) \qquad \forall q \in \M^m.$$

We assume to be given $U_0^{(m)},F^{(m)}:\M^m\to\R$ satisfying Property \ref{def:app_reg_estim}(2) with $C=C_0(r).$ We also consider viscosity solutions $U^{(m)}:[0,T]\times\M^m\to\R$ of the Hamilton-Jacobi equation
\begin{equation}\label{eq:app-HJ_finite_dim}
\left\{
\begin{array}{ll}
\partial_t U^{(m)}(t,q )+H^{(m)}(q ,D_{q}U^{(m)}(t, q))-F^{(m)}(q)=0, & {\rm{on\ }} (0,T)\times \M^m,\\
U^{(m)}(0,\cdot)=U^{(m)}_0, & {\rm{on\ }} \M^m.
\end{array}
\right.
\end{equation}
By Remark \ref{rem:discrete-cont} 
$$U^{(m)}(t, q) \equiv \tilde \cU(t,M^q) \qquad \forall (t, q) \in [0,\infty) \times \M^m.$$ 

Given $t\in(0,T)$ we consider the $m$ particles flows $S^{t, m}, P^{t, m}: \M^m \rightarrow \M^m$. In other words,  
\begin{equation}\label{eq:app_Hamiltonian_sys_1-m}
\left\{
\begin{array}{ll}
\dot S_i^{t,m}(s,q)=D_p H(S_i^{t,m}(s,q ),mP_i^{t,m}(s,q)), & (s, q) \in(0,t) \times \M^m,\\[5pt]
\dot P_i^{t,m}(s,q )=-\frac{1}{m}D_q H(S_i^{t,m}(s,q),mP_i^{t,m}(s,q))+D_{q_i} F^{(m)}(S^{t,m}(s,q )), & (s, q) \in(0,t) \times \M^m,\\[5pt]
S_i^{t,m}(t,q)=q_i,\ P_i^{t,m}(0,q)=D_{q_i} U^{(m)}_0(S^{t,m}(0,q)) &  q \in \M^m.
\end{array}
\right.
\end{equation} 
This is analogous to the flow $(S^{t, m}, P^{t, m})$ in Remark \ref{rem:uniquestrict-c1} where we have not displayed the $m$ and $t$ dependence to alleviate the notation. We also consider the $m$ particles flows $\xi^m, \eta^m: [0,\infty) \times \M^m \rightarrow \M^m$, similar to \eqref{eq:app_Hamiltonian_sys} (which also correspond to the discretized flow \eqref{eq:app_Hamiltonian_sys-discrete}). They are defined as 
\begin{equation}\label{eq:app_Hamiltonian_sys-m}
\left\{
\begin{array}{ll}
\dot\xi^m_i(s,z )=D_p H(\xi^m_i(s,z),m\eta^m_i(s,z)), & s\in(0,t),\\[5pt]
\dot \eta^m_i(s, z)=-\frac{1}{m}D_q H(\xi^m_i(s, z),m\eta^m_i(s, z))+D_{q_i} F^{(m)}(\xi^m(s, z)), & s\in(0,t),\\[5pt]
\xi^m_i(0,z)=z_i,\ \eta^m_i(0,z)=D_{q_i} U^{(m)}_0( z),
\end{array}
\right.
\end{equation}
for $i\in\{1,\dots,m\}$, where $ z=(z_1,\dots,z_m)\in\M^m$. 

We next introduce functions on $\M^m$ and list some of their special properties which are useful for our study.
\begin{property}\label{def:app_reg_estim}
For a permutation invariant function $G^{(m)}:\M^m\to\R$ we define the following properties by assuming for each $r>0$, there is $C \equiv C(r)$ increasing in $r$ such that the following hold.

\begin{itemize}
\item[(1)] 
\begin{itemize}
\item[(a)]$G^{(m)} \in C_{\rm{loc}}^{0,1}(\M^m)\cap C^1(\M^m)$ and for every $m\in\N$ and $q\in\B_r^m(0)$ we have
\begin{equation}\label{reg:estim_C01}
|D_{q_i}G^{(m)}(q)|\le Cm^{-1},\ \forall i\in\{1,\dots,m\}.  
\end{equation} 
\item[(b)]$G^{(m)} \in C_{\rm{loc}}^{0,1}(\M^m)\cap C^1(\M^m)$ and for every $m\in\N$ and $q\in\B_r^m(0)$ we have
\begin{equation}\label{reg:estim_C01-less_restr}
\sum_{i=1}^m m|D_{q_i}G^{(m)}(q)|^2\le C.
\end{equation}
\end{itemize}
\item[(2)] $G^{(m)} \in C_{\rm{loc}}^{1,1}(\M^m)\cap C^2(\M^m)$ and for every $m\in\N$ and $q\in\B_r^m(0)$ we have 
\begin{equation}\label{reg:estim_C11}
\ds
|D^2_{q_iq_j}G^{(m)}(q)|_{\infty}\le\left\{
\begin{array}{ll}
Cm^{-1}, & i=j; \; i\in\{1,\dots,m\} \\[5pt]
Cm^{-2}, & i\neq j;\; i,j\in\{1,\dots,m\}.
\end{array}
\right.
\end{equation}
Here for $A=(A_{ij})_{i,j=1}^m$, we use the notation $|A|_\infty:=\max_{(i,j)}|A_{ij}|$.
\item[(3)] $G^{(m)} \in C_{\rm{loc}}^{2,1}(\M^m)\cap C^3(\M^m)$ and for every $m\in\N$ and $q\in\B_r^m(0)$ we have
\begin{equation}\label{reg:estim_C21}
\ds
|D^3_{q_iq_jq_k}G^{(m)}(q)|_{\infty}\le\left\{
\begin{array}{ll}
Cm^{-1}, & i=j=k;\; i \in\{1,\dots,m\} \\[5pt]
Cm^{-2}, & (i=j\neq k) \; \text{or} \; (i\neq j=k) \; \text{or} \; (i=k\neq j); \;  i, j, k\in\{1,\dots,m\}\\[5pt]
Cm^{-2}, & i\neq j\neq k,\; i, j, k\in\{1,\dots,m\}.
\end{array}
\right.
\end{equation}
\end{itemize}
Here for $A=(A_{ijk})_{i,j,k=1}^m$, we use the notation $|A|_\infty:=\max_{(i,j,k)}|A_{ijk}|$.
\end{property}

We present now the main theorem of this section.

\begin{theorem}\label{thm:app_regularity-m}
Let $U^{(m)}:(0,T)\times \M^m\to\R$ be the unique viscosity solution of \eqref{eq:app-HJ_finite_dim}, which is constructed by the discretization approach described in Remark \ref{rem:discrete-cont}. Let $r>0$. Then for all $t\in(0,T)$ there exists $C(t,r)>0$ such that the following hold for all $m\in\N$. 
\begin{itemize}
\item[(1)]  $U^{(m)}(t,\cdot)$ satisfies the estimates in Property \ref{def:app_reg_estim}(2) in $\B_r^m(0)$ with constant $C(t,r)$.
\item[(2)] Further assume that $U_0^{(m)}$ and $F^{(m)}$ satisfy Property \ref{def:app_reg_estim}(3) and \eqref{hyp:H_3} takes place. Then $U^{(m)}(t,\cdot)$ satisfies the estimates in Property \ref{def:app_reg_estim}(3) in $\B_r^m(0)$ with constant $C(t,r)$.
\item[(3)] We assume that the assumptions from (1) and \eqref{hyp:D_qH} take place. Then $\partial_t U^{(m)}(t,\cdot)$ satisfies the estimates in Property \ref{def:app_reg_estim}(1)-(b) in $\B_r^m(0)$ with constant $C(t,r)$. 
\end{itemize}
\end{theorem}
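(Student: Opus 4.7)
The plan is to linearize the $m$-particle Hamiltonian flow \eqref{eq:app_Hamiltonian_sys_1-m} around a reference trajectory and track, block by block, the $m$-scaling of the variations; convexity then converts these sensitivities into the Hessian and $D^3$ estimates of $U^{(m)}$. The starting point is the identity $D_{q_i}U^{(m)}(t,q)=P_i^{t,m}(t,q)$ from \eqref{eq:discretegra1}. To avoid the boundary-value structure of \eqref{eq:app_Hamiltonian_sys_1-m}, I would pass to the forward flow \eqref{eq:app_Hamiltonian_sys-m}: by the analogue of Lemma \ref{lem:app_flow-regularity} in dimension $md$ (valid under the convexity assumptions of Subsection \ref{subsect:assumptions} applied to $\cL^m$ and $U_0^{(m)}$), $\xi^m(t,\cdot):\M^m\to\M^m$ is a $C^{N-1}$ diffeomorphism and $P^{t,m}_s(\cdot)=\eta^m_s\circ(\xi^m(t,\cdot))^{-1}$. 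Every derivative of $U^{(m)}(t,\cdot)$ is therefore obtained from a derivative of $(\xi^m,\eta^m)$ in the initial datum, composed with (derivatives of) the inverse Jacobian of $\xi^m(t,\cdot)$.

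For part (1), set $\mathcal{A}_{ij}(s,z):=D_{z_j}\xi^m_i(s,z)$ and $\mathcal{B}_{ij}(s,z):=D_{z_j}\eta^m_i(s,z)$; differentiating \eqref{eq:app_Hamiltonian_sys-m} in $z_j$ yields a linear ODE system of schematic form
\begin{align*}
\dot{\mathcal{A}}_{ij}&=D^2_{qp}H\,\mathcal{A}_{ij}+D^2_{pp}H\,(m\mathcal{B}_{ij}),\\
\dot{\mathcal{B}}_{ij}&=-\tfrac{1}{m}D^2_{qq}H\,\mathcal{A}_{ij}-\tfrac{1}{m}D^2_{qp}H\,(m\mathcal{B}_{ij})+\sum_{k=1}^m D^2_{q_iq_k}F^{(m)}(\xi^m)\,\mathcal{A}_{kj},
\end{align*}
with $\mathcal{A}_{ij}(0)=\delta_{ij}I_d$ and $\mathcal{B}_{ij}(0)=D^2_{z_iz_j}U_0^{(m)}(z)$. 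The initial conditions obey the scalings of Property \ref{def:app_reg_estim}(2), and so does the coefficient $D^2F^{(m)}$ in the forcing. Introducing the four rescaled quantities
\[\alpha_m(s):=\sup_i|\mathcal{A}_{ii}|,\quad\beta_m(s):=m\sup_{i\ne j}|\mathcal{A}_{ij}|,\quad\gamma_m(s):=m\sup_i|\mathcal{B}_{ii}|,\quad\delta_m(s):=m^2\sup_{i\ne j}|\mathcal{B}_{ij}|,\]
a term-by-term bookkeeping (in which each sum $\sum_{k\ne i}$ of $m-1$ entries of size $1/m^2$ contributes only $1/m$) shows that $\alpha_m,\beta_m,\gamma_m,\delta_m$ satisfy a closed Grönwall system whose constants depend only on $C_0(r)$, the Lipschitz constant of $DH$, and $T$. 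The block scaling is thus preserved on $[0,T]$ uniformly in $m$, provided $z$ remains in a bounded set, which is ensured by \eqref{eq:Hamiltonian-Flow4}.

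The main obstacle, and the step where convexity is essential, is to translate these sensitivities through the formula
\[D^2_{q_iq_j}U^{(m)}(t,q)=\sum_k \mathcal{B}_{jk}(t,z)\,[\mathcal{A}(t,z)^{-1}]_{ki},\qquad z=(\xi^m(t,\cdot))^{-1}(q).\]
The strict convexity in \eqref{ass:convexity-on-tildeL} together with \eqref{ass:F-U0-C11new2} propagate (through Remark \ref{rem:discrete-cont} and Proposition \ref{theorem:hilbert-smooth}) to a uniform-in-$m$ lower bound on $\mathcal{A}(t,z)$ in operator norm, so $\mathcal{A}(t,z)^{-1}$ exists with operator norm bounded uniformly in $m$. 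One then verifies that the ansatz $|[\mathcal{A}^{-1}]_{ii}|\le C'$, $|[\mathcal{A}^{-1}]_{ij}|\le C'/m$ for $i\ne j$ is self-consistent with $\mathcal{A}\mathcal{A}^{-1}=I$: each off-diagonal equation $\mathcal{A}_{ii}[\mathcal{A}^{-1}]_{ij}+\mathcal{A}_{ij}[\mathcal{A}^{-1}]_{jj}+\sum_{k\ne i,j}\mathcal{A}_{ik}[\mathcal{A}^{-1}]_{kj}=0$ balances at order $1/m$, and the uniqueness of the inverse upgrades consistency to a rigorous bound. Plugging the block scalings on $\mathcal{A}^{-1}$ and on $\mathcal{B}$ into the displayed formula produces $|D^2_{q_iq_j}U^{(m)}|\le C(t,r)/m$ on the diagonal and $\le C(t,r)/m^2$ off the diagonal, establishing Property \ref{def:app_reg_estim}(2).

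For part (2) I would differentiate the linearized system once more in a third coordinate to get a forward linear ODE for the second-order sensitivities of the flow, forced by third derivatives of $H$ (bounded via \eqref{hyp:H_3}) and of $F^{(m)},U_0^{(m)}$ (bounded via Property \ref{def:app_reg_estim}(3)); the same Grönwall scheme with six rescaled quantities reproducing the three-index scalings of \eqref{reg:estim_C21} yields the $D^3$ estimate after composing with $\mathcal{A}^{-1}$ and its derivative. For part (3) I would differentiate \eqref{eq:app-HJ_finite_dim} spatially to get
\[D_{q_i}\partial_tU^{(m)}=-\tfrac{1}{m}D_qH(q_i,mD_{q_i}U^{(m)})-\sum_k D_pH(q_k,mD_{q_k}U^{(m)})\,D^2_{q_kq_i}U^{(m)}+D_{q_i}F^{(m)},\]
and combine the Property \ref{def:app_reg_estim}(2) estimate on $D^2U^{(m)}$ from (1), the Property \ref{def:app_reg_estim}(1)-(a) bound on $DF^{(m)}$, the Lipschitz bound $|D_pH|=O(1)$, and hypothesis \eqref{hyp:D_qH} on $D_qH$: each contribution is $O(1/m)$, so squaring and summing in $i$ produces Property \ref{def:app_reg_estim}(1)-(b).
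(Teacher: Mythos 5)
Your overall strategy — differentiate the forward Hamiltonian flow \eqref{eq:app_Hamiltonian_sys-m}, track the $m$-scaling of the sensitivity blocks $\mathcal{A}_{ij}=D_{z_j}\xi^m_i$ and $\mathcal{B}_{ij}=D_{z_j}\eta^m_i$, then compose with the inverse Jacobian of $\xi^m(t,\cdot)$ — is the same as the paper's (Steps 1--4 and Final Step of the proof). Your Gr\"onwall system on the four rescaled suprema $\alpha_m,\beta_m,\gamma_m,\delta_m$ is a clean repackaging of what the paper does via the explicit matrix-exponential bounds in Lemma \ref{lem:app_matrix_exp-2} (and Lemma \ref{lem:app_matrix-exponential} for the second-order sensitivities in part (2)); the bookkeeping you describe closes correctly, so this part is a legitimate alternative. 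Your part (3) is likewise the paper's argument, with the caveat that the $D_qH$- and $D_pH$-terms are not individually $O(1/m)$ uniformly (they grow with $|q_j|$ and with $m|D_{q_j}U^{(m)}|$), and the bound comes from summing in $\ell^2$ using \eqref{hyp:D_qH} and the Lipschitz estimate on $\nabla\tilde\cU$; your final sentence acknowledges the summation but the phrase ``each contribution is $O(1/m)$'' misstates where the cancellation lives.

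The place where your proposal genuinely fails to close is the inversion of the Jacobian $\mathcal{A}(t,z)=D_z\xi^m(t,\cdot)$, i.e.\ what the paper does in Step 2 via the display \eqref{eq:app_inverse-asym}. Your argument is: (a) the backward flow is Lipschitz uniformly in $m$, so $\|\mathcal{A}^{-1}\|_{\rm op}\le C$ uniformly; (b) the ansatz $|[\mathcal{A}^{-1}]_{ii}|\le C'$, $|[\mathcal{A}^{-1}]_{ij}|\le C'/m$ ($i\ne j$) is ``self-consistent'' with $\mathcal{A}\mathcal{A}^{-1}=I$, and uniqueness of the inverse ``upgrades consistency to a rigorous bound.'' Step (a) is fine (it follows from Lemma \ref{lem:summary-bounds2} restricted to the finite-dimensional invariant subspace, and note that the $\bH$-norm of $M^q$ is $m^{-1/2}|q|_{\R^{md}}$ so the Lipschitz constant transfers directly). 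Step (b) is not a proof. Verifying that an ansatz balances the defining equation at leading order in $m$ does not imply that the unique solution of that equation satisfies the ansatz; uniqueness of the inverse is irrelevant here. Concretely, writing $\mathcal{A}=D+E$ with $D$ block-diagonal and $E$ the $O(1/m)$ off-diagonal part, the fixed-point relation $\mathcal{A}^{-1}=D^{-1}-D^{-1}E\mathcal{A}^{-1}$ combined with the operator-norm bound in (a) and Cauchy--Schwarz on the $i$-th row of $E$ (whose $\ell^2$-norm is $O(m^{-1/2})$) yields only $|[\mathcal{A}^{-1}]_{ij}|=O(m^{-1/2})$ off the diagonal, one full order short of the claimed $O(1/m)$, and iterating the fixed-point map does not improve this because $(D^{-1}E)^n$ retains entries of order $1/m$ for all $n\ge 1$ and $\|D^{-1}E\|_{\rm op}$ need not be $<1$. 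The paper instead identifies the relevant model matrix — $A_d$-block diagonal plus $\tfrac{1}{m}A_d$ off the diagonal, i.e.\ a rank-one $J_m$-type perturbation — inverts \emph{that} explicitly, and reads off the block scalings of $(D_z\xi^m)^{-1}$ from the explicit formula; this pins down the correct behaviour, though it still uses the entrywise equivalence $\sim$ rather than a hard estimate. To make your route rigorous you would need to expose the rank-structure of $E$ coming from $D^2 F^{(m)}$ (the mean-field coupling yields an essentially rank-$d$ perturbation) and use a Sherman--Morrison/Woodbury-type argument rather than a bare consistency check.
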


\begin{remark}
Since the proof of the previous theorem is quite technical, we summarize its main ideas. First, as a consequence of the results in Section \ref{sec:preliminaries} (in particular in Proposition \ref{theorem:hilbert-smooth}), $U^{(m)}$ is actually a classical solution to \eqref{eq:app-HJ_finite_dim} which is of class $C^{1,1}_{\rm{loc}}$. Then classical results from the literature will imply that it is as smooth as the data $H, F^{(m)}$ and $U_0^{(m)}$ (cf. \cite{CannarsaS}). Therefore, is remains to obtain the precise uniform derivative estimates as claimed in the statement of the theorem. 

A key observation is the well-known representation formula for $D_qU^{(m)}$, i.e. 
$$
D_{q} U^{(m)}(t,q)=\eta^m(t, \cdot)\circ(\xi^m)^{-1}(t, q),
$$
where $(\xi^m,\eta^m)$ is the Hamiltonian flow, the solution to \eqref{eq:app_Hamiltonian_sys-m}. Therefore, the precise derivative estimates on $U^{(m)}$ can be obtained by differentiating the previous formula and relying on fine derivative estimates of the flow $(\xi^m,\eta^m)$ and of its inverse. We obtain these necessary estimates by studying the linearized system (and its derivative) associated to \eqref{eq:app_Hamiltonian_sys-m}. Since these computations will be quite delicate, we identify two simplified systems in Lemma \ref{lem:app_matrix-exponential} and Lemma \ref{lem:app_matrix_exp-2}, which carry the main structure of the original linearized systems. Estimates on these simpler systems will essentially be enough to deduce the estimates on the linearized systems we are aiming for. Finally, the derivative estimates on $\partial_t U^{(m)}$ are obtained by directly differentiating the Hamilton-Jacobi equation and using the previously established estimates on spacial derivatives of $U^{(m)}$.
\end{remark}

\begin{proof}[Proof of Theorem \ref{thm:app_regularity-m}]
We aim to obtain precise upper bounds on expressions depending on $m$ (with respect to $m$, when $m$ is large). For this, we use the standard big-O notation. For instance, if $\alpha$ is an integer and $A(m)$ is a real number depending on $m$, by 
$$A(m)=O(m^\alpha)$$ 
we mean that there exists $C>0$ independent of $m$ such that $|A(m)|\le Cm^{\alpha}$, for all $m$ large. If $A(m)=(a_{ij}(m))_{ij}$ is a matrix whose elements are real numbers depending on $m$, by the abuse of the notation, by $A(m)= O(m^\alpha)$ we mean that there exists a constant $C>0$ independent of $m$ such that $|a_{ij}(m)|\le Cm^\alpha$ for all $i,j$. When $A(m)=(a_{ij}(m))_{ij}$ and $B(m)=(b_{ij}(m))_{ij}$ are matrices, by $A(m)=O(B(m))$ we mean that $a_{ij}(m)=O(b_{ij}(m))$ for all $i,j$. To ease the notation, we sometimes write $A(m)\sim B(m)$ for $A(m)=O(B(m))$  and $B(m)=O(A(m))$.

First, let us notice that by Proposition \ref{theorem:hilbert-smooth}, $U^{(m)}$ is a $C^{1,1}_{\rm{loc}}((0,T)\times\M^m)$ classical solution of \eqref{eq:app-HJ_finite_dim}, therefore in particular any point $(t,q)\in(0,T)\times\M^m$ is regular and not conjugate (by the proof of Lemma \ref{lem:app_flow-regularity}) in the sense of Definition 6.3.4 of \cite{CannarsaS}. 

Furthermore, we notice that Lemma \ref{lem:app_flow-regularity} asserts that $\xi^m(s,\cdot)$ is a $C^N$ diffeomorphism and Theorem 6.4.11 from \cite{CannarsaS} yields that $U^{(m)}\in C^3((0,T)\times\M^m)$. In what follows we aim to obtain quantitative derivative estimates on $U^{(m)}$ with respect to the discretization parameter $m$.

{\bf Step 0.} {\it Basic bounds on $\xi^m(t, z)$ when $q:=\xi^m_t( z)\in \B_r^{m}(0).$ } 

By Proposition \ref{prop:homeo1},  $\xi^m(s, z)=S_s^{t, m}[q]$  since $ q=\xi^m(t, z)$. 
By the same proposition, for $i\in\{1,\dots,m\}$ and $ z\in\M^m$, we have  
\begin{equation}\label{eq:estim_first} 
\left\{
\begin{array}{ll}
\dot\xi_i^m(t, z)=D_p H(\xi_i^m(t, z),mD_{q_i} U^{(m)}(t,\xi^m(t, z))), & t\in(0,T),\\
\xi^m(0, z)=z,
\end{array}
\right.
\end{equation}
and 
\begin{equation}\label{eq:app_eta_u}
\eta_i^m(t, z)=D_{q_i} U^{(m)}(t,\xi^m(t, z))=D_{q_i}U^{(m)}(t, x),\ \ {\rm{and}}\ \ \eta_i^m(0, z)=D_{q_i} U^{(m)}_0(z).
\end{equation}
By Proposition \ref{theorem:hilbert-smooth}  there exists $\b(t,r)>0$ (independent of $m$) for any $q \in\B_{r}^m(0)$ we have 
\begin{equation}\label{eq:app_xi_bounded}
S_s^{t, m}[q]\equiv \xi^m(s, z)\in\B_{\b(t,r)}^m,\ \  {\rm{for\ all\ }}s\in [0,t]. 
\end{equation}
Proposition \ref{prop:semi-convex-time_Lip1} ensures $\tilde \cU$ is locally Lipschitz on $[0,\infty) \times \mathbb H$ and so, there exists $C_1(t,r)>0$ (depending on $\b(t,r)$) such that 
$\|\nabla \tilde \cU(t, \xi(t, M^z)\| \le C_1(t,r).$ Using the relation between $\nabla \tilde \cU$ and $\eta$ provided by Proposition \ref{prop:homeo1} (iv) we conclude  
\begin{equation}\label{eq:app_eta_bounded}
\sum_{i=1}^m m|\eta_i^m(t, z)|^2\le  C_1(t,r).
\end{equation}

We are now well equipped to start the proof of the assertion (1) of the theorem.

{\bf Step 1.} {\it Estimates on $(D_{z_j}\xi_i(t,\cdot),D_{z_j}\eta_i(t,\cdot))_{i,j=1}^m.$ }

{\it Claim 1.} There exists a constant $C_2(t,r)>0$ (independent of $m$) such that if $\xi(t,z)=q\in \B_r^{m}(0)$, then for all $i,j\in\{1,\dots, m\}$ we have 
\begin{align*} 
|D_{z_j}\xi_i^m(t,\cdot)|_{\infty}\le\left\{
\begin{array}{ll}
C_2(t,r), & i=j\\[5pt]
\frac{C_2(t,r)}{m}, & i\neq j
\end{array}
\right.
\end{align*}
and 
\begin{align}\label{eq:app_estim1eta}
|D_{z_j}\eta_i^m(t,\cdot)|_{\infty}\le\left\{
\begin{array}{ll}
\frac{C_2(t,r)}{m}, & i=j\\[5pt]
\frac{C_2(t,r)}{m^2}, & i\neq j
\end{array}
\right..
\end{align}

{\it Proof of Claim 1.}  By differentiating the Hamiltonian system \eqref{eq:app_Hamiltonian_sys-m} with respect to the $z_j$, we get
\begin{equation}\label{eq:app_first_linerized}
\left\{
\begin{array}{l}
\partial_tD_{z_j}\xi_i^m=D^2_{qp} H(\xi_i^m,m\eta_i)D_{z_j}\xi_i^m+mD^2_{pp}H(\xi_i^m,m\eta_i^m)D_{z_j}\eta_i^m,\\[5pt]
\partial_t D_{z_j}\eta_i^m=-\frac{1}{m}\left(D^2_{qq} H(\xi_i^m,m\eta_i)D_{z_j}\xi_i^m+mD^2_{pq}H(\xi_i^m,m\eta_i)D_{z_j}\eta_i^m\right)+\sum_{l=1}^m D^2_{q_lq_i} F^{(m)}(\xi^m)D_{z_j}\xi_l^m,\\[5pt]
D_{z_j}\xi_i^m(0,\cdot)=\left\{
\begin{array}{ll}
I_{d\times d}, & i=j,\\
0_{d\times d}, & i\neq j,
\end{array}
\right.,\ D_{z_j}\eta_i^m(0, z)=D^2_{q_jq_i} U^{(m)}_0(z).
\end{array}
\right.
\end{equation}
Let us set 
$$\ov C_2:=\max\{|\partial_q^a\partial_p^b H(q,p)|:\ (q,p)\in\R^d\times \R^d,\ |a|+|b|= 2\}.$$
If $\xi^m(t, z)=q\in\B_r^m(0),$ then
in the same way, there exists $\tilde C_2(t,r)>0$ (depending on $\b(t,r)$) such that $D^2_{q_lq_i} F^{(m)}(\xi_1,\dots,\xi_m)$ and $D_{q_jq_i}^2 U_0^{(m)}(z)$ satisfy the estimate \eqref{reg:estim_C11} with $\tilde C_2(t,r).$ Set
$$\hat C_2=\hat C_2(t,r):=\max\{\ov C_2,\tilde C_2(t,r)\}.$$
We plan to use the bounds 
\begin{align*}
&|D^2_{qp} H(\xi_i^m,m\eta_i)|_\infty, \quad |D^2_{pq}H(\xi_i^m,m\eta_i)|_\infty \quad \le \ov C_2,,\\
&|(1/m)D^2_{qq} H(\xi_i^m,m\eta_i)|_\infty\le \ov C_2/m,\quad |mD^2_{pp}H(\xi_i^m,m\eta_i^m)|_\infty\le \ov C_2 m,\\
\end{align*}
and 
\begin{align*}
&|D^2_{q_lq_i} F^{(m)}(\xi^m)|_\infty\le \left\{
\begin{array}{ll}
\tilde C_2(t,r)m^{-1}, & i=l  \\[5pt]
\tilde C_2(t,r)m^{-2}, & i\neq l
\end{array}, \qquad
\right. |D^2_{q_jq_i} U^{(m)}_0(z)|_\infty\le \left\{
\begin{array}{ll}
\tilde C_2(t,r)m^{-1}, & i=j  \\[5pt]
\tilde C_2(t,r)m^{-2}, & i\neq j
\end{array}
\right..
\end{align*}
Thus, to obtain the precise bounds (in terms of $m$) on the solution to the system \eqref{eq:app_first_linerized}, it is enough to obtain bounds on the solution $(\hat X(s),\hat Y(s))=\left((\hat X_{ij}(s))_{i,j=1}^m,(\hat Y_{ij}(s))_{i,j=1}^m\right)$ to 
$$
\left\{
\begin{array}{l}
\partial_t\hat X_{ij}=\hat C_2 \hat X_{ij}+m\hat C_2 \hat Y_{ij},\\[5pt]
\partial_t \hat Y_{ij}=(\hat C_2/m)\hat X_{ij}+\hat C_2\hat Y_{ij}+\sum_{l=1,l\neq i}^m(\hat C_2/m^2)\hat X_{lj},\\[5pt]
\hat X_{ij}(0)=\left\{
\begin{array}{ll}
1, & i=j\\
0, & i\neq j
\end{array}
\right.,\ \hat Y_{ij}(0)=\left\{
\begin{array}{ll}
\hat C_2m^{-1}, & i=j\\
\hat C_2m^{-2}, & i\neq j
\end{array}
\right..
\end{array}
\right.
$$
The constant $\hat C_2>0$ can be simply factorized out from the previous system, and since this is independent of $m$, when studying the solution, without loss of generality it is enough to study the modified system with coefficients $1$, instead of $\hat C_2$. Thus, when writing the system in a closed form, one can clearly identify the blocks $B_1,\dots,B_4$ defined in \eqref{def:special_block} and the system appearing in Lemma \ref{lem:app_matrix_exp-2}. Therefore, by the precise estimates on $(X_{ij},Y_{ij})_{i,j=1}^m$ in of Lemma \ref{lem:app_matrix_exp-2}, we conclude that there exists $C>0$ (independent of $m$) such that {\it Claim 1} follows by setting 
$$C_2(t,r):=e^{tC\hat C(t,r)}.$$

Now, let us denote by $\zeta^m=(\zeta_1^m(t,\cdot),\dots,\zeta_m^m(t,\cdot)):=S^{t,m}_0[q]$ the inverse of $\xi^m(t,\cdot)$, in particular, we have that if $\xi_i^m(t,z)=q_i$, then $\zeta_i^m(t,q)=z_i$. Next, we derive estimates for $D_{q_j}\zeta_i^m(t,\cdot)$.

\medskip

{\bf Step 2.} {\it Estimates on $(D_{q_j}\zeta_i^m)_{i,j=1}^m$.}

\medskip

{\it Claim 2.} There exists $C_3(t,r)>0$ (independent of $m$) such that for all $i,j\in\{1,\dots, m\}$ we have
\[
|D_{q_j}\zeta_i^m(t,\cdot)|_{\infty}\le\left\{
\begin{array}{ll}
C_3(t,r), & i=j,\\[5pt]
\frac{C_3(t,r)}{m}, & i\neq j,
\end{array}
\right. \ \ {\rm{in}}\ \B_r^m(0).
\]

Since $\xi^m(t,\cdot): \M \rightarrow \M$ is a diffeomorphism,  we have
\begin{equation}\label{eq:app_inverse_der}
D_{q}\zeta^m(t,q)=\left(D_{z}\xi^m(t,\cdot)\right)^{-1}\circ\zeta^m(t,q)
\end{equation}
Since we have a uniform lower bound on $\det(D_{z}\xi(t,\cdot))$ in $\M^m$, we can simply study the asymptotic behavior of $D_{q}\zeta^m(t,q)$ with respect to $m$ via the asymptotic behavior of $(D_{z}\xi^m(t,\cdot))^{-1}.$ By the previous uniform local estimates on $D_{z}\xi^m(t,\cdot)$ (from {\it Claim 1}), we have that there exists a constant $C(t,r)>0$ depending on $C_2(t,r)$ such that  
\begin{equation}\label{eq:app_inverse-asym}
D_{z}\xi^m(t,\cdot)\sim C(t,r)\left[
\begin{array}{lllll}
A_d & \frac{1}{m}A_d & \frac{1}{m}A_d &\dots & \frac{1}{m}A_d\\[5pt]
\frac{1}{m}A_d & A_d & \frac{1}{m}A_d & \dots & \frac{1}{m}A_d\\[5pt]
\dots & \dots & \dots & \ddots & \dots\\[5pt]
\frac{1}{m}A_d & \frac{1}{m}A_d & \frac{1}{m}A_d & \dots & A_d
\end{array}
\right], 
\end{equation}
for some invertible $(d\times d)$-blocks $A_d.$ Therefore, 
$$
(D_{z}\xi(t,\cdot))^{-1}\sim\frac{1}{C(t,r)}\left[
\begin{array}{lllll}
\frac{m}{m-\frac12}A_d^{-1} & \frac{-m}{(2m-1)(m-1)}A_d^{-1} & \frac{-m}{(2m-1)(m-1)}A_d^{-1} &\dots & \frac{-m}{(2m-1)(m-1)}A_d^{-1}\\[5pt]
\frac{-m}{(2m-1)(m-1)}A_d^{-1} & \frac{m}{m-\frac12}A_d^{-1} & \frac{-m}{(2m-1)(m-1)}A_d^{-1} & \dots & \frac{-m}{(2m-1)(m-1)}A_d^{-1}\\[5pt]
\dots & \dots & \dots & \ddots & \dots\\[5pt]
\frac{-m}{(2m-1)(m-1)}A_d^{-1} & \frac{-m}{(2m-1)(m-1)}A_d^{-1} & \frac{-m}{(2m-1)(m-1)}A_d^{-1} & \dots & \frac{m}{m-\frac12}A_d^{-1}
\end{array}
\right], 
$$
and so {\it Claim 2} follows by setting $C_3(t,r):=C(t,r)^{-1}.$

Going forward to conclude the proof of the assertion (1) of the theorem, we recall that by \eqref{eq:app_eta_u}, 
\begin{align*}
\eta_i^m(t,\zeta^m(t, q))=D_{q_i}U^{(m)}(t,q).
\end{align*}
Differentiating this expression with respect to $q_j$ yields
\begin{align*}
D_{q_jq_i}U^{(m)}(t,q)&=\sum_{l=1}^m D_{q_l}\Big(\eta_i(t,\zeta^m(t,q))\Big)D_{q_j}\zeta_l^m(t,q)\\
&=D_{q_j}\eta_i^m(t,\zeta^m(t,q))D_{q_j}\zeta_j^m(t,q)+D_{q_i}\eta_i^m(t,\zeta^m(t,q))D_{q_j}\zeta_i^m(t,q)\\
&+\sum_{l\neq i,l\neq j} D_{q_l}\eta_i^m(t,\zeta(t,q))D_{q_j}\zeta_l^m(t,q).
\end{align*}
The previous estimates  established in {\it Claim 1} and {\it Claim 2}, yields  assertion (1).

\medskip

{\bf Step 3.} {\it Estimates on $(D_{z_kz_j}\xi_i^m(t,\cdot),D_{z_kz_j}\eta_i^m(t,\cdot))_{i,j,k=1}^m.$}

{\it Claim 3.} There exists a constant $C_4(t,r)>0$ depending on all the previous ones, but independent of $m$ such that if $\xi(t, z)=q\in\B_r^m(0)$, then for all $i,j,k\in\{1,\dots,m\}$ we have 
$$|D^2_{z_kz_j}\xi_i^m(t,\cdot)|_{\infty}\le 
\left\{
\begin{array}{ll}
C_4(t,r), & i=j=k,\\[5pt]
\ds\frac{C_4(t,r)}{m}, & i=j\neq k,\ i\neq j=k,\ i=k\neq j,\\[5pt]
\ds\frac{C_4(t,r)}{m^2}, & i\neq j\neq k,
\end{array}
\right. 
$$ 
and 
$$|D^2_{z_kz_j}\eta_i^m(t,\cdot)|_{\infty}\le 
\left\{
\begin{array}{ll}
\ds\frac{C_4(t,r)}{m}, & i=j=k,\\[5pt]
\ds\frac{C_4(t,r)}{m^2}, & i=j\neq k,\ i\neq j=k,\ i=k\neq j,\\[5pt]
\ds\frac{C_4(t,r)}{m^3}, & i\neq j\neq k.
\end{array}
\right.$$

{\it Proof of Claim 3.}  Differentiating the system \eqref{eq:app_first_linerized} with respect to $z_k$, we obtain for the first equation
\begin{align}\label{eq:matrix1}
\partial_tD^2_{z_kz_j}\xi_i^m&=D_{z_k}\xi_i^mD^3_{qqp} H(\xi_i^m,m\eta_i)D_{z_j}\xi_i^m+mD_{z_k}\eta_i^m D^3_{pqp}H(\xi_i^m,m\eta_i)D_{z_j}\xi_i^m\nonumber\\ 
&+D^2_{qp} H(\xi_i^m,m\eta_i^m)D^2_{z_kz_j}\xi_i^m+mD_{z_k}\xi_iD^3_{qpp}H(\xi_i^m,m\eta_i)D_{z_j}\eta_i^m\nonumber\\ 
&+m^2 D_{z_k}\eta_i^m D^2_{ppp}H(\xi_i^m,m\eta_i)D_{z_j}\eta_i^m+mD^2_{pp}H(\xi_i^m,m\eta_i)D^2_{z_kz_j}\eta_i^m
\end{align}
together with the initial condition $D^2_{z_kz_j}\xi_i^m(0,\cdot)=0_{d\times d\times d}.$ From the differentiation  of the second equation with respect to $z_k$, we obtain
\begin{align}\label{eq:matrix2}
\partial_t D^2_{z_kz_j}\eta_i^m&=-\frac{1}{m}\left(D_{z_k}\xi_iD^3_{qqq} H(\xi_i^m,m\eta_i^m)D_{z_j}\xi_i^m+ mD_{z_k}\eta_iD^3_{pqq} H(\xi_i^m,m\eta_i^m)D_{z_j}\xi_i^m \right)\nonumber\\
&-\frac{1}{m}\left(D^2_{qq} H(\xi_i^m,m\eta_i^m)D^2_{z_kz_j}\xi_i^m D_{z_k}\xi_i^m +D^3_{qpq}H(\xi_i^m,m\eta_i^m)D_{z_j}\eta_i^m\right)\nonumber\\
&-\frac{1}{m}\left(m^2 D_{z_k}\eta_iD^3_{ppx}H(\xi_i^m ,m\eta_i^m)D_{z_j}\eta_i^m+mD^2_{pq}H(\xi_i,m\eta_i^m)D^2_{z_kz_j}\eta_i^m\right)\nonumber\\
&+\sum_{l_1,l_2=1}^m D_{z_k}\xi_{l_1}^mD^3_{q_{l_1}q_{l_2}q_i} F^{(m)}(\xi^m)D_{z_j}\xi_{l_2}^m+\sum_{l=1}^m D^2_{q_lq_i} F^{(m)}(\xi^m)D^2_{z_kz_j}\xi_l^m
\end{align}
with the initial condition 
\begin{align}\label{eq:matrix3}
D^2_{z_kz_j}\eta_i^m(0,z)&=D^3_{q_{k}q_{j}q_i} U^{(m)}_0(z)
\end{align}

Let us fix $k,j$. The asymptotic behavior of $(D_{z_kz_j}\xi_i^m(t,\cdot),D_{z_kz_j}\eta_i^m(t,\cdot))$, as the solution to the system \eqref{eq:matrix1}-\eqref{eq:matrix2}, can be studied in the same way as the one of \eqref{eq:app_first_linerized} in {\bf Step 1}. For this, one needs to identify the precise bounds on the coefficient matrices in \eqref{eq:matrix1}-\eqref{eq:matrix2}.
Let us set 
$$\ov C_4:=\max\{|\partial_q^\a\partial_p^\beta H(q,p)|:\ (q,p)\in\R^d\times \R^d,\ 2\le|\a|+|\b|\le 3\},$$
then we notice that by the assumptions on $H$, we have that if $\xi^m(t,z)=q\in\B_r^m(0),$ then
$$|\partial_q^\a\partial_p^\beta H(\xi_i^m(t, z),m\eta_i^m(t, z))|\le \ov C_4.$$
In the same way, there exists $\tilde C_4(t,r)>0$ (depending on $\b(t,r)$) such that $D^2_{q_kq_jq_i} F^{(m)}(\xi^m)$ and $D_{q_kq_jq_i}^2 U_0^{(m)}(q)$ satisfy the estimate \eqref{reg:estim_C21} with $\tilde C_4(t,r).$ Set
$$\hat C_4(t,r):=\max\{\ov C_4,\tilde C_4(t,t)\}\max\{C_2(t,r),1\}^2.$$
Now, system \eqref{eq:matrix1}-\eqref{eq:matrix2} has the same structure as \eqref{eq:approx-sys2}, where $(D^2_{q_kq_j}\xi_i^m,D^2_{q_k q_j}\eta_i^m)$ plays the role of $(X_i,Y_i)$. The blocks $B_1,\dots,B_4$ the coefficient blocks appearing in \eqref{eq:approx-sys2} can be identified in the same way as in {\bf Step 1.} It remains to study the bounds on the corresponding $A_1,A_2$ and $Y_0$ appearing in this system, where  
\begin{align*}
(A_1)_i&:=D_{z_k}\xi_iD^3_{qqp} H(\xi_i^m,m\eta_i)D_{z_j}\xi_i^m+mD_{z_k}\eta_i^m D^3_{pqp}H(\xi_i^m,m\eta_i^m)D_{z_j}\xi_i^m\\
\nonumber&+mD_{z_k}\xi_i^m D^3_{qpp}H(\xi_i^m,m\eta_i^m)D_{z_j}\eta_i^m+m^2 D_{z_k}\eta_i^mD^2_{ppp}H(\xi_i^m,m\eta_i^m)D_{z_j}\eta_i^m,
\end{align*}
\begin{align*}
(A_2)_i&:= -\frac{1}{m}\left(D_{z_k}\xi_iD^3_{qqq} H(\xi_i^m,m\eta_i^m)D_{z_j}\xi_i^m+ mD_{z_k}\eta_i^m D^3_{pqq} H(\xi_i^m,m\eta_i)D_{z_j}\xi_i^m \right)\\
\nonumber&-\frac{1}{m}\left(D_{z_k}\xi_i^m mD^3_{qpq}H(\xi_i^m,m\eta_i^m)D_{z_j}\eta_i^m+m^2 D_{z_k}\eta_i^mD^3_{ppq}H(\xi_i^m,m\eta_i^m)D_{z_j}\eta_i^m\right)\\
\nonumber&+\sum_{l_1,l_2=1}^m D_{z_k}\xi_{l_1}^mD^3_{q_{l_1}q_{l_2}q_i} F^{(m)}(\xi^m)D_{z_j}\xi_{l_2}^m
\end{align*}
and we set 
\begin{align*}
(Y_0)_i:=D^3_{q_{k}q_{j}q_i} U^{(m)}_0
\end{align*}

Using the obtained bounds on $(D_{z_j}\xi_i, D_{z_j}\eta_i)$ in {\bf Step 1} and the  assumptions on $U^{(m)}_0$ in \eqref{reg:estim_C21}, one checks the following asymptotic properties with respect to $m$.

\medskip
{\it Sub-claim 3.}
\begin{itemize}
\item[(1)] If $k=j=i$, then $(A_1)_i = O(\hat C_4(t,r))$, $(A_2)_i=O(\frac{\hat C_4(t,r)}{m})$ and $(Y_0)_i=O(\frac{\hat C_4(t,r)}{m}).$
\item[(2)] If $k=j\neq i$ then $(A_1)_i= O(\frac{\hat C_4(t,r)}{m^2})$, $(A_2)_i = O(\frac{\hat C_4(t,r)}{m^2})$ and $(Y_0)_i=O(\frac{\hat C_4(t,r)}{m^2}).$
\item[(3)] If $k=i\neq j$ or $i=j\neq k$, $(A_1)_i=O( \frac{\hat C_4(t,r)}{m})$, $(A_2)_i = O(\frac{\hat C_4(t,r)}{m^2})$ and $(Y_0)_i=O(\frac{\hat C_4(t,r)}{m^2}).$
\item[(4)] If $k\neq j\neq i$, then $(A_1)_i=O(\frac{\hat C_4(t,r)}{m^2})$, $(A_2)_i=O(\frac{\hat C_4(t,r)}{m^3})$ and $(Y_0)_i=O(\frac{\hat C_4(t,r)}{m^3}),$
\end{itemize}
Now, one considers two cases when studying the desired properties. Let us recall that $k,j$ are fixed. 

{\it Case 1.} If $k=j$, (1)-(2) of {\it Sub-claim 3} can be combined with Lemma \ref{lem:app_matrix-exponential}(1) to conclude the proof of the Claim.

{\it Case 2.} If $k\neq j$, (3)-(4) of {\it Sub-claim 3} can be combined with Lemma \ref{lem:app_matrix-exponential}(2)  to conclude the proof of the Claim.

Therefore there exists a constant $C>0$ such that {\it Claim 3} holds for $C_4(t,r):=e^{tC\hat C_4(t,r)}.$

\medskip

{\bf Step 4.} {\it Estimates on $(D_{q_kq_j}\zeta_i(t,\cdot))_{i,j,k=1}^m.$}

{\it Claim 4.} There exists a constant $C_5(t,r)>0$ depending on all the previous ones, but independent of $m$ such that for all $i,j,k\in\{1,\dots,m\}$, we have 
$$|D^2_{x_kx_j}\zeta_i(t,\cdot)|_{\infty}\le 
\left\{
\begin{array}{ll}
C_5(t,r), & i=j=k,\\[5pt]
\ds\frac{C_5(t,r)}{m}, & i=j\neq k,\ i\neq j=k,\ i=k\neq j,\\[5pt]
\ds\frac{C_5(t,r)}{m^2}, & i\neq j\neq k,
\end{array}
\right. \ \ {\rm{in}}\ \B_r^m.
$$ 

{\it Proof of Claim 4.} It is enough to differentiate the expression \eqref{eq:app_inverse_der} and use all the previous estimates on $(D^2_{z_kz_j}\xi_i)_{i,j,k=1}^m$ and on $(D_{q_j}\zeta_i)_{i,j=1}^m$ from {\bf Step 3} and {\bf Step 2}, respectively.

We have
\begin{align*}
D^2_{q q}\zeta(t,q)=-\left\{\left[\left(D_{z}\xi(t,\cdot)\right)^{-1}D^2_{zz}\xi(t,\cdot)D_q\zeta(t,q)\left(D_{z}\xi(t,\cdot)\right)^{-1}\right]\circ\zeta(t,q)\right\}.
\end{align*}
The previous writing is used for the following short hand notation: for $k\in\{1,\dots,m\}$, we have
\begin{align*}
D_{q_k}D_{q}\zeta(t,q)=-\left\{\left[\left(D_{z}\xi(t,\cdot)\right)^{-1}\left(\sum_{l=1}^mD_{z_l}D_{z}\xi(t,\cdot)D_{q_k}\zeta_l(t,q)\right)\left(D_{z}\xi(t,\cdot)\right)^{-1}\right]\circ\zeta(t,q)\right\},
\end{align*}
and in particular for $i,j\in\{1,\dots,m\}$, we have 
$$
\left(\sum_{l=1}^mD_{z_l}D_{z}\xi(t,\cdot)D_{q_k}\zeta_l(t,q)\right)_{ij}=\sum_{l=1}^mD^2_{z_lz_j}\xi_i(t,\cdot)D_{q_k}\zeta_l(t,q)=: A_{ij}.
$$
For $k\in\{1,\dots, m\}$ fixed, by the definition of $A_{ij}$ and by {\bf Steps 2-3}, this last matrix can be bounded as follows: by setting $\tilde C_5(t,r):=C_4(t,r)C_3(t,r),$ we have
\begin{align*}
|A_{ij}|_\infty\le 
\left\{
\begin{array}{ll}
\tilde C_5(t,r), & i=j=k,\\[5pt]
\ds\frac{\tilde C_5(t,r)}{m}, & i=j\neq k,\ i\neq j=k,\ i=k\neq j,\\[5pt]
\ds\frac{\tilde C_5(t,r)}{m^2}, & i\neq j\neq k.
\end{array}
\right.
\end{align*} 
Now, using the bounds on $\left(D_{\bz}\xi(t,\cdot)\right)^{-1}$ from \eqref{eq:app_inverse-asym}, by setting $C_5(t,r):=\tilde C_5(t,r)C(t,r)^2$, we conclude the statement of {\it Claim 4.}

\medskip
{\bf Final Step.} Let us recall that from \eqref{eq:app_eta_u} that we have
\begin{align*}\label{eq:app_eta_u}
\eta_i(t,\zeta(t,q))=D_{q_i}U^{(m)}(t,q).
\end{align*}
Differentiating this expression with respect to $q_j$ and $q_k$ we obtain 
\begin{align*}
D^3_{q_kq_jq_i}U^{(m)}(t,\cdot)=\sum_{l_1,l_2=1}^mD_{q_k}\zeta_{l_2}(t,\cdot)D^2_{z_{l_2}z_{l_1}}\eta_i(t,\zeta(t,\cdot))D_{q_j}\zeta_{l_1}(t,\cdot)+\sum_{l=1}^m D_{z_l}\eta_i(t,\zeta(t,\cdot))D^2_{q_k q_j}\zeta_l(t,\cdot)
\end{align*}
from where by using the estimates from {\bf Steps 1-4}, we obtain
\begin{align*}
\Big{|}D^3_{q_kq_jq_i}U^{(m)}(t,\cdot)\Big{|}_\infty &\le\frac{1}{m}\left(|D_{q_k}\zeta_{i}|_{\infty}|D_{q_j}\zeta_{i}|_{\infty}+|D^2_{q_kq_j}\zeta_i|_\infty\right)\\
&+\frac{1}{m^2}\left(\sum_{l=1,l\neq i}^m|D_{q_k}\zeta_{l}|_{\infty}|D_{q_j}\zeta_{i}|_{\infty} +\sum_{l=1,l\neq i}^m |D_{q_k}\zeta_{i}|_{\infty}|D_{q_j}\zeta_{i}|_{\infty}+\sum_{l=1,l\neq i}^m|D^2_{q_k q_j}\zeta_l|_{\infty}\right)\\
&+\frac{1}{m^3}\sum_{\begin{subarray}{l}
l_1,l_2=1\\
l_1\neq l_2\neq i
\end{subarray}
}^m|D_{q_k}\zeta_{l_1}|_{\infty}|D_{q_j}\zeta_{l_2}|_{\infty}
\end{align*}
Using again the estimates from the previous steps, we obtain (1) and (2) of the theorem.

\smallskip

The statement in (3) can be easily shown by differentiating the Hamilton-Jacobi equation satisfied by $U^{(m)}$ with respect the variable $q_j$ and by using the estimates on $U^{(m)}$ provided in (1)-(2). Indeed, we have
\begin{align*}
|D_{q_j}\partial_t U^{(m)}|&\le \frac{1}{m}|D_qH(q_j,mD_{q_j}U^{(m)})|+\frac{1}{m}|D_pH(q_j,mD_{q_j}U^{(m)})|m|D^2_{q_jq_j}U^{(m)}|\\
&+\sum_{i\neq j}\frac{1}{m}|D_pH(q_i,m D_{q_i}U^{(m)})|m|D^2_{q_jq_i}U^{(m)}|+|D_{q_j}F^{(m)}|\\
&\le\frac{1}{m}|D_qH(q_j,mD_{q_j}U^{(m)})|+\frac{1}{m}|D_pH(q_j,mD_{q_j}U^{(m)})| +\frac{C}{m} + |D_{q_j}F^{(m)}|. 
\end{align*}
Thus 
\begin{align*}
& \sum_{j=1}^m m |D_{q_j}\partial_t U^{(m)}|^2\\
&\le\sum_{j=1}^m \frac{1}{m}|D_qH(q_j,mD_{q_j}U^{(m)})|^2+\sum_{j=1}^m\frac{1}{m}|D_pH(q_j,mD_{q_j}U^{(m)})|^2+C+\sum_{j=1}^m m|D_{q_j}F^{(m)}|^2\le C,
\end{align*}
where we used the assumption on $F^{(m)}$, \eqref{hyp:D_qH} and the fact that since $\cU\in C^{1,1}_{\rm{loc}}([0,T]\times\cP_2(\M))$ and $D_pH$ is Lipschitz, we have $\sum_{j=1}^m\frac{1}{m}|D_pH(q_j,mD_{q_j}U^{(m)})|^2\le C$. The claim follows, which concludes the proof of the theorem.
\end{proof}

\begin{lemma}\label{lem:app_matrix-exponential}
Let $[X\ Y]^\top=[X_1\ \dots\ X_m\ Y_1\ \dots \ Y_m]^\top\in\R^{2m}$ be the solution of the ODE system 
\begin{equation}\label{eq:approx-sys2}
\partial_t\left[
\begin{array}{l}
X\\
Y
\end{array}
\right]=\left[
\begin{array}{l}
A_1\\
A_2
\end{array}
\right]+
\left[
\begin{array}{ll}
B_1 & B_2\\
B_3 & B_4
\end{array}
\right]\left[
\begin{array}{l}
X\\
Y
\end{array}
\right],\ \
\left[
\begin{array}{l}
X(0)\\
Y(0)
\end{array}
\right] = \left[
\begin{array}{l}
0_m\\
Y_0
\end{array}
\right], 
\end{equation}
where $A_1,A_2, Y_0\in\R^m$, $0_m\in\R^m$ is the zero vector and the $(m\times m)$-dimensional blocks $B_i$ are such that 
\begin{align}\label{def:special_block}
B_1=B_4=I_{m};\ B_2=m I_{m}\ \ {\rm{and}}\ \ B_3=\left[
\begin{array}{llll}
\frac{1}{m} & \frac{1}{m^2} & \dots & \frac{1}{m^2}\\
\frac{1}{m^2} & \frac{1}{m} & \dots & \frac{1}{m^2}\\
\dots & \dots & \ddots & \dots\\
\frac{1}{m^2} & \dots & \frac{1}{m^2} & \frac{1}{m}
\end{array}
\right].
\end{align}

Then there exists a constant $C>0$ (independent of $m$), such that 
\begin{itemize}
\item[(1)] If for $i_0\in\{1,\dots, m\}$ fixed
$$(A_1)_{i_0}=1,\ \ (A_1)_{i}=\frac{1}{m},\ \forall\ i\neq i_0$$ 
and
$$(A_2)_{i_0}=(Y_0)_{i_0}=\frac{1}{m},\ \ (A_2)_{i}=(Y_0)_i=\frac{1}{m^2},\ \forall\ i\neq i_0,$$
then  
\begin{align*}
|X_i(t)|\le
\left\{
\begin{array}{ll}
e^{tC}, & i=i_0,\\[5pt]
\frac{e^{tC}}{m}, & i\in\{1,\dots,m\},\ i\neq i_0,\\[5pt]
\end{array}
\right. \ \ {\rm{and}}\ \ 
|Y_i(t)|\le
\left\{
\begin{array}{ll}
\frac{e^{tC}}{m}, & i=i_0,\\[5pt]
\frac{e^{tC}}{m^2}, & i\in\{1,\dots, m\},\ i\neq i_0.
\end{array}
\right.
\end{align*}
\item[(2)]If for some $k,j\in\{1,\dots,m\}$ fixed, $k\neq j$, we have 
$$(A_1)_{j}=(A_1)_k=\frac{1}{m},\ \ (A_1)_{i}=\frac{1}{m^2},\ \forall\ i\neq j,\ i\neq k$$ 
and
$$(A_2)_{j}=(A_2)_k=(Y_0)_{j}=(Y_0)_k=\frac{1}{m^2},\ \ (A_2)_{i}=(Y_0)_i=\frac{1}{m^3},\ \forall\ i\neq j,\ i\neq k$$
then  
\begin{align*}
|X_i(t)|\le
\left\{
\begin{array}{ll}
\frac{e^{tC}}{m}, & i=j,\ i=k,\\[5pt]
\frac{e^{tC}}{m^2}, & i\in\{1,\dots,m\},\ i\neq j,\ i\neq k\\[5pt]
\end{array}
\right. \ \ {\rm{and}}\ \ 
|Y_i(t)|\le
\left\{
\begin{array}{ll}
\frac{e^{tC}}{m^2}, & i=j,\ i=k,\\[5pt]
\frac{e^{tC}}{m^3}, & i\in\{1,\dots, m\},\ i\neq j,\ i\neq k.
\end{array}
\right.
\end{align*}
\end{itemize}
\end{lemma}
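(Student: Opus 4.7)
My plan is to exploit the very special algebraic structure of the coefficient matrix by reducing the $2m$-dimensional linear ODE to a fixed 4-dimensional one whose coefficients are bounded uniformly in $m$, and then conclude by Gronwall.

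The first step is to rescale $\widetilde{Y}_i := mY_i$ to normalize the off-diagonal block $B_2 = mI_m$. A direct check shows that in both Case (1) and Case (2) one has the identities $(A_1)_i = m(A_2)_i$ and $\widetilde{Y}_i(0) = m(Y_0)_i = (A_1)_i$, so writing $a := A_1$ the system becomes
\begin{equation*}
\partial_t X = a + X + \widetilde{Y}, \qquad \partial_t \widetilde{Y} = a + KX + \widetilde{Y},
\end{equation*}
where $K := mB_3 = \tfrac{m-1}{m}I_m + \tfrac{1}{m}\mathbf{1}\mathbf{1}^{\top}$ with $\mathbf{1} \in \R^m$ the all-ones vector. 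The crucial observation is that $Ke_i = \tfrac{m-1}{m}e_i + \tfrac{1}{m}\mathbf{1}$ and $K\mathbf{1} = \tfrac{2m-1}{m}\mathbf{1}$, so $K$ preserves every subspace of the form $\mathrm{span}\{e_S,\mathbf{1}\}$ for $S \subset \{1,\dots,m\}$.

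In Case (1), the data $a$ and $\widetilde{Y}(0)$ both lie in $V_{i_0} := \mathrm{span}\{e_{i_0},\mathbf{1}\}$, and $X(0)=0 \in V_{i_0}$; since $M := \bigl(\begin{smallmatrix}I & I\\ K & I\end{smallmatrix}\bigr)$ preserves $V_{i_0} \times V_{i_0}$, the full trajectory remains there. Writing $X(t) = \alpha^X(t)e_{i_0} + \beta^X(t)\mathbf{1}$ and $\widetilde{Y}(t) = \alpha^Y(t)e_{i_0} + \beta^Y(t)\mathbf{1}$, the problem reduces to a $4$-dim linear ODE for $(\alpha^X,\beta^X,\alpha^Y,\beta^Y)$. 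After the further rescaling $\beta \mapsto m\beta$, all coefficients in the rescaled $4$-dim system and all initial data are $O(1)$ uniformly in $m$, so Gronwall gives $|\alpha^{X,Y}|+m|\beta^{X,Y}| \le e^{tC}$ for some $m$-independent $C$. Translating back, $X_{i_0} = \alpha^X + \beta^X$ and $X_i = \beta^X$ for $i\ne i_0$ yield the claimed bounds, and similarly for $Y = m^{-1}\widetilde{Y}$.

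In Case (2), the symmetry $j \leftrightarrow k$ places $a$ and $\widetilde{Y}(0)$ in $W_{jk} := \mathrm{span}\{e_j+e_k,\mathbf{1}\}$, which is again $M$-invariant. Writing $X = \lambda^X(e_j+e_k) + \mu^X \mathbf{1}$ and $\widetilde{Y}$ analogously, the reduced $4$-dim system has the same shape as before; the correct rescaling is now $\lambda \mapsto m\lambda$ and $\mu \mapsto m^2\mu$, after which all coefficients and initial conditions are $O(1)$, so Gronwall closes the argument and reading off the components of $X_i = \lambda^X \mathbf{1}_{i \in \{j,k\}} + \mu^X$ produces the stated estimates.

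The only delicate point is the bookkeeping in the second rescaling step (especially Case (2), with two different scales for $\lambda$ and $\mu$): one must verify that no coefficient in the reduced $4\times 4$ matrix picks up a factor of $m$. The worst term is $\frac{2\lambda^X}{m}$ appearing in $\partial_t \mu^Y$, which after multiplying by $m^2$ becomes $2m\lambda^X = 2\widetilde{\lambda}^X$, and likewise $\frac{(2m-1)\mu^X}{m}$ becomes $\frac{2m-1}{m}\widetilde{\mu}^X$; both remain $O(1)$, which is what makes the reduction work.
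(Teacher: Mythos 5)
Your proof is correct and takes a genuinely different route from the paper. The paper works directly with the representation formula $\bigl[X(t)\ Y(t)\bigr]^{\top}=\exp(tB)\bigl(\bigl[0_m\ Y_0\bigr]^{\top}+B^{-1}A\bigr)-B^{-1}A$: it proves by induction on $n$ that the $m\times m$ blocks of $B^{n}$ obey the same diagonal/off-diagonal $O(1),O(m),O(1/m),O(1/m^{2})$ hierarchy uniformly in $n$, infers the same hierarchy for $\exp(tB)$, computes $B^{-1}$ explicitly via a Schur-complement identity and the inverse of $I_m-mB_3$, and then multiplies everything out. Your argument instead exploits the rank-one structure $mB_3=\frac{m-1}{m}I_m+\frac{1}{m}\mathbf 1\mathbf 1^{\top}$, which makes the coefficient operator leave every subspace of the form $\mathrm{span}\{e_S,\mathbf 1\}\times\mathrm{span}\{e_S,\mathbf 1\}$ invariant; since the particular data in (1) and (2) happen to lie in such a subspace, the $2m$-dimensional problem collapses to a genuinely $4$-dimensional linear ODE whose coefficients and initial data become $O(1)$ after the appropriate $m$-dependent rescaling, and Gronwall closes it. What the paper's approach buys is that it estimates $\exp(tB)$ and $B^{-1}$ once and for all, so the conclusion would read off for any right-hand side and initial datum with the stated scaling (not just the specific $A$, $Y_0$ that appear), at the cost of a somewhat delicate induction whose constants must be tracked so they only grow geometrically in $n$ (which is then absorbed by $n!$ in the exponential series). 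Your approach is more structural and avoids both the induction and the inverse computation, but it leans on the precise alignment of the data with the invariant subspace — in particular on the compatibility $mA_2=A_1=mY_0$, which you should state as the enabling fact — and it would need to be adapted if the data were perturbed off that subspace. One cosmetic caveat: Gronwall delivers a bound of the form $K(1+t)e^{Lt}$ rather than literally $e^{tC}$, but since this lemma is consumed in Theorem \ref{thm:app_regularity-m} only through the $t$-dependent constant $C_4(t,r)=e^{tC\hat C_4(t,r)}$, the discrepancy is immaterial; the paper's own display is being read in the same loose sense.
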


\begin{proof}
We analyse the representation formula for \eqref{eq:approx-sys2} in the different cases. Since we are only interested in the asymptotic properties of the solution with respect to $m$, first let us study the asymptotic behavior of the exponential and the inverse of the coefficient matrix.

Let $B:=\left[
\begin{array}{ll}
B_1 & B_2\\
B_3 & B_4
\end{array}
\right]$ and for $n\in\N$, let us denote the powers of $B$ as
$B^n:=\left[
\begin{array}{ll}
B_{1,n} & B_{2,n}\\
B_{3,n} & B_{4,n}
\end{array}
\right]$. 

{\it Claim.} We have the following properties for the blocks $B_{i,n}$ for all $n\in\N$ and for $i,j\in\{1,\dots,m\}$
\begin{itemize}
\item[(1)] $\ds(B_{1,n})_{ii}= O(1)$, $(B_{1,n})_{ij}=O(\frac1m),$ if $i\neq j$.
\item[(2)] $(B_{2,n})_{ii}=O(m)$, $(B_{2,n})_{ij}=O(1),$ if $i\neq j$.
\item[(3)] $(B_{3,n})_{ii}=O( \frac1m)$, $(B_{3,n})_{ij}=O(\frac{1}{m^2}),$ if $i\neq j$.
\item[(4)] $(B_{4,n})_{ii}=O(1)$, $(B_{4,n})_{ij}=O(\frac1m),$ if $i\neq j$.
\end{itemize}
{\it Proof of the Claim.} This follows from a mathematical induction argument in $n$.

Since we have a characterization of the asymptotic properties in terms of $m$ of the elements of the powers $n\in\N$ of the block matrix (which are uniform in $n$), the property from the {\it Claim} will also hold true for the blocks of the matrix exponential of $B$. Setting $A:=[A_1^\top\ A_2^\top]^\top$, the representation formula for the solutions of \eqref{eq:approx-sys2} reads as 
\[
\left[
\begin{array}{l}
X(t)\\
Y(t)
\end{array}
\right]=\exp(tB)\left([0_m^\top\ Y_0^\top]^\top+ B^{-1} A\right) - B^{-1}A.
\]

It remains to compute $B^{-1}$ (which exists, since $B$ is nonsingular), for which we have the formula (using the blocks from \eqref{def:special_block})
\begin{align*}
B^{-1}&=\left[
\begin{array}{ll}
(I_m-m B_3)^{-1} & -m(I_m-mB_3)^{-1}\\
-B_3(I_m-m B_3)^{-1} & I_m+ mB_3(I_m-m B_3)^{-1}
\end{array}
\right]=\left[
\begin{array}{ll}
M & -mM\\
-B_3M & I_m+ mB_3M
\end{array}
\right],
\end{align*}
where, we have used the notation
\begin{align*}
M:=(I_m-m B_3)^{-1}=m \left[
\begin{array}{llll}
0 & -1 & \dots & -1\\
-1 & 0 & \dots & -1\\
\dots & \dots & \ddots & \dots\\
-1 & \dots & -1 & 0
\end{array}
\right]^{-1} = \left[
\begin{array}{llll}
m\frac{m-2}{m-1} & \frac{-m}{m-1} & \dots & \frac{-m}{m-1}\\[5pt]
\frac{-m}{m-1} & m\frac{m-2}{m-1} & \dots & \frac{-m}{m-1}\\[5pt]
\dots & \dots & \ddots & \dots\\[5pt]
\frac{-m}{m-1} & \dots & \frac{-m}{m-1} & m\frac{m-2}{m-1}
\end{array}
\right]
\end{align*}

Now, in the case of (1), we have that $(B^{-1}A)_{i}=0$, if $i\in\{1,\dots,m\}$, and $(B^{-1}A)_{m+i_0}=\frac{1}{m}$ and $(B^{-1}A)_i=\frac{1}{m^2}$, if $i\in\{m+1,\dots, 2m$, $i\neq m+i_0$.


Furthermore, there exists a constant $C>0$ (independent of $m$) such that
\begin{align*}
\left(\exp(tB)[0_m^\top\ Y_0^\top]^\top\right)_{i}\sim
\left\{
\begin{array}{ll}
e^{tC}, & i=i_0,\\[5pt]
\frac{e^{tC}}{m}, & i\in\{1,\dots,m\},\ i\neq i_0,\\[5pt]
\frac{e^{tC}}{m}, & i=m+i_0,\\[5pt]
\frac{e^{tC}}{m^2}, & i\in\{m+1,\dots, 2m\},\ i\neq m+i_0.
\end{array}
\right.
\end{align*}
(1) from the thesis of the lemma follows.

\medskip

In the case on (2), we compute similarly $(B^{-1}A)_i=0$, if $i\in\{1,\dots, m\}$, $(B^{-1}A)_i=\frac{1}{m^2}$ if $i=m+j$ or $j=m+k$ and $(B^{-1}A)_i=\frac{1}{m^3}$ otherwise.

Furthermore, there exists a constant $C>0$ (independent of $m$) such that
\begin{align*}
\left(\exp(tB)[0_m^\top\ Y_0^\top]^\top\right)_{i}\sim
\left\{
\begin{array}{ll}
\frac{e^{tC}}{m}, & i=j,\ i=k,\\[5pt]
\frac{e^{tC}}{m^2}, & i\in\{1,\dots,m\},\ i\neq j,\ i\neq k\\[5pt]
\frac{e^{tC}}{m^2}, & i=m+j,\ i=m+k,\\[5pt]
\frac{e^{tC}}{m^3}, & i\in\{m+1,\dots, 2m\},\ i\neq m+j,\ i\neq m+k.
\end{array}
\right.
\end{align*}
And finally, (2) from the thesis of the lemma follows.
\end{proof}

\begin{lemma}\label{lem:app_matrix_exp-2}
Let $X=(X_{ij})_{i,j=1}^m$ and $Y=(X_{ij})_{i,j=1}^m$ be such that $[X\ Y]^\top\in\R^{2m\times m}$ is the solution of the ODE system  
\begin{equation}\label{eq:approx-sys3}
\partial_t\left[
\begin{array}{l}
X\\
Y
\end{array}
\right]=
\left[
\begin{array}{ll}
B_1 & B_2\\
B_3 & B_4
\end{array}
\right]\left[
\begin{array}{l}
X\\
Y
\end{array}
\right],\ \
\left[
\begin{array}{l}
X(0)\\
Y(0)
\end{array}
\right] = \left[
\begin{array}{l}
I_m\\
Y_0
\end{array}
\right], 
\end{equation}
where $Y_0\in\R^{m\times m}$, is set to $Y_0:=B_3$ and the $(m\times m)$-dimensional blocks $B_i$ are defined in \eqref{def:special_block}. Then, there exists $C>0$ (independent of $m$) such that
\begin{align*}
|X_{ij}(t)|\le
\left\{
\begin{array}{ll}
e^{tC}, & i=j,\\[5pt]
\frac{e^{tC}}{m}, & i\neq j,\\[5pt]
\end{array}
\right. \ \ {\rm{and}}\ \ 
|Y_{ij}(t)|\le
\left\{
\begin{array}{ll}
\frac{e^{tC}}{m}, & i=j,\\[5pt]
\frac{e^{tC}}{m^2}, &  i\neq j.
\end{array}
\right.
\end{align*}
\end{lemma}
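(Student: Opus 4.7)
The plan is to reduce the question to the asymptotic analysis already carried out for $\exp(tB)$ in the proof of Lemma \ref{lem:app_matrix-exponential}, and then carry out two explicit block multiplications. Since the system is now homogeneous, the solution admits the closed form
\[
\begin{bmatrix} X(t) \\ Y(t) \end{bmatrix} = \exp(tB) \begin{bmatrix} I_m \\ B_3 \end{bmatrix},
\qquad
B := \begin{bmatrix} B_1 & B_2 \\ B_3 & B_4 \end{bmatrix},
\]
so I would first write $\exp(tB) = \begin{bmatrix} E_1(t) & E_2(t) \\ E_3(t) & E_4(t) \end{bmatrix}$ and read off
$X(t) = E_1(t) + E_2(t)\, B_3$ and $Y(t) = E_3(t) + E_4(t)\, B_3$.

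The key input is the inductive Claim in the proof of Lemma \ref{lem:app_matrix-exponential}, which establishes that for every $n$ the four blocks of $B^n$ obey the diagonal/off-diagonal pattern $\bigl(O(1), O(1/m)\bigr)$, $\bigl(O(m), O(1)\bigr)$, $\bigl(O(1/m), O(1/m^2)\bigr)$, $\bigl(O(1), O(1/m)\bigr)$ respectively, with a constant that grows at most geometrically in $n$. Summing the series for $\exp(tB) = \sum_{n\ge 0} t^n B^n / n!$ and absorbing the geometric growth into an exponential factor $e^{tC}$ gives the same four bounds for $E_1,\dots,E_4$ with constants $e^{tC}$. This step is identical to the corresponding argument in Lemma \ref{lem:app_matrix-exponential}, so I would just invoke it.

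Next, I would carry out the two block products entry by entry, using $(B_3)_{kk} = 1/m$ and $(B_3)_{k\ell} = 1/m^2$ for $k\ne \ell$. For the diagonal of $E_2 B_3$, the $(i,i)$-entry equals $(E_2)_{ii}(B_3)_{ii} + \sum_{k\ne i}(E_2)_{ik}(B_3)_{ki} = O(m)\cdot O(1/m) + (m-1)\cdot O(1)\cdot O(1/m^2) = O(1)$, while the $(i,j)$-entry with $i\ne j$ splits into the three contributions $k=i$, $k=j$, $k\ne i,j$, each of size $O(1/m)$. Adding $(E_1)_{ii}=O(1)$ and $(E_1)_{ij}=O(1/m)$ yields the claimed bounds on $X_{ij}$. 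The estimate on $Y = E_3 + E_4 B_3$ is obtained in exactly the same way; the dominant contribution on the diagonal is $(E_4)_{ii}(B_3)_{ii} = O(1/m)$, plus $(E_3)_{ii}=O(1/m)$, while the off-diagonal three-term split produces $O(1/m^2)$, matched by $(E_3)_{ij}=O(1/m^2)$.

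The bookkeeping is the only nontrivial aspect: one must check that, when estimating an $(i,j)$-entry of a block product, the $(m-2)$ terms in the generic sum $\sum_{k\ne i,j}$ do not inflate the bound. The arithmetic above shows they do not, because in each case the individual summand is $O(1/m^2)$ times either $O(m)$, $O(1)$ or $O(1/m)$, so that the total $(m-2)$-fold contribution matches the targeted order. Once these two block multiplications are in place, absorbing all residual constants into a single $C>0$ depending on $d$ but not on $m$ gives the stated bounds.
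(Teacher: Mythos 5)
Your proposal is correct and takes essentially the same route as the paper: both rely on the representation $[X(t)\ Y(t)]^\top=\exp(tB)[I_m\ B_3]^\top$ together with the block-asymptotics of $\exp(tB)$ inherited from the Claim in Lemma \ref{lem:app_matrix-exponential}. The only difference is that you spell out the two block multiplications $E_1+E_2 B_3$ and $E_3+E_4 B_3$ entry by entry, which the paper leaves implicit; the bookkeeping you do is correct (and, incidentally, the constant need not depend on $d$ here, since the $B_i$ are scalar $m\times m$ blocks).
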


\begin{proof}
This result is a consequence of the asymptotic behavior of the matrix exponential $\exp(tB)$, where $B:=\left[
\begin{array}{ll}
B_1 & B_2\\
B_3 & B_4
\end{array}
\right].$  Using the asymptotic result from the {\it Claim} in Lemma \ref{lem:app_matrix-exponential} and from the representation formula
\begin{equation}\label{eq:ODE_representation}
\left[
\begin{array}{l}
X(t)\\
Y(t)
\end{array}
\right]=\exp(tB)[I_m\ Y_0]^\top,
\end{equation}
the result follows.
\end{proof}

%
%
%
%
%
%
%
%
\section{Comparing regularity properties of functions defined on $\sP_2(\M)$, $\bH$ and $\M^m$}\label{sec:reg_P2_H_Pm}
Throughout this section, we lift any given function $\cU:\sP_2(\M)\to\R$ to $\bH$ to obtain the function $\tilde\cU:\bH\to\R$ defined as $\tilde\cU(x):=\cU(\sharp(x)).$ Recall $(\Om_j)_{j=1}^m$ is the Borel partition in Section \ref{sec:preliminaries}. We set  
$$U^{(m)}(q):=\cU(\mu^{(m)}_q)=\tilde\cU(M^q).$$
\vskip0.40cm
 \subsection{Semi-convex and semi-concave functions on Hilbert spaces}

\begin{definition}[Semi-convexity and semi-concavity on $\bH$]\label{def:semi_conv_hilbert}
Let $\B\subseteq\bH$ be a convex open set. We say that $\tilde \cU:\B\to\R$ is semi-convex (or $\l$-convex) on $\B$, if there exists $\l\in\R$ and for all $x\in \B$ there exists a continuous linear form $\theta_x$ on $\bH$ such that 
$$\tilde \cU(y)\ge \tilde \cU(x)+\theta_x(y-x) + \frac{\l}{2}\|x-y\|^2,\ \ \forall\ y\in \B.$$
We say that a function $\tilde \cU:\B\to \R$ is $\l$-concave, if $-\tilde\cU$ is $(-\l)$-semi-convex.
\end{definition}

\begin{remark}
The previous definition has an equivalent reformulation. Let $\B\subseteq\bH$ be a convex open set. Then $\tilde \cU:\B\to\R$ is $\l$-convex if and only if 
$$\tilde \cU((1-t)x+ty)\le (1-t)\tilde \cU(x)+t\tilde \cU(y)-\frac{\l}{2} t(1-t)\|x-y\|^2, \ \forall t\in[0,1],\ \forall x, y\in \B.$$
\end{remark} 

\begin{definition}[$C^{1,1}$ functions]
We say that $\tilde \cU: \B\to\R$ is $C^{1,1}$ on an open set $\B\subseteq\bH$, if it is Fr\'echet differentiable on $\B$ and its Fr\'echet differential is Lipschitz continuous, i.e. there exists $C>0$ such that
$$
\|\nabla \tilde \cU(x)-\nabla \tilde \cU(y)\|\le C\|x-y\|, \ \forall\ x, y\in \B.
$$
\end{definition}

Inspired from similar results on finite dimensional smooth manifold (see for instance in \cite{Fat:weakKAM}), we can state the following characterization of $C^{1,1}$ functions defined on subsets of $\bH$.

\begin{remark}\label{rem:C11onH} In fact $\tilde \cU:\B\to\R$ is $C^{1,1}$ on a convex set $\B\subseteq\bH$ if and only if it is Fr\'echet differentiable on $\B$ and there exists $K\ge 0$ such that
\begin{align*}
|\tilde \cU(y)-\tilde \cU(x)-\nabla \tilde \cU(x)(y-x)|\le K\|x-y\|^2, \ \forall\ x, y\in\B.
\end{align*}
\end{remark}

%
%
%
%
\vskip0.40cm
\subsection{Notions of convexity on $(\sP_2(\M),W_2)$} There are various notions of convexity for functionals defined on the Wasserstein space. The concept of  so-called {\it displacement convexity} \cite{AGS,McCann1997} is expressed in terms of $W_2$--geodesics. Recall that given $\mu_0,\mu_1\in\sP_2(\M)$, for any geodesics $[0,1]\ni t\mapsto \mu_t\in\sP_2(\M)$, of constant speed connecting $\mu_0$ to $\mu_1$ in $\sP_2(\M)$ is of the form $\mu_t= \mu_t:=((1-t)\pi^1+t\pi^1)_\sharp\g$ for some $\g\in\Gamma_o(\mu_0,\mu_1)$, then 

\begin{definition}[Semi-convexity and semi-concavity on $(\sP_2(\M),W_2)$] Let $\cU:\sP_2(\M)\to\R$.
\begin{enumerate}
\item[(1-i)] We say that $\cU$ is \emph{semi-convex} (or $\l$-convex) in the classical sense if there is   $\l\in\R$ such that 
$$\cU((1-t)\mu_0+t\mu_1)\le (1-t)\cU(\mu_0)+t\cU(\mu_1)-\frac{\lambda}{2} t(1-t)W_2^2(\mu_0,\mu_1),\ \ \forall\ \mu_0,\mu_1\in\sP_2(\M),\ \ \forall\ t\in[0,1].$$
\item[(1-ii)]We say that $\cU:\sP_2(\M)\to\R$ is semi-concave (or $\l$-concave) in the classical sense if $-\cU$ is $(-\l)$-convex. We refer to $0$-convex and $0$-concave functions simply as convex and concave functions, respectively.  
\item[(2-i)] We say $\cU:\sP_2(\M)\to\R$ is \emph{displacement semi-convex} (or displacement $\lambda$-convex) if there exists $\l\in\R$ such that for any $[0,1]\ni t\mapsto \mu_t\in\sP_2(\M)$ stands for any geodesic of constant speed connecting $\mu_0$ to $\mu_1$ we have 
$$\cU(\mu_t)\le (1-t)\cU(\mu_0)+t\cU(\mu_1)-\frac{\lambda}{2} t(1-t)W_2^2(\mu_0,\mu_1),\ \ \forall\ \mu_0,\mu_1\in\sP_2(\M),\ \ \forall\ t\in[0,1].$$
\item[(2-ii)] We say that $\cU:\sP_2(\M)\to\R$ is \emph{displacement semi-concave} (or displacement $\l$-concave) if $-\cU$ is displacement $(-\l)$-convex. We refer to displacement $0$-convex and  displacement $0$-concave as simply \emph{displacement convex} and \emph{displacement concave}, respectively.
\end{enumerate}
\end{definition}

The following results link $\l$-convexity on the Wasserstein, the Hilbert and the finite dimensional  space $\M^m$. This is a generalization of Proposition 5.79 from \cite{CarmonaD-I}.
\begin{lemma}\label{lem:con_conv} Let $\cU:\sP_2(\M)\to\R$ be a continuous function and let $\tilde \cU:\bH\to\R$ be defined as $\tilde\cU:=\cU\circ\sharp$ so that $\tilde \cU$ is continuous. As above consider for a natural number $m$ consider $U^{(m)}:\M^m\to\R$. Finally, fix $\l\in\R$. Then the followings are equivalent. 
\begin{itemize}
\item[(1)] $\tilde\cU$ is $\l$-convex on $\bH$;
\item[(2)] $\cU$ is displacement $\l$-convex on $(\sP_2(\M),W_2)$
\item[(3)] For any natural number $m$, we have that $U^{(m)}$ is $\frac{\l}{m}$-convex on $\M^m$.
\end{itemize}
\end{lemma}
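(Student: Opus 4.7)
My plan is to prove the three-way equivalence via the cycle $(1)\Rightarrow(3)\Rightarrow(2)\Rightarrow(1)$, combining a direct computation, an approximation by empirical measures, and a rearrangement argument.

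For $(1)\Rightarrow(3)$ I will exploit the affine identity $M^{(1-t)q+tp}=(1-t)M^{q}+tM^{p}$ together with the relation $\|M^{q}-M^{p}\|_{\bH}^{2}=m^{-1}|q-p|^{2}$, both immediate from the definition of $M^{q}$ on the equal-volume partition $(\Omega_{i}^{m})_{i}$. Evaluating the Hilbert $\lambda$-convexity inequality for $\tilde\cU$ at $x_{0}=M^{q}$, $x_{1}=M^{p}$ and using $U^{(m)}(q)=\tilde\cU(M^{q})$ then immediately yields $\tfrac{\lambda}{m}$-convexity of $U^{(m)}$ on $\M^{m}$.

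For $(3)\Rightarrow(2)$, given $\mu_{0},\mu_{1}\in\sP_{2}(\M)$ and $\gamma\in\Gamma_{o}(\mu_{0},\mu_{1})$ producing the constant-speed $W_{2}$-geodesic $\mu_{t}=((1-t)\pi^{1}+t\pi^{2})_{\sharp}\gamma$, a standard empirical-measure approximation furnishes $q_{m},p_{m}\in\M^{m}$ with $\gamma_{m}:=m^{-1}\sum_{i}\delta_{(q_{m,i},p_{m,i})}\to\gamma$ in $W_{2}(\M\times\M)$. One then checks that $\mu_{q_{m}}^{(m)}\to\mu_{0}$, $\mu_{p_{m}}^{(m)}\to\mu_{1}$, $\mu_{(1-t)q_{m}+tp_{m}}^{(m)}\to\mu_{t}$ in $\sP_{2}(\M)$, and $m^{-1}|q_{m}-p_{m}|^{2}\to W_{2}^{2}(\mu_{0},\mu_{1})$. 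Writing out the $\tfrac{\lambda}{m}$-convexity of $U^{(m)}$ at $(q_{m},p_{m})$, rescaling, and passing to the limit using continuity of $\cU$ delivers the displacement $\lambda$-convexity inequality along $\mu_{t}$.

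The remaining implication $(2)\Rightarrow(1)$ is the most delicate step. Given arbitrary $x_{0},x_{1}\in\bH$ with laws $\mu_{0},\mu_{1}$, I would combine the rearrangement invariance $\tilde\cU=\cU\circ\sharp$ with the atomlessness of $\cL^{d}_{\Omega}$: any $\gamma\in\Gamma(\mu_{0},\mu_{1})$ is realized as $(y_{0},y_{1})_{\sharp}\cL^{d}_{\Omega}$ for suitable $y_{0},y_{1}\in\bH$ with $\sharp(y_{i})=\mu_{i}$, and for $\gamma$ optimal the lifts satisfy $\|y_{0}-y_{1}\|=W_{2}(\mu_{0},\mu_{1})$, so that $\sharp((1-t)y_{0}+ty_{1})$ is a genuine $W_{2}$-geodesic and (2) gives the Hilbert-space inequality directly for such optimally coupled pairs. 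The main obstacle is precisely that for an arbitrary $(x_{0},x_{1})$ the curve $t\mapsto\sharp((1-t)x_{0}+tx_{1})$ is only a generalized geodesic, coinciding with a $W_{2}$-geodesic exactly when $(x_{0},x_{1})_{\sharp}\cL^{d}_{\Omega}$ is optimal; this gap is bridged by approximating $(x_{0},x_{1})$ by step functions of the form $(M^{q_{m}},M^{p_{m}})$, invoking $(3)$ (already obtained from $(1)$ via the direct computation above, and which we may equivalently derive from $(2)$ by the empirical-approximation argument of the previous paragraph), and passing to the limit in $\bH$ using continuity of $\cU$.
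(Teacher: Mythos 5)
Your proposed cycle $(1)\Rightarrow(3)\Rightarrow(2)\Rightarrow(1)$ differs from the paper's $(1)\Rightarrow(2)\Rightarrow(3)\Rightarrow(1)$, and two of your three implications are sound: $(1)\Rightarrow(3)$ is a clean direct computation from $M^{(1-t)q+tp}=(1-t)M^q+tM^p$ and $\|M^q-M^p\|^2=m^{-1}|q-p|^2$; and $(3)\Rightarrow(2)$ via empirical approximation of an optimal plan $\gamma\in\Gamma_o(\mu_0,\mu_1)$ by $\gamma_m=\frac1m\sum_i\delta_{(q_{m,i},p_{m,i})}$, using that $(1-t)\pi^1+t\pi^2$ is Lipschitz and that $\frac1m|q_m-p_m|^2=\int|a-b|^2\,d\gamma_m\to W_2^2(\mu_0,\mu_1)$, is a valid route (the paper instead goes $(1)\Rightarrow(2)$ directly by lifting the optimal plan to a pair $(x,y)\in\bH^2$ with $\|x-y\|=W_2(\mu_0,\mu_1)$, which is shorter).

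The gap is in $(2)\Rightarrow(1)$. You correctly identify the obstacle — for arbitrary $x_0,x_1\in\bH$ the coupling $(x_0,x_1)_\sharp\cL^d_\Omega$ need not be optimal, so $t\mapsto\sharp((1-t)x_0+tx_1)$ is only a generalized geodesic and $(2)$ says nothing about it directly. Your bridge is to approximate by $(M^{q_m},M^{p_m})$ and invoke $(3)$. But inside a proof of $(2)\Rightarrow(1)$ you cannot use ``$(3)$ already obtained from $(1)$'' without circularity, and the alternative justification you offer — that $(3)$ follows from $(2)$ by ``the empirical-approximation argument of the previous paragraph'' — is wrong: that paragraph established $(3)\Rightarrow(2)$, not $(2)\Rightarrow(3)$. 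These are not the same, and $(2)\Rightarrow(3)$ is genuinely nontrivial: for generic $q,p\in\M^m$ the coupling $\frac1m\sum_i\delta_{(q_i,p_i)}$ is not optimal between $\mu_q^{(m)}$ and $\mu_p^{(m)}$, so the segment $(1-t)q+tp$ does not project to a Wasserstein geodesic, and the displacement inequality does not immediately give $\frac\lambda m$-convexity of $U^{(m)}$. What is needed is a local argument, as in the paper: fix $q\in\M^m$, and note that for $q'$ with $\max_i|q_i-q'_i|$ sufficiently small, the identity coupling between $\mu_q^{(m)}$ and $\mu_{q'}^{(m)}$ is optimal, so $W_2^2(\mu_q^{(m)},\mu_{q'}^{(m)})=\frac1m\sum_i|q_i-q'_i|^2$ and $t\mapsto\frac1m\sum_i\delta_{(1-t)q_i+tq'_i}$ is a genuine constant-speed geodesic; displacement $\lambda$-convexity along this geodesic gives $\frac\lambda m$-convexity of $U^{(m)}$ near $q$, hence globally since (semi-)convexity is a local property. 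Without supplying this step, your cycle does not close: you have shown $(1)\Leftrightarrow(3)$ and $(3)\Rightarrow(2)$, but nothing propagates out of $(2)$.
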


\begin{proof}  (1)$\Rightarrow$(2). Let us suppose $\tilde\cU$ is $\l$-convex, 
let $\mu,\nu\in\sP(\M)$ and let $\gamma \in \Gamma_o(\mu, \nu).$ Then, there exist $x,y\in\bH$ such that $(x, y)_\sharp \cL^d_\Omega= \gamma. $ In particular, we have $\sharp(x)=\mu$, $\sharp(y)=\nu$ and $W_2(\mu,\nu)=\|x-y\|.$  For $[0,1]\ni t\mapsto \mu_t:=\left[(1-t)\pi^1+t\pi^2\right]_\sharp \g$  is a geodesic of constant speed connecting $\mu$ to $\nu$. Actually, any geodesic between $\mu$ and $\nu$ has this representation. By the $\l$-convexity of $\tilde\cU$ we have
\begin{align*}
\cU(\mu_t)&=\cU\left(\sharp\left[(1-t)x+ty\right]\right)=\tilde\cU((1-t)x+ty)\\
&\le (1-t)\tilde\cU(x) + t\tilde\cU(y)-\frac{\l}{2} t(1-t)\|x-y\|^2\\
&=(1-t) \cU(\mu)+t U(\nu)-\frac{\l}{2}t(1-t)W_2^2(\mu,\nu).
\end{align*}
Thus, $\cU$ is displacement $\l$-convex.


(2)$\Rightarrow$(3). Let us suppose that $\cU$ is displacement $\l$-convex and we show that $U^{(m)}$ is $\frac{\l}{m}$-convex on $\M^m$. Let us fix $(q_1,\dots, q_m)\in \M^m$. It is enough to show the $\frac{\l}{m}$-convexity of $U^{(m)}$ in a small neighborhood of this fixed point. Therefore, let $(q'_1,\dots,q'_m)\in\M^m$ be such that $\max\{|q_i-q'_i|:\ i\in\{1,\dots,m\}\}$ is small so that  $W_2^2(\mu^{(m)}_q,\mu^{(m)}_{q'})=\frac{1}{m}\sum_{i=1}^m|q_i- q'_i|^2.$ By this assumption, we also have that the constant speed geodesic connecting $\mu^{(m)}_q$ to $\mu^{(m)}_{q'}$ in a unit time is given by $[0,1]\ni t\mapsto \mu_t^{(m)}=\frac{1}{m}\sum_{i=1}^m\d_{(1-t)q_i + t q'_i}.$

By this construction, for $t\in[0,1]$ we have 
\begin{align*}
U^{(m)}((1-t)q + t { q'})=\cU(\mu_t^{(m)})
& \leq (1-t) \cU(\mu^{(m)}_q)+ t \cU(\mu^{(m)}_{q'})-{\l \over 2}t(1-t)  W_2\big( \mu^{(m)}_q, \mu^{(m)}_{q'}\big)\\ 
&=(1-t) U^{(m)}(q)+t U^{(m)}(q')-\frac{\l}{2m}t(1-t)\sum_{i=1}^m |q_i-q'_i|^2.
\end{align*}
Therefore, the $\frac{\l}{m}$-convexity of $U^{(m)}$ in a small neighborhood of $q$ follows.

\medskip
(3)$\Rightarrow$(1) We suppose $U^{(m)}$ is $\frac{\l}{m}$-convex for all natural number $m.$ We plan to show the $\l$-convexity of $\tilde\cU$ on $\bH$. Note the $\frac{\l}{m}$-convexity of $U^{(m)}$ is equivalent to the $\l$-convexity of the restriction of $\tilde \cU$ to $\{M^q\; : \; q \in \R^{md}\} \subset \bH.$ In particular, the local Lipschitz constants of these restrictions are bounded from above by a number which is independent of $m.$ These finite dimensional functions then have a unique extension $\tilde \cV$ on $\bH$, which is $\l$--convex and coincides with $\tilde \cU$ on a dense subset of $\bH.$ It suffices to know that $\tilde \cU$ is continuous to conclude that it is nothing but $\tilde \cV.$ 

\end{proof}

\vskip0.40cm
\subsection{ $C^{1,1}$ functions on $(\sP_2(\M),W_2)$ versus $C^{1,1}$ functions on $\bH$}
Given a differentiable function  $\cU:\sP_2(\M)\to \R$ (cf. \cite{AGS}), we denote as $\nabla_w \cU$ the Wasserstein gradient field of $\cU.$ This subsection exploits the connection between the differential of $\cU:\sP_2(\M)\to \R$ and the differential of its lift $\tilde \cU:\bH\to\R$ (\cite{GangboT2017}). More precisely, we have the following result. 
\begin{remark}\label{rem:factorization} 
Let $x\in\bH$ and set $\mu:=\sharp(x)$. Then $\cU$ is differentiable at $\mu$ if and only if $\tilde \cU$ is differentiable at $x$ and it this case, we have the factorization $\nabla\tilde \cU(x)=\nabla_w \cU(\mu)\circ x$. 
\end{remark}   

\begin{definition}\label{def:c11_wasserstein}
Let $\cB\subseteq \sP_2(\M)$ be open and geodesically convex. Let $\a\in(0,1]$. We say that $\cU\in C^{1,\a}(\cB)$, if it is continuously differentiable on $\cB$ and there exists a constant $C\ge 0$ such that
\begin{itemize}
\item[(1)] $\spt(\mu)\ni q_1\mapsto\nabla_w \cU(\mu)(q_1)$ is $\a$--H\"older continuous (or simply Lipschitz continuous if $\a=1$) with constant $C$ for any $\mu\in\cB$.\vspace{10pt}
\item[(2)]$\ds\left|\cU(\nu)-\cU(\mu)-\int_{\M^2}\nabla_w \cU(\mu)(q_1)\cdot(q_2-q_1)\dd\g(q_1, q_2)\right|\le CW_2^{1+\a}(\mu,\nu),\ \forall\ \mu,\nu\in \cB,\; \forall \g\in\Gamma_o(\mu,\nu).$
\end{itemize}
\end{definition}

\begin{definition}\label{def:c11_M_times_wasserstein}
Similarly to the previous definition, let $\cB\subseteq \sP_2(\M)$ be open and geodesically convex and let $K\subseteq\M$ be a convex open set. Let $\a\in(0,1]$. We say that $u\in C^{1,\a}(K\times\cB)$, if it is continuously differentiable on $K\times \cB$ and there exists a constant $C\ge 0$ such that
\begin{itemize}
\item[(1)] $\spt(\mu)\ni q_1\mapsto\nabla_w u(q,\mu)(q_1)$ is $\a$--H\"older continuous (or simply Lipschitz continuous if $\a=1$) with constant $C$ for any $(q,\mu)\in K\times\cB$.\vspace{10pt}
\item[(2)]\begin{align*}
\ds\Big|u(\ov q,\nu)-u(q,\mu)-D_q u(q,\mu)&\cdot(\ov q-q)-\int_{\M^2}\nabla_w u(q,\mu)(q_1)\cdot(q_2-q_1)\dd\g(q_1, q_2)\Big|\\
&\le C\left(|\ov q - q|^{1+\a}+ W_2^{1+\a}(\mu,\nu)\right), \ \ \forall\ \ov q, q\in K, \mu,\nu\in \cB,\; \forall \g\in\Gamma_o(\mu,\nu).
\end{align*}
\end{itemize}
\end{definition}

\begin{remark}\label{rmk:c11_wasserstein}
\begin{enumerate}
\item[(i)] 
Let us notice that Definition \ref{def:c11_wasserstein}(2) implies that $\nabla_w \cU$ is `$\a$--H\"older continuous' in the following sense. We have
$$\Bigg{|}\int_{\M^2}\nabla_w \cU(\mu)(q_1)\cdot(q_1-q_2)\dd\g(q_1, q_2)- \int_{\M^2}\nabla_w \cU(\nu)(q_2)\cdot(q_1- q_2)\dd\tilde\g(q_2, q_1)\Bigg{|}\le 2CW_2^{1+\a}(\mu,\nu),$$
for any $\mu,\nu\in \cB$ and $\g\in\Gamma_o(\mu,\nu),$ $\tilde\g\in\Gamma_o(\nu,\mu)$.
\item[(ii)] Let us underline that the inequality in Definition \ref{def:c11_wasserstein}(2) naturally encodes also the fact that $\cU$ is locally Lipschitz continuous. Indeed, that inequality, implies that
\begin{align*}
|\cU(\nu)-\cU(\mu)| & \le CW_2^{1+\a}(\mu,\nu)+\int_{\M^2}|\nabla_w \cU(\mu)(q_1)|\cdot|q_2-q_1|\dd\g(q_1, q_2)\\
&\le CW_2^{1+\a}(\mu,\nu)+\|\nabla_w\cU(\mu)\|_{L^2(\mu)}W_2(\mu,\nu)= \Big(CW_2^\a(\mu,\nu) + \|\nabla_w\cU(\mu)\|_{L^2(\mu)}\Big)W_2(\mu,\nu),
\end{align*}
so the local Lipschitz property follows.
\item[(iii)] Definition \ref{def:c11_M_times_wasserstein}(2) naturally encodes that $K\ni q\mapsto u(q,\mu)$ is of class $C^{1,\a}$, uniformly with respect to $\mu$.
\end{enumerate}
\end{remark}

\begin{lemma}\label{lem:c11_equiv}
$\cU\in C^{1,1}\big( \sP_2(\M)\big)$ if and only if $\tilde \cU\in C^{1,1}(\bH)$. 
\end{lemma}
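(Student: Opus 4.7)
I will prove the lemma by exploiting two organizing principles: the factorization $\nabla\tilde\cU(x)=\nabla_w\cU(\mu)\circ x$ at $\mu=\sharp(x)$ (Remark \ref{rem:factorization}), and the equivalent characterization of $C^{1,1}(\bH)$ via the quadratic Taylor remainder from Remark \ref{rem:C11onH}. I will also repeatedly invoke the fact that, since $\cL^d_\Om$ is atomless, every coupling $\gamma\in\Gamma(\mu,\nu)$ is realized as $(x,y)_\sharp\cL^d_\Om$ for some $x,y\in\bH$, and in particular that for any $\mu,\nu$ I may find $x,y$ with $\sharp(x)=\mu$, $\sharp(y)=\nu$ and $\|x-y\|^2=W_2^2(\mu,\nu)$.

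For the reverse direction $\tilde\cU\in C^{1,1}(\bH)\Rightarrow\cU\in C^{1,1}(\sP_2(\M))$, the argument is direct. To verify Definition \ref{def:c11_wasserstein}(2), given $\mu,\nu$ and $\gamma\in\Gamma_o(\mu,\nu)$, I realize $\gamma=(x,y)_\sharp\cL^d_\Om$ with $\|x-y\|^2=W_2^2(\mu,\nu)$; then the Hilbert Taylor bound from Remark \ref{rem:C11onH} applied at $(x,y)$ reads, after substituting the factorization and the identity $\langle\nabla\tilde\cU(x),y-x\rangle_\bH=\int \nabla_w\cU(\mu)(q_1)\cdot(q_2-q_1)\,d\gamma$, exactly as Definition \ref{def:c11_wasserstein}(2). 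For Definition \ref{def:c11_wasserstein}(1), I fix $\mu$ and $q_1,q_2\in\spt\mu$, pick any $x^*\in\bH$ with law $\mu$, and for small $\epsilon>0$ choose Borel subsets $A_i\subset (x^*)^{-1}(B_\epsilon(q_i))$ of common measure $\delta>0$. Forming $y^*$ from $x^*$ by swapping values between $A_1$ and $A_2$ via a measure-preserving bijection preserves $\sharp(y^*)=\mu$, while $\|x^*-y^*\|^2$ is comparable to $2\delta|q_1-q_2|^2$ up to $O(\delta\epsilon)$. The Hilbert Lipschitz bound on $\nabla\tilde\cU$ evaluated on this pair, combined with the factorization and continuity of $\nabla_w\cU(\mu)$ on $\spt\mu$ (which follows from Wasserstein differentiability), yields $|\nabla_w\cU(\mu)(q_1)-\nabla_w\cU(\mu)(q_2)|\leq L|q_1-q_2|$ upon sending $\epsilon\to 0$.

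For the forward direction $\cU\in C^{1,1}(\sP_2(\M))\Rightarrow\tilde\cU\in C^{1,1}(\bH)$, Fréchet differentiability of $\tilde\cU$ with $\nabla\tilde\cU(x)=\nabla_w\cU(\mu)\circ x$ follows from Remark \ref{rem:factorization}. To establish the Taylor remainder bound of Remark \ref{rem:C11onH}, I fix $x,y\in\bH$ with $\mu=\sharp(x)$, $\nu=\sharp(y)$ and set $\gamma_0:=(x,y)_\sharp\cL^d_\Om\in\Gamma(\mu,\nu)$. Picking an optimal $\gamma^*\in\Gamma_o(\mu,\nu)$, Definition \ref{def:c11_wasserstein}(2) gives
\[
\Big|\cU(\nu)-\cU(\mu)-\int \nabla_w\cU(\mu)(q_1)\cdot(q_2-q_1)\,d\gamma^*\Big|\leq CW_2^2(\mu,\nu)\leq C\|x-y\|^2.
\]
It then remains to control the coupling residual $R:=\int \nabla_w\cU(\mu)(q_1)\cdot(q_2-q_1)\,d(\gamma^*-\gamma_0)$. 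Extending $\nabla_w\cU(\mu)$ Lipschitzly to $\Phi_\mu:\M\to\M$ via a McShane-type extension (with the uniform Lipschitz constant from Definition \ref{def:c11_wasserstein}(1), up to a dimensional factor), using that $\gamma^*-\gamma_0$ has vanishing first and second marginals, and adding and subtracting $\Phi_\mu(q_2)$ strategically, I aim to reduce $R$ to integrands of the form $[\Phi_\mu(q_1)-\Phi_\mu(q_2)]\cdot(q_2-q_1)$, whose absolute value is bounded pointwise by $C|q_1-q_2|^2$ and hence by $C\int|q_1-q_2|^2\,d(\gamma^*+\gamma_0)\leq 2C\|x-y\|^2$.

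The main obstacle lies in this last residual estimate in the forward direction. After the two marginal cancellations one is left with an expression of the form $\int\Phi_\mu(q_2)\cdot q_1\,d(\gamma^*-\gamma_0)$, which naive Lipschitz-plus-Cauchy-Schwarz bounds control only by $C\|x-y\|\|x\|$, insufficient for $C^{1,1}$. Extracting the genuine quadratic order requires exploiting both the bilinear structure of the integrand and the joint vanishing of the marginals of $\gamma^*-\gamma_0$: I plan to rewrite $q_1=q_2+(q_1-q_2)$, kill the $\Phi_\mu(q_2)\cdot q_2$ contribution by the second marginal, and then absorb the remaining piece $\int\Phi_\mu(q_2)\cdot(q_1-q_2)\,d(\gamma^*-\gamma_0)$ by a symmetric split together with the Lipschitz bound on $\Phi_\mu$ and a further marginal cancellation.
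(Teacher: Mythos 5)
Your reverse implication ($\tilde\cU\in C^{1,1}(\bH)\Rightarrow\cU\in C^{1,1}(\sP_2(\M))$) follows essentially the same route as the paper: realize an optimal coupling as $(x,y)_\sharp\cL^d_\Om$ with $\|x-y\|=W_2(\mu,\nu)$ to transfer the quadratic Taylor bound, and use a measure-preserving swap in $\Om$ to extract pointwise Lipschitz continuity of $\nabla_w\cU(\mu)$. One caveat: when you invoke ``continuity of $\nabla_w\cU(\mu)$ on $\spt\mu$ (which follows from Wasserstein differentiability)'' to pass to the limit $\epsilon\to 0$, that is not automatic --- a priori $\nabla_w\cU(\mu)$ is only an $L^2(\mu)$ vector field. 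The paper avoids this by choosing the swap centers to be Lebesgue points of $x$ and of $\nabla\tilde\cU(x)$, so the $L^2$ estimate yields the pointwise bound there, on a set of full $\mu$-measure; you should do the same.

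The forward implication has a genuine gap, and to your credit you have located it exactly: the residual
\[
R:=\int_{\M^2}\nabla_w\cU(\mu)(q_1)\cdot(q_2-q_1)\,d(\gamma^*-\gamma_0)(q_1,q_2),\qquad \gamma^*,\gamma_0\in\Gamma(\mu,\nu),
\]
must be bounded by $C\|x-y\|^2$, and your proposed chain of Lipschitz splits plus marginal cancellations is circular. After peeling off $[\Phi_\mu(q_1)-\Phi_\mu(q_2)]\cdot(q_2-q_1)$ and killing $\Phi_\mu(q_2)\cdot q_2$ by the second marginal, the remaining term $-\int\Phi_\mu(q_2)\cdot q_1\,d(\gamma^*-\gamma_0)$ reproduces $-R$ under your ``symmetric split,'' so you only obtain the tautology $R=R$. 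In fact, for a general Lipschitz $\Phi_\mu$ the estimate $|R|\le C\int|q_1-q_2|^2\,d(\gamma^*+\gamma_0)$ is false: take $\Phi_\mu(q)=Jq$ with $J$ a $90^\circ$ rotation in $\R^2$, $\mu=\nu$ uniform on the unit circle, $\gamma^*=(\id,\id)_\sharp\mu$ and $\gamma_0=(\id,R_\theta)_\sharp\mu$; a direct computation gives $R=-\sin\theta$ while $\int|q_1-q_2|^2\,d(\gamma^*+\gamma_0)=2(1-\cos\theta)=O(\theta^2)$. The missing ingredient is the potential structure: $\nabla_w\cU(\mu)\in T_\mu\sP_2(\M)$ is, by definition of the tangent space, an $L^2(\mu)$-limit of gradients $D\varphi_n$ with $\varphi_n\in C_c^\infty(\M)$. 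The paper therefore invokes Lemma~3.3 of \cite{GangboT2017}, which rests on the second-order Taylor identity for a potential $\xi$:
\[
\int D\xi(q_1)\cdot(q_2-q_1)\,d(\gamma_1-\gamma_2)=-\tfrac12\int D^2\xi(\zeta)(q_2-q_1)\cdot(q_2-q_1)\,d(\gamma_1-\gamma_2),
\]
valid because $\int[\xi(q_2)-\xi(q_1)]\,d(\gamma_1-\gamma_2)=0$ whenever $\gamma_1,\gamma_2$ share both marginals. It is exactly the antisymmetric (rotational) part of a vector field that makes $R$ of order $W_2$ rather than $W_2^2$, and it is annihilated only by passing to the potential; no amount of bilinear bookkeeping without it will produce the quadratic gain.
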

\begin{proof} {\bf Part 1.} Suppose first that $\tilde \cU\in C^{1,1}(\bH)$ so that by Remark \ref{rem:C11onH} there exists a constant $C\ge0$ such that 
\begin{equation}\label{eq:proof_1}
|\tilde \cU(y)-\tilde \cU(x)-\nabla\tilde \cU(x)(y-x)|\le {C\over 2} \|x-y\|^2,\ \ \forall x, y\in\bH.
\end{equation}
This implies in particular that $\cU\in C^1(\sP_2(\M))$ and for any $x\in\bH$ such that $\sharp(x)=\mu\in\sP_2(\M),$ we have $\nabla\tilde \cU(x)=\nabla_w \cU(\mu)\circ x.$

{\it Claim.} For any $\mu\in\sP_2(\M)$, $q\mapsto\nabla_w \cU(\mu)(q)$ is Lipschitz continuous on $\spt(\mu)$ uniformly in $\mu$, with Lipschitz constant at most $C$.

{\it Proof of the claim.} Let $\mu\in\sP_2(\M)$ and consider $x, y\in\bH$, such that $\sharp(x)=\sharp(y)=\mu$ and $\|x-y\|>0$. Since $\nabla\tilde \cU$ is Lipschitz continuous, one has that 
$$\|\nabla\tilde \cU(x)-\nabla\tilde \cU(y)\|\le C\|x-y\|.$$
This reads off 
\be\label{ineq:lip}
\|\nabla_w \cU(\mu)(x)-\nabla_w \cU(\mu)(y)\|\le C\|x-y\|.
\ee


Suppose that $\spt(\mu)$ contains more than one element, otherwise the statement is trivial. Although $x$ is defined up to a set of measure zero, we are going to choose a representative which is Borel.  Set 
\[
\Omega_0:=\bigl\{\omega \in \Omega\; | \; \omega \;\; \text{is a Lebesgue point for}\;\; x, \nabla\tilde \cU(x) \bigr\} \cap x^{-1}({\rm spt\,}(\mu))
\]
Note that $\Omega_0$ is a set of full measure in $\Omega$ and so, $x(\Omega_0)$ is a set of full $\mu$--measure. In fact, we do not know that $x(\Omega_0)$ is Borel, but we can find a Borel set $A\subset x(\Omega_0)$ of full $\mu$--measure.

We suppose that $A$ has more than one element, otherwise the statement is trivial. Let $q_1, q_2\in A$ with $q_1\neq q_2$ and let $q_1^0,q_2^0\in\Om_0$ such that $x(q_1^0)=q_1$ and $x(q_2^0)=q_2$.  Let $r>0$ small such that $B_r(q_1^0)\cap B_r(q_2^0)=\emptyset$. Set

\begin{equation}\label{eq:FOMFG} 
S_r(\omega):=
\left\{
\begin{array}{ll}
\omega, &\hbox{if} \; \omega \in \Omega \setminus \bigl(B_r(q_1^0) \cup B_r(q_2^0) \bigr),\\
\omega -q_1^0+q_2^0,&\hbox{if} \; \omega \in  B_r(q_1^0),\\
\omega -q_2^0+q_1^0,&\hbox{if} \; \omega \in B_r(q_2^0). 
\end{array}
\right.
\end{equation}
Since $S_r$ preserves $\sL^d\mres\Om$, $x$ and $y:=x \circ S_r$ have the same law $\mu$. We notice that in particular
$$y=x\chi_{\M\setminus(B_r(q_1^0)\cup B_r(q_2^0))}+x(\cdot+q_2^0-q_1^0)\chi_{B_r(q_1^0)}+x(\cdot+q_1^0-q_2^0)\chi_{B_r(q_2^0)}.$$
Since $q_1$ and $q_2$ are distinct image points of $x$, for $r>0$ sufficiently small 
$$\|x-y\|^2=\int_{B_r(q_1^0)}|x(z)-x(z+q_2^0-q_1^0)|^2\dd z+\int_{B_r(q_2^0)}|x(z)-x(z+q_1^0-q_2^0)|^2\dd z>0.$$ 
Similarly, \eqref{ineq:lip} yields
\begin{align*}
&\|\nabla_w \cU(\mu)(x)-\nabla_w \cU(\mu)(y)\|^2=\int_{B_r(q_1^0)}|\nabla_w \cU(\mu)(x(z))-\nabla_w \cU(\mu)(x(z+q_2^0-q_1^0))|^2\dd z\\
&+\int_{B_r(q_2^0)}|\nabla_w \cU(\mu)(x(z))-\nabla_w \cU(\mu)(x(z+q_1^0-q_2^0))|^2\dd z\\
&\le C^2\left(\int_{B_r(q_1^0)}|x(z)-x(z+q_2^0-q_1^0)|^2\dd z+\int_{B_r(q_2^0)}|x(z)-x(z+q_1^0-q_2^0)|^2\dd z\right)
\end{align*}
Now, dividing the inequality by $\sL^d(B_r(q_1^0))$, and sending $r\da 0$, since $q_1^0$ and $q_2^0$ are Lebesgue point of $x$ with $x(q_1^0)=q_1$ and $x(q_2^0)=q_2$, one obtains that
$$|\nabla_w \cU(\mu)(q_1)-\nabla_w \cU(\mu)(q_2)|\le C|q_1-q_2|,$$
as desired. The claim follows.

Now, let $\mu,\nu\in\sP(\M)$ and $x,y\in\bH$ such that $\sharp(x)=\mu$, $\sharp(y)=\nu$ and $W_2(\mu,\nu)=\|x-y\|.$ Let us note that $\g:=\sharp(x,y) \in \Gamma_o(\mu, \nu).$ We have 
$$\nabla\tilde \cU(x)(y-x)=\int_\Om \nabla_w \cU(\mu)(x(\om))\cdot (y(\om)-x(\om))\dd \om=\int_{\M^2}\nabla_w \cU(\mu)(q_1)\cdot (q_2-q_1)\dd\g(q_1, q_2).$$
Thus, by \eqref{eq:proof_1} 
$$\left|\cU(\nu)-\cU(\mu)- \int_{\M^2}\nabla_w \cU(\mu)(q_1)\cdot(q_2-q_1)\dd\g(q_1, q_2)\right|\le {C \over 2}W_2^2(\mu,\nu),$$
which by the arbitrariness of $\mu,\nu$ implies the statement.

{\bf Part 2.} We now need to prove the reversed implication and start by assuming that $\cU$ is $C^{1,1}(\sP_2(\M))$. In particular $\nabla_w \cU(\mu)(\cdot)$ is $C$--Lipschitz continuous on $\spt(\mu)$ (uniformly in $\mu$) and increasing the value of $C$ if necessary, we assume the inequality in Definition \ref{def:c11_wasserstein}(2) to hold with the same constant $C$. Take $x,y\in\bH$ and set $\mu:=\sharp(x)$ and $\nu:=\sharp(y).$ Recall $\tilde \cU \in C^1(\bH)$ and $\nabla\tilde \cU (x)=\nabla_w \cU (\mu)\circ x$. Let $\g:=\sharp(x, y)$ and let $\g_0\in\Gamma_o(\mu,\nu)$. We have 
\begin{align*}
& \left|\tilde \cU(y)-\tilde \cU(x)-\nabla\tilde \cU(x)(y-x)\right| \\ 
= &\left|\cU(\nu)- \cU(\mu)- \int_{\M^2}\nabla_w\cU(\mu)(q_1)\cdot(q_2-q_1)\dd\g(q_1, q_2)\right|\\
\le &\left|\cU(\nu)- \cU(\mu)-\int_{\M^2}\nabla_w \cU(\mu)(q_1)\cdot(q_2-q_1)\dd\g_0(q_1, q_2)\right|\\
+&\left|\int_{\M^2}\nabla_w\cU(\mu)(q_1)\cdot(q_2-q_1)\dd(\g_0-\g)(q_1, q_2)\right|\\
\le & CW_2^2(\mu,\nu)+\frac12\left\|D_q\nabla_w \cU(\mu)\right\|_{L^\infty}\left(\int_{\M^2}|q_1-q_2|^2\dd\g(q_1, q_2)+\int_{\M^2}|q_1-q_2|^2\dd\g_0(q_1, q_2)\right)\\
\le &CW_2^2(\mu,\nu)+\frac12 C\left(\|x-y\|^2+W_2^2(\mu,\nu)\right)\le 2C\|x-y\|^2,
\end{align*}
where in the penultimate line we used an inequality from Lemma 3.3 \cite{GangboT2017}. Indeed, according to Lemma 3.3 \cite{GangboT2017} if $\g_1,\g_2\in\Gamma(\mu,\nu)$ and $\xi\in C_c^2(\M)$, then
$$\left|\int_{\M^2}D\xi(q_1)\cdot(q_2-q_1)\dd(\g_1-\g_2)(q_1, q_2)\right|\le\frac12\|D^2\xi\|_{L^\infty}\left(\int_{\M^2}|q_1-q_2|^2\dd(\g_1+\g_2)(q_1, q_2)\right).$$
Since $\nabla_w \cU(\mu)$ is the limit of $(D\xi_n)_{n\in\N}$ (where $(\xi_n)_{n\in\N}\in C_c^\infty(\M)$) in $L^2_{\mu}(\M;\R^d)$ and $\nabla_w\cU(\mu)$ has a global Lipschitz continuous extension to $\M$, it is easy to see that the previous inequality is still valid for $D\xi=\nabla_w \cU(\mu)$ (for which we use its Lipschitz continuous extension to $\M$). 

This completes the verification of the proof of the lemma.\end{proof}

\begin{remark}
\begin{itemize}
\item[(i)] It seems an interesting open problem whether the equivalence in Lemma \ref{lem:c11_equiv} hold for $C^{1,\alpha}$ functions for $\a\in(0,1)$.
\item[(ii)] The uniform Lipschitz continuity property of $q\mapsto\nabla_w \cU(\mu)(q)$, from the proof of Lemma \ref{lem:c11_equiv}, appeared already in \cite[Lemma 3.3]{CarmonaD2015} and in \cite[Proposition 5.36]{CarmonaD-I}. However, not only our proof is based on a different approach, it is considerably shorter and will be useful in the proof of Lemma \ref{lem:weak-Holder}.
\end{itemize}
\end{remark}

\begin{definition}\label{def:c21_wasserstein} 
Let $\cB\subseteq \sP_2(\M)$ be open and geodesically convex and let $\alpha \in (0,1].$ We say that $\cU\in C^{2, \alpha,w }(\cB)$, if $\cU\in C^{1,\a}(\cB)$, and if there exist a constant $C>0$, and functions 
$$\Lambda_0:\R^d\times\cB\to\R^{d\times d}, \;\; \Lambda_1:\M^2\times\cB\to\R^{d\times d}$$ such that 
$$\Lambda_0\in L^\infty(\M;\mu), \;\; \Lambda_1\in L^\infty(\M^2;\mu\otimes\mu)$$  
\begin{itemize}
\item[(1)] \small
$$\bigg{|}\nabla_w\cU(\nu)(\overline q_1)-\nabla_w\cU(\mu)(q_1)-\Lambda_0(q_1,\mu)(\overline q_1-q_1)-\int_{\M^2}\Lambda_1(q_1,a,\mu)(b-a)\dd\g(a,b)\bigg{|}\le C\left(|q_1-\overline q_1|^{1+\a}+W_2(\mu,\nu)^{1+\a}\right)
$$
\item[(2)] $\Lambda_0$ and $\Lambda_1$ are $\a$--H\"older continuous, i.e. 
$$|\Lambda_0(q_1,\mu)-\Lambda_0(\overline q_1,\nu)|_\infty\le C\big(|q_1-\overline q_1|^\alpha +W_2^\alpha (\mu,\nu)\big)$$
and
$$|\Lambda_1(q_1, q_2,\mu)-\Lambda_1(\overline q_1, \overline q_2,\nu)|_\infty\le C(|q_1-\overline q_1|^\alpha +|q_2-\overline q_2|^\alpha +W_2^\alpha (\mu,\nu)),$$
\end{itemize}
for any $\mu,\nu\in\cB,$ $(q_1,\overline  q_1),(q_2, \overline q_2)\in\spt(\mu)\times\spt(\nu)$ and $\g\in\Gamma_o(\mu,\nu).$

We say that $\cU\in C^{2,\alpha,w }_{\rm{loc}}(\sP_2(\M))$, if $\cU\in C^{2,\alpha,w }(\cB_r)$ for all $r>0$.
\end{definition}
\begin{remark}\label{rem:whessian} Let $\Lambda_0$ and $\Lambda_1$ be as above.
\begin{itemize} 
\item[(1)] By abuse of notation we write 
$$D_{q_1}\big(\nabla_w\cU(\mu)(q_1)\big):=\Lambda_0(q_1,\mu)\ \  {\rm{and}}\ \  \overline \nabla^2_{ww}\cU(\mu)(q_1,q_2):=\Lambda_1(q_1, q_2,\mu),$$
for all $\mu\in\sP_2(\M)$ and $x,y\in\spt(\mu).$ The bar is to recall that $ \Lambda_1$ is not exactly the second Wasserstein gradient as introduced in \cite{ChowGan}.
\item[(2)]  Note that if we choose any  matrix $\Lambda(a, \mu)$ such that  any of its rows $w$ is such that $\nabla\cdot (w\mu)=0$ and $w\in L^2(\mu)$, then the matrix defined as $\overline \Lambda_1(q, a, \mu):= \Lambda_1(q, a, \mu)+\Lambda(a, \mu)$ also satisfies Definition \ref{def:c21_wasserstein} (1). We could determine $\Lambda_1(q, \cdot, \mu)$ uniquely by imposing that the $i$-th row of $(\Lambda_0(q, \mu), \Lambda_1(q, \cdot, \mu))$ is the unique element of minimal norm of the subdifferential of $(q, \mu) \mapsto \nabla_w \cU(\mu)(q).$  The $i$-th row of the element of minimal norm belongs to $\M \times T_\mu \sP_2(\M)$ and  the new matrix will be denoted as $\nabla_{ww}^2 \cU(\mu).$ This new matrix is selected at the expense of giving up the property that $\Lambda_1$ is uniformly bounded.  Increasing $C$ if necessary, we can instead ensure  
\[\|\nabla_{ww}^2 \cU(\mu)(q_1, \cdot)\|_{L^2_\mu} \leq C(r) \qquad \forall \mu \in \cB, \forall q_1 \in\spt(\mu). \]

\item[(3)] In the spirit of the terminology used in \cite{ChowGan}, we refer to $\overline \nabla^2_{ww}\cU$ as an ``extended Wasserstein Hessian'' of $\cU$. In contrast with the assumptions in \cite{ChowGan}, in Definition \ref{def:c21_wasserstein} (1), we assume slightly different conditions: the expansion here is required only on $\spt(\mu)\times\spt(\nu)$, $\Lambda_0$ and $\Lambda_1$ are supposed to be essentially bounded only on $\spt(\mu)$, and in addition we require the H\"older/Lipschitz property in Definition \ref{def:c21_wasserstein} (2) to be fulfilled. 

\item[(4)] Let us compare our definition of $C^{2,\a,w}_{\rm{loc}}(\cP_2(\M))$ regularity of $\cU$ to $C^{2,\a}_{\rm{loc}}(\bH)$ regularity of $\tilde \cU$ (where $\tilde\cU(x)=\cU(\sharp(x))$). If $\tilde\cU\in C^{2,\a}_{\rm{loc}}(\bH)$, then $\tilde\cU$ is twice continuously differentiable in the Fr\'echet sense and for each $r>0$ there exists $C=C(r)$ such that
\begin{equation}\label{equ:C21-hil}
\|\nabla\tilde\cU(y)-\nabla\tilde\cU(x)-\nabla^2\tilde\cU(x)(y-x,\cdot)\|\le C\|x-y\|^{1+\a},\ \forall\ x,y\in\B_r.
\end{equation}
To heuristically compare this inequality to the setting of $\cP_2(\M)$ we proceed as follows. Let $\sharp(x)=\mu$ and $\sharp(y)=\nu$ with $\|x-y\|=W_2(\mu,\nu).$ Then we know (see \cite{GangboT2017}) that $\nabla\tilde\cU(x)=\nabla_w\cU(\mu)\circ x$, $\nabla\tilde\cU(y)=\nabla_w\cU(\nu)\circ y$ and 
\begin{align*}
\nabla^2\tilde U(x)(h,h_*)=
 \int_{\Omega} D_{q}\big(\nabla_w \cU(\mu) \big)\circ x\; h\cdot h_* d\omega+  \int_{\Omega^2} \nabla^2_{ww} \cU(\mu)\big(x(\omega), x(\omega_*)\big)h(\omega)\cdot h_*(\omega_*) d\omega d\omega_*,
\end{align*}
if $\xi, \xi_* \in T_\mu \mathcal P_2(\mathbb M)$ and $h=\xi \circ x$ and $h_*=\xi_* \circ x.$ Thus, \eqref{equ:C21-hil} would read as 
\begin{align}
&\sup_{\|h_*\|\le 1}\Bigg{|}\int_\Om\left[ \nabla_w\cU(\nu)(y(\omega))\cdot h_*(\omega) - \nabla_w\cU(\mu)(x(\omega))\cdot h_*(\omega)\right]d\omega\nonumber\\
& -  \int_{\Omega} D_{q}\big(\nabla_w \cU(\mu) \big)\circ x\; (y-x)\cdot h_* d\omega-  \int_{\Omega^2} \nabla^2_{ww} \cU(\mu)\big(x(\omega), x(\omega_*)\big)(y-x)(\omega)\cdot h_*(\omega_*) d\omega d\omega_*  \Bigg{|}\nonumber\\
&\le CW_2(\mu,\nu)^{1+\a}\label{eq:LinftyvsL2}.
\end{align}
From here we see, a necessary condition to obtain inequality (1) in Definition \ref{def:c21_wasserstein} is to have \eqref{eq:LinftyvsL2} hold when we maximize over the set of $h$ such that $\|h_*\|_{L^1}\le 1$ rather than maximizing over the set of $h$ such that $\|h_*\|\le 1.$ In other words, we have not been able to show that if $\tilde\cU\in C^{2,\alpha}_{\rm{loc}}(\bH)$ then $\cU\in C^{2,\alpha,w}_{\rm{loc}}(\cP_2(\M)).$ Moreover, in Appendix \ref{appendixE} we show that imposing $\cU\in C^{2,\a,w}_{\rm{loc}}(\cP_2(\M))$ in general does not imply that $\tilde\cU\in C^{2,\alpha}_{\rm{loc}}(\bH).$

\item[(5)] Let us point out that using an extrinsic approach, \cite{BuckdahnLPR} introduced  spaces of the type $C^{2,1}(\cP_2(\M))$ via the differentials of their lifts on a  Hilbert space. In this work, we define $C^{2,1,w}(\cP_2(\M))$, in an intrinsic way, i.e. directly via the differential calculus on the Wasserstein space. As a result, our derivatives are always defined on the supports of the corresponding measures, while in \cite{BuckdahnLPR} the authors work with global extensions. Similarly, we require essential boundedness of the Wasserstein Hessian only on the support of the corresponding measures, while \cite{BuckdahnLPR} requires boundedness of the global extensions. The work by \cite{GangboT2017}, allows to assert that both the intrinsic and extrinsic approaches is essentially the same. However, $C^{2,1,w}(\cP_2(\M))$ has the advantage that it can be seen  as an increasing `limit' of the spaces $C^{2,1}(\M^m)$, when $m\to+\infty$, as we show this in Subsection \ref{subsec:regularity_limit} below.

\item[(6)] \cite[Section 2]{BuckdahnLPR} constructs an example of $\cU\in C^{2,1}(\cP_2(\M))$ for which its lifted version $\tilde\cU$ fails to be twice Fr\'echet differentiable at any point. More discussions can be found in \cite{BuckdahnLPR,CardaliaguetDLL, CarmonaD-I,CarmonaD-II,ChassagneuxCD}.
\end{itemize}
\end{remark}

\subsection{Regularity of $\cU$ as a by-product of regularity estimates on $U^{(m)}$}\label{subsec:regularity_limit} This subsection infer regularity properties on functions $\cU$ defined on $\sP_2(\M)$, from estimates on their restrictions $U^{(m)}.$ Recall that for $r>0$ $\B_r^m$ is a ball in $\M^m$ while $\cB_r$ is a ball in $\sP_2(\M).$ We assume that we have at hand a constant $C=C(r)>0$. 

\begin{lemma}\label{lem:reg_finite_infinite}
Suppose for each $m\in\N$ fixed, $U^{(m)}:\M^m\to\R$ is permutation invariant with respect to its $m$-variables and $|U^{(m)}|$ is bounded on $\B^m_r$ by a constant which depends on $r>0$ but is independent of $m.$ Then there exists $C=C(r)>0$ such that the followings hold true.
\begin{itemize}
\item[(i)] If $U^{(m)}$ satisfies Property \ref{def:app_reg_estim} (1)-(b) then for any $q, \; b\in \B^m_r,$ we have
$$|U^{(m)}(q)-U^{(m)}(b)|\le C W_2(\mu^{(m)}_q,\mu^{(m)}_b).$$
\item[(ii)] If $U^{(m)}$ satisfies Property \ref{def:app_reg_estim} (2). Then for any $q, \; b\in \B^m_r,$ we have 
$$\Big{|}U^{(m)}(b)-U^{(m)}(q)-\sum_{i=1}^m D_{q_i}U^{(m)}(q)\cdot(b_i-q_i)\Big{|}\le C W_2^2(\mu^{(m)}_q,\mu^{(m)}_b).$$ 
\item[(iii)] The assumption in (ii) implies  for any $q, \; b\in \B^m_r,$ 
\begin{itemize}
\item[(a)]  
$$m |D_{q_i}U^{(m)}(q)-D_{q_i}U^{(m)}(b)|\le C\left(|q_i-b_i|+W_2(\mu^{(m)}_q,\mu^{(m)}_b)\right).$$
\item[(b)] We have 
$$m |D_{q_i}U^{(m)}(q)-D_{q_j}U^{(m)}(b)|\le C\left(|q_i-b_j|+W_2(\mu^{(m)}_q,\mu^{(m)}_b)+\frac{1}{\sqrt{m}}\right),\ i\neq j.$$
\end{itemize}
\item[(iv)] Suppose that $U^{(m)}$ satisfies Property \ref{def:app_reg_estim} (3). If $i \in \{1, \cdots, m\}$ and $q, \; b\in \B^m_r$ then  
$$
m \bigg{|}D_{q_i}U^{(m)}(b)-D_{q_i}U^{(m)}(q)-\sum_{j=1}^m D^2_{q_iq_j}U^{(m)}(q)(b_j-q_j)\bigg{|}\le C\left(|q_i-b_i|^2+ W_2^2(\mu^{(m)}_q,\mu^{(m)}_b)\right).
$$ 
\item[(v)] The assumption in (iv) implies, $q, \; b\in \B^m_r,$ 
\begin{itemize}
\item[(a)] If  $i\neq j$ then 
$$m^2 |D^2_{q_i q_j}U^{(m)}(q)-D^2_{q_iq_j}U^{(m)}(b)|\le C\left(|q_i-b_i|+|q_j-b_j|+W_2(\mu^{(m)}_q,\mu^{(m)}_b)\right).$$
\item[(b)] If $(i,j)\neq (k,l), i\neq j, k\neq l$  then  
\begin{align*}
&m^2 |D^2_{q_iq _j}U^{(m)}(q)-D^2_{q_kq_l}U^{(m)}(b)|\le C \left(|q_i-b_k|+|q_j-b_l|+W_2(\mu^{(m)}_q,\mu^{(m)}_b)+\frac{1}{\sqrt{m}}\right).\\
\end{align*}
\item[(c)]  We have 
$$m |D^2_{q_iq_i}U^{(m)}(q)-D^2_{q_iq_i}U^{(m)}(b)|\le C \left(|q_i-b_i|+W_2(\mu^{(m)}_q,\mu^{(m)}_b)\right).$$
\item[(d)] We have 
$$m |D^2_{q_iq_i}U^{(m)}(q)-D^2_{q_jq_j}U^{(m)}(b)|\le C \left(|q_i-b_j|+W_2(\mu^{(m)}_q,\mu^{(m)}_b)+\frac{1}{\sqrt{m}}\right).$$
\end{itemize}
\end{itemize}
\end{lemma}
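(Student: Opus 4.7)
The plan is to combine three elementary ingredients: \emph{(a)} the permutation invariance of $U^{(m)}$, since $U^{(m)}(q)=\cU(\mu^{(m)}_q)$ depends only on the empirical measure and hence $U^{(m)}(q_\pi)=U^{(m)}(q)$ for every permutation $\pi$; \emph{(b)} choosing the labeling of $b$ to realize the $W_2$-optimal coupling between $\mu^{(m)}_q$ and $\mu^{(m)}_b$, which yields $\tfrac{1}{m}\sum_{i=1}^m |q_i-b_i|^2 = W_2^2(\mu^{(m)}_q,\mu^{(m)}_b)$; and \emph{(c)} Taylor expansion at the appropriate order, combined with Cauchy--Schwarz to absorb the off-diagonal contributions through the scaling in Property~\ref{def:app_reg_estim}. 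By the relabeling in (a) one may always arrange (b), after which all inequalities reduce to pointwise bounds on suitable quadratic or trilinear forms.

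For (i), I would write $U^{(m)}(b)-U^{(m)}(q)=\int_0^1\sum_i D_{q_i}U^{(m)}(q+t(b-q))\cdot(b_i-q_i)\,dt$ and apply Cauchy--Schwarz with weights $\sqrt m$ on the gradient side and $1/\sqrt m$ on the increment side: the first factor is bounded by Property~\ref{def:app_reg_estim}(1)(b), while the second equals $W_2$ after the optimal relabeling. For (ii), I would Taylor-expand $U^{(m)}$ to second order; the remainder is a quadratic form with coefficients $\tfrac12 D^2_{q_iq_j}U^{(m)}$, and by Property~\ref{def:app_reg_estim}(2) the diagonal piece contributes $\tfrac{C}{m}\sum|b_i-q_i|^2 = CW_2^2$, while the off-diagonal is controlled by $\tfrac{C}{m^2}\big(\sum|b_i-q_i|\big)^2 \le \tfrac{C}{m}\sum|b_i-q_i|^2 = CW_2^2$ through Cauchy--Schwarz. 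Item (iv) proceeds analogously by Taylor-expanding $D_{q_i}U^{(m)}$ to second order and using the case-analysis in Property~\ref{def:app_reg_estim}(3): the $(i,i,i)$ entry produces the $|b_i-q_i|^2$ contribution, mixed entries of type $(i,i,j)$ produce $|b_i-q_i|\cdot|b_j-q_j|/m^2$ terms handled by Young's inequality, and the remaining entries sum to $O(W_2^2)$ by the same Cauchy--Schwarz argument; multiplication by $m$ gives the stated bound.

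For (iii)(a), (v)(a) and (v)(c), I would apply the fundamental theorem of calculus to $D_{q_i}U^{(m)}$ or $D^2_{q_iq_j}U^{(m)}$ along the segment from $q$ to $b$; the resulting expression involves $D^2_{q_iq_j}U^{(m)}$ or $D^3_{q_iq_jq_k}U^{(m)}$ paired with $b-q$, and splitting according to which indices coincide with $i$ (respectively $i,j$) produces the $|b_i-q_i|$-type terms from the diagonal and a $CW_2$ remainder from the off-diagonal via Cauchy--Schwarz. For (iii)(b), (v)(b) and (v)(d), the comparison involves derivatives at different indices, so an additional permutation step is needed: using the symmetry identity $D_{q_i}U^{(m)}(q)=D_{q_j}U^{(m)}(q')$, where $q'$ is obtained from $q$ by swapping positions $i$ and $j$, I would then apply (iii)(a) to the pair $(q',b)$; since $\mu^{(m)}_{q'}=\mu^{(m)}_q$ the Wasserstein distance is preserved, but the optimal matching from $q'$ to $b$ is typically no longer the identity, and the resulting mismatch costs exactly the $1/\sqrt{m}$ additive slack that appears in the bounds. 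The same permutation trick reduces (v)(b) and (v)(d) to (v)(a) and (v)(c).

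The hard part will be the careful bookkeeping of off-diagonal contributions: each off-diagonal entry of $D^{k+1}U^{(m)}$ is only of size $m^{-(k+1)}$, but there are $O(m^{k+1})$ of them, so Cauchy--Schwarz (or Young's inequality) must be applied at precisely the right place to avoid losing a factor of $m$. Equally essential is the permutation-invariance reduction to the optimal labeling, since a direct Taylor estimate only controls expressions by $\sqrt{\tfrac1m\sum|b_i-q_i|^2}$ for the given labeling, which can be much larger than $W_2(\mu^{(m)}_q,\mu^{(m)}_b)$ when the labeling is far from the optimal coupling.
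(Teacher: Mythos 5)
Your plan matches the paper's proof essentially point for point: permutation invariance is used to reduce to a labeling that realizes $W_2(\mu^{(m)}_q,\mu^{(m)}_b)$, Taylor expansion at the appropriate order is applied, and the diagonal/off-diagonal split together with Cauchy--Schwarz absorbs the off-diagonal terms using the $m^{-1}/m^{-2}/m^{-3}$ hierarchy of Property \ref{def:app_reg_estim}. Parts (i), (ii), (iii)(a), (iv), (v)(a), (v)(c) are handled exactly as you describe.

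The one place where your description is imprecise (though the underlying idea is right) is the treatment of the mixed-index cases (iii)(b), (v)(b), (v)(d). You say you would "apply (iii)(a) to the pair $(q',b)$" after the swap, but (iii)(a), as proven, is only valid when the labeling is $W_2$-optimal: its proof runs Cauchy--Schwarz on $\sum_{k\neq i}\frac{1}{m^2}|q_k-b_k|$ and then replaces $\bigl(\tfrac{1}{m}\sum_k|q_k-b_k|^2\bigr)^{1/2}$ by $W_2$, a step that fails for $(q',b)$. For the swapped pair this quantity can exceed $W_2$ by $O(1)$, not $O(1/\sqrt{m})$, because $|q'_i-b_i|=|q_j-b_i|$ can be of size $r\sqrt{m}$. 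What you actually need to do (and what the paper does, via a slightly different but equivalent rearrangement of both $q$ and $b$) is redo the Taylor expansion for $(q',b)$ and handle the single mismatched coordinate $k=i$ \emph{separately}: it carries the weight $C m^{-2}$ and magnitude $\lesssim r\sqrt{m}$, hence contributes $\lesssim C m^{-3/2}$, which after multiplication by $m$ is precisely the $1/\sqrt{m}$ slack. The remaining $k\neq i,j$ terms are still optimally matched, so they give $CW_2/m$ as before. So your conclusion is correct, but one cannot invoke (iii)(a) as a black box; the cost analysis of the mismatched pair must be carried out explicitly inside the Taylor estimate. The same remark applies to the reduction of (v)(b) and (v)(d) via the 4-index and 2-index swaps.
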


\begin{proof} Since $U^{(m)}$ is permutation invariant reordering $q$ and $b$ if necessary, we may assume 
$$\g^{(m)}:=\frac{1}{m}\sum_{i=1}^m\d_{(q_i, b_i)}\in\Gamma_{o}(\mu^{(m)}_q,\mu^{(m)}_b).$$ 
Below, using Taylor's  expansion, we may find $\xi\in\B^m_r$ on the line segment connecting $q$ to $b$ such that (using the shorthand notation $\|\cdot\|_\infty$ to denote $\|\cdot\|_{L^\infty(\B_r^m)}$)

(i)  we have
$$ |U^{(m)}(b)-U^{(m)}(q)|\le \Big{|}\sum_{i=1}^m D_{q_i}U^{(m)}(\xi)\cdot(b_i-q_i)\Big{|}\le\left(\sum_{i=1}^m m|D_{q_i} U^{(m)}|^2\right)^{\frac12}\left(\sum_{i=1}^m\frac{1}{m}|q_i-b_i|^2\right)^{\frac12}.$$
Using the fact that 
$$\sum_{i=1}^m m|D_{q_i}U^{(m)}(q)|^2\le C^2\quad \text{and} \quad \sum_{i=1}^m\frac1m |q_i-b_i|^2=W_2^2(\mu^{(m)}_q,\mu^{(m)}_b),$$
we verify the statement in (i).

(ii) A second order Taylor expansion yields
\begin{align*}
U^{(m)}(b)&-U^{(m)}(q)-\sum_{i=1}^m D_{q_i}U^{(m)}(q)\cdot(b_i-q_i)=\frac12\sum_{i,j=1}^m \langle (b_i-q_i),D^2_{q_iq_j}U^{(m)}(\xi)(b_j-q_j)\rangle\\
&=\frac12\sum_{i=1}^m \langle (b_i-q_i),D^2_{q_iq_i}U^{(m)}(\xi)(b_i-q_i)\rangle+\frac12\sum_{i\neq j} \langle (b_i-q_i),D^2_{q_iq_j}U^{(m)}(\xi)(b_j-q_j)\rangle
\end{align*}
Thus, under the assumption in (ii), we have
\begin{align*}
\Big{|}U^{(m)}(b)&-U^{(m)}(q)-\sum_{i=1}^m D_{q_i}U^{(m)}(q)\cdot(b_i-q_i)\Big{|}\le \frac{C}{2m}\sum_{i=1}^m |q_i-b_i|^2+\frac14\sum_{i\neq j}\|D^2_{q_i q_j} U^{(m)}\|_{\infty}|q_i-b_i|^2\\
&+\frac14\sum_{i\neq j}\|D^2_{q_i q_j} U^{(m)}\|_{\infty}|q_j-b_j|^2\\
&\le \Bigl(\frac{C}{2}+\frac{C}{4} +\frac{C}{4}\Big) \int_{\M^2}|z-w|^2\dd\g^{(m)}(z,w)=CW_2^2(\mu^{(m)}_q,\mu^{(m)}_b).
\end{align*}

(iii)-(a) Performing again a first order Taylor expansion, we find
\begin{align*}
D_{q_i}U^{(m)}(q)-D_{q_i}U^{(m)}(b)&=\sum_{k=1}^m D^2_{q_k q_i} U^{(m)}(q)(q_k-b_k)\\
&=D^2_{q_i q_i} U^{(m)}(\xi)(q_i-b_i)+\sum_{k\neq i}D^2_{q_k q_i} U^{(m)}(\xi)(q_k-b_k).
\end{align*}
Thus using the assumptions, we find
\begin{align*}
\Big{|}D_{q_i}U^{(m)}(q)-D_{q_i}U^{(m)}(b)\Big{|}&\le \frac{C}{m}|q_i-b_i|+\left(\sum_{k\neq i}m^3\|D^2_{q_kq_i}U^{(m)}\|^2_{\infty}\right)^{\frac{1}{2}}\left(\sum_{k\neq i}\frac{1}{m^3}|q_k-b_k|^2\right)^{\frac{1}{2}}\\
&\le \frac{C}{m}\left(|q_i-b_i|+W_2(\mu^{(m)}_q,\mu^{(m)}_b)\right).
\end{align*}

(iii)-(b) Without loss of generality, let us suppose that $i<j$. By the permutation invariance of $U^{(m)}$, we observe that $D_{q_i}U^{(m)}(q)= D_{q_1}U^{(m)}(q^{ij})$ and a similar identity holds for $\quad D_{q_j}U^{(m)}(b)$ if we set 
\begin{equation}\label{eq:qij1} 
q^{ij}:= (q_i,q_j,q_1,\dots,q_{i-1},q_{i+1},\dots,q_{j-1},q_{j+1},\dots,q_m).
\end{equation} 
Using a similar identity for $D_{q_j}U^{(m)}(b)$ we obtain 
\begin{align*}
|D_{q_i}U^{(m)}(q)-D_{q_j}U^{(m)}(b)| & =|D_{q_1}U^{(m)}(q^{ij})- D_{q_1}U^{(m)}(b^{ij})|\\
&\le \|D^2_{q_1q_1}U^{(m)}\|_\infty|q_i-b_j|+\|D^2_{q_2q_1} U^{(m)}\|_\infty|q_j-b_i|\\
&+\sum_{k=1}^{i-1}\|D^2_{q_{k+2} \; q_1}U^{(m)}\|_\infty|q_{k}-b_{k}|+\sum_{k=i+1}^{j-1}\|D^2_{q_{k+1}\; q_1} U^{(m)}\|_\infty|q_{k}-b_{k}|\\
&+\sum_{k=j+1}^{m}\|D^2_{q_{k}q_1} U^{(m)}\|_\infty|q_{k}-b_{k}|.
\end{align*}
Thus, 
\begin{align*}
|D_{q_i}U^{(m)}(q)-D_{q_j}U^{(m)}(b)| 
&\le\frac{C}{m}|q_i-b_j|+\frac{C}{m^2}(|q_j|+|b_i|)+\frac{C}{m^2}\sum_{k=1}^m|q_k-b_k|\\
&\le \frac{C}{m}\left(|q_i-b_j|+W_2(\mu^{(m)}_q,\mu^{(m)}_b)+\frac{2r\sqrt{m}}{m}\right)\\
&\le \frac{C}{m}\left(|q_i-b_j|+W_2(\mu^{(m)}_q,\mu^{(m)}_b)+\frac{1}{\sqrt{m}}\right),
\end{align*}
where we have used the assumptions on $D^2_{q_iq_j}U^{(m)}$ and in the last two rows we used the facts that since $q,b\in\B_r^m$, we have that $|q_i|,|b_j|\le r\sqrt{m}$, for all $i,j\in\{1,\dots,m\}.$

(iv) Similarly to the previous points, we perform a Taylor expansion (of order two) to obtain
\begin{align*}
D_{q_i}U^{(m)}(b)-D_{q_i}U^{(m)}(x)-\sum_{j=1}^m D^2_{q_iq_j}U^{(m)}(q)(b_j-q_j)=\frac{1}{2} \sum_{j,k=1}^m \langle (b_k-q_k),D^3_{q_iq_jq_k}U^{(m)}(q)(b_j-q_j)\rangle,
\end{align*}
and thus
\begin{align*}
\Big{|}D_{q_i}U^{(m)}(b)&-D_{q_i}U^{(m)}(q)-\sum_{j=1}^m D^2_{q_iq_j}U^{(m)}(q)(b_j-q_j)\Big{|}\\
&\le\frac12\|D^3_{q_iq_iq_i} U^{(m)}\|_{\infty}|q_i-b_i|^2+\frac12\sum_{j\neq i}\|D^3_{q_iq_jq_j} U^{(m)}\|_{\infty}|q_j-b_j|^2\\
&+\frac12\sum_{j\neq k\neq i}\|D^3_{q_iq_jq_j} U^{(m)}\|_{\infty}|q_j-b_j|\cdot|q_k-b_k|. 
\end{align*}
We conclude 
\begin{align*} 
& \Big{|}D_{q_i}U^{(m)}(b)-D_{q_i}U^{(m)}(q)-\sum_{j=1}^m D^2_{q_iq_j}U^{(m)}(q)(b_j-q_j)\Big{|}\\
&\le\frac{C}{2m}|q_i-b_i|^2+\frac{C}{2m}\sum_{j=1}^m\frac{1}{m}|q_j-b_j|^2+\frac{C}{2m}\left(\sum_{j=1}^m\frac{1}{m}|q_j-b_j|\right)\left(\sum_{k=1}^m\frac{1}{m}|q_k-b_k|\right)\\
&\le\frac{C}{2m}\left(|q_i-b_i|^2+W_2^2(\mu^{(m)}_q,\mu^{(m)}_q)\right),
\end{align*}

(v) We write again 
\begin{align*}
D^2_{q_iq_j}U^{(m)}(q)-D^2_{q_iq_j}U^{(m)}(b)&=\sum_{k=1}^m D^3_{q_iq_jq_k}U^{(m)}(q)(q_k-b_k)\\
&=D^3_{q_iq_jq_i}U^{(m)}(q)(q_i-q_i)+D^3_{q_iq_jq_j}U^{(m)}(q)(q_j-b_j)\\
&+\sum_{k=1,k\neq i, k\neq j}^mD^3_{q_iq_jq_k}U^{(m)}(q)(q_k-q_k).
\end{align*}
Thus in the case of (a) using the assumptions, we find

\begin{align*}
\Big{|}D^2_{q_iq_j}U^{(m)}(q)-D^2_{q_iq_j}U^{(m)}(b)\Big{|}&\le \frac{C}{m^2}(|q_i-b_i|+|q_j-b_j|)+C\sum_{k=1}^m\frac{1}{m^3}|q_k-b_k| \\
&\le \frac{C}{m^2}\left(|q_i-b_i|+|q_j-b_j|+W_2(\mu^{(m)}_q,\mu^{(m)}_b)\right).
\end{align*}

In the case of (c), since $i=j$ in the above expansion, we find
\begin{align*}
|D^2_{q_iq_i}U^{(m)}(q)-D^2_{q_i q_i}U^{(m)}(b)| &\le \|D^3_{q_iq_iq_i}U^{(m)}\|_{\infty}|q_i-b_i|+\sum_{k\neq i}\|D^3_{q_iq_iq_k}U^{(m)}\|_{\infty}|q_k-b_k|\\
&\le \frac{C}{m}\left(|q_i-b_i|+W_2(\mu^{(m)}_q,\mu^{(m)}_b)\right).
\end{align*}

To show (b), let us suppose without loss of generality that $i<j< k<l$. By the permutation invariance of $U^{(m)}$ we have the identities 
$$
D^2_{q_iq_j}U^{(m)}(q)=D^2_{q_1q_2}U^{(m)}(q_i,q_j,q_k, q_l, \ov q) \quad \text{and} \quad 
D^2_{q_k q_l}U^{(m)}(b)=D^2_{q_1q_2}U^{(m)}(b_k,b_l,b_i,b_j,\ov b),
$$
where $\ov q,\ov b\in\R^{d\times(m-4)}$ obtained from $q$ and $b$, respectively, by deleting the vectors indexed by $i,j,k,l$. Therefore, using the local bounds on the third order derivatives of $U^{(m)}$, we have
\[
|D^2_{q_iq_j}U^{(m)}(q)-D^2_{q_k q_l}U^{(m)}(b)|=|D^2_{q_1q_2}U^{(m)}(q_i,q_j,q_k, q_l,\ov q)-D^2_{q_1q_2}U^{(m)}(q_k,q_l,q_i,q_j,\ov b)|
\]
and so, 
\begin{align*}
|D^2_{q_iq_j}U^{(m)}(q)-D^2_{q_k q_l}U^{(m)}(b)| 
&\le \|D^3_{q_1q_2q_1}U^{(m)}\|_\infty|q_i-b_k| + \|D^3_{q_1q_2q_2}U^{(m)}\|_\infty |q_j-b_l|\\
&+ \|D^3_{q_1q_2q_3}U^{(m)}\|_\infty|q_k-b_i| + \|D^3_{q_1q_2q_4}U^{(m)}\|_\infty|q_l-b_j|\\
&+\sum_{\a=1}^{i-1}\|D^3_{q_1q_2q_{\a+4}}U^{(m)}\|_\infty|q_\a-b_\a|+\sum_{\a=i+1}^{j-1}\|D^3_{q_1q_2q_{\a+3}}U^{(m)}\|_\infty|q_\a-b_\a|\\
&+\sum_{\a=j+1}^{k-1}\|D^3_{q_1q_2q_{\a+2}}U^{(m)}\|_\infty|q_\a-b_\a|+\sum_{\a=k+1}^{l-1}\|D^3_{q_1q_2q_{\a+1}}U^{(m)}\|_\infty|q_\a-b_\a|\\
&+\sum_{\a=l+1}^{m}\|D^3_{q_1q_2q_{\a}}U^{(m)}\|_\infty|q_\a-b_\a|.
\end{align*}
Thus, 
\begin{align*} 
& |D^2_{q_iq_j}U^{(m)}(q)-D^2_{q_k q_l}U^{(m)}(b)| \\
&\le \frac{C}{m^2}\left(|q_i-b_k| +  |q_j-b_l|\right) + \frac{C}{m^3}(|q_k|+|b_i|+|q_l|+|b_j|)+\frac{C}{m^3}\sum_{\a=1}^m|q_\a-b_\a|\\
&\le \frac{C}{m^2}\left(|q_i-b_k| +  |q_j-b_l|+W_2(\mu^{(m)}_q,\mu^{(m)}_b)+\frac{1}{\sqrt{m}}\right),
\end{align*}
where we have used again that since $q, b\in\B_r^m$, we have $|q_\a|, |b_\a|\le C\sqrt{m}$ for all $\a\in\{1,\dots,m\}.$

In the case of (d), we proceed similarly as for (b). Let us suppose without loss of generality that $i<j$. Then, by the permutation invariance of $U^{(m)}$, we use the expression in \eqref{eq:qij1} to obtain 
$$D^2_{q_iq_i}U^{(m)}(q)=D^2_{q_1q_1}U^{(m)}(q^{ij}),$$
Using the analogous identity with $D^2_{q_jq_j}U^{(m)}(b)$ we conclude 
\begin{align*}
|D^2_{q_iq_i}U^{(m)}(q)-D^2_{q_jq_j}U^{(m)}(b)|&=|D^2_{q_1q_1}U^{(m)}(q^{ij})-D^2_{q_1q_1}U^{(m)}(b^{ij})|\\
&\le \|D^3_{q_1q_1q_1}U^{(m)}\|_\infty|q_i-b_j| + \|D^3_{q_1q_1q_2}U^{(m)}\|_\infty|q_j-b_i|\\
&+\sum_{k=1}^{i-1}\|D^3_{q_1q_1q_{k+2}}\|_\infty|q_{k}-b_{k}|+\sum_{k=i+1}^{j-1}\|D^3_{q_1q_1q_{k+1}}\|_\infty|q_{k}-b_{k}|\\
&+\sum_{k=j+1}^{m}\|D^3_{q_{1}q_1q_k}\|_\infty|q_{k}-b_{k}|.
\end{align*}
Thus, 
\begin{align*}
|D^2_{q_iq_i}U^{(m)}(q)-D^2_{q_jq_j}U^{(m)}(b)|
&\le\frac{C}{m}|q_i-b_j|+\frac{C}{m^2}(|q_j|+|b_i|)+\frac{C}{m^2}\sum_{k=1}^m|q_k-b_k|\\
&\le \frac{C}{m}\left(|q_i-b_j|+W_2(\mu^{(m)}_q,\mu^{(m)}_b)+\frac{1}{\sqrt{m}}\right),
\end{align*}
where we have used again that since $q, b\in\B_r^m$, we have $|q_\a|,|b_\a|\le C\sqrt{m}$ for all $\a\in\{1,\dots,m\}.$
\end{proof}

The following two theorems show how the quantified regularity estimates on the restrictions of functions $u:\M\times\sP_2(\M)\to\R$ and $\cU:\sP_2(\M)\to\R$ to $\M\times\M^m$ and $\M^m$, respectively, will imply the corresponding regularity of the original functions.

\begin{theorem}\label{thm:finite_infinite_reg-scalar-mast} 
Let $u:\M\times\sP_2(\M)\to\R$ be a continuous function. For $m\in\N$, we define $u^{(m)}:\M\times(\M)^m\to\R$ as
$$u^{(m)}(q_0,q):=u(q_0,\mu^{(m+1)}_q),$$
where $(q_0,q)=(q_0,q_1,\dots,q_m)\in (\M)^{m+1}$ and $\mu_q^{(m+1)}=\frac{1}{m+1}\sum_{i=0}^m\d_{q_i}$. 
Suppose that $u^{(m)} \in C^{1,1}_{\rm{loc}}(\M\times(\M)^m)$ and that for $K\subset\M$ compact and $r>0$, $u^{(m)}(q_0,\cdot)$ satisfies the estimates of Property \ref{def:app_reg_estim}(1)-(a) and (2) for all $q_0\in K$, with a constant $C=C(K,r)>0$. Let us moreover assume that for any $K\subset\M$ compact and $r>0$, there exists $C=C(K,r)>0$ such that 
\begin{align}\label{def:x_0-constant}
&|D_{q_0}u^{(m)}(q_0,q)|\le C,\ \ |D^2_{q_0q_0}u^{(m)}(q_0,q)|_\infty\le C,\ \sum_{i=1}^m m|D^2_{q_iq_0}u^{(m)}(q_0,q)|^2_\infty\le C\\
\nonumber&{\rm{and}}\\ 
\nonumber&|D^2_{q_iq_j} u^{(m)}(q_0,q)|_\infty \le \left\{
\begin{array}{ll}
\ds\frac{C}{m}, & i=j,\ {\rm{and}}\ i>0,\\[5pt]
\ds\frac{C}{m^2}, & i\neq j,\ i,j>0,
\end{array}
\right.
\end{align}
for any $q_0\in K$ and $q=(q_1,\dots,q_m)\in\B_r^m$.

Then, there exists $\Phi_1:\M\times\cP_2(\M)\times\M\to\R^d$ locally Lipschitz continuous function such that for any $r>0$ and $K\subset\M$ compact, there exists $C=C(K,r)>0$ such that for any $q_0,y_0\in K$, any $\mu,\nu\in\sP_2(\M)$ and $\g\in\Gamma_o(\mu,\nu)$, $u$ satisfies 
\begin{align*} 
& \Big{|}u(y_0,\nu)-u(q_0,\mu)-D_{q_0}u(q_0,\mu)\cdot(y_0-q_0)-\int_{\M^2}\Phi_1(q_0,\mu,q)\cdot(y-q)\dd\g(q,y)\Big{|}\\
& \le C\left(|q_0-y_0|^2+W_2^2(\mu,\nu)\right).
\end{align*}
This implies in particular that $u\in C^{1,1}_{\rm{loc}}(\M\times\sP_2(\M))$,  $\nabla_w u(q_0,\mu)(\cdot)$ can be obtained as the projection of $\Phi_1(q_0,\mu,\cdot)$ onto $T_\mu\cP_2(\M)$ and
\begin{align*} 
& \Big{|}u(y_0,\nu)-u(q_0,\mu)-D_{q_0}u(q_0,\mu)\cdot(y_0-q_0)-\int_{\M^2}\nabla_w u(q_0,\mu)(q)\cdot(y-q)\dd\g(q,y)\Big{|}\\
& \le C\left(|q_0-y_0|^2+W_2^2(\mu,\nu)\right).
\end{align*}
\end{theorem}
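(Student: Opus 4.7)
The approach is an enhanced version of the one-variable Lemma \ref{lem:reg_finite_infinite}, now accommodating the parameter $q_0 \in \M$ in addition to the measure argument. The plan is: (i) establish the claimed Taylor inequality at the discrete level on empirical measures $\mu^{(m+1)}_q$ with constants uniform in $m$; (ii) build the vector field $\Phi_1$ as a limit of natural finite-dimensional proxies; and (iii) extend from discrete to arbitrary measures by density and continuity. Throughout, the subtlety that $q_0$ appears both as the first variable of $u$ and as an atom of $\mu^{(m+1)}_q$ contributes only terms of order $1/(m+1)$ which vanish in the limit.

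For (i), fix $q_0, y_0 \in K$ and $q, b \in \B_r^m$, and reorder the components so that the standard discrete coupling concentrated on $\{(q_0, y_0), (q_1, b_1), \dots, (q_m, b_m)\}$ lies in $\Gamma_o(\mu^{(m+1)}_q, \mu^{(m+1)}_b)$ and realizes $W_2^2(\mu^{(m+1)}_q, \mu^{(m+1)}_b) = \frac{1}{m+1}\big(|y_0-q_0|^2 + \sum_{i=1}^m |q_i-b_i|^2\big)$. A second-order Taylor expansion of $u^{(m)}$ from $(q_0, q)$ to $(y_0, b)$ leaves a remainder that decomposes naturally into three blocks: the pure $D^2_{q_0 q_0}u^{(m)}$ contribution, bounded by $C|y_0-q_0|^2$; the mixed $D^2_{q_i q_0}u^{(m)}$ block, which by Cauchy-Schwarz using the crucial hypothesis $\sum_i m |D^2_{q_i q_0}u^{(m)}|^2 \le C$ is bounded by $C|y_0-q_0|\, W_2(\mu^{(m+1)}_q, \mu^{(m+1)}_b) \le C(|y_0-q_0|^2 + W_2^2)$; and the pure $D^2_{q_i q_j}u^{(m)}$ block, treated exactly as in Lemma \ref{lem:reg_finite_infinite}(ii) to yield $O(W_2^2)$.

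For (ii), set $\Phi_1^{(m)}(q_0, \mu^{(m+1)}_q, q_i) := (m+1) D_{q_i} u^{(m)}(q_0, q)$ on the support of $\mu^{(m+1)}_q$, assigning the $q_0$-atom its analogous weight extracted from the part of $D_{q_0}u^{(m)}$ arising from the dependence of $\mu^{(m+1)}_q$ on $q_0$. The discrete Taylor identity from (i) then rewrites as $u^{(m)}(y_0, b) - u^{(m)}(q_0, q) = D_{q_0}u(q_0, \mu^{(m+1)}_q)(y_0-q_0) + \int_{\M^2}\Phi_1^{(m)}(q_0, \mu^{(m+1)}_q, q)\cdot(y-q)\, d\gamma^{(m+1)} + O(|y_0-q_0|^2 + W_2^2)$. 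Lemma \ref{lem:reg_finite_infinite}(iii)(a)-(b) (whose $m^{-1}$ factors exactly cancel the $(m+1)$-prefactor), combined with the $D^2_{q_i q_0}$ and $D^2_{q_0 q_0}$ bounds, produce Lipschitz estimates on $\Phi_1^{(m)}$ in $(q_0, \mu, q)$ uniform in $m$ up to a $1/\sqrt{m}$ error which vanishes. An Arzel\`a-Ascoli plus diagonal extraction along sequences of empirical measures approximating arbitrary $\mu \in \cP_2(\M)$ in $W_2$ then produces a locally Lipschitz limit $\Phi_1$ on $\M \times \cP_2(\M) \times \M$, and the discrete Taylor inequality passes to the limit to yield the first displayed estimate.

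The main obstacle is that $\Phi_1$ constructed this way is only canonical modulo the $L^2(\mu)$-orthogonal complement of $T_\mu \cP_2(\M)$: the integral of $\Phi_1(q_0, \mu, \cdot)\cdot(y-\cdot)$ against $\gamma \in \Gamma_o(\mu, \nu)$ only detects the tangential component, since $(y-q)/W_2(\mu,\nu)$ realizes (to leading order) the initial velocity of a Wasserstein geodesic and any such velocity lies in $T_\mu\cP_2(\M)$. This explains why the statement identifies $\nabla_w u(q_0, \mu)(\cdot)$ as the $L^2(\mu)$-projection of $\Phi_1(q_0, \mu, \cdot)$ onto $T_\mu\cP_2(\M)$, and the second displayed inequality follows immediately from the first. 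Together with the uniform $q_0$-derivative bounds, which yield $D_{q_0}u \in C^{0,1}_{\rm loc}(\M \times \cP_2(\M))$, this identifies $u$ as an element of $C^{1,1}_{\rm loc}(\M \times \cP_2(\M))$ in the sense of Definition \ref{def:c11_M_times_wasserstein}.
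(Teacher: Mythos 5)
Your proposal follows the same overall strategy as the paper's proof: a second-order Taylor expansion in the discrete variables, the crucial Cauchy--Schwarz bound on the mixed Hessian block using $\sum_i m|D^2_{q_iq_0}u^{(m)}|^2\le C$, Lipschitz (Kirszbraun-type) extension and Arzel\`a--Ascoli extraction of $\Phi_0,\Phi_1$, and finally the observation that the coupling integral only senses the $T_\mu\sP_2(\M)$-projection of $\Phi_1$. Those are exactly the ingredients used in the paper.

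The one place you diverge creates a genuine, if repairable, gap. You propose to build $\Phi_1^{(m)}$ on all $m+1$ atoms of $\mu_q^{(m+1)}$ (including the $q_0$-atom), assigning the $q_0$-atom a value ``extracted from the part of $D_{q_0}u^{(m)}$ arising from the dependence of $\mu_q^{(m+1)}$ on $q_0$.'' But that extraction is not available from the stated hypotheses: you only control the composed map $u^{(m)}(q_0,q)=u(q_0,\mu_q^{(m+1)})$, where the first argument and the atom are tied together. Splitting $D_{q_0}u^{(m)}$ into a ``first-argument'' part $D_{q_0}u(q_0,\mu_q^{(m+1)})$ and a ``measure-atom'' part $\frac{1}{m+1}\nabla_w u(q_0,\mu_q^{(m+1)})(q_0)$ presupposes differentiability of the \emph{decoupled} function $\hat u^{(m+1)}(a,q_0,q):=u(a,\mu_q^{(m+1)})$ in $q_0$ alone, which is precisely part of what the theorem is being used to establish (the paper only invokes such a decomposition later, in Lemma~\ref{lem:last-estimates_time}, after the $C^{1,1}$ regularity of $u$ is already in hand). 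The paper's proof sidesteps this entirely: it defines $\Phi_0^{(m)}(q_0,\mu_q^{(m)}):=D_{q_0}u^{(m)}(q_0,q)$ (the \emph{full} $q_0$-derivative) and $\Phi_1^{(m)}(q_0,q_i,\mu_q^{(m)}):=m\,D_{q_i}u^{(m)}(q_0,q)$ only on the $m$-atom empirical measure $\mu_q^{(m)}=\frac1m\sum_{i=1}^m\delta_{q_i}$ excluding $q_0$, uses the $m$-point coupling $\g^{(m)}=\frac1m\sum_i\delta_{(q_i,\qo_i)}$, and only afterwards identifies the limits with $D_{q_0}u$ and (after projection) $\nabla_w u$. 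This is the cleaner route; adopting it makes your argument go through without needing any a priori differentiability of $u$ in the measure variable.

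Two further points that your sketch passes over but which the paper handles carefully: (a) to ensure that the discrete plan $\g^{(m)}$ converges to a \emph{prescribed} $\g\in\Gamma_o(\mu,\nu)$, the atoms must be Lebesgue-point samples of a pair $(x,y)$ with $\sharp(x,y)=\g$, so that $\{(q_i,\qo_i)\}$ is cyclically monotone and $\g^{(m)}$ is genuinely optimal for the empirical marginals; merely ``reordering the components'' is not enough if $(q_0,y_0)$ is held fixed. (b) The Arzel\`a--Ascoli extraction first produces a limit on compactly supported measures $\sP_{\rm c}(\M)$, and continuity in $W_2$ is then needed to pass to general $\mu\in\cB_r$.
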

\begin{proof}  Our construction is inspired by \cite[Lemma 8.10]{GangboS2015}. 

For $m\in\N$ we define $\Phi_0^{(m)}:\M\times \sP_{2}^{(m)}(\M)\to\R^d$ and $\Phi_1^{(m)}:\M\times\bigcup_{\mu\in\sP_{2}^{(m)}(\M)}\spt(\mu)\times \{\mu\}\to \R^d$ as
$$\Phi_0^{(m)}(q_0,\mu^{(m)}_q):=D_{q_0}u^{(m)}(q_0,q)$$
and
$$\Phi_1^{(m)}(q_0,q_i,\mu^{(m)}_q):=m D_{q_i}u^{(m)}(q_0,q),\quad \forall i\in\{1,\dots,m\}.$$ 
Here $$q=(q_1,\dots,q_m) \quad \text{and} \quad \mu^{(m)}_q:=\frac1m\sum_{i=1}^m\d_{q_i} \in \sP_{2}^{(m)}(\M).$$
From the assumptions of this theorem, as a consequence of Lemma \ref{lem:reg_finite_infinite}(i), when restricted to $K\times\sP_{2}^{(m)}(\M)\cap\cB_r$ where $K\subseteq\M$ is  compact and $r>0$, $\Phi_0^{(m)}$ is uniformly bounded and uniformly Lipschitz continuous, with respect to $m$ (and the Lipschitz constant depends solely on $K$ and $r$). 

Let $\mathcal K$ be the collection of compact sets in $\M$. We assume there exists a positive function $C$ defined $\mathcal K \times (0,\infty)$ such that $C(K, r) \leq C(K', r')$ $K \subset K'$ and $r \leq r'.$ 

We assume to be given a family of functions 
\[
f^{(m)}: \M \times \sP_2^{(m)}(\M) \rightarrow \R
\]
such that  for each $r>0$ and each $K \in \mathcal K,$  the restriction of $f^{(m)}$ to  $K \times \Big(\sP^{(m)}_2(\M) \cap \cB_r \Big)$ is $C(K, r)$--Lipschitz. We assume there exists a compact subset in the real line which contains all the $f^{(m)}(0, \delta_0).$

In what follows, we will perform Lipschitz extensions of various functions using the Kirszbraun extension formula. For $r>0$, $q_0\in \M$ and $K \in \mathcal K,$ we define the Kirszbraun--Valentine extension  $f^{(m)}_{K, r}(q_0, \cdot):\sP_2(\M) \rightarrow \R$ as 
\begin{equation}\label{lip:extension}
f^{(m)}_{K, r}(q_0, \mu)= \inf_{\nu} \Big\{ f^{(m)}(q_0, \nu)+C(K, r)W_2(\mu, \nu) \; : \; \nu \in \sP_2^{(m)}(\M) \cap \cB_r  \Big\}.
\end{equation}
We have that $f^{(m)}_{K, r}(q_0, \cdot)$ is $C(K, r)$--Lipschitz for all $q_0 \in \M$ and  $f^{(m)}_{K, r}$ coincides with $f^{(m)}$ on $K \times ( \sP_2^{(m)}(\M) \cap \cB_r).$ Furthermore, for any $K' \in \mathcal K$, $f^{(m)}_{K, r}(\cdot, \mu)$ is $C(K', r)$--Lipschitz on $K' \times \sP_2(\M)$. 

Let $\overline B_R(0)$ denote the closed ball of radius $R>0$, centered at the origin in $\M$ and let  $\sP_{\rm c}(\M)$ be the union of all the $\sP_2(\overline B_R(0)).$  Since  $\sP_2(\overline  B_R(0))$ is a compact subset of $\sP_2(\M)$, we apply the Ascoli--Arzel\`a theorem and use a diagonalization argument to obtain a function 
$$
f^\infty_{K, r}: \M \times \sP_{\rm c}(\M) \rightarrow \R 
$$ 
such that a subsequence of $(f^{(m)}_{K, r})_m$ converges locally uniformly to $f^\infty_{K, r}$ on compact sets. We have that $f^\infty_{K, r}(q_0, \cdot)$ is $C(K, r)$--Lipschitz on $\sP_{\rm c}(\M)$ for all $q_0 \in \M$ and  $f^\infty_{K, r}(\cdot, \mu)$ is $C(K', r)$--Lipschitz on $K'$ for $\mu\in \sP_{\rm c}(\M).$
In fact 
\begin{equation}\label{eq:important1}
\big|f^\infty_{K, r}(q_0, \mu)-f^\infty_{K, r}(a_0, \nu)\big| \leq C(K', r)\Big(|q_0-a_0|+W_2(\mu, \nu) \Big) 
\end{equation}
for all $q_0, a_0 \in K'$ and $\mu, \nu \in \cB_r.$

The function $f^\infty_{K, r}$ admits a unique $C(K, r)$--Lipschitz extension to $K \times \cB_r$ which we continue to denote as $f^\infty_{K, r}$. Using the construction \eqref{lip:extension} for each coordinate function of $\Phi_0^{(m)}$, we construct $$\Phi_{0, K, r}^{\infty}:\M\times\sP_2(\M)\to\R^d.$$

Similarly, assume we are given a family of functions $\Phi_1^{(m)}$ defined on 
\[
 \M \times  \bigg\{\Big(q_i, {1\over m} \sum_{j=1}^m \delta_{q_j}\Big)\; : \; q \in (\M)^m\bigg\}.
\]
As a consequence of the assumptions and Lemma \ref{lem:reg_finite_infinite}(iii)-(b) we assume for each $r>0$
and $K \in \mathcal K,$ 
\[
\Big|\Phi^{(m)}_1(q_0, q_1, \mu_q^{(m)})- \Phi^{(m)}_1( \qo_0, \qo_1, \mu_{\qo}^{(m)})\Big| \leq C(K, r) \Big( |q_0-\qo_0|+ |q_1-\qo_1|+ W_2(\mu_q^{(m)},  \mu_{\qo}^{(m)})+{1\over \sqrt m}\Big)
\]
for all $q_0, \qo_0 \in K$ and all $q, \qo \in \B_r^m.$ 

For each $k \in \{1, \cdots, d\}$,  $\Phi^{(m), k}_1$ and $q_0, q_* \in \M$, define 
\[
\Phi^{(m),k }_{1,K, r}(q_0, q_*, \mu):=  \inf_{\qo} \Big\{\Phi^{(m),k}_1(q_0, \qo_i, \mu_{\qo}^{(m)}))+C(K, r)\Big(|q_*-\qo_i|+W_2(\mu, \mu_{\qo}^{(m)}) \Big) \; : \; \qo \in \B_r^m  \Big\}
\]
Note 
\begin{equation}\label{eq:need-k-limit1}
\big|\Phi^{(m),k }_{1,K, r}\big(q_0,q_i,\mu_q^{(m)}\big)-\Phi^{(m),k}_1(q_0, q_i,\mu_q^{(m)}) \big| \le \frac{C}{\sqrt{m}},\quad \forall (q_0, q)\in K\times\B_r^m.
\end{equation}

As done earlier, there is a function 
$$ 
\Phi^{\infty,k }_{1,K, r}: \M \times \M \times \sP_2(\M) \rightarrow \R
$$ 
and a subsequence (which we may assume subsequence to be the same as the ones above) such that $\Big(\Phi^{(m),k }_{1,K, r}\Big)_m$ converges locally uniformly to $\Phi^{\infty,k }_{1,K, r}$ on compact sets. Increasing the value of $C(K', r)$ if necessary, we have 
\begin{equation}\label{eq:important2}
\Big|\Phi^{\infty }_{1,K, r}(q_0, q_1, \mu)-\Phi^{\infty }_{1,K, r}(\qo_0, \qo_1, \nu)\Big| \leq C(K', r)\Big(|q_0-\qo_0|+ |q_1-\qo_1|+ W_2(\mu, \nu) \Big) 
\end{equation}
if $q_0, q_1, \qo_0, \qo_1 \in K'$ and $\mu, \nu \in \cB_r.$

Let $q_0, \qo_0\in\M$ and let $K\subset\M$ be the closure of a bounded open set containing the line segment $[q_0,\qo_0]$. Let furthermore $q,\qo\in\B_r^m$. By the regularity assumptions  on $u^{(m)}$ one can write the following Taylor expansion 
\begin{align*}
u^{(m)}(\qo_0,  \qo )&-u^{(m)}(q_0, q)-D_{q_0} u^{(m)}(q_0,q)\cdot(\qo_0-q_0)-\sum_{i=1}^m D_{q_i} u^{(m)}(q_0,q)\cdot(\qo_i-q_i)\\
&=\frac{1}{2}(\qo_0-q_0)\cdot D^2_{q_0q_0}u^{(m)}(z_0,z)(\qo_0-q_0)+\sum_{i=1}^m(\qo_i-q_i)\cdot D^2_{q_iq_0}u^{(m)}(z_0,z)(\qo_0-q_0)\\
&+\frac12\sum_{i=1}^m(\qo_i-q_i)D^2_{q_iq_i} u^{(m)}(z_0,z)(\qo_i-q_i)+\frac12\sum_{i\neq j=1}^m(\qo_j-q_j)D^2_{q_iq_j} u^{(m)}(z_0,z)(\qo_i-q_i),
\end{align*}
where $(z_0,z)\in\M\times(\M)^m$ is a point on the line segment connecting $(q_0,q)$ to $(\qo_0,\qo)$. If 
$q,\qo \in\B_r^m$, by convexity, we also have that $z\in\B_r^m$. Now, using the uniform bounds on $D^2_{q_iq_j}u^{(m)}$ from the assumptions of this theorem, increasing the value of $C=C(K,r)>0$ if necessary, we have 
\begin{align}\label{eq:taylor-altern}
\Big{|}u^{(m)}(\qo_0,\overline y)&-u^{(m)}(q_0,q)-D_{q_0} u^{(m)}(q_0,q)\cdot(\qo_0-q_0)-\sum_{i=1}^m D_{q_i} u^{(m)}(q_0,q)\cdot(\qo_i-q_i)\Big{|}\\
\nonumber&\le C|\qo_0-q_0|^2 +C|\qo_0-q_0|\sum_{i=1}^m\frac{1}{\sqrt{m}}|\qo_i-q_i|\sqrt{m}|D_{q_iq_0}^2 u^{(m)}|\\
\nonumber&+\frac{C}{2m}\sum_{i=1}^m|\qo_i-q_i|^2+\frac{C}{2}\left(\sum_{j=1}^m\frac{1}{m}|\qo_j-q_j|^2\right)^{\frac12}\left(\sum_{i=1}^m\frac{1}{m}|\qo_i-q_i|^2\right)^{\frac12}\\
\nonumber&\le C\left(|q_0-\qo_0|^2+W_2^2(\mu_q^{(m)},\mu_{\qo}^{(m)})\right),
\end{align}
where in the last inequality we have used a Cauchy-Schwarz and a Young inequality, i.e.
\begin{align*}
|\qo_0-q_0|\sum_{i=1}^m\frac{1}{\sqrt{m}}|\qo_i-q_i|\sqrt{m}|D_{q_iq_0}^2 u^{(m)}|&\le|\qo_0-q_0|\left(\sum_{i=1}^m\frac{1}{m}|\qo_i-q_i|^2\right)^{\frac12}\left(m|D_{q_iq_0}^2 u^{(m)}|^2\right)^\frac{1}{2}\\
&\le\frac12|\qo_0-q_0|^2+\frac{C}{2}\sum_{i=1}^m\frac{1}{m}|\qo_i-q_i|^2
\end{align*}
Now, using the previous constructions, the first line in the chain of inequalities \eqref{eq:taylor-altern} can be rewritten as
\begin{align}\label{eq:identification}
&u^{(m)}(\qo_0,\qo)-u^{(m)}(q_0,q)-D_{q_0} u^{(m)}(q_0,q)\cdot(\qo_0-q_0)-\sum_{i=1}^m D_{q_i} u^{(m)}(q_0,q)\cdot(\qo_i-q_i)\nonumber \\
&=u(\qo_0,\mu_{\qo}^{(m+1)})-u(q_0,\mu_q^{(m+1)})-\Phi_0^{(m)}(q_0,\mu_q^{(m)})\cdot(\qo_0-q_0))\nonumber \\
&-\int_{\M^2}\Phi_1^{(m)}(q_0,q,\mu_q^{(m)})\cdot(\qo-q)\gamma^{(m)}(dq,d\qo),
\end{align}
where $(q_i)_{i=1}^m$ and $(\qo_i)_{i=1}^m$ are ordered in a way that 
$$
W_2^2(\mu_q^{(m)},\mu_{\qo}^{(m)})=\frac{1}{m}\sum_{i=1}^m|q_i-\qo_i|^2\quad \text{ and} \quad \gamma^{(m)}:=\frac{1}{m}\sum_{i=1}^m\delta_{(q_i,\qo_i)}\in\Gamma_o(\mu^{(m)}_q,\mu_{\qo}^{(m)}).
$$ 
In what follows, we pass to the limit all the terms in the previous line, keeping in mind that only the integral term needs some additional effort. We have
\begin{align}
&\int_{\M^2}\Phi_1^{(m)}(q_0, e,\mu_q^{(m)})\cdot(\overline e-e)\gamma^{(m)}(de,d\overline e)\nonumber\\
=&\int_{\M^2}\Phi^{(m)}_{1, K, r}(q_0, e,\mu_q^{(m)})\cdot(\overline e-e)\gamma^{(m)}(de,d\overline e)\nonumber\\
+&\int_{\M^2}\left(\Phi_1^{(m)}(q_0,e,\mu_q^{(m)})-\Phi^{(m)}_{1, K, r}(q_0,e,\mu_q^{(m)})\right)\cdot(\overline e-e)\gamma^{(m)}(de,d\overline e)\label{eq:int-Phi1-m1}
\end{align}
Let us observe that
\begin{align}
&\Bigg{|}\int_{\M^2}\left(\Phi_1^{(m)}(q_0,e,\mu_q^{(m)})-\Phi^{(m)}_{1, K, r}(q_0,e,\mu_q^{(m)})\right)\cdot(\overline e-e)\gamma^{(m)}(de,d\overline e)\Bigg{|}\nonumber\\
&\le\frac{C}{\sqrt{m}}\int_{\M^2} |e-\overline e|\gamma^{(m)}(de,d\overline e)\le\frac{2rC}{\sqrt{m}}\label{eq:int-Phi1-m2}.
\end{align}

The next step in our argument to pass to the limit in the remaining integral in the {first line of \eqref{eq:int-Phi1-m2}} works as follows. Fix a compact set $K \subset \M$, $R>0$ $q_0\in K$ and let $\mu,\nu\in\sP(\ov B_R(0))$ and $\g\in\Gamma_o(\mu,\nu)$. Let moreover $x, y\in\bH$ be such that  $\sharp(x,y)=\g$, which implies $\sharp(x)=\mu$, $\sharp(y)=\nu$. For $m\in \N$, recall $(\Omega_j^m)_{j=1}^m$ is the partition of introduced in Section \ref{sec:preliminaries}. Let us notice that for a.e. $\omega\in\Om$, $(x(\omega), y(\omega))\in\spt(\g).$ Let $(\omega)_{i=1}^m$ be Lebesgue points of $(x,y)$ such that $\omega_i\in\Om_i$ for all $i\in\{1,\dots,m\}$. Let us define 
$$ 
q_i:=x(\omega_i),\; \qo_i:=y(\omega_i), \; q:=(q_1,\dots,q_m), \qo:=(\qo_1,\dots,\qo_m)\in\B_r^m, \quad i\in\{1,\dots, m\}.
$$ 
We will assume we have chosen the Lebesgue points such that $M^q_{m}\to x$, $M^{\qo}_{m}\to y$ as $m\to+\infty$, strongly in $\bH$. We have that $\{(q_i,\qo_i)\}_{i=1}^m$ is contained in $\spt(\g)$ and so, it is cyclical monotone.  This implies that if we define $\g^{(m)}:=1/m\sum_{i=1}^m\d_{(q_i,\qo_i)}$ then monotonicity of the set of these points, one has that 
$$\g^{(m)}\in\Gamma_o(\mu^{(m)}_q,\mu^{(m)}_{\qo}).$$
Let us underline that in our construction it is very important that $\g^{(m)}$ is an optimal plan and a necessary and sufficient condition for this is the cyclical monotonicity of its support (cf. \cite{Santambrogio,Villani}).

Furthermore, as the supports of the measure involved are contained in the compact set $\overline B_R(0)$, we have the following narrow convergence 
$$\g^{(m)}\weakly \g,\ m\to+\infty,\quad \lim_{m\rightarrow \infty }W_2(\mu^{(m)}_q,\mu)=\lim_{m\rightarrow \infty } W_2(\mu^{(m)}_{\qo},\nu)= 0.$$
As, 
$$\sharp(M^q_m)=\mu_q^{(m)}, \; \sharp(M^{\qo}_m)=\mu_{\qo}^{(m)}\;\; \text{and}\;\; \sharp(M^q_m,M^{\qo}_m)=\g^{(m)},$$ 
we have in particular 
$$W_2^2(\mu^{(m)}_q,\mu^{(m)}_{\qo})=\sum_{i=1}^m\frac{1}{m}|q_i- \qo_i|^2=\|M^q_m-M^{\qo}_m\|^2.$$
By the uniform Lipschitz property of $\Phi^{(m)}_{1, K, r}$, we have  
 $$\lim_{m\rightarrow \infty }\Phi^{(m)}_{1, K, r}(q_0,M^q_m(\omega),\mu^{(m)}_q)=\Phi^\infty_{1, K, r}(q_0,x(\omega),\mu)$$ 
 and
 $$\lim_{m\rightarrow \infty } \Phi^{(m)}_{1, K, r}(q_0,M^{\qo}_m(\omega),\mu^{(m)}_{\qo}) =\Phi^\infty_{1, K, r}(q_0,y(\omega),\nu),$$
for a.e. $\omega$ in $\Om$. Also, since for a.e. $\omega\in\Om$, \eqref{eq:need-k-limit1} implies 
$$ \Phi^{(m)}_{1, K, r}(q_0,M^q_m(\omega),\mu^{(m)}_q)=m D_{q_i}u^{(m)}(q_0, q)+O(1/\sqrt{m}),$$ for some $i\in\{1,\dots, m\}$, by the assumption Property \ref{def:app_reg_estim}(1)(a), we have that $\left(\Phi^{m}_{1, K, r}(q_0,M^q_m(\cdot),\mu^{(m)}_q)\right)_m$ is a uniformly bounded sequence. Therefore, using all these facts, Lebesgue's dominated convergence theorem yields that up to passing to a suitable subsequence, that we do not relabel, we obtain
$$
\lim_{m\rightarrow \infty}\big\|\Phi^{(m)}_{1, K, r}(q_0,M^q_m,\mu^{(m)}_q)-\Phi^{\infty}_{1, K, r}(q_0,x,\mu)\big\|= \lim_{m\rightarrow \infty}\big\|\Phi^{(m)}_{1, K, r}(q_0,M^{\qo}_m,\mu^{(m)}_{\qo})-\Phi^{\infty}_{1, K, r}(q_0,y,\nu)\big\|=0.
$$
Now, using a suitable subsequence that we do not relabel, we conclude 
\begin{align*}
&\lim_{m\rightarrow \infty}\int_{\M^2} \Phi^{(m)}_{1, K, r}(q_0,q,\mu_q^{(m)})\cdot(\overline e-e)\g^{(m)}(de, d\overline e)\\
=& \lim_{m\rightarrow \infty}\int_\Om \Phi^{(m)}_{1, K, r} (q_0,M^q_m(\omega),\mu_q^{(m)})\cdot(M^{\qo}_m(\omega)-M^q_m(\omega))d\omega\\
= &\int_\Om\Phi_{1,K,r}^\infty(q_0,x(\omega),\mu)\cdot(y(\omega)-x(\omega))\dd\omega\\
=& \int_{\M^2}\Phi_{1,K,r}^\infty(q_0,e,\mu)\cdot(\overline e-e)\g(de,d \overline e).
\end{align*}
We combine \eqref{eq:taylor-altern} and \eqref{eq:identification}   to obtain
\begin{align*}
&\Big{|}u(\qo_0,\nu)-u(q_0,\mu)-\Phi^{\infty}_{0, K, r}(q_0,\mu)\cdot(\qo_0-q_0)-\int_{\M^2}\Phi^{\infty}_{1, K, r}(q_0,e,\mu)\cdot(\overline e-e)\g(de,d\overline e)\Big{|}\\
\le &C(K, r)\left(|q_0-\qo_0|^2+W_2^2(\mu,\nu)\right).
\end{align*}
We underline that the previous inequality has only been established under the condition that $\mu,\nu\in\cB_r$ have compact support. Since $u$ is continuous, we combine \eqref{eq:important1} and \eqref{eq:important2} to conclude 
\begin{align}\label{ineq:egy}
&\Big{|}u(\qo_0,\nu)-u(q_0,\mu)-\Phi^{\infty}_{0, K, r}(q_0,\mu)\cdot(\qo_0-q_0)-\int_{\M^2}\Phi^{\infty}_{1, K, r}(q_0,e,\mu)\cdot(\overline e-e)\g(de,d\overline e)\Big{|}\nonumber\\
\leq &C(K, r)\left(|q_0-\qo_0|^2+W_2^2(\mu,\nu)\right)
\end{align} 
for any $q_0, \qo_0 \in K$ and $\mu,\nu\in\cB_r.$

Note that in \eqref{ineq:egy}, $\Phi^{\infty}_{0, K, r}$ and $\Phi^{\infty}_{1, K, r}$ depend a priori on $K$ and $r.$ However since $K$ and $r$ are arbitrary, $u$ is differentiable at every $(q_0, \mu) \in \M \times \sP_2(\M).$ We have that $\Phi^{\infty}_{0, K, r}(q_0,\mu)$ must coincide with $D_{q_0} u(q_0, \mu)$ which is uniquely determined and so, it is independent of $K$ and $r$. Furthermore, the Wasserstein sub- and super-differentials of $u(q_0, \cdot)$ at $\mu$ coincide and contain a unique element of minimal norm $\nabla_w u(q_0, \mu)$. We do not know that $\Phi^{\infty}_{1, K, r}(q_0, \cdot,\mu)$ equals to $\nabla_w u(q_0, \mu)(\cdot)$,  however,  for $\gamma \in \Gamma_o(\mu, \nu)$, \eqref{ineq:egy} implies 
\begin{align}\label{ineq:egybis}
&\Big{|}u(\qo_0,\nu)-u(q_0,\mu)- D_{q_0} u(q_0, \mu) \cdot(\qo_0-q_0)-\int_{\M^2}\nabla_w u(q_0, \mu)(e)\cdot(\overline e-e)\g(de,d\overline e)\Big{|}\nonumber\\
\leq &C(K, r)\left(|q_0-\qo_0|^2+W_2^2(\mu,\nu)\right)
\end{align} 
for any $q_0, \qo_0 \in K$ and $\mu,\nu\in\cB_r.$ In fact $\nabla_w u(q_0, \mu)$ is the projection of $\Phi^{\infty}_{1, K, r}(q_0, \cdot,\mu)$ onto $T_\mu\cP_2(\R^d)$.
\end{proof}

Using the exact same steps as in the proof of Theorem \ref{thm:finite_infinite_reg-scalar-mast}, we can show an analogous result for functions depending on time as well. We formulate this in the following 

\begin{corollary}\label{cor:finite_infinite_reg-scalar-mast-time} 
Let $u:(0,+\infty)\times\M\times\sP_2(\M)\to\R$ be a continuous function. For $m\in\N$, we define $u^{(m)}:(0,+\infty)\times\M\times(\M)^m\to\R$ as
$$u^{(m)}(t_0,q_0,q):=u(t_0,q_0,\mu^{(m+1)}_q),$$
where $(q_0,q)=(q_0,q_1,\dots,q_m)\in (\M)^{m+1}$ and $\mu_q^{(m+1)}=\frac{1}{m+1}\sum_{i=0}^m\d_{q_i}$. 
Suppose that $u^{(m)} \in C^{1,1}_{\rm{loc}}((0,+\infty)\times\M\times(\M)^m)$ and that for $I\subset(0,+\infty)$ and $K\subset\M$ compacts and $r>0$, $u^{(m)}(t_0,q_0,\cdot)$ satisfies the estimates of Property \ref{def:app_reg_estim}(1)-(a) and (2) for all $(t_0,q_0)\in I\times K$, with a constant $C=C(I,K,r)>0$.We assume moreover that for any $I\subset(0,+\infty)$ and $K\subset\M$ compacts and $r>0$, there exists $C=C(I,K,r)>0$ such that 
\begin{align}\label{def:q_0-constant1}
&|D_{q_0}u^{(m)}(t_0,q_0,q)|\le C,\ \ |D^2_{q_0q_0}u^{(m)}(t_0,q_0,q)|_\infty\le C,\ \sum_{i=1}^m m|D^2_{q_iq_0}u^{(m)}(t_0,q_0,q)|^2_\infty\le C\\
\nonumber&|D^2_{q_iq_j} u^{(m)}(t_0,q_0,q)|_\infty \le \left\{
\begin{array}{ll}
\ds\frac{C}{m}, & i=j,\ {\rm{and}}\ i>0,\\[5pt]
\ds\frac{C}{m^2}, & i\neq j,\ i,j>0,
\end{array}
\right.
\end{align}
and
\begin{align}\label{def:q_0-constant2}
&|\partial_{t_0}u^{(m)}(t_0,q_0,q)|\le C,\ \ |\partial^2_{t_0t_0}u^{(m)}(t_0,q_0,q)|\le C,\ \ \ |\partial_{t_0}D_{q_0}u^{(m)}(t_0,q_0,q)|\le C,\\
\nonumber&\sum_{i=1}^m m|D_{q_i}\partial_{t_0}u^{(m)}(t_0,q_0,q)|^2\le C
\end{align}
for any $(t_0,q_0)\in I\times K$ and $q=(q_1,\dots,q_m)\in\B_r^m$.

Then, there exists $\Phi_1:(0,+\infty)\times\M\times\cP_2(\M)\times\M\to\R^d$ locally Lipschitz continuous function such that for any $r>0$ and $I\subset(0,+\infty)$ and $K\subset\M$ compacts, there exists $C=C(I,K,r)>0$ such that for any $s_0,t_0\in I$, $q_0,y_0\in K$, any $\mu,\nu\in\sP_2(\M)$ and $\g\in\Gamma_o(\mu,\nu)$, $u$ satisfies 
\begin{align*} 
 \Bigg{|}u(s_0,y_0,\nu)-u(t_0,q_0,\mu)&-D_{q_0}u(t_0,q_0,\mu)\cdot(y_0-q_0) -\partial_{t_0}u(t_0,q_0,\mu)(s_0-t_0)\\
 &-\int_{\M^2}\Phi_1(t_0,q_0,\mu,q)\cdot(y-q)\dd\g(q,y)\Bigg{|}\\
& \le C\left(|s_0-t_0|^2+|q_0-y_0|^2+W_2^2(\mu,\nu)\right).
\end{align*}
This implies in particular that $u\in C^{1,1}_{\rm{loc}}((0,+\infty)\times\M\times\sP_2(\M))$ and $\nabla_w u(t_0,q_0,\mu)(\cdot)$ is the projection of $\Phi_1(t_0,q_0,\mu,\cdot)$ onto $T_\mu\cP_2(\M)$ and
\begin{align*} 
 \Bigg{|}u(s_0,y_0,\nu)-u(t_0,q_0,\mu)&-D_{q_0}u(t_0,q_0,\mu)\cdot(y_0-q_0)\\
&-\partial_{t_0}u(t_0,q_0,\mu)(s_0-t_0)-\int_{\M^2}\nabla_w u(t_0,q_0,\mu)(q)\cdot(y-q)\dd\g(q,y)\Bigg{|}\\
& \le C\left(|s_0-t_0|^2+|q_0-y_0|^2+W_2^2(\mu,\nu)\right).
\end{align*}
\end{corollary}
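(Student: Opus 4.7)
The proof adapts the argument of Theorem \ref{thm:finite_infinite_reg-scalar-mast} with the time variable $t_0$ treated as an additional finite-dimensional variable alongside $q_0$. First I would set
\[
\Phi_0^{(m)}(t_0, q_0, \mu^{(m)}_q) := \bigl(\partial_{t_0} u^{(m)}(t_0, q_0, q),\; D_{q_0} u^{(m)}(t_0, q_0, q)\bigr)
\]
and
\[
\Phi_1^{(m)}(t_0, q_0, q_i, \mu^{(m)}_q) := m\, D_{q_i} u^{(m)}(t_0, q_0, q), \qquad i \in \{1,\dots,m\},
\]
and verify that on $I \times K \times (\sP_2^{(m)}(\M) \cap \cB_r)$ they are uniformly Lipschitz with constants depending only on $I$, $K$, $r$. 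The $(t_0, q_0)$-Lipschitz dependence of $\Phi_0^{(m)}$ follows directly from the bounds on $\partial_{t_0 t_0}^2 u^{(m)}$, $\partial_{t_0} D_{q_0} u^{(m)}$ and $D^2_{q_0 q_0} u^{(m)}$ in \eqref{def:q_0-constant1}--\eqref{def:q_0-constant2}. The $\mu$-Lipschitz dependence follows from Lemma \ref{lem:reg_finite_infinite}(i) applied to $\partial_{t_0} u^{(m)}(t_0, q_0, \cdot)$ (the hypothesis $\sum_{i=1}^m m |D_{q_i}\partial_{t_0} u^{(m)}|^2 \le C$ is exactly Property \ref{def:app_reg_estim}(1)-(b)) and to $D_{q_0} u^{(m)}(t_0, q_0, \cdot)$ (using $\sum_{i=1}^m m |D^2_{q_i q_0} u^{(m)}|^2 \le C$). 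For $\Phi_1^{(m)}$, Lemma \ref{lem:reg_finite_infinite}(iii) gives the $\mu$-Lipschitz property up to an $O(1/\sqrt{m})$ error, while uniform control in $(t_0, q_0)$ again comes from \eqref{def:q_0-constant1}--\eqref{def:q_0-constant2}.

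Next I would run the Kirszbraun--Valentine extension procedure from the proof of Theorem \ref{thm:finite_infinite_reg-scalar-mast} verbatim to produce $\Phi^{(m)}_{0,I,K,r}$ and coordinate-wise $\Phi^{(m),k}_{1,I,K,r}$, defined on all of $I\times K\times\sP_2(\M)$ (respectively $I\times K\times\M\times\sP_2(\M)$), and then invoke Arzel\`a--Ascoli with a diagonal subsequence extraction on $I\times K\times\sP_2(\overline{B}_R(0))$ to pass to limits $\Phi^\infty_{0,I,K,r}$ and $\Phi^\infty_{1,I,K,r}$, Lipschitz on the corresponding sets. The second-order Taylor expansion at $(t_0, q_0, q)$ now produces, beyond the pure and mixed terms already handled in \eqref{eq:taylor-altern}, the new cross term
\[
|s_0 - t_0| \sum_{i=1}^m |D_{q_i}\partial_{t_0} u^{(m)}(z)|\,|\qo_i - q_i| \leq |s_0-t_0|\,\Bigl(\sum_{i=1}^m m |D_{q_i}\partial_{t_0} u^{(m)}|^2\Bigr)^{1/2}\Bigl(\sum_{i=1}^m \tfrac{1}{m}|\qo_i-q_i|^2\Bigr)^{1/2},
\]
which by Young's inequality is controlled by $\tfrac12 |s_0 - t_0|^2 + \tfrac{C}{2} W_2^2(\mu^{(m)}_q, \mu^{(m)}_\qo)$; the pure $t_0$ term contributes $\tfrac{1}{2}|\partial^2_{t_0 t_0} u^{(m)}|\,|s_0-t_0|^2$, and the $|s_0-t_0|\,|y_0-q_0|$ term is estimated using the bound on $\partial_{t_0}D_{q_0} u^{(m)}$ and a further Young inequality. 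Every coefficient in the expansion is bounded uniformly in $m$ by the hypotheses \eqref{def:q_0-constant1}--\eqref{def:q_0-constant2}.

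Finally, the limit passage $m \to \infty$ is carried out identically to the proof of Theorem \ref{thm:finite_infinite_reg-scalar-mast}: one selects Lebesgue points $\omega_i \in \Omega_i^m$ of $(x,y)$ with $\sharp(x,y) = \gamma \in \Gamma_o(\mu, \nu)$ supported in $\overline{B}_R(0) \times \overline{B}_R(0)$, so that the empirical plan $\gamma^{(m)} = \tfrac{1}{m}\sum_i \delta_{(x(\omega_i), y(\omega_i))}$ is cyclically monotone hence optimal, and then uses dominated convergence together with the uniform boundedness of $\Phi^{(m)}_{1,I,K,r}$ coming from Property \ref{def:app_reg_estim}(1)-(a). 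This yields
\begin{align*}
\bigl|\, u(s_0, y_0, \nu) - u(t_0, q_0, \mu) &- \Phi^{\infty}_{0,I,K,r}(t_0, q_0, \mu) \cdot (s_0 - t_0,\, y_0 - q_0) \\
&- \int_{\M^2} \Phi^{\infty}_{1,I,K,r}(t_0, q_0, \mu, q) \cdot (y - q)\,d\gamma(q, y)\,\bigr| \\
&\leq C\bigl(|s_0 - t_0|^2 + |q_0 - y_0|^2 + W_2^2(\mu, \nu)\bigr),
\end{align*}
first for $\mu,\nu$ of compact support and then for arbitrary $\mu,\nu \in \cB_r$ by continuity of $u$ and the global Lipschitz bounds on the two $\Phi^\infty$'s. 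Uniqueness of the finite-dimensional part of the differential identifies the two components of $\Phi^\infty_{0,I,K,r}$ with $\partial_{t_0} u$ and $D_{q_0} u$, while the projection of $\Phi^\infty_{1,I,K,r}(t_0, q_0, \mu, \cdot)$ onto $T_\mu\cP_2(\M)$ equals $\nabla_w u(t_0, q_0, \mu)$, as in the proof of the theorem. There is no essential new obstacle; the main technical wrinkle is the control of the mixed $D_{q_i}\partial_{t_0} u^{(m)}$ cross term, which is precisely the motivation for the hypothesis $\sum_{i=1}^m m|D_{q_i}\partial_{t_0} u^{(m)}|^2 \le C$ in \eqref{def:q_0-constant2}.
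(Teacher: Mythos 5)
Your proposal is correct and takes essentially the same approach as the paper, which does not give a separate proof for this corollary but simply states it follows ``using the exact same steps as in the proof of Theorem \ref{thm:finite_infinite_reg-scalar-mast}.'' That is precisely what you carry out: treat $t_0$ as a second finite-dimensional coordinate alongside $q_0$, and absorb the new time-cross terms in the Taylor expansion via the bounds in \eqref{def:q_0-constant2}.
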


\begin{theorem}\label{thm:finite_infinite_reg} 
Let $\cU\in C^{1,1}_{\rm{loc}}(\sP_2(\M))$. Let $U^{(m)}:(\M)^m\to\R$ be defined as $U^{(m)}(q):=\cU(\mu_q^{(m)})$ for $q \in \M^m$, such that Property \ref{def:app_reg_estim}(2--3) are satisfied. Then $\cU\in C^{2,1,w}_{\rm{loc}}(\sP_2(\M))$  in the sense of Definition \ref{def:c21_wasserstein}, such that the following hold. There exist  $C:(0,\infty) \to (0,\infty)$ monotone nondecreasing and 
\begin{enumerate}
\item[(i)]there are continuous maps 
$$ 
\Lambda_0:\M\times\sP_2(\M)\to\R^{d\times d} \quad \text{and} \quad \Lambda_1:\M\times \M\times\sP_2(\M)\to\R^{d\times d}
$$  such that  for $\mu\in\sP_2(\M)$ we have 
$$
\sup_{\mu \in \cB_r}\|\Lambda_0(\cdot,\mu)\|_{L^\infty(\mu)}, \quad 
\sup_{\mu \in \cB_r}\|\Lambda_1(\cdot, \cdot,\mu)\|_{L^\infty(\mu\otimes \mu)} \leq C(r).
$$ 
\item[(ii)] Let $\mu,\nu\in\cB_r$ and $\g\in\Gamma_o(\mu,\nu)$. We have  
\small
\begin{equation}\label{ineq:forHessian}
\Big{|}\nabla_w\cU(\nu)(\qo)-\nabla_w\cU(\mu)(q)-\Lambda_0(q,\mu)(\qo-q)-\int_{\M^2}\Lambda_1(q,a,\mu)(b-a)\dd\g(a,b)\Big{|}\le C\left(|q-\qo|^2+W_2^2(\mu,\nu)\right)
\end{equation}
and 
\begin{align}\label{ineq:Lip-grad}
\big{|}\nabla_w\cU(\mu)(q)-\nabla_w\cU(\nu)(\qo)\big{|}\le C\left(|q-\qo|+W_2(\mu,\nu)\right),\ \forall\mu,\nu\in\cB_r,\ .
\end{align}
for all $(q,\qo)\in\spt(\mu)\times\spt(\nu)$. 
\end{enumerate}
\end{theorem}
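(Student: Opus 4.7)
\medskip
\noindent\textbf{Proof proposal.} The plan is to mimic, with the necessary simplifications, the strategy used for Theorem \ref{thm:finite_infinite_reg-scalar-mast}, the key observation being that on $\sP_{2}^{(m)}(\M)$ one has the identity $\nabla_w\cU(\mu^{(m)}_q)(q_i)=m\,D_{q_i}U^{(m)}(q)$, so that Lemma \ref{lem:reg_finite_infinite}(iv) already supplies a discrete second–order Taylor expansion of $\nabla_w\cU$. Concretely, set
\[
\Lambda_0^{(m)}(q_i,\mu_q^{(m)}):=m\,D^2_{q_iq_i}U^{(m)}(q),\qquad \Lambda_1^{(m)}(q_i,q_j,\mu_q^{(m)}):=m^2\,D^2_{q_iq_j}U^{(m)}(q)\quad (i\neq j),
\]
and, on the diagonal $i=j$, extend $\Lambda_1^{(m)}$ continuously in some harmless way (e.g.\ by the same formula with an $O(1/m)$ correction, which will vanish in the limit). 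Then Property \ref{def:app_reg_estim}(2) gives uniform boundedness in $m$ on each $\cB_r\cap \sP_{2}^{(m)}(\M)$, while Lemma \ref{lem:reg_finite_infinite}(v) (parts (a)--(d)) provides uniform Lipschitz estimates of the form
\[
|\Lambda_0^{(m)}(q,\mu)-\Lambda_0^{(m)}(\qo,\nu)|\le C\bigl(|q-\qo|+W_2(\mu,\nu)+m^{-1/2}\bigr),
\]
\[
|\Lambda_1^{(m)}(q,a,\mu)-\Lambda_1^{(m)}(\qo,\ao,\nu)|\le C\bigl(|q-\qo|+|a-\ao|+W_2(\mu,\nu)+m^{-1/2}\bigr),
\]
on points of the appropriate discrete sets, with $C=C(r)$ independent of $m$.

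The next step is to promote $\Lambda_0^{(m)},\Lambda_1^{(m)}$ to globally defined, uniformly Lipschitz objects on $\M\times\cP_2(\M)$ and $\M\times\M\times\cP_2(\M)$ respectively. I would do this by the Kirszbraun--Valentine extension used in the proof of Theorem \ref{thm:finite_infinite_reg-scalar-mast}: for each coordinate one defines
\[
\Lambda_{0,r}^{(m),k}(q,\mu):=\inf_{\qo\in\B_r^m,\,i}\Bigl\{\Lambda_0^{(m),k}(\qo_i,\mu_{\qo}^{(m)})+C(r)\bigl(|q-\qo_i|+W_2(\mu,\mu_{\qo}^{(m)})\bigr)\Bigr\}
\]
and similarly for $\Lambda_1^{(m)}$, with an extra coordinate. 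The Ascoli--Arzel\`a theorem together with a diagonal argument over a countable exhaustion of $\M\times\sP_c(\M)$ (measures with compact support) then yields, along a common subsequence, limits $\Lambda_0=\Lambda_0(q,\mu)$ and $\Lambda_1=\Lambda_1(q,a,\mu)$ that inherit the claimed local Lipschitz property (with the $m^{-1/2}$ term vanishing) and are uniformly bounded on $\cB_r$.

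To pass to the limit in the Taylor expansion of Lemma \ref{lem:reg_finite_infinite}(iv), I would fix $\mu,\nu\in\cB_r$ with compact supports in some $\overline B_R(0)$, choose $\g\in\Gamma_o(\mu,\nu)$, represent $\g=\sharp(x,y)$ with $x,y\in\bH$, and pick Lebesgue points $\om_i\in\Om_i^m$ so that $(q_i,b_i):=(x(\om_i),y(\om_i))$ forms a cyclically monotone set. Then $\g^{(m)}:=\frac1m\sum_i\delta_{(q_i,b_i)}\in\Gamma_o(\mu_q^{(m)},\mu_b^{(m)})$, $\g^{(m)}\weakly\g$, $M^q_m\to x$, $M^b_m\to y$ strongly in $\bH$, and Lemma \ref{lem:reg_finite_infinite}(iv) rewritten with the $\Lambda$'s reads
\[
\Bigl|\nabla_w\cU(\mu_b^{(m)})(b_i)-\nabla_w\cU(\mu_q^{(m)})(q_i)-\Lambda_0^{(m)}(q_i,\mu_q^{(m)})(b_i-q_i)-\!\!\int_{\M^2}\!\!\Lambda_1^{(m)}(q_i,a,\mu_q^{(m)})(\bt-a)\,\g^{(m)}(da,d\bt)\Bigr|
\]
is bounded by $C(|q_i-b_i|^2+W_2^2(\mu_q^{(m)},\mu_b^{(m)}))$, modulo negligible $1/\sqrt m$ errors from replacing $\Lambda_j^{(m)}$ by its Lipschitz extension (controlled exactly as in \eqref{eq:int-Phi1-m2}). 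Passing $m\to\infty$ via dominated convergence and continuity of $\nabla_w\cU$ (which holds since $\cU\in C^{1,1}_{\rm loc}$ by Definition \ref{def:c11_wasserstein} and Remark \ref{rmk:c11_wasserstein}) yields \eqref{ineq:forHessian} for $\mu,\nu$ compactly supported; density of such measures in $\cB_r$ and continuity of all quantities extends it to every $\mu,\nu\in\cB_r$. Finally, \eqref{ineq:Lip-grad} is the already known $C^{1,1}_{\rm loc}$ Lipschitz bound on $\nabla_w\cU$, which I would record by invoking Lemma \ref{lem:c11_equiv}.

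The main obstacle I anticipate is the subtlety in identifying the limiting $\Lambda_0,\Lambda_1$ with a genuine ``Wasserstein Hessian'' in the intrinsic sense: the Kirszbraun extension only guarantees that \eqref{ineq:forHessian} holds with some pair of bounded, Lipschitz tensor fields, and as discussed in Remark \ref{rem:whessian}(2) the non--diagonal $\Lambda_1$ is defined only up to divergence--free rows. This is not an actual obstruction to the statement of the theorem (which only requires existence of $\Lambda_0,\Lambda_1$ with the stated properties), but some care is needed in the limiting argument: one must ensure that the Kirszbraun extension of $\Lambda_1^{(m)}$ is performed \emph{in the two spatial variables jointly} so that, after taking limits along a subsequence, $\Lambda_1(q,\cdot,\mu)\in L^\infty(\mu)$ with the correct uniform bound on $\cB_r$, and symmetrically in the two spatial arguments after averaging over $\gamma\in\Gamma_o$.
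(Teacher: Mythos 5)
Your proposal reproduces the overall architecture of the paper's proof — the discrete tensors $\Lambda_0^{(m)},\Lambda_1^{(m)}$, the Lipschitz estimates of Lemma \ref{lem:reg_finite_infinite}(v) with the $m^{-1/2}$ error, the Kirszbraun--Valentine extension, Ascoli--Arzel\`a, and a discrete Taylor expansion from Lemma \ref{lem:reg_finite_infinite}(iv). However, there is a genuine gap in the limiting step. In your construction, every particle $(q_i,b_i)=(x(\omega_i),y(\omega_i))$ is drawn from $\spt(\g)$, so the base point of the Taylor expansion is itself a coupled pair. Passing $m\to\infty$ therefore yields \eqref{ineq:forHessian} only for $\g$-a.e.\ $(q,\qo)$, whereas the theorem asserts it for every $(q,\qo)\in\spt(\mu)\times\spt(\nu)$, with $\qo$ completely unrelated to $q$ under $\g$. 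This distinction matters: for instance, taking $\mu=\nu$ and $\g$ the identity coupling, the $\g$-a.e.\ statement reduces to the triviality $0\le 0$, while the full statement encodes the genuinely new information that $\Lambda_0(\cdot,\mu)$ is the Jacobian of $\nabla_w\cU(\mu)(\cdot)$ with quadratic remainder. Nor can the full statement be recovered from the $\g$-a.e.\ version by density, because the right-hand side $C\bigl(|q-\qo|^2+W_2^2\bigr)$ is quadratic in $|q-\qo|$ while $\nabla_w\cU$, $\Lambda_0$, $\Lambda_1$ are only Lipschitz, so replacing $(q,\qo)$ by a nearby coupled pair introduces a first-order error that destroys the bound.

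The paper resolves this by distinguishing one particle: it fixes an arbitrary $(q_1,\qo_1)\in\spt(\mu)\times\spt(\nu)$ \emph{not} required to lie in $\spt(\g)$, draws only the remaining $m-1$ particles from $\spt(\g)$, builds $\g^{(m-1)}:=\frac{1}{m-1}\sum_{i=2}^m\delta_{(q_i,\qo_i)}$, and writes the Taylor expansion \eqref{eq:tay-grad} at index $1$. This produces the factor $(m-1)/m\to 1$ in \eqref{eq:tay-grad-estim-3} and, as a side benefit, automatically avoids the undefined diagonal $\Lambda_1^{(m)}(q_1,q_1,\cdot)$, sidestepping the \emph{ad hoc} diagonal extension you propose. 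The distinguished-particle idea is exactly the mechanism you used yourself when discussing Theorem \ref{thm:finite_infinite_reg-scalar-mast} (the role of $q_0$ there), so the fix is within reach; the point is that it is needed here too, even though there is no extrinsic $\M$-variable in the present theorem.
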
 

\begin{proof}
We follow ideas similar to those presented in the proof of Theorem \ref{thm:finite_infinite_reg-scalar-mast}. Recall that for, $q\in \B^m_r$ we use the notation $\mu^{(m)}_q:=1/m\sum_{i=1}^m\d_{q_i}$ and use a similar notation for $\qo\in \B^m_r.$ Let us define the matrix valued functions 
$$\Lambda_{0}^{(m)}:\bigcup_{q\in\B_r^m}\spt(\mu^{(m)}_q)\times \{\mu^{(m)}_q\}\to\R^{d\times d}$$ 
and 
$$\Lambda_{1}^{(m)}:\bigcup_{q\in\B_r^m}\left(\big{(}\spt(\mu^{(m)}_q)\times\spt(\mu^{(m)}_q)\big{)}  \setminus \{(q_i,q_i)\, : \, i=1, \cdots, m\}\right)\times \{\mu^{(m)}_q\}\to\R^{d\times d}$$ 
as
$$\Lambda_0^{(m)}(q_i,\mu^{(m)}_q):=m D^2_{q_iq_i} U^{(m)}(q),\ \ {\rm{and}}\ \ \Lambda_{1}^{(m)}(q_i,q_j,\mu^{(m)}_q):=m^2 D^2_{q_iq_j} U^{(m)}(q),\ {\rm{if}}\ i\neq j.$$
Let us underline that we have not defined $\Lambda_{1}^{(m)}(q_i,q_i,\mu_q^{(m)})$  for $i=j.$ Because of this, later we will need special care when one passes to the limit the corresponding objects as $m\to+\infty$.

We observe that as a consequence of the assumptions and Lemma \ref{lem:reg_finite_infinite}(v)-(b,d), we have that for any $r>0$, there exists a constant $C=C(r)>0$ such that
$$|\Lambda_0^{(m)}(q_i,\mu^{(m)}_q)-\Lambda_0^{(m)}(\qo_j,\mu^{(m)}_{\qo})|\le C\left(|q_i-\qo_j|+W_2(\mu_q^{(m)},\mu_{\qo}^{(m)})+\frac{1}{\sqrt m}\right)$$
and
$$|\Lambda_1^{(m)}(q_i,q_k,\mu^{(m)}_q)-\Lambda_1^{(m)}(\qo_j,\qo_l,\mu^{(m)}_{\qo})|\le C\left(|q_i-\qo_j|+|q_k-\qo_l|+W_2(\mu_q^{(m)},\mu_{\qo}^{(m)})+\frac{1}{\sqrt m}\right)$$
for any $q, \qo\in\B_r^m$, and for any $i,j,k,l\in\{1,\dots,m\}$, $i\neq k$, $j\neq l$. For every coordinate function $(\Lambda_0^{(m)})_{\a\b}, (\Lambda_1^{(m)})_{\a\b}$ ($\a,\b\in\{1,\dots,d\}$), we define the extensions 
$$\Big(\Lambda_{0, r}^{(m)}\Big)_{\a\b}:\M\times\sP_2(\M)\to\R\quad \text{and} \quad \Big(\Lambda_{1,r}^{(m)}\Big)_{\a\b}:\M\times\M\times\sP_2(\M)\to\R$$ 
as follows. For $z,z_1,z_2\in\M$, $\mu\in\sP_2(\M)$  we set 
$$\Big( \Lambda_{0, r}^{(m)}\Big)_{\a\b}(z,\mu):=\inf\left\{ (\Lambda_{0}^{(m)})_{\a\b}(q_i,\mu_q^{(m)})+C\left(|q_i-z|+W_2(\mu_q^{(m)},\mu\right) \right\}$$
and
$$\Big(\Lambda_{1, r}^{(m)}\Big)_{\a\b}(z_1,z_2,\mu):=\inf\left\{ (\Lambda_{1}^{(m)})_{\a\b}(q_i,q_k,\mu_q^{(m)})+C\left(|q_i-z_1|+|q_k-z_2|+W_2(\mu_q^{(m)},\mu\right)\right\},$$
where both infima is taken over $q\in\B_r^m,\ i,k\in\{1,\dots, m\}, i\neq k$. 

Recall  $\Lambda_{0, r}^{(m)}$ and $\Lambda_{1, r}^{(m)}$ are $C(r)$--Lipschitz and we have
\begin{align}\label{eq:close1}
|\Lambda_{0, r}^{(m)}(q_i,\mu_q^{(m)})- \Lambda_{0}^{(m)}(q_i,\mu_q^{(m)})|_\infty\le \frac{C}{\sqrt{m}},\ \forall q\in \B_r^m, i\in\{1,\dots,m\}
\end{align}
and 
\begin{align}\label{eq:close2}
|\Lambda_{1, r}^{(m)}(q_i,q_k,\mu_q^{(m)})- \Lambda_{1}^{(m)}(q_i,q_k,\mu_q^{(m)})|_\infty\le \frac{C}{\sqrt{m}},\ \forall q\in \B_r^m, i,k\in\{1,\dots,m\}, i\neq k.
\end{align}
If $R>0$, $z_1, z_2 \in B_R(0)$ and $\mu$ is supported by $B_R(0)$ then  for all $\a,\b\in\{1,\dots, d\}$  
\[
-C \leq \left(\Lambda_{1, r}^{(m)}\right)_{\a\b}(z_1, z_2, \mu) \leq C+C \Big(|z_1|+|z_2|+W_2(0, \mu) \Big) \leq C(3R).
\]
We obtain a similar uniform bound on $\left( \Lambda_{0, r}^{(m)}\right)_m$. As in the proof of Theorem \ref{thm:finite_infinite_reg-scalar-mast}, there are $C$--Lipschitz functions 
$$
\Lambda_{0, r}:\M\times\sP_2(\M)\to\R^{d\times d}, \quad \Lambda_{1,r}:\M\times\M\times\sP_2(\M)\to\R^{d\times d}
$$ 
locally bounded respectively on $\M \times \sP_2(\M)$ and $\M^2 \times \sP_2(\M)$ by a constant depending only on $r$ and $R$. Up to a subsequence, as $m\to+\infty$, $\left( \Lambda_{0, r}^{(m)}\right)_m$ and $\left( \Lambda_{1,r}^{(m)}\right)_m$ converge to $\Lambda_{0, r}$ and $\Lambda_{1, r}$, uniformly on $\ov B_R(0) \times \sP(\ov B_R(0))$ and $\ov B_R(0)\times \ov B_R(0)\times \sP(\ov B_R(0))$, respectively.

Our next task is to show that 
$$\Lambda_{0, r}(\cdot,\mu)\in L^\infty(\M;\mu), \quad \Lambda_{1, r}(\cdot,\cdot,\mu)\in L^\infty(\M\times\M;\mu\otimes\mu), \qquad \forall \mu\in\cB_r\cap\sP(\ov B_R(0)).$$

{\it Claim 1.}  $\Lambda_{1,r}(\cdot,\cdot,\mu)\in L^\infty(\M^2;\mu\otimes \mu).$

{\it Proof of Claim 1.} Let $r>0$, $R>0$ and first let $\mu\in\cB_R\cap\sP(\ov B_R(0))$. Let $z_1,z_2 \in B_R(0).$ As we plan to let $m$ tend to $\infty$ it is not a loss of generality to assume $R \leq r \sqrt m.$ Since $q=(z_1, z_2, 0, \cdots, 0) \in \B^m_r$ we have 
\[
-C \leq \big(\Lambda^{(m)}_{1, r}\big)_{\a \b}(z_1, z_2, \mu) \leq (\Lambda^{(m)}_1)_{\alpha\beta}(z_1,z_2, \mu_q^{(m)}) + C(r) \Big(|z_1-z_1|+|z_2-z_2|+W_2\big(\mu_q^{(m)}, \mu \big) \Big) \leq C(r) + 2r C(r)
\]
Letting $m$ tend to $\infty$ we conclude $\big|\big(\Lambda_{1, r}\big)_{\a \b}(z_1, z_2, \mu)\big|  \leq C(r) + 2r C(r)$ first on $\M^2 \times \sP_{\rm c}(\M)$ and by continuity, this holds on $\M^2 \times \sP_2(\M)$. 
\hfill\break

{\it Claim 2.}  $\Lambda_{0,r}(\cdot,\mu)\in L^\infty(\M;\mu).$

{\it Proof of Claim 2.} The proof is similar but simpler than that of Claim 1. 
\hfill\break

For $q,\qo\in\B_r^m$ we have the expansion 
\begin{align}\label{eq:tay-grad}
& m D_{q_1}U^{(m)}(\qo)-m D_{q_1}U^{(m)}(q)-mD^2_{q_1q_1}U^{(m)}q)(\qo_1-q_1)- m\sum_{k=2}^m D^2_{q_{1}q_k}U^{(m)}(q)(\qo_k-q_k)\\
\nonumber&=\frac{m}{2}\sum_{k,l=1}^m (\qo_l-q_l) D^3_{q_1q_kq_l} U^{(m)}( z)(\qo_k-q_k)
\end{align}
where $ z$ is a point on the line segment connecting $q$ to $\qo$.  

Let $\mu,\nu\in\cB_r$, $\g\in\Gamma_o(\mu,\nu)$ and let $(q_1, \qo_1)\in\spt(\mu)\times\spt(\nu)$ (which is not necessarily in $\spt(\g)$). Suppose that both $\spt(\mu)$ and $\spt(\nu)$ contain more than one element. We choose $x, y \in \bH$ such that $\sharp(x, y)=\g$ and so, $\sharp(x)=\mu$, $\sharp(y)=\nu$. Let $(\Om_i^{m-1})_{i=1}^{m-1}$ be the partition of $\Om$ introduced in Section \ref{sec:preliminaries}.
We are going to choose special values of $m:=2^l+1$ and choose Lebesgue points $\omega_{i+1} \in \Om^{2^l}_i$ such that all the points in $\Om^{2^l}_i$ are kept in $\Om^{2^{l+1}}_i$. We set 
$q_i:=x(\omega_i), \quad \qo_i:=y(\omega_i)$ for $i=2, \cdots, m$
Set 
\[
\g^{(m-1)}:=\frac{1}{m-1}\sum_{i=2}^m\d_{(q_i,\qo_i)}, \qquad \mu_q^{(m-1)}:= \frac{1}{m-1}\sum_{i=2}^m\d_{q_i}, \qquad \mu_{\qo}^{(m-1)}:= \frac{1}{m-1}\sum_{i=2}^m\d_{\qo_i}.
\]
Since, $(q_i,\qo_i)_{i=2}^\infty$ is cyclically monotone 
\[
\g^{(m-1)} \in \Gamma_o\big(\mu_q^{(m-1)}, \mu_{\qo}^{(m-1)} \big).
\]
By construction $\big(\g^{(m-1)}\big)_m$ converges narrowly to $\g$. Let $M^q_{(m-1)}, M^{\qo}_{(m-1)}\in\bH$ the random variables corresponding to the previously chosen points $(q_2,\dots,q_m)$ and $(\qo_2,\dots,\qo_m)$, respectively. We have 
\begin{equation}\label{eq:nov28.2019.1} 
\lim_{m\to+\infty}W_2(\mu_q^{(m)},\mu)=\lim_{m\to+\infty}W_2(\mu_q^{(m-1)},\mu)=\lim_{m\to+\infty}W_2(\mu_{\qo}^{(m)},\nu)=\lim_{m\to+\infty}W_2(\mu_{\qo}^{(m-1)},\nu)=0.
\end{equation}
Furthermore, 
$$
\sharp\Big(M^q_{(m-1)},M^{\qo}_{(m-1)}\Big)=\g^{(m-1)},
$$
and \
$$ 
\lim_{m\to+\infty} \big\|M^q_{(m-1)} -x\big\|=  \lim_{m\to+\infty} \big\|M^{\qo}_{(m-1)} -y\big\|=0.
$$ 
Using the assumptions on $D^3_{q_jq_kq_l}U^{(m)}$, since $z\in\B_r^m$, increasing the value of $C$ if necessary, we have 
\begin{align*}
&\Big{|}m\sum_{k,l=1}^m (y_l-x_l) D^3_{q_1q_kq_l} U^{(m)}(z)(\qo_k-q_k)\Big{|}\\
\le &m|D^3_{q_1q_1q_1} U^{(m)}(z)|_\infty|\qo_1-q_1|^2+m\sum_{k=2}^m |D^3_{q_1q_kq_1} U^{(m)}(z)|_\infty|\qo_k-q_k| |\qo_1-q_1|\\
+ &m\sum_{l=2}^m |D^3_{q_1q_1q_l} U^{(m)}(z)|_\infty|\qo_1-q_1| |\qo_l-q_l|\\
+ &m\sum_{k=2}^m |D^3_{q_1q_kq_k} U^{(m)}(z)|_\infty|\qo_k-q_k|^2+m\sum_{k\neq l=2}^m |\qo_l-q_l| |D^3_{q_1q_kq_l} U^{(m)}(z)|_\infty|\qo_k-q_k|\\
\le &C\left(|\qo_1-q_1|^2+|\qo_1-q_1|\sum_{k=2}^m\frac{1}{m}|\qo_k-q_k|+\sum_{k=2}^m\frac{1}{m}|\qo_k-q_k|^2+\frac{1}{m^2}\sum_{k\neq l=2}^m |\qo_l-q_l| |\qo_k-q_k|\right)\\
\le &C\left(|\qo_1-q_1|^2+W_2^2(\mu_q^{(m-1)},\mu_{\qo}^{(m-1)})\right)
\end{align*}
Thus, this together with \eqref{eq:tay-grad} implies 
\begin{align*} 
&m  \Big{|}D_{q_{1}}U^{(m)}(\qo)- D_{q_{1}}U^{(m)}(q)-D^2_{q_{1}q_1}U^{(m)}(q)(\qo_1-q_1)-\sum_{k=2}^m D^2_{q_{1}q_k}U^{(m)}(q)(\qo_k-q_k)\Big{|}\\
\le & C\left(|\qo_1-q_1|^2+W_2^2(\mu_q^{(m-1)},\mu_{\qo}^{(m-1)})\right). 
\end{align*}
Using the definition of $\Lambda_0^{(m)}$ and $\Lambda_1^{(m)}$ we read off 
\begin{align}\label{eq:tay-grad-estim-3}
& \Big{|}\nabla_w\cU(\mu_\qo^{(m)})(\qo_1)-\nabla_w\cU(\mu_q^{(m)})(q_1)- \Lambda_0^{(m)}(q_1,\mu_q^{(m)})(\qo_1-q_1) \nonumber\\
& \qquad \qquad - {m-1 \over m}\int_{\M^2}\Lambda_1^{(m)}(q_1,a,\mu_q^{(m)})(b-a)\g^{(m-1)}(da,db)\Big{|}\\
\nonumber&\le C\left(|\qo_j-q_i|^2+W_2^2(\mu_q^{(m-1)},\mu_{\qo}^{(m-1)})\right),
\end{align}
Now, first by the continuity of $\nabla_w\cU$, \eqref{eq:nov28.2019.1} implies
$$\lim_{m\rightarrow \infty }\nabla_w\cU(\mu_q^{(m)})(q_1)= \nabla_w\cU(\mu)(q_1), \quad \text{and} \quad \lim_{m\rightarrow \infty }\nabla_w\cU(\mu_{\qo}^{(m)})(\qo_1)=\nabla_w\cU(\nu)(\qo_1).$$ 
Before passing to the limit in the other terms, let us {\it further} suppose that $\mu, \nu\in\sP(\ov B_R(0))$ for some $R>0$. In light of \eqref{eq:close1}, $\Lambda_0^{(m)}(q_1,\mu_q^{(m)})$ and $ \Lambda_{0,r}^{(m)}(q_1,\mu_q^{(m)})$ have the same limit. By the local uniform convergence property of $\Lambda_{0, r}^{(m)}$, we have that $\lim_{m\rightarrow \infty }\Lambda_0^{(m)}(q_1,\mu_q^{(m)})=\Lambda_{0, r}(q_1,\mu).$ 

To handle the limit in the last term on the left hand side of the inequality \eqref{eq:tay-grad-estim-3}, we observe that 
\begin{align*}
\int_{\M^2}\Lambda_1^{(m)}&(q_1,a,\mu_q^{(m)})(b-a)\g^{(m-1)}(da,db)=\int_{\M^2}\Lambda_{1, r}^{(m)}(q_1,a,\mu_q^{(m)})(b-a)\g^{(m-1)}(da,db)\\
&+\int_{\M^2}\left(\Lambda_1^{(m)}(q_1,a,\mu_q^{(m)})-\Lambda_{1, r}^{(m)}(q_1,a,\mu_q^{(m)})\right)(b-a)\g^{(m-1)}(da, db)
\end{align*}
and by \eqref{eq:close2}, increasing $C$ if necessary, we have that 
\begin{align*}
\Bigg{|}\int_{\M^2}\left(\Lambda_1^{(m)}(q_1,a,\mu_q^{(m)})-\Lambda_{1,r}^{(m)}(q_1,a,\mu_q^{(m)})\right)(b-a)\g^{(m-1)}(da,db)\Bigg{|}&\le\frac{C}{\sqrt{m}} \iint_{\M^2}|b-a|\g^{(m-1)}(da,db)\\
&\le\frac{C r}{\sqrt{m}}.
\end{align*}
Therefore, it is enough to study the limit of 
$$\int_{\M^2}\Lambda_{1, r}^{(m)}(q_1,a,\mu_q^{(m)})(b-a)\g^{(m-1)}(da, db).$$
Since 
$$ 
\Big|\Lambda_{1, r}^{(m)}(q_1,M^q_{(m-1)}(\omega),\mu_q^{(m)})-\Lambda_1^{(m)}(q_1,M^q_{(m-1)}(\omega),\mu_q^{(m)})\Big|\le \frac{C}{\sqrt{m}}
$$ 
and since $\Lambda_1^{(m)}(q_1,M^q_{(m-1)}(\omega),\mu_q^{(m)})=\Lambda_1^{(m)}(q_1,q_i,\mu_q^{(m)})$ for some $i\in\{2,\dots,m\}$ for a.e. $\omega\in\Om$, we have that 
$\omega\mapsto \Lambda_{1, r}^{(m)}(q_1,M^q_{(m-1)}(\omega),\mu_q^{(m)})$ is uniformly bounded with respect to $m\in\{2,3,\dots\}$. Thus by the previous convergences and by Lebesgue's dominated convergence theorem, up to passing to a subsequence that we do not relabel, we have that 
$$\lim_{m\to\infty}\Big\|\Lambda_{1, r}^{(m)}(q_1,M^q_{(m-1)},\mu_q^{(m)}) - \Lambda_1(q_1,x,\mu)\Big\|=0.$$
Thus, up to a subsequence, 
\begin{align*}
&\lim_{m\to\infty} \int_{\M^2}\Lambda_{1, r}^{(m)}(q_1,a,\mu_q^{(m)})(b-a)\g^{(m-1)}(da,bb)\\
=&\lim_{m\to\infty} \int_\Om  \Lambda_{1, r}^{(m)}\Big(q_1,M^q_{(m-1)}(\omega),\mu_q^{(m)}\Big)\Big(M^{\qo}_{(m-1)}(\omega)-M^q_{(m-1)}(\omega)\Big)\dd\omega\\
=&\int_\Om\Lambda_{1, r}(q_1,x(\omega),\mu)(y(\omega)-x(\omega))d\omega=\int_{\M^2}\Lambda_{1, r}(q_1,a,\mu)(b-a)\g(da,db)
\end{align*}
We have all the ingredients to conclude that up to subsequence  \eqref{eq:tay-grad-estim-3} implies
to obtain
\begin{align*}
&\Big{|}\nabla_w\cU(\nu)(\qo_1)-\nabla_w\cU(\mu)(q_1)-\Lambda_{0,r}(q_1,\mu)(\qo_1-q_1)-\int_{\M^2}\Lambda_{1, r}(q_1,a,\mu)(b-a)\g(da,db)\Big{|}\\
&\le C\left(|q_1-\qo_1|^2+W_2^2(\mu,\nu)\right).
\end{align*}
{As $C$ is independent of $R$, we extend the previous inequality to all $\mu,\nu\in\cB_r$ without imposing they lie in $\sP(B_R(0)).$}  We also notice that by the assumptions, i.e. Property \ref{def:app_reg_estim}(3), the map $q\mapsto\nabla_w\cU(\mu)(q)$  is Lipschitz continuous uniformly with respect to $\mu\in\cB_r$. More precisely, Lemma \ref{lem:reg_finite_infinite} (iii)-(b) yields that there exists $C=C(r)>0$ such that for all $\mu,\nu\in\cB_r$ and $(q_1, \qo_1)\in \spt(\mu)\times\spt(\nu)$ we have
\begin{align*}
|\nabla_w\cU(t,\mu)(q_1)-\nabla_w\cU(t,\nu)(\qo_1)|\le C(|q_1-\qo_1|+W_2(\mu,\nu)),
\end{align*}
so \eqref{ineq:Lip-grad} follows.
\end{proof}
\begin{remark}\label{rem:symmetricxx} Note that $\Lambda_0$ is a symmetric matrix, as limit of symmetric matrices.
\end{remark}
%

%
%
%
%
%

%
%
%
%
%
%
%
%

\section{Global well-posedness of master equations}\label{sec:master_equations}
 Throughout this section, we fix $T>0$ and impose \eqref{ass:F-U0-C11new1}-\eqref{ass:convexity-on-tildeL}. We further assume 
 \begin{equation}\label{eq:collectUF} 
 \cU_0,\cF \in C^{2,1,w}_{\rm{loc}}(\sP_2(\M))\quad \text{and} \quad  U_0^{(m)}, \; F^{(m)} \; \text{satisfy Property} \; \ref{def:app_reg_estim}(3).
\tag{H\arabic{hyp}} 
\end{equation}
\stepcounter{hyp}  
  
Let $\tilde \cU$ be the solution obtained in Proposition \ref{theorem:hilbert-smooth} and define $\cU:[0,T]\times\sP_2(\M)\to\R$ as $\cU(t, \mu):=\tilde \cU(t, x)$ where $\mu=\sharp(x).$  By Lemma \ref{lem:c11_equiv}, the regularity property obtained on $\tilde \cU$ in Proposition \ref{theorem:hilbert-smooth} ensures that $\cU(t, \cdot)$ is $C^{1,1}_{\rm{loc}}(\sP_2(\M)).$ We use Remark \ref{rem:factorization}  to obtain that $\cU \in C^{1,1}_{\rm{loc}}([0, T] \times \sP_2(\M))$ (in the sense of Definition \ref{def:c11_wasserstein}) and it is a classical solution to the Hamilton-Jacobi equation 
\begin{equation}\label{eq:HJB}
\left\{
\begin{array}{ll}
\partial_t\cU + \cH(\mu,\nabla_w\cU)=\cF(\mu),&  {\rm{in}}\ (0,T)\times\sP_2(\M),\\
\cU(0,\mu)=\cU_0(\mu), & {\rm{in}}\ \sP_2(\M).
\end{array}
\right.
\end{equation}

%
%
\subsection{The vectorial master equation} 
Let $\cV:\cP_2(\M)\times\M\to\R^d$ and define 
\[
\overline \cN_\mu\big[\cV, \nabla_{w}^\top \cV \big](t, \mu, q):=\int_\M\nabla_{w}^\top \cV(t, \mu,q)(b)  D_pH\big(b,  \cV(t, \mu, b)\big) \mu(db)
\]
We plan to obtain existence of $\cV:[0,T]\times\sP_2(\M)\times\M\to\R^d,$ solution to the so-called {\it vectorial master equation}
\begin{equation}\label{eq:master_vector}
\left\{
\begin{array}{r}
\ds\partial_t\cV+D_q H(q,\cV(t,\mu,q))+D_q\cV(t,\mu,q)\nabla_p H(q,\cV(t,\mu,q))+\overline \cN_\mu\big[\cV, \nabla_{w}^\top \cV \big](t, \mu, q)\\ 
\ds =\nabla_w\cF(\mu)(q)\\[5pt]
\ds\cV(0,\mu,\cdot)=\cV_0(\mu),
\end{array}
\right.
\end{equation}  
as a by--product of the regularity properties of the solution to \eqref{eq:HJB}. The lower order regularity results in the Hilbert setting are starting points to improve to higher order regularity results in the Wasserstein space. First, let us discuss about the existence and regularity of solutions of \eqref{eq:HJB}.

\begin{theorem}\label{thm:main-regularity} The equation \eqref{eq:HJB} has a unique classical solution $\cU\in C^{1,1}_{\rm{loc}}([0,T]\times\sP_2(\M))$ such that $\cU(t,\cdot)\in C^{2,1,w}_{\rm{loc}}(\cP_2(\M))$, which has to be understood in the sense of Definition \ref{def:c21_wasserstein}.
\end{theorem}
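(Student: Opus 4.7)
The plan is to combine the Hilbert-space regularity already recorded in Proposition \ref{theorem:hilbert-smooth} with the discretization machinery built in Sections \ref{sec:app-reg} and \ref{sec:reg_P2_H_Pm}. Existence and $C^{1,1}_{\rm{loc}}$ regularity are essentially free: Proposition \ref{theorem:hilbert-smooth} yields a unique $\tilde\cU\in C^{1,1}_{\rm{loc}}((0,\infty)\times\bH)$ solving the Hamilton--Jacobi equation on $\bH$, Proposition \ref{prop:semi-convex1}(i) gives rearrangement invariance, so $\cU(t,\mu):=\tilde\cU(t,x)$ (with $\sharp(x)=\mu$) is well defined. Lemma \ref{lem:c11_equiv} and Remark \ref{rem:factorization} then promote this to $\cU\in C^{1,1}_{\rm{loc}}([0,T]\times\cP_2(\M))$, and the chain rule together with $\nabla\tilde\cU(t,x)=\nabla_w\cU(t,\mu)\circ x$ turns the HJ equation on $\bH$ into \eqref{eq:HJB} pointwise. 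Uniqueness reduces to the Hilbert-space case: any classical solution of \eqref{eq:HJB} lifts to a classical, hence viscosity, solution of the equation on $\bH$, and \cite{CrandallLions,CL2} give uniqueness there.

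The core of the argument is the $C^{2,1,w}_{\rm{loc}}$ regularity, which cannot be read directly off $\bH$ (cf.\ Appendix \ref{appendixE} and Remark \ref{rem:whessian}(4)). Here I would proceed by discretization. For each $m$, set $U^{(m)}(t,q):=\cU(t,\mu_q^{(m)})=\tilde\cU(t,M^q)$. By Remark \ref{rem:discrete-cont}, $U^{(m)}$ is the unique classical solution of the finite-dimensional HJ equation \eqref{eq:app-HJ_finite_dim} associated to $H^{(m)}, F^{(m)}, U_0^{(m)}$. The assumption \eqref{eq:collectUF} guarantees that $U_0^{(m)}$ and $F^{(m)}$ satisfy Property \ref{def:app_reg_estim}(3) with constants independent of $m$, so Theorem \ref{thm:app_regularity-m}(1)--(2) applies and yields: for every $t\in(0,T)$ and $r>0$ there exists $C(t,r)$, independent of $m$, such that $U^{(m)}(t,\cdot)$ satisfies Property \ref{def:app_reg_estim}(2) and (3) on $\B_r^m(0)$ with constant $C(t,r)$.

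Having these $m$-uniform scaled bounds in hand, I would invoke Theorem \ref{thm:finite_infinite_reg} to translate them into the regularity claim on $\cP_2(\M)$. Concretely, for each fixed $t\in(0,T)$, $U^{(m)}(t,\cdot)$ verifies the hypotheses of that theorem, so one obtains continuous fields $\Lambda_0(t,\cdot,\cdot)$ and $\Lambda_1(t,\cdot,\cdot,\cdot)$, locally bounded in the $L^\infty(\mu)$ and $L^\infty(\mu\otimes\mu)$ sense, and a Lipschitz estimate on $\nabla_w\cU(t,\mu)(\cdot)$, together with the expansion \eqref{ineq:forHessian}. This is exactly the definition of $\cU(t,\cdot)\in C^{2,1,w}_{\rm{loc}}(\cP_2(\M))$ (Definition \ref{def:c21_wasserstein}).

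The final step is to verify that $\cU$ is a classical solution in the sense required, i.e.\ that $\partial_t\cU$ and $\cH(\mu,\nabla_w\cU)$ make pointwise sense and coincide: this follows from Proposition \ref{theorem:hilbert-smooth}(i)--(ii) (which gives continuity of $\partial_t\tilde\cU$ and compatibility with $\tilde\cH$) together with the factorization in Remark \ref{rem:factorization} and the definition of $\cH$ through its lift. I expect the main obstacle to be the \emph{uniform-in-$m$} bookkeeping in Step~3--Step~4 of the proof of Theorem \ref{thm:app_regularity-m}, which has already been worked out there; once those scaled bounds are in place, the passage to the limit via Theorem \ref{thm:finite_infinite_reg} is rather mechanical thanks to the Kirszbraun--Valentine extension and Arzel\`a--Ascoli arguments introduced in the proof of Theorem \ref{thm:finite_infinite_reg-scalar-mast}.
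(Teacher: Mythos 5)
Your proposal follows essentially the same route as the paper's own proof: invoke Proposition \ref{theorem:hilbert-smooth} together with Lemma \ref{lem:c11_equiv} and Remark \ref{rem:factorization} for existence, uniqueness and $C^{1,1}_{\rm{loc}}$, then Theorem \ref{thm:app_regularity-m} for the $m$--uniform scaled bounds on $U^{(m)}(t,\cdot)$, and finally Theorem \ref{thm:finite_infinite_reg} to pass to $C^{2,1,w}_{\rm{loc}}$. You spell out a few intermediate steps (rearrangement invariance, the factorization of the gradient, and the reduction of uniqueness to the Hilbert-space viscosity theory) that the paper compresses into one line, but the logical skeleton is identical.
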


\begin{proof} 
First, we notice that Proposition \ref{theorem:hilbert-smooth} asserts existence and uniqueness of a solution $\cU\in C^{1,1}_{\rm{loc}}([0,T]\times\sP_2(\M))$. Then, Theorem \ref{thm:app_regularity-m} will imply that $U^{(m)}(t,q):=\cU(t,\mu_q^{(m)})$ for $t\in(0,T)$, $m\in\N$, $q\in (\M)^m$ satisfies the regularity estimates from Property \ref{def:app_reg_estim} in $\B_r^m(0)$ with constant $C(t,r)$. We apply Theorem \ref{thm:finite_infinite_reg} to infer $\cU(t,\cdot)$ is of class $C^{2,1,w}_{\rm{loc}}(\sP_2(\M))$. 
\end{proof}
 
 \begin{remark}
In this subsection we discuss existence of \emph{weak solutions} to \eqref{eq:master_vector}. The regularity of solutions $\cU$ to the Hamilton-Jacobi equation \eqref{eq:HJB} established in Theorem \ref{thm:main-regularity} are enough to differentiate this equation with respect to the measure variable. This procedure gives us a notion of weak solution to the vectorial master equation. Better regularity properties of this solution are subtle and we need additional effort to obtain these. We postpone this analysis to Subsection \ref{subsec:further_vectorial_eq}, where we point out a deep connection between the vectorial and the scalar master equations as well.
 \end{remark}
 
 \begin{definition}\label{def:master_weak}
 We say that $\cV:[0,T]\times \bigcup_{\mu\in\cP_2(\M)}\{\mu\}\times\spt(\mu)\to\R^d$ is a \emph{weak solution} to \eqref{eq:master_vector} if it is locally Lipschitz on its domain of definition, $\cV(\cdot,\mu,q)$ is differentiable on $(0,T)$ for all $\mu\in\cP_2(\M)$ and $q\in\spt(\mu)$, $\cV(t,\cdot,\cdot)\in C^{1,1}_{\rm{loc}}\left(\cup_{\mu\in\cP_2(\M)}\{\mu\}\times\spt(\mu)\right)$, $\cV(t,\mu,\cdot)$ is differentiable on $\spt(\mu)$ for all $t\in[0,T]$ and $\mu\in\cP_2(\M)$ and the equation \eqref{eq:master_vector} is satisfied pointwise on $[0,T]\times \bigcup_{\mu\in\cP_2(\M)}\{\mu\}\times\spt(\mu)$.
 \end{definition}

 \begin{theorem}\label{thm:main-vector}
Suppose $\cU(t,\cdot)\in C^{2,1,w}_{\rm{loc}}(\sP_2(\M))$ (in the sense of Definition \ref{def:c21_wasserstein}). Using the notation in Remark \ref{rem:whessian}, we have assumed 
$$
D_q\big(\nabla_w\cU(t,\mu)(\cdot)\big)\in L^\infty(\M;\mu), \;\; \overline \nabla^2_{ww}\cU(t,\mu)(\cdot,\cdot)\in L^\infty(\M\times\M;\mu\otimes\mu) \quad \forall \mu\in\sP_2(\M), \;\; \text{and a.e.} \; t\in(0,T).
$$ 
Then the vector field $\cV(t,\mu,q) := \nabla_w \cU(t,\mu)(q)$ defined on  $[0,T]\times \bigcup_{\mu\in\cP_2(\M)}\{\mu\}\times\spt(\mu)$, solves the vectorial master equation \eqref{eq:master_vector} with initial data $\cV_0=\nabla_w\cU_0$ in the sense of Definition \ref{def:master_weak}.
\end{theorem}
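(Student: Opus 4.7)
The strategy is to obtain the vectorial master equation by formally taking the Wasserstein gradient of the scalar Hamilton--Jacobi equation \eqref{eq:HJB}, and to justify each step via the regularity $\cU(t,\cdot)\in C^{2,1,w}_{\rm loc}$ produced by Theorem~\ref{thm:main-regularity}. The symmetry properties built into Definition~\ref{def:c21_wasserstein}, namely $\Lambda_0=\Lambda_0^\top$ (Remark~\ref{rem:symmetricxx}) and the adjoint relation $\Lambda_1(q,a,\mu)^\top=\Lambda_1(a,q,\mu)$ (inherited from the symmetry of $\nabla^2\tilde\cU$ on $\bH\times\bH$ and from its discrete counterparts $D^2_{q_iq_j}U^{(m)}=(D^2_{q_jq_i}U^{(m)})^\top$), are essential for identifying the nonlocal operator $\overline\cN_\mu$. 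As a starting point, the inequality \eqref{ineq:Lip-grad} in Theorem~\ref{thm:finite_infinite_reg} yields that $(\mu,q)\mapsto\cV(t,\mu,q):=\nabla_w\cU(t,\mu)(q)$ is locally Lipschitz on $\bigcup_\mu\{\mu\}\times\spt(\mu)$ with the identifications $D_q\cV(t,\mu,q)=\Lambda_0(q,\mu)$ and $\nabla_w\cV(t,\mu,q)(a)=\Lambda_1(q,a,\mu)$ on the supports; joint continuity in $t$ is inherited from Proposition~\ref{theorem:hilbert-smooth}.

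\textbf{Differentiation of \eqref{eq:HJB} in $\mu$.} Fix $t$, $\mu$, $q_0\in\spt(\mu)$ and a compactly supported test field $\xi\in T_\mu\cP_2(\M)$; set $\mu_s:=(\id+s\xi)_\sharp\mu$ and $q_s:=q+s\xi(q)$. Writing \eqref{eq:HJB} in the form
\begin{align*}
\partial_t\cU(t,\mu)+\int_{\M}H(q,\cV(t,\mu,q))\mu(dq)=\cF(\mu),
\end{align*}
and using the quadratic expansion of Definition~\ref{def:c21_wasserstein} for $\cV$, which gives
$\frac{d}{ds}\Big|_{s=0}\cV(\mu_s,q_s)=\Lambda_0(q,\mu)\xi(q)+\int\Lambda_1(q,a,\mu)\xi(a)\mu(da)$,
a change of variables inside the integral yields
\begin{align*}
\frac{d}{ds}\Big|_{s=0}\!\!\int\! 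H(q_s,\cV(\mu_s,q_s))\mu(dq)=\int\bigl[D_qH+\Lambda_0 D_pH\bigr]\cdot\xi(q)\mu(dq)+\iint D_pH\cdot\Lambda_1(q,a,\mu)\xi(a)\mu(da)\mu(dq),
\end{align*}
gradients of $H$ being evaluated at $(q,\cV(t,\mu,q))$. Since $\Lambda_0^\top=\Lambda_0=D_q\cV$, the first bracket equals $D_qH(q,\cV)+D_q\cV(t,\mu,q)\nabla_pH(q,\cV)$.

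\textbf{Fubini and the nonlocal term.} Using $u\cdot Mv=(M^\top u)\cdot v$ together with $\Lambda_1(q,a,\mu)^\top=\Lambda_1(a,q,\mu)$ in the double integral and swapping the order of integration,
\begin{align*}
\iint D_pH(q,\cV)\cdot\Lambda_1(q,a,\mu)\xi(a)\mu(da)\mu(dq)=\int\xi(a)\cdot\overline\cN_\mu[\cV,\nabla_w^\top\cV](t,\mu,a)\mu(da),
\end{align*}
where the inner integral is identified with $\overline\cN_\mu$ via the very definition of $\nabla_w^\top\cV$ in terms of $\Lambda_1$. Combining with the previous step and using the arbitrariness of $\xi\in T_\mu\cP_2(\M)$, the Wasserstein gradient of $\mu\mapsto\int H(q,\cV(t,\mu,q))\mu(dq)$ at $q_0\in\spt(\mu)$ equals
\begin{align*}
D_qH(q_0,\cV)+D_q\cV(t,\mu,q_0)\nabla_pH(q_0,\cV)+\overline\cN_\mu[\cV,\nabla_w^\top\cV](t,\mu,q_0),
\end{align*}
while the right-hand side of \eqref{eq:HJB} contributes $\nabla_w\cF(\mu)(q_0)$.

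\textbf{Existence of $\partial_t\cV$ and main obstacle.} To read off \eqref{eq:master_vector} pointwise we still need $\partial_t\cV$ to exist as a continuous function and the exchange $\partial_t\nabla_w\cU=\nabla_w\partial_t\cU$ to hold on $\spt(\mu)$; this is the main technical difficulty, since $\cU$ is only $C^{1,1}$ in $t$. The route is to differentiate in $t$ the $m$-particle representation $\eta^m(t,\cdot)=D_qU^{(m)}(t,\xi^m(t,\cdot))$ from \eqref{eq:app_eta_u} and combine the uniform-in-$m$ derivative estimates of Theorem~\ref{thm:app_regularity-m} with the finite-to-infinite correspondence of Theorem~\ref{thm:finite_infinite_reg-scalar-mast} (applied to the scalar function $\partial_t\cU$) to produce a continuous $\partial_t\cV$ on $[0,T]\times\bigcup_\mu\{\mu\}\times\spt(\mu)$ equal to $\nabla_w\cF(\mu)$ minus the expression identified above. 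This yields \eqref{eq:master_vector} pointwise on $\spt(\mu)$; the initial condition $\cV(0,\mu,\cdot)=\nabla_w\cU_0(\mu)$ is immediate, and the earlier step provides the regularity demanded by Definition~\ref{def:master_weak}.
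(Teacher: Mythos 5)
Your proposal reproduces the paper's architecture faithfully for the core differentiation step: you perturb $\mu$ along $(\id+s\xi)_\sharp\mu$ with $\xi\in T_\mu\cP_2(\M)$, expand $\cV(t,\mu_s,q_s)$ to first order using the $C^{2,1,w}$ structure with $\Lambda_0$ and $\Lambda_1$, invoke the symmetry $\Lambda_0=\Lambda_0^\top$ (Remark~\ref{rem:symmetricxx}) and the adjoint relation $\Lambda_1(q,a,\mu)^\top=\Lambda_1(a,q,\mu)$, and then use Fubini to identify the nonlocal operator $\overline\cN_\mu$. These steps are precisely what the paper does, and the observation that the choice of test fields $\xi=D\varphi$ makes the difference between $\Lambda_1$ and the minimal-norm representative $\nabla^2_{ww}\cU$ irrelevant in the integration is the right reason the identification goes through.

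Where you diverge is in the treatment of $\partial_t\cV$ and the commutation $\partial_t\nabla_w\cU=\nabla_w\partial_t\cU$. You correctly flag this as the delicate point and propose going back to the $m$-particle flow, differentiating the representation $\eta^m(t,\cdot)=D_qU^{(m)}(t,\xi^m(t,\cdot))$ in $t$, and passing to the limit via the uniform-in-$m$ estimates. This would likely work (it is philosophically consistent with the rest of the paper's discretization machinery, and Theorem~\ref{thm:app_regularity-m}(3) does provide the $\partial_t D_q U^{(m)}$ estimate), but it is heavier than what the paper actually does, and your pointer to Theorem~\ref{thm:finite_infinite_reg-scalar-mast} is slightly off, since that theorem is phrased for functions of $(q_0,\mu)$ with a distinguished finite-dimensional argument whereas $\partial_t\cU$ is a function of $\mu$ alone (Theorem~\ref{thm:finite_infinite_reg} would be the closer fit). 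The paper instead argues directly: since $\cU\in C^{1,1}_{\rm loc}([0,T]\times\cP_2(\M))$, the map $t\mapsto\nabla_w\cU(t,\mu)(q)$ is Lipschitz; and since $\partial_t\cU$ is determined pointwise by the Hamilton--Jacobi equation $\partial_t\cU=-\cH(\cdot,\nabla_w\cU)$ while $\cU(t,\cdot)\in C^{2,1,w}_{\rm loc}$ gives differentiability of the right-hand side in $\mu$, one obtains $\nabla_w\partial_t\cU$ directly from the equation and may commute. That route avoids any return to the discrete level and is both shorter and logically cleaner; your sketch remains a plausible but unexecuted detour, so the step is incomplete as written.

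One more point worth registering: the paper closes the argument by noting that the vector $\nabla^2_{ww}\cU(t,\mu)(q,\cdot)^\top D_pH(q,\nabla_w\cU(t,\mu)(q))$ lies in $T_\mu\cP_2(\M)$ before invoking arbitrariness of $\xi$, which is what actually pins down the equation (rather than just its projection against test fields). Your write-up identifies the nonlocal term via $\Lambda_1$ and then asserts the conclusion by "arbitrariness of $\xi$," but does not verify membership in the tangent space for the terms that survive the integration by parts. This is a small but non-trivial omission, since without it one only gets the equation modulo the orthogonal complement of $T_\mu\cP_2(\M)$.
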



\begin{proof}[Proof of Theorem \ref{thm:main-vector}] Let $\mu\in\sP_2(\M)$, let $\varphi \in C_c^\infty(\M)$ be arbitrary and set $\xi:=D\varphi.$ Choose $\e>0$ be small enough such that for all  $s\in [0,\e]$, $X_s:=\id + s \xi$ is a diffeomorphism of $\M$ into $\M$ and $|\id|^2/2 +s \varphi$ is convex. For any $q \in \spt(\mu)$ we have 
\begin{align} 
\nabla_w \cU(t, \sigma_s)(X_s(q)) = & \nabla_w \cU(t, \mu)(q)+ s D_q\nabla_w  \cU(t, \mu)(q)\xi(q) +s\int_{\M} \nabla^2_{ww} \cU(t, \mu)(q , a) \xi(a)\mu(da) \nonumber\\
+& o(s).\label{eq:dec04.2019.1}
\end{align} 
Since 
\[
\int_{\M} H\big( z, \nabla_w \cU(t, \sigma_s)(z)\big)\sigma_s(dz)=\int_{\M} H\Big(X_s(q), \nabla_w \cU(t, \sigma_s)\big(X_s(q) \big)\Big)\mu(dq),
\]
\eqref{eq:dec04.2019.1} implies 
\begin{align} 
\cH\big(\sigma_s, \nabla_w \cU(t, \sigma_s)\big) = & \cH\big(\mu, \nabla_w \cU(t, \mu)\big)+ s \int_{\M}D_qH\big(q,  \nabla_w \cU(t, \mu(q))\big) \cdot \xi(q) \mu(dq)\nonumber\\
+ &s \int_{\M}D_pH\big(q,  \nabla_w \cU(t, \mu)(q)\big) \cdot  \Big(D_q  \nabla_w \cU(t, \mu)(q)\xi(q) \Big) \mu(dq)\nonumber\\
+ &s \int_{\M^2}D_pH\big(q,  \nabla_w \cU(t, \mu)(q)\big) \cdot \Big(  \nabla^2_{ww} \cU(t, \mu)(q , a) \xi(a)\mu(da) \Big) \mu(dq)\nonumber\\
- &  \cF(\mu)- s \int_{\M} \nabla_w \cF(\mu)(q) \cdot \xi(q) \mu(dq) + o(s).\label{eq:dec04.2019.2}
\end{align} 
Similarly, 
\begin{equation}\label{eq:dec04.2019.3}
\partial_t \cU(t, \sigma_s)= \partial_t \cU(t, \mu)+ s \int_{\M}\partial_t\nabla_w \cU(t, \mu)(q) \cdot \xi(q) \mu(dq) + o(s).
\end{equation}
Let us remark that since $\cU$ is a $C^{1,1}_{\rm{loc}}([0,T]\times\cP_2(\M))$ solution to \eqref{eq:HJB}, $\nabla_w\cU(\cdot,\mu)(q)$ is Lipschitz continuous on $[0,T]$. Moreover, from the equation \eqref{eq:HJB} and since $\cU(t,\cdot)\in C^{2,1,w}_{\rm{loc}}(\cP_2(\M))$,  we get that $\partial_t\cU(t,\cdot)$ is differentiable for all $t\in(0,T)$. Therefore, $\partial_t\nabla_w\cU(t,\mu)(q)=\nabla_w\partial_t\cU(t,\mu)(q)$ for all $(t,\mu)\in(0,T)\times\cP_2(\M)$ and $q\in\spt(\mu)$.

Since 
\[
\partial_t \cU(t, \sigma_s)+ \cH\big(\sigma_s, \nabla_w \cU(t, \sigma_s)\big)=0, 
\]
\eqref{eq:dec04.2019.2} and \eqref{eq:dec04.2019.3} imply  
\begin{align} 
&  \int_{\M}\Big( \partial_t\nabla_w \cU(t, \mu)(q)+ D_qH\big(q,  \nabla_w \cU(t, \mu)(q)\big) -\nabla_w \cF(\mu)(q)\Big) \cdot \xi(q) \mu(dq) 
\nonumber\\
+ & \int_{\M}D_pH\big(q,  \nabla_w \cU(t, \mu)(q)\big) \cdot  \Big(D_q  \nabla_w \cU(t, \mu)(q)\xi(q) \Big) \mu(dq)\nonumber\\
+ & \int_{\M^2}D_pH\big(q,  \nabla_w \cU(t, \mu)(q)\big) \cdot \Big(  \nabla^2_{ww} \cU(t, \mu)(q , a) \xi(a)\mu(da) \Big) \mu(dq) 
=0.\label{eq:dec04.2019.5}
\end{align} 
Since we asserted in Remark \ref{rem:symmetricxx} that  $D_q\nabla_w\cU(t,\mu)(\cdot)$ is symmetric, \eqref{eq:dec04.2019.5} can be rewritten as 
\begin{align*}
&  \int_{\M}\Big[ \partial_t\nabla_w \cU(t, \mu)(q)+ D_qH\big(q,  \nabla_w \cU(t, \mu)(q)\big) -\nabla_w \cF(\mu)(q)\Big] \cdot \xi(q) \mu(dq)\nonumber\\ 
+ & \int_{\M}D_q  \nabla_w \cU(t, \mu)(q) D_pH\big(q,  \nabla_w \cU(t, \mu)(q)\big) \cdot \xi(q) \mu(dq)\nonumber\\
+ &\int_{\M^2}\Big(  \nabla^2_{ww} \cU(t, \mu)(q , a)^\top  D_pH\big(q,  \nabla_w \cU(t, \mu)(q)\big)\Big) \mu(dq)\cdot\xi(a)\mu(da)
=0.
\end{align*}
Note 
\[
D_qH\big(\cdot,  \nabla_w \cU(t, \mu) \big)+ D_q  \nabla_w \cU(t, \mu) D_p H\big(\cdot,  \nabla_w \cU(t, \mu)\big)=D_q\Big( H\big(\cdot,  \nabla_w \cU(t, \mu)\big)\Big) \in T_\mu\sP_2(\M).
\]
Since the rows of $\nabla^2_{ww} \cU(t, \mu)(q , a)$ belong to $T_\mu\sP_2(\M)$, so does $ \nabla^2_{ww} \cU(t, \mu)(q , a)^\top  D_pH\big(q,  \nabla_w \cU(t, \mu)(q)\big)$ (as linear combinations of these rows). By the arbitrariness of $\xi$ and the previous claims, we  conclude 
\begin{align*}
\partial_t\nabla_w \cU(t, \mu)+ D_qH\big(\cdot,  \nabla_w \cU(t, \mu) \big)+ D_q  \nabla_w \cU(t, \mu) D_p H\big(\cdot,  \nabla_w \cU(t, \mu)\big)+\overline \cN_\mu\big[\cV, \nabla_{w}^T \cV \big](t, \mu,\cdot)=\nabla_w \cF(\mu),
\end{align*}
$\mu$--almost everywhere on $q\in\M$.
\end{proof}

\begin{remark}
At this point we do not know whether all the terms appearing in \eqref{eq:master_vector} could be extended to (at least $\sL^d$--a.e.) $q\in\M$. We have good pointwise continuity properties of $\overline \nabla_{ww}^\top \cU(t, \cdot)(\cdot,\cdot)$, but we do not know much about the continuity properties of $\nabla_{ww}^\top \cU(t, \cdot)(\cdot,\cdot).$ If we knew 
$$\overline \cN_\mu\big[\cV, \nabla_{ww}^\top \cU \big](t, \mu, q)=\overline \cN_\mu\big[\cV, \overline \nabla_{ww}^\top \cU \big](t, \mu, q)$$ we could deduce that $q \mapsto \overline \cN_\mu\big[\cV, \nabla_{ww}^\top \cU \big](t, q, \mu)$ is continuous. In the same time, we do not know whether $\ds\partial_t\cV$ admits a continuous extension. 

As a last remark, despite the fact that $\cV(t,\mu,\cdot)$ itself is defined only on $\spt(\mu)$, we know that it is Lipschitz continuous there, uniformly with respect to $t$ and $\mu$. But it is not clear at all whether any Lipschitz continuous extension of this in the same time would produce a valid extension for $\partial_t\cV$ and $\nabla_w^\top\cV$. As highlighted before, we revisit this question in Subsection \ref{subsec:further_vectorial_eq}, and in particular there we produce a solution to the vectorial master equation which is defined for (Lebesgue) a.e. $q\in\M$.

\end{remark}

%
%
%
%
\subsection{The scalar master equation}\label{subsec:scalar_main}

In this subsection we assume there exists a function $C$ which assume to each compact set $K\subset\M$ and each real number $r>0$, a positive value $C(K, r).$ We assume  to be given 
\begin{equation}\label{ass:C11onuandf} 
u_0,f\in C^{1,1}_{\rm{loc}}(\M\times\sP_2(\M))\tag{H\arabic{hyp}}
\end{equation} 
\stepcounter{hyp} 
such that 
\begin{align}\label{hyp:u_0-f}
\nabla_w\cU_0(\mu)(q)=D_q u_0(q,\mu),\nabla_w\cF(\mu)(q)=D_q f(q,\mu),\ \forall\ (q,\mu)\in\M\times\sP_2(\M).\tag{H\arabic{hyp}}
\end{align}
\stepcounter{hyp}

Since we can modify $L$ or $\tilde \cF$ as follows, 
\[
\tilde \cL(x, a)= \int_\Omega\bigl( L(x(\omega), a(\omega)) -r |x(\omega)|^2 \bigr)d\omega +\tilde \cF(x)+ r \|x\|^2, 
\]
we learn from Proposition \ref{prop:conv} that \eqref{ass:F-U0-C11new2} and \eqref{ass:convexity-on-tildeL} imply that
\begin{equation}\label{ass:convexity} 
\M\ni q\mapsto u_0(q,\mu)\ \ {\rm{is\ convex\ and}}\ \ \M\times\R^d\ni(q,v)\mapsto L(q,v)+f(q,\mu) \ \text{is strictly convex} \quad \forall \mu\in\sP_2(\M).
\end{equation}

Let us remark that by the fact that $u_0,f\in C^{1,1}_{\rm{loc}}(\M\times\cP_2(\M))$, we have that $u_0$ and $f$ are locally bounded, i.e.
$\forall K\subset\M\ {\rm{compact\ and\ }}r>0,\  \exists C=C(K, r):\ |u_0(q_0,\mu)|,|f(q_0,\mu)|\le C,\ \forall\ (q_0,\mu)\in K\times\cB_r.$

We are to find a function $u:[0,T]\times\M\times\sP_2(\M)\to\R$ that satisfies the {\it scalar master equation}
\begin{equation}\label{eq:master}
\left\{
\begin{array}{ll}
\ds\partial_t u(t, q, \mu)+H(q,D_q u(t, q, \mu))+\cN_{\mu}\big[D_q u(t, \cdot, \mu), \nabla_w {u}(t, q, \mu)(\cdot)\big]=f(q,\mu), & (0,T)\times\M\times\sP_2(\M),\\[5pt]
u(0, \cdot,\cdot)=u_0, & \M\times\sP_2(\M),
\end{array}
\right.
\end{equation} 
where the non--local operator $\cN_{\mu}$ is defined as in \eqref{eq:defineNmu}. We define the notion of classical solution to \eqref{eq:master} as follows.

\begin{definition}\label{def:classical_sol}
We say that $u$ is a classical solution to \eqref{eq:master}, if the following holds. It is continuously differentiable on $(0,T)\times\M\times\cP_2(\M)$, continuous up to the initial time 0 and the PDE is satisfied pointwise. The vector field $\M\ni q\mapsto D_q u(t,q,\nu)$ is Lipschitz, uniformly with respect to $(t,\nu)\in[0,T]\times\cB_r$ ($r>0$).

Furthermore, for all $\nu\in\cP_2(\M)$ and for $\cL^1\otimes\cL^d$--a.e. $(s,q)\in (0,T)\times\M$, $D_q\nabla_w u(s,q,\nu)(\cdot)$ and $\nabla_wD_q u(s,q,\nu)(\cdot)$ exist, belong to $L^2(\nu)$ and they satisfy additionally 
\begin{equation}\label{cond:uniqueness} 
\int_{\M}\Big(\big(D_q\nabla_w -\nabla_wD_q\big) u(s,q,\nu)(y)\Big) D_pH(y,D_q u(s,y,\nu))\nu(dy)= 0.
\end{equation}
\end{definition} 

\begin{remark}
The condition \eqref{cond:uniqueness} in the previous definitions needs some comments. In Theorem \ref{thm:main-scalar} we will actually show existence of $C^{1,1}_{\rm{loc}}([0,T]\times\M\times\cP_2(\M))$ solution to \eqref{eq:master}. Let us notice that for functions $w\in C^{1,1}_{\rm{loc}}(\M\times\cP_2(\M))$,  $D_q\nabla_w w(q,\nu)(\cdot)$ is meaningful for all $\nu\in\cP_2(\M)$ and for a.e. $q\in\M$ (see Subsection \ref{subsec:further_vectorial_eq}). But, since $D_q w$ is only Lipschitz continuous with respect to the measure variable, $\nabla_w D_qw(q,\nu)(\cdot)$ might not be meaningful in general (since, Rademacher-type theorems in $(\cP_2(\M),W_2)$ are more subtle, cf. \cite{Dello}). So the $C^{1,1}$ regularity in general is not enough to ensure \eqref{cond:uniqueness}.

Nevertheless, as the discussion in Subsection \ref{subsec:further_vectorial_eq} shows, the solution that we construct for the master equation \eqref{eq:master} naturally satisfies \eqref{cond:uniqueness}. This condition in particular will imply uniqueness of the solution as well.
\end{remark}

For $m\in\N$, we define 
$$u_0^{(m)},f^{(m)}:\M\times(\M)^m\to\R, \quad U_0^{(m)}, F^{(m)}:(\M)^m\to\R$$ as
$$u_0^{(m)}(y, q):=u_0\Big(y,\mu^{(m)}_q\Big),\quad f^{(m)}(y,q):=f\Big(y,\mu^{(m)}_q\Big),\quad U^{(m)}_0(q):=\cU_0\Big(\mu^{(m)}_q\Big),\quad F^{(m)}(q):=\cF\Big(\mu^{(m)}_q\Big),$$
where for $q=(q_1,\cdots,q_m)\in(\M)^m$, $\mu^{(m)}_q$ is defined as in \eqref{eq:average-q}.

We impose the following hypotheses on $u_0^{(m)}$ and $f^{(m)}.$

\begin{align}\label{hyp:u_0-f-m} 
u_0^{(m)}(y,\cdot),\ f^{(m)}(y,\cdot)\ {\rm{satisfy\ Properties\ }}\ref{def:app_reg_estim}{\rm{(1)(a)\ and}}\ \ref{def:app_reg_estim}(2),\ {\rm{locally\ uniformly\ w.r.t.\ }}y\in\M.
\tag{H\arabic{hyp}}
\end{align}
\stepcounter{hyp}
\begin{align}\label{hyp:D_yu_0-D_yf-m} 
D_yu_0^{(m)}(y,\cdot),\  D_yf^{(m)}(y,\cdot)\ {\rm{satisfy\ Property\ }}\ref{def:app_reg_estim}{\rm{(1)(a)}},\ {\rm{locally\ uniformly\ w.r.t.\ }}y\in\M.
\tag{H\arabic{hyp}}
\end{align}
\stepcounter{hyp}
Let us notice that based on the previous assumptions, we have that $D_yu_0^{(m)}$ and $D_yf^{(m)}$ are locally uniformly bounded, i.e.
$
\forall r>0, K\subset\M\ {\rm{compact}},\ \exists C=C(K,r):\ |D_{y}u_0^{(m)}(y,q)|, |D_{y}f^{(m)}(y,q)|\le C,\ {\rm{if}}\ (y,q)\in K\times\B_r^m.$ In the same time, by the assumption \eqref{ass:LipschitzH}, $D_qL$ and  $\partial_y^{a}\partial_v^{b}L$ (for all $a,b$ multi-indices with $|a|+|b| = 2$) are locally uniformly bounded.

We assume that there exists a constant $C>0$ such that 
\begin{align}\label{hyp:H_3} 
\|\partial_q^{a}\partial_p^{b}H\|_{L^\infty(\M\times\R^d)}\le C, \; \ {\rm{for}}\ a,b \ {\rm{multi-indices}\ with\ } |a|+|b| = 3.\tag{H\arabic{hyp}}
\end{align}
\stepcounter{hyp}

We assume there exists a locally bounded continuous function $\theta: \cP_2(\M) \rightarrow [0,\infty)$ such that 

\begin{equation}\label{hyp:f-theta} 
L(q, v)+f(q, \mu) \geq \lambda_1 |v|^2 -\theta(\mu) (|q|+1), \qquad \forall (q, v) \in \M \times \R^d, \; \forall \mu \in \cP_2(\M).\tag{H\arabic{hyp}}
\end{equation}\stepcounter{hyp}
Note that it suffices to impose that $f(\cdot, \mu)$ is convex to have that \eqref{ass:UpperboundonL} implies \eqref{hyp:f-theta}.
\hfill\break


Recall that Remark \ref{rem:new-bound-onHL} (iii) ensures there exists a constant $C$ such that 

We assume that there exists $C>0$ such that 
\begin{equation}\label{hyp:D_qH}
|D_q H(q,p)|\le C(1+|q|+|p|),\quad |D_q L(q,v)|\le C(1+|q|+|v|) \ \ \forall (q,p, v)\in \M\times\R^{2d}.\tag{H\arabic{hyp}}
\end{equation}\stepcounter{hyp}

\vskip0.40cm
\subsection{Examples of data functions} \label{ex:example} 
We pause for a moment to give examples of initial data  $\cU_0$ and $u_0$, which satisfy the standing assumptions of this manuscript. Similar examples can be constructed for $\cF$ and $f$ as well. 

Let $\phi_0,\phi_1:\M\to\R$ be smooth bounded functions with uniformly bounded derivatives up to order 3. For simplicity, we assume also that they are positive and  $\phi_1$ is even. Fix $\l>0$ and let $\phi:\M\to\R$ be defined as $\phi(q):=\frac{\l}{2}|q|^2+\phi_0(q)$ and assume $\l$ is large enough such that $D^2\phi+D^2\phi_1\ge0$  on $\M$. Then, let us define $\cU_0:\sP_2(\M)\to\R$ as
$$\cU_0(\mu):=\int_{\M}\phi(q)\mu(dq)+\frac{1}{2}\int_{\M}\phi_1*\mu(q)\mu(dq), \quad \tilde \cU_0(x)=\cU_0\big(x_\sharp \cL^d_\Om\big), \qquad \forall \mu \in \sP_2(\M), x \in \bH.$$ 
Then $\tilde \cU_0$ fulfills the  assumptions \eqref{ass:F-U0-C11new1} and \eqref{ass:F-U0-C11new2}. 

Set  
$$u_0(q_0,\mu)=\phi(q_0)+(\phi_1*\mu)(q_0).$$
For $q:=(q_1, \cdots, q_m) \in \M^m$ and $q_0 \in \M$ , we have 
$$u_0^{(m)}(q_0,q)=\phi(q_0)+\sum_{i=1}^m\frac{1}{m}\phi_1(q_0-q_i), \quad \text{and}\quad U_0^{(m)}(q)=\frac{1}{m}\sum_{i=1}^m\phi(q_i)+\frac{1}{2m^2}\sum_{i,j=1}^m\phi_1(q_i-q_j),$$
and so for $1\leq i \leq m$, 
$$D_{q_i}u_0^{(m)}(q_0, q)=\frac{1}{m}D\phi_1(q_0-q_i)\ \ {\rm{and}}\ \ {D^2_{q_0 q_i}}u_0^{(m)}(q_0, q))=\frac{1}{m}D^2\phi_1(y-x_i).$$
We have 
$$D_{q_0} u_0^{(m)}(q_0, q)=D\phi(y)+\sum_{i=1}^m\frac{1}{m}D\phi_1(q_0-q_i).$$
From these computations, one can easily verify that \eqref{ass:C11onuandf} through \eqref{hyp:D_yu_0-D_yf-m} are satisfied. 

Under appropriate conditions on functions $L_0,$ $l$ and $g,$ lagrangians of the form 
\[
L(q, v):= L_0(v) + l(q, v) + g(q)
\]
and Hamiltonian defined as $H(q,\cdot):=L^*(q,\cdot),$ satisfy \eqref{ass:Hamiltonian} through \eqref{ass:convexity-on-tildeL} and \eqref{hyp:H_3} through \eqref{hyp:D_qH}.

\medskip
We are ready now to define the candidate for the solution to the scalar master equation. Given $t \in [0,T]$, $q \in \M$ and $\mu \in \cP_2(\M)$ we define 
\begin{equation}\label{def:u_alt}
u(t, q, \mu):= \inf_{\gamma} \biggl\{u_0(\gamma_0, \sigma_0^t[\mu]) +\int_0^t \Big(L(\gamma_s, \dot \gamma_s) + f(\gamma_s, \sigma_s^t[\mu])\Big)ds\; : \; \gamma \in W^{1, 2}([0,t], \M), \gamma_t=q \biggr\}.
\end{equation}
Here the curve $(\s^t_s[\mu])_{s\in[0,t]}$ is defined in \eqref{def:sigma_optimal}.
Define 
\[
M_*(r):=\sup_{B_r(0) \times \cB_{e_T(r)}} |\theta|+ |u_0|+T(|f|+|L(0, \cdot)|), \qquad c_*(r):= \sup_{\ov B_1(0) \times \ov \cB_r} |u_0|
\]
%
\begin{remark}\label{rem:summary-bounds4} Let $r>0.$ 
\begin{enumerate}
\item[(i)] As $u_0(\cdot, \nu)$  is convex, if $D_qu(0, \nu)\not =0$ then  
\[
u_0\bigg({D_q u(0, \nu) \over |D_qu(0, \nu)|}, \nu \bigg) \geq u_0(0, \nu)+ {D_q u(0, \nu) \over |D_qu(0, \nu)|} \cdot D_q u(0, \nu)= u_0(0, \nu)+ {|D_qu(0, \nu)|^2 \over |D_qu(0, \nu)|}.
\]
Thus, if $\nu \in \cB_r,$ we conclude that  
\[
|D_qu(0, \nu)| \leq 2c_*(r).
\]
Clearly, the previous inequality still holds when $D_qu(0, \nu) =0.$ Consequently, 
\[
u_0(q, \nu) \geq u_0(0, \nu)+ D_qu(0, \nu) \cdot q \geq -c_*(r)(1+|q|).
\]
\item[(ii)] Suppose  $(t, q, \mu) \in [0, T] \times B_r(0) \times \cB_r$. Then 
\[
u(t, q, \mu) \leq M_*(r), 
\]
and so, if $\gamma$ is the unique minimizer in \eqref{def:u_alt}, we use \eqref{hyp:f-theta} and Remark \ref{rem:summary-bounds1} (ii) to obtain  
\[
M_*(r)\geq u(t, q, \mu) \geq-c_*\big(e_T(r)\big) (1+|\gamma(0)|)-M_*(r)T -M_*(r) \int_0^t |\gamma|ds +\lambda_1 \int_0^t |\dot \gamma|^2ds.
\]
We conclude there exists a constant $\ov M(r)$ independent of $t$ such that $$\int_0^t |\dot \gamma|^2ds \leq \ov M(r).$$
Hence, 
\begin{equation}\label{eq:holder-for-gamma}
|\gamma_{\tau_1}-\gamma_{\tau_2}|^2 \leq \ov M(r)|\tau_2-\tau_1| \qquad \text{if} \quad 0\leq \tau_1 \leq \tau_2 \leq t.
\end{equation}
\item[(iii)] By (ii), there is constant $M^*(r)$ such that 
\[
|u(t, q, \mu)| \leq M^*(r) \qquad (t, q, \mu) \in [0, T] \times B_r(0) \times \cB_r
\]
Since $$(q, v) \mapsto L_{s, t}(q, v) := L(q, v)+ f(q, \sigma_s^t[\mu]), \quad q \mapsto   u_0(q, \sigma_0^t[\mu])$$ 
are convex, we obtain that $u(t, \cdot, \mu)$ is a convex function and so as argued above, 
\[
\big|D_q u(t, q, \mu) \big| \leq u\bigg(t, q+ {D_q u(t, q, \mu) \over \big|D_q u(t, q, \mu) \big|} \bigg)-u(t, q, \mu) \leq M^*(r)+M^*(r+1).
\]
\end{enumerate}
\end{remark}

\begin{lemma}\label{lem:gamma_c11}
Let $(t, q, \mu) \in [0, T] \times B_r(0) \times \cB_r$ and let $\g:[0,t]\to\M$ be the unique optimizer in \eqref{def:u_alt}. Suppose that the assumptions \eqref{ass:Hessianv-v},\eqref{ass:LipschitzH}, \eqref{ass:UpperboundonL}, \eqref{hyp:u_0-f} and \eqref{hyp:D_qH} take place. Then $\g\in C^{1,1}([0,t])$.
\end{lemma}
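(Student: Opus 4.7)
The strategy is to write the Euler--Lagrange system for the minimizer $\gamma$, pass to its Hamiltonian form, and then establish uniform bounds on the momentum on $[0,t]$ so that all time derivatives are bounded and Lipschitz.

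First, by the standard calculus of variations with free initial point, the fact that $(q,v)\mapsto L(q,v)+f(q,\sigma_s^t[\mu])$ is $C^2$ and strictly convex in $v$ (thanks to \eqref{ass:Hessianv-v} together with the smoothness of $f$), and that $\gamma$ is the unique optimizer with free initial endpoint and fixed terminal value $\gamma_t=q$, implies that $\gamma\in W^{2,\infty}_{\rm loc}((0,t))$ solves the Euler--Lagrange equation
\[
\frac{d}{ds} D_v L(\gamma_s,\dot\gamma_s)=D_q L(\gamma_s,\dot\gamma_s)+D_q f(\gamma_s,\sigma_s^t[\mu]), \qquad s \in (0,t),
\]
together with the transversality condition $D_v L(\gamma_0,\dot\gamma_0)=D_q u_0(\gamma_0,\sigma_0^t[\mu])$. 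Setting $p_s:=D_v L(\gamma_s,\dot\gamma_s)$ and using the Legendre duality between $L(q,\cdot)$ and $H(q,\cdot)$, this is equivalent to the Hamiltonian system
\begin{equation*}
\dot\gamma_s=D_p H(\gamma_s,p_s), \quad \dot p_s=-D_q H(\gamma_s,p_s)+D_q f(\gamma_s,\sigma_s^t[\mu]), \qquad \gamma_t=q, \quad p_0=D_q u_0(\gamma_0,\sigma_0^t[\mu]).
\end{equation*}

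Next I would establish a priori bounds on $\gamma$ and $p$. The bound on $\gamma$ is essentially already in hand: by Remark \ref{rem:summary-bounds4}(ii), $\int_0^t|\dot\gamma|^2\,ds\le\bar M(r)$ and $\gamma_t=q\in B_r(0)$, hence $|\gamma_s|\le r+\sqrt{\bar M(r) T}=:R_1(r)$ for all $s\in[0,t]$. In particular $|\gamma_0|\le R_1(r)$. Combined with the estimate \eqref{eq:Hamiltonian-Flow4} giving a uniform bound on $W_2(\sigma_s^t[\mu],\delta_0)$ (so that $\sigma_s^t[\mu]\in\cB_{R_2(r)}$ for some $R_2(r)$ independent of $s$), and with \eqref{ass:C11onuandf} which yields local boundedness of $D_q u_0$, the initial momentum satisfies $|p_0|\le C_0(r)$ for a constant depending only on $r$. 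Similarly $|D_q f(\gamma_s,\sigma_s^t[\mu])|\le C_1(r)$ uniformly in $s$.

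Now the growth condition \eqref{hyp:D_qH} gives $|D_q H(\gamma_s,p_s)|\le C(1+R_1(r)+|p_s|)$, so
\[
|\dot p_s|\le C(1+R_1(r)+|p_s|)+C_1(r),
\]
and a Gronwall argument on the forward equation for $p$ starting from $p_0$ yields $|p_s|\le C_2(r)$ uniformly on $[0,t]$. Feeding this back into $\dot\gamma_s=D_p H(\gamma_s,p_s)$ and using the global Lipschitz continuity of $DH$ from \eqref{ass:LipschitzH} (in particular $|D_p H(\gamma_s,p_s)|\le C(1+|\gamma_s|+|p_s|)$), one also gets $|\dot\gamma_s|\le C_3(r)$. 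Thus $\gamma,p$ are both Lipschitz on $[0,t]$.

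Finally, $\dot\gamma_s=D_p H(\gamma_s,p_s)$ as a composition of the Lipschitz map $D_p H$ (assumption \eqref{ass:LipschitzH}) with the Lipschitz pair $(\gamma,p)$ is itself Lipschitz on $[0,t]$, so $\gamma\in C^{1,1}([0,t])$, which is the desired conclusion. The main (though essentially bookkeeping) obstacle is the propagation step for $p$: one must be careful to use only the two-sided Lipschitz/growth hypotheses on $H$ and the $C^{1,1}_{\rm loc}$ character of $u_0$ and $f$, since no higher regularity or convexity of $H$ in $q$ is available; the rest is a standard bootstrap from the Hamiltonian system.
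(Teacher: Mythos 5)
Your argument is correct and takes essentially the same route as the paper, which simply refers to Theorem 6.2.5 of Cannarsa--Sinestrari for the Euler--Lagrange/Hamiltonian bootstrap you carry out explicitly. One small imprecision: $f(\cdot,\sigma_s^t[\mu])$ is only $C^{1,1}$ (from \eqref{ass:C11onuandf}), not $C^2$, so the composite running cost is merely $C^{1,1}$ in $q$; this does not affect your conclusion (you use only boundedness and Lipschitz continuity of $D_q f$), but it is exactly why the optimizer is $C^{1,1}$ rather than $C^2$.
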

\begin{proof}
The proof  follows the same lines as the one of \cite[Theorem 6.2.5]{CannarsaS}. 
\end{proof}

%
%
\begin{proposition}\label{prop:representu} Let $\mu \in \cP_2(\M)$ and $t \in [0,T]$.   Recall $[0,t]\ni s \mapsto \sigma_s^t[\mu]$ is defined in \eqref{def:sigma_optimal} in Lemma \ref{lem:unique-path}.
\begin{enumerate} 
\item[(i)] We have $u(t, \cdot, \mu) \in C^{1,1}_{\rm loc}(\M)$. Furthermore, there exists a unique $\gamma$ minimizer in \eqref{def:u_alt} which we denote as $s \mapsto S_s^t[\mu](q).$
\item[(ii)] If $\omega \in \Om$, $x \in \bH$,   $\mu=\sharp(x)$ and $q=x(\omega)$ (meaning in particular that $q\in\spt(\mu)$), then $\tilde S_s^t[x](\omega)= S_s^t[\mu](q).$ 
\item[(iii)] Under the assumptions in (ii) we have $D_q u(t, q, \mu)= \nabla_w \cU(t, \mu)(q).$
\item[(iv)] $[0,t]\ni s\mapsto D_q u(s, S^t_s[\mu](q), \s^t_s[\mu])$ is Lipschitz continuous, for all $(q,\mu)\in\M\times\cP_2(\M)$.
\item[(v)] We have that $u(\cdot,\cdot,\mu)\in C^{0,1}_{\rm{loc}}([0,T]\times\M)$, with Lipschitz constants depending on $r>0$, where $\mu\in\cB_r$.
\end{enumerate} 
\end{proposition}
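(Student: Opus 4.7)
The overall plan is to show that the finite--dimensional variational problem \eqref{def:u_alt} is the pointwise, fibrewise version of the infinite--dimensional variational problem defining $\tilde\cU(t,x)$ in \eqref{eq:defineUtilde}, once the measure flow $(\sigma^t_s[\mu])_{s\in[0,t]}$ is frozen. Everything then flows from the Hilbert--space regularity already established in Proposition \ref{theorem:hilbert-smooth}, combined with Remark \ref{rem:factorization} which links $\nabla\tilde\cU$ and $\nabla_w\cU$.

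For (i), the key point is that with $(\sigma^t_s[\mu])_{s\in[0,t]}$ fixed, \eqref{def:u_alt} becomes a standard calculus of variations problem on $\R^d$ with Lagrangian $(q,v)\mapsto L(q,v)+f(q,\sigma^t_s[\mu])$ and initial cost $q\mapsto u_0(q,\sigma^t_0[\mu])$. Existence of a minimizer follows from coercivity via \eqref{hyp:f-theta}, uniqueness from strict convexity of the action (a consequence of \eqref{ass:convexity}, itself coming from \eqref{ass:convexity-on-tildeL} and \eqref{ass:F-U0-C11new2} through Lemma \ref{lem:con_conv}). The $C^{1,1}_{\rm loc}$ regularity of $u(t,\cdot,\mu)$ comes from two directions: on one hand, $u(t,\cdot,\mu)$ is convex as the infimum over $\gamma(0)$ of an initial cost plus an action that are both convex in the terminal point $q=\gamma(t)$ (Remark \ref{rem:summary-bounds4}(iii)); on the other hand, standard calculus of variations (cf.\ \cite{CannarsaS}) gives local semiconcavity under the smoothness and convexity assumptions on $L,f,u_0$. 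A convex and semiconcave function is automatically $C^{1,1}_{\rm loc}$.

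For (ii), the main observation is that the Hamiltonian flow on the cotangent bundle $\bH^2$ decouples pointwise in $\omega$ once the measure flow has been identified as $(\sigma^t_s[\mu])_{s\in[0,t]}$. Indeed, since $\tilde\cF(x)=\cF(\sharp(x))$, one has $\nabla_x\tilde\cF(S(s))(\omega)=\nabla_w\cF(\sigma^t_s[\mu])(S(s,\omega))=D_qf(S(s,\omega),\sigma^t_s[\mu])$, and therefore the system \eqref{eq:app_Hamiltonian_sys_1} applied to $y=x$ reads fibrewise as
\begin{align*}
\partial_s \tilde S^t_s[x](\omega)&=D_pH\bigl(\tilde S^t_s[x](\omega),\tilde P^t_s[x](\omega)\bigr),\\
\partial_s \tilde P^t_s[x](\omega)&=-D_qH\bigl(\tilde S^t_s[x](\omega),\tilde P^t_s[x](\omega)\bigr)+D_qf\bigl(\tilde S^t_s[x](\omega),\sigma^t_s[\mu]\bigr),
\end{align*}
with terminal condition $\tilde S^t_t[x](\omega)=x(\omega)=q$ and natural boundary condition at $s=0$ given pointwise by $\tilde P^t_0[x](\omega)=D_qu_0(\tilde S^t_0[x](\omega),\sigma^t_0[\mu])$ (using $\sharp(\tilde S^t_0[x])=\sigma^t_0[\mu]$ and Remark \ref{rem:factorization} applied to $\tilde\cU_0$). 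The Euler--Lagrange equation for the unique minimizer $S^t_\cdot[\mu](q)$ of \eqref{def:u_alt} is exactly the same Hamiltonian system with the same terminal and transversality conditions. Uniqueness of solutions to this system (from strict convexity of $L+f$ and the regularity of $H,f,u_0$) then forces $\tilde S^t_s[x](\omega)=S^t_s[\mu](q)$.

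For (iii)--(v), everything now follows by pushing (ii) through. Writing $\tilde P^t_s[x]=\nabla_a\tilde\cL(\tilde S^t_s[x],\partial_s\tilde S^t_s[x])$ and evaluating at $s=t$, Proposition \ref{theorem:hilbert-smooth}(iv) gives $\nabla\tilde\cU(t,x)(\omega)=D_vL(q,\partial_s\tilde S^t_s[x](\omega)|_{s=t})$. By (ii), this equals $D_vL(q,\dot\gamma(t))$, which by the envelope/transversality theorem for \eqref{def:u_alt} is precisely $D_qu(t,q,\mu)$; combining with $\nabla\tilde\cU(t,x)=\nabla_w\cU(t,\mu)\circ x$ from Remark \ref{rem:factorization} yields (iii). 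For (iv), the same envelope theorem identifies $D_qu(s,S^t_s[\mu](q),\sigma^t_s[\mu])$ with the momentum $P^t_s=D_vL(S^t_s[\mu](q),\partial_s S^t_s[\mu](q))$ along the optimal trajectory; since Lemma \ref{lem:gamma_c11} gives $S^t_\cdot[\mu](q)\in C^{1,1}([0,t])$ and $D_vL$ is $C^1$, the composition is Lipschitz in $s$. For (v), local Lipschitzness of $u(\cdot,\cdot,\mu)$ in $q$ follows from (i) and Remark \ref{rem:summary-bounds4}(iii); Lipschitzness in $t$ comes from plugging a competitor of the form $\gamma_{t+h}(s):=\gamma_t(st/(t+h))$ into \eqref{def:u_alt} and using the a priori $W^{1,2}$--bound \eqref{eq:holder-for-gamma} together with continuity of $s\mapsto\sigma^t_s[\mu]$.

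The step I expect to be delicate is (ii): the precise matching of the transversality condition $\tilde P^t_0[x](\omega)=D_qu_0(\tilde S^t_0[x](\omega),\sigma^t_0[\mu])$ with the boundary condition for $S^t_\cdot[\mu](q)$ relies on the fact that $\nabla\tilde\cU_0$ factors through $\nabla_wU_0$ on all of $\sharp^{-1}(\sigma^t_0[\mu])$, not only at a single point; this is exactly where the rearrangement invariance of $\tilde\cU_0$ and the $C^{1,1}_{\rm loc}$--equivalence from Lemma \ref{lem:c11_equiv} are genuinely used. Once this is in place, uniqueness for the Hamiltonian two--point problem provides the pointwise identification, and (iii)--(v) are consequences.
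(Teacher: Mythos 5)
Your proposal is correct and follows essentially the same route as the paper: establish $C^{1,1}_{\rm loc}(\M)$ regularity and uniqueness of the minimizer for (i) from convexity plus standard semiconcavity, identify the fibrewise disintegration of the Hilbert-space Hamiltonian flow with the one-particle Euler--Lagrange system in (ii) via the factorizations $\nabla\tilde\cF = \nabla_w\cF\circ x$ and $\nabla\tilde\cU_0 = \nabla_w\cU_0\circ x$ together with strict convexity of the action, and then read off (iii)--(v) from the resulting momentum identity $D_q u = D_v L(\gamma,\dot\gamma)$ along the optimal trajectory. The only cosmetic differences are that you derive (iii) by evaluating directly at $s=t$, whereas the paper establishes $\nabla_w\cU(s,\sigma_s^t[\mu])(\gamma_s)=D_qu(s,\gamma_s,\sigma_s^t[\mu])$ along the whole trajectory (which simultaneously yields (iv)) and then lets $s\uparrow t$; and for (v) you use a reparametrized competitor while the paper invokes the dynamic programming principle together with Lemma \ref{lem:summary-bounds2}(ii) and Lemma \ref{lem:gamma_c11} --- both are valid routes to the same Lipschitz estimate.
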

\proof{} (i) By Remark \ref{rem:summary-bounds4} (iii), $u(t, \cdot, \mu)$ is a convex function. The fact that $u(t, \cdot, \mu)$ is locally semi--concave is a standard property. Thus,  $u(t, \cdot, \mu)$ is $C^{1,1}_{\rm loc}(\M).$ Since, the action 
$$ 
\gamma \mapsto A_t[\gamma]:= u_0(\gamma_0, \sigma_0^t[\mu]) +\int_0^t  L_{s, t}(\gamma_s, \dot \gamma_s) ds
$$ 
is strictly convex, $S_s^t[\mu](q)$ is uniquely defined. 

(ii) By the convexity of $A_t$, any critical point of $A_t$ on the set $\{ \gamma \in C^1([0,t], \M)\,: \, \gamma_t=q \}$ is a minimizer. Set 
\[
p_s:= P_s^t[\mu](q).
\]
The Hamiltonian associated to $L_{s, t}$ is $H_{s, t}(q,p):= H(q, p)-  f(q, \sigma_s^t[\mu]).$ Since $D_p H_{s, t}(q, p)\equiv D_p H(q, p)$, in light of Proposition \ref{prop:homeo1} (iv) we have 
\begin{equation}\label{eq:jan02.2020.1}
D_p H_{s, t}(\gamma_s, p_s) = D_p H\Big( \tilde S_s^t[x](\omega),  \tilde P_s^t[x](\omega)\Big)= \partial_s \tilde S_s^t[x](\omega)= \dot \gamma_s.
\end{equation} 
By \eqref{hyp:u_0-f}
\[
D_q H_{s, t}(q, p)=D_q H(q, p)- D_q f(q, \sigma_s^t[\mu])= D_q H(q, p)- \nabla_w \cF (\sigma_s^t[\mu])(q).
\]
Thus, by Remark \ref{rem:factorization} 
\begin{equation}\label{eq:jan02.2020.2}
D_q H_{s, t}(\gamma_s, p_s)= D_q H\Big( \tilde S_s^t[x](\omega),  \tilde P_s^t[x](\omega)\Big)- \nabla\tilde \cF (\tilde S_s^t[\mu])(\omega)=-\partial_s \tilde P_s^t[x](\omega)=-\dot p_s.
\end{equation} 
We use first use \eqref{hyp:u_0-f}, second Remark \ref{rem:factorization} and third the last identity in \eqref{eq:app_Hamiltonian_sys_1}, to obtain 
\[
D_q  u_0(\gamma_0, \sigma_0^t[\mu])= \nabla_w \cU_0\big( \sigma_0^t[\mu]\big)(\gamma_0)=\nabla\tilde \cU_0 (\tilde S_0^t[\mu])(\omega)=  \tilde P_0^t[x](\omega))= p_0.
\]
This, together with \eqref{eq:jan02.2020.1} and \eqref{eq:jan02.2020.2} implies $\gamma$ is a critical point of $A_t$ on the set $\{ \gamma \in C^1([0,t], \M)\,: \, \gamma_t=q \}.$ Hence, $\gamma$ is the unique minimizer, which verifies (ii). 

(iii) 
By the optimality property of $\gamma$, the standard Hamilton--Jacobi theory ensures that 
\begin{equation}\label{eq:jan02.2020.0}
\dot \gamma_s =D_p H(\gamma_s, D_q u(s, \gamma_s, \s^t_s[\mu])) \qquad \forall s \in (0, t).
\end{equation} 
First, by the strict convexity of $H$ in the second variable, we have that
\begin{align*}\label{eq:jan02.2020.0}
D_q u(s, \gamma_s, \s^t_s[\mu])=D_v L(\g_s,\dot \gamma_s ) \qquad \forall s \in (0, t),
\end{align*} 
from where, by Lemma \ref{lem:gamma_c11} and the by the regularity of $D_vL$ one obtains that $[0,t]\ni s\mapsto D_q u(s, \gamma_s, \s^t_s[\mu])$ is Lipschitz continuous. This shows (iv).

Then, by Proposition \ref{prop:homeo1} (iv), 
\[
\dot \gamma_s =D_p H\Big(\gamma_s, \nabla_w \cU(s, \sigma_s^t[\mu])(\gamma_s)\Big),
\]
which, together with \eqref{eq:jan02.2020.0} implies 
\[
D_p H\Big(\gamma_s, \nabla_w \cU(s, \sigma_s^t[\mu])(\gamma_s)\Big)= D_p H\big(\gamma_s, D_q u(s, \gamma_s, \s^t_s[\mu])\big) \qquad \forall s \in (0, t).
\]
Thus, by \eqref{ass:Hessianv-v}, one has
\[
 \nabla_w \cU(s, \sigma_s^t[\mu])(\gamma_s)=   D_q u(s, \gamma_s, \s^t_s[\mu])  \qquad \forall s \in (0, t).
\]
Letting $s$ increase to $t$ we verify (iii). 

(v) What remains to be shown is the Lipschitz regularity of $u$ with respect to the variable $t$. But, this follows from the dynamic programming principle and from the time Lipschitz continuity of $(\g_s)_{s\in[0,t]}$ and $(\s^t_s[\mu])_{s\in[0,t]}$ (see Lemma \ref{lem:summary-bounds2}(ii) and Lemma \ref{lem:gamma_c11}). \endproof

\begin{remark}\label{rem:representu} 

(i) Let $\mu \in \cP_2(\M)$, $t \in [0,T]$. Note that in Proposition \ref{prop:representu} $S_s^t[\mu]$ is defined on the whole set $\M$ and not just on the support of $\mu$. When $x \in \bH$ is such that  $\mu=\sharp(x)$, Proposition \ref{prop:representu} (ii) reads off 
\[
\tilde S_s^t[x]=S_s^t[\mu]\circ x.
\]
Also, 
\begin{equation}\label{eq:flow}
\left\{
\begin{array}{ll}
\partial_s S_s^t[\mu]=D_pH(S_s^t[\mu],\nabla_w\cU(s,\sigma_s^t[\mu])(S_s^t[\mu])), & s\in(0,t),\\
S_t^t[\mu]=\id.
\end{array}
\right.
\end{equation} 
\smallskip

(ii) It is very important to underline also the fact that by Proposition \ref{prop:representu}(iii) we have that for all $(t,\mu)\in(0,T)\times\cP_2(\M)$, $D_qu(t,\cdot,\mu)=\nabla_w\cU(t,\mu)(\cdot)$ on $\spt(\mu)$. Since $D_qu(t,\cdot,\mu)$ is defined on the whole $\M$ (and we will see below that it is locally Lipschitz continuous), this produces a very natural extension for $\nabla_w\cU(t,\mu)(\cdot)$ to the whole $\M$. This observation will also help us to improve the previous notion of weak solution to the vectorial master equation, as we will see in Subsection \ref{subsec:further_vectorial_eq}.

(iii) Since $\cU$ is of class $C^{1,1}_{\rm{loc}}$ (cf. Definition \ref{def:c11_wasserstein}) \cite[Corollary 3.38]{CarmonaD-I} yields the existence of a Lipschitz continuous extension of $\nabla_w\cU(t,\mu)(\cdot)$ to the whole $\M$, with a Lipschitz constant independent of $\mu$. This extension has the property that it is continuous at $(\mu,q)$ for $q\in\spt(\mu)$. Our result, as described above, because of the local Lipschitz continuity of $D_qu$ (cf. Lemma \ref{lem:u_alt-reg}) provides a slightly better extension.
\end{remark}

\begin{proposition}\label{prop:ext_u}  For all $t\in[0,T]$ and $q\in\M$, the function $u(t,q,\cdot)$ is continuous on $\cP_2(\M).$
\end{proposition}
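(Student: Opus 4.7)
The plan is to fix $t \in [0,T]$ and $q \in \M$, take any sequence $(\mu_n)_n \subset \cP_2(\M)$ with $W_2(\mu_n,\mu) \to 0$, and show $u(t,q,\mu_n) \to u(t,q,\mu)$ by matching the limsup and liminf inequalities arising from the variational representation \eqref{def:u_alt}. The two main ingredients are stability of the measure-valued Hamiltonian flow $\mu \mapsto \sigma^t_\cdot[\mu]$ under perturbations of $\mu$, and standard direct method arguments for the action functional once uniform $L^2$ bounds on the velocities of the minimizing curves are in hand.

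First, I would upgrade the convergence $\mu_n \to \mu$ to the uniform convergence $\sup_{s \in [0,t]} W_2(\sigma^t_s[\mu_n], \sigma^t_s[\mu]) \to 0$. Choose $x_n, x \in \bH$ with $\sharp(x_n)=\mu_n$, $\sharp(x)=\mu$, and $\|x_n - x\| = W_2(\mu_n,\mu) \to 0$ (obtained by lifting optimal plans). Proposition \ref{theorem:hilbert-smooth}(ii) and the structure of the Hamiltonian flow give that $x \mapsto \tilde S^t_s[x]$ is Lipschitz on bounded sets, with Lipschitz constant uniform in $s \in [0,t]$. Hence $\|\tilde S^t_s[x_n] - \tilde S^t_s[x]\| \to 0$ uniformly in $s$, and since $\sigma^t_s[\mu_n] = \sharp(\tilde S^t_s[x_n])$ by Proposition \ref{prop:representu}(ii), the uniform $W_2$-convergence follows.

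For the limsup direction, let $\gamma$ denote the unique minimizer in \eqref{def:u_alt} for $u(t,q,\mu)$; using $\gamma$ as an admissible competitor for $u(t,q,\mu_n)$ yields
\[
u(t,q,\mu_n) \le u_0(\gamma_0, \sigma^t_0[\mu_n]) + \int_0^t \bigl[L(\gamma_s, \dot\gamma_s) + f(\gamma_s, \sigma^t_s[\mu_n])\bigr] ds.
\]
Since $u_0, f \in C^{1,1}_{\rm loc}(\M \times \cP_2(\M))$ they are continuous in the measure variable, uniformly on compact subsets of $\M$; as $\{\gamma_s\}_{s \in [0,t]}$ is compact, dominated convergence gives $\limsup_n u(t,q,\mu_n) \le u(t,q,\mu)$.

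For the liminf direction, let $\gamma^n$ be the minimizer in \eqref{def:u_alt} for $(t,q,\mu_n)$. Eventually $\mu_n \in \cB_r$ for some $r>0$, so Remark \ref{rem:summary-bounds4}(ii) combined with \eqref{hyp:f-theta} produces a uniform bound on $\int_0^t |\dot\gamma^n|^2 ds$ and thus on $\sup_s |\gamma^n_s|$. By Arzelà--Ascoli and weak $L^2$-compactness, along a subsequence $\gamma^n \to \gamma^*$ uniformly on $[0,t]$ with $\gamma^*_t = q$, and $\dot\gamma^n \rightharpoonup \dot\gamma^*$ weakly in $L^2$. Convexity of $L$ in the velocity (cf. \eqref{ass:Hessianv-v}) yields weak lower semicontinuity of $\int_0^t L(\gamma^n_s, \dot\gamma^n_s)\, ds$; the uniform convergences $\gamma^n \to \gamma^*$ and $\sigma^t_s[\mu_n] \to \sigma^t_s[\mu]$ together with continuity of $u_0$ and $f$ let one pass to the limit in the $u_0$ and $f$ contributions, giving
\[
\liminf_n u(t,q,\mu_n) \ge u_0(\gamma^*_0, \sigma^t_0[\mu]) + \int_0^t \bigl[L(\gamma^*_s, \dot\gamma^*_s) + f(\gamma^*_s, \sigma^t_s[\mu])\bigr] ds \ge u(t,q,\mu).
\]
The only delicate point is the passage to the limit in the $f$-integral, but it reduces to uniform continuity of $f$ on a fixed compact subset of $\M \times \cP_2(\M)$, which is routine once the first step is in place. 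Combining both inequalities yields the desired continuity $u(t,q,\mu_n) \to u(t,q,\mu)$.
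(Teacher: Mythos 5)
Your proposal is correct and follows essentially the same route the paper suggests: the paper explicitly defers to the direct-method / variational stability argument used in the proof of Proposition \ref{prop:summary-bounds3}, and your limsup/liminf argument with the flow-stability estimate (really Lemma \ref{lem:summary-bounds2} rather than Proposition \ref{theorem:hilbert-smooth}(ii)) and the a priori bound from Remark \ref{rem:summary-bounds4}(ii) reproduces exactly that scheme. If anything your version is slightly cleaner: by providing the limsup competitor argument explicitly, you remove the mild circularity in the paper (whose proof of Proposition \ref{prop:summary-bounds3} invokes Proposition \ref{prop:ext_u}, the very statement at issue here).
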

We skip the proof of this proposition since it is obtained by standard arguments, similar to those appearing in the proof of Proposition \ref{prop:summary-bounds3}
%

%
%
%
\begin{lemma}\label{lem:u_alt-reg} When \eqref{ass:F-U0-C11new1} - \eqref{hyp:D_qH} hold, then $u$ defined in \eqref{def:u_alt} is of class $C^{1,1}_{\rm{loc}}([0,T]\times\M\times\cP_2(\M))$.
\end{lemma}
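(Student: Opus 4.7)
The plan is to combine the representation formula \eqref{def:u_alt} with the regularity of the Hamiltonian flow and of $\cU$ (Theorem \ref{thm:main-regularity}), and then invoke the discretization-based Corollary \ref{cor:finite_infinite_reg-scalar-mast-time} to upgrade quantitative finite-dimensional bounds to $C^{1,1}_{\rm{loc}}$ regularity on $[0,T]\times\M\times\cP_2(\M)$.

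First I would record basic stability properties of the flows. For $x,y\in \bH$ with $\sharp(x)=\mu$, $\sharp(y)=\nu$ and $\|x-y\|=W_2(\mu,\nu)$, the Lipschitz property of $\nabla\tilde\cH$ from \eqref{ass:F-U0-C11new1} and \eqref{ass:LipschitzH}, combined with Proposition \ref{theorem:hilbert-smooth} and Gr\"onwall's inequality, yield $\|\tilde S_s^t[x]-\tilde S_s^t[y]\|\le C(t,r)\|x-y\|$; pushing forward gives local $W_2$-Lipschitz continuity of $\mu\mapsto \sigma_s^t[\mu]$. The trajectory $S_s^t[\mu](q)$, defined for every $q\in\M$, solves a two-point boundary value problem whose coefficients depend on $\sigma_s^t[\mu]$ through $\nabla_w\cU(s,\cdot)$; since by Theorem \ref{thm:main-regularity} these coefficients are locally Lipschitz in $\mu$, standard ODE stability applied to the Hamiltonian system \eqref{eq:flow} provides joint local Lipschitz continuity of $(s,q,\mu)\mapsto S_s^t[\mu](q)$.

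Second, setting $u^{(m)}(t,q_0,q_1,\ldots,q_m):=u(t,q_0,\mu_q^{(m+1)})$, I would verify the estimates \eqref{def:q_0-constant1}-\eqref{def:q_0-constant2} with constants locally uniform in $m$. Since $q_0\in\spt(\mu_q^{(m+1)})$, by Proposition \ref{prop:representu}(ii) the optimal trajectory $S_s^t[\mu_q^{(m+1)}](q_0)$ coincides with the $0$-th component of the $(m+1)$-particle Hamiltonian flow from Subsection \ref{subsec:multi-d}. Substituting into \eqref{def:u_alt} expresses $u^{(m)}$ in terms of this $(m+1)$-particle flow, and I would then rerun the quantitative derivative analysis of Theorem \ref{thm:app_regularity-m} on this augmented system, carefully tracking the distinguished index $0$. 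This produces the scalings $|D_{q_0}^2 u^{(m)}|=O(1)$, $|D_{q_iq_0}^2 u^{(m)}|=O(1/m)$ (so that $\sum_{i=1}^m m|D^2_{q_iq_0}u^{(m)}|^2=O(1)$), and $|D^2_{q_iq_j}u^{(m)}|=O(1/m)$ or $O(1/m^2)$ for $i,j\ge 1$, matching \eqref{def:q_0-constant1}; the hypotheses \eqref{hyp:u_0-f-m} and \eqref{hyp:D_yu_0-D_yf-m} on the cost data $u_0^{(m)}$ and $f^{(m)}$ deliver the $q_0$-variable estimates. The time-derivative bounds \eqref{def:q_0-constant2} then follow by differentiating the finite-dimensional Hamilton-Jacobi equation satisfied by $u^{(m)}$ (justified as in Remark \ref{rem:discrete-cont}) and inserting the spatial derivative bounds just obtained, exactly as in Theorem \ref{thm:app_regularity-m}(3). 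With all hypotheses in hand, Corollary \ref{cor:finite_infinite_reg-scalar-mast-time} yields the claim.

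The principal obstacle is the quantitative analysis in the second step: adapting Theorem \ref{thm:app_regularity-m} to the $(m+1)$-particle problem in the presence of the distinguished endpoint variable $q_0$ requires careful bookkeeping of how the linearized Hamiltonian flow and its inverse interact with this extra free variable and of the fact that its derivative behavior differs by one order in $m$ from that of the symmetric variables $q_1,\dots,q_m$. The convexity assumptions \eqref{ass:F-U0-C11new2}, \eqref{ass:convexity-on-tildeL} and \eqref{ass:convexity}, together with the $C^{2,1,w}$-regularity assumed in \eqref{eq:collectUF}, guarantee that the augmented Hamiltonian flow remains a diffeomorphism and that the matrix-exponential estimates of Lemmas \ref{lem:app_matrix-exponential}-\ref{lem:app_matrix_exp-2} transfer to this setting.
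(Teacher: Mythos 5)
Your proposal follows the same route as the paper's proof: discretize $u$ to the $(m+1)$-particle function $u^{(m)}(t,q_0,q)=u(t,q_0,\mu_q^{(m+1)})$, write $u^{(m)}$ via the representation formula in terms of the $(m+1)$-particle flow $(Q_i)_{i=0}^m$, obtain quantitative derivative estimates on this flow from Theorem~\ref{thm:app_regularity-m} (the paper isolates this in Lemma~\ref{lem:reg-Q}, together with Lemma~\ref{lem:Q_0-prop} for $Q_0$), differentiate the composite formula carefully tracking the distinguished index $q_0$ (Lemma~\ref{lem:last-estimates}), obtain time-derivative bounds by differentiating the dynamic-programming identity (Lemma~\ref{lem:last-estimates_time}), and conclude with Corollary~\ref{cor:finite_infinite_reg-scalar-mast-time}. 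One imprecision worth flagging: you say you would ``rerun the quantitative derivative analysis of Theorem~\ref{thm:app_regularity-m} on this augmented system.'' The paper does not redo the matrix-exponential analysis for an asymmetric system; the $(m+1)$-particle Hamiltonian flow $(\xi^m,\eta^m,\zeta^m)$ is still permutation-symmetric and Theorem~\ref{thm:app_regularity-m} is applied to it as a black box. The asymmetry between $q_0$ and $q_1,\dots,q_m$ is introduced only when $u^{(m)}$ is expressed as a composite of the data and the flow, and the extra order in $m$ for the $q_0$-derivatives comes out of the chain-rule bookkeeping in Lemma~\ref{lem:last-estimates}, not from a modified flow analysis. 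A second, more material, omission concerns the time-derivative bounds: you present them as a straightforward replay of Theorem~\ref{thm:app_regularity-m}(3), but the paper's Lemma~\ref{lem:last-estimates_time} needs the key identity
\begin{equation*}
(m+1)D_{q_0}U^{(m+1)}(t,q_0,q)= D_{q_0}u^{(m)}(t,q_0,q) - D_{q_0}\hat u^{(m+1)}(t,q_0,q_0,q),
\end{equation*}
which decouples the derivative of $\cU$ restricted to empirical measures from that of $u^{(m)}$ via an auxiliary function $\hat u^{(m+1)}(t,a,q_0,q):=u(t,a,\mu_q^{(m+1)})$. Without this trick the bound $|(m+1)D_{q_0}U^{(m+1)}|\le C$, and hence the estimates $|\partial_t u^{(m)}|\le C$ and $\sum_k (m+1)|D_{q_k}\partial_t u^{(m)}|^2\le C$ required by Corollary~\ref{cor:finite_infinite_reg-scalar-mast-time}, do not follow directly. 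Your outline is thus correct in structure, but this step deserves to be made explicit.
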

\begin{proof}
We proceed by a discretization approach. Let $\mu\in\sP_2(\M)$, $t>0$, $m\in\N$ and $q_0\in\spt(\mu)$ be fixed. Moreover given  $\{q_1,\dots,q_m\}\subset\spt(\mu)$ we shall use the notation of $q=(q_1,\dots,q_m)\in(\M)^m.$  We define 
$$
\mu_q^{(m+1)}=\frac{1}{m+1}\sum_{i=0}^m\delta_{q_i}, \quad \sigma^{(m+1)}_s:=\sigma_s^t\Big[ \mu_q^{(m+1)}\Big]
$$
so that $\sigma^{(m+1)}$ is the solution to the continuity equation \eqref{continuity} with $\mu_q^{(m+1)}$ as terminal condition. Note 
$$\sigma^{(m+1)}_s=\frac{1}{m+1}\sum_{i=0}^m\d_{S_s^t[  \mu_q^{(m+1)}](q_i)},\qquad \forall s\in(0,t).$$

We define 
$$ 
u_0^{(m+1)},f^{(m+1)}:\M\times(\M)^{(m+1)}\to\R, \qquad U^{(m+1)}, u^{(m)}:(0,T)\times(\M)^{(m+1)}\to\R
$$ 
as  
$$u_0^{(m+1)}(y_0,q_0,q):=u_0(y_0,\mu^{(m+1)}_q), \quad f^{(m+1)}(y_0,q_0,q):=f(y_0,\mu^{(m+1)}_q),$$ 
and 
\begin{equation}\label{eq:Um_and_u_m}
U^{(m+1)}(s,q_0,q):=\cU(s,\mu^{(m+1)}_q), \quad u^{(m)}(t,q_0,q):=u(t,q_0,\mu_q^{(m+1)}).
\end{equation}
Observe 
\begin{align}\label{def:u_alt_m}
u^{(m)}(t,q_0,q)&=u_0\Big(Q_0(0,q_0,q),\s^{(m+1)}_0\Big)\\
\nonumber+&\int_0^t L\Big(Q_0(s,q_0,q),D_pH\Big(Q_0(s,q_0,q),\nabla_w\cU\big(s,\sigma^{(m+1)}_s\big)\big(Q_0(s,q_0,q)\big)\Big)\Big)\dd s\\
\nonumber+&\int_0^tf\Big(Q_0(s,q_0,q),\sigma^{(m+1)}_s\Big) ds\\
\nonumber=&u_0^{(m+1)}\Big(Q_0(0,q_0,q),Q_0(0,q_0,q),Q(0,q_0,q))\Big)\\
\nonumber+&\int_0^t L\Big(Q_0(s,q_0,q),D_pH(Q_0(s,q_0,q),(m+1)D_{q_0}U^{(m+1)}(s,Q_0(s,q_0,q), Q(s,q_0,q)\Big) ds\\
\nonumber+&\int_0^tf^{(m+1)}\Big(Q_0(s,q_0,q),Q_0(s,q_0,q),Q(s,q_0,q)\Big) ds
\end{align}
where we have set 
\begin{equation}\label{eq:m-discrete-flow}
Q_i(s, q_0, q):= S_s^t\big[\mu_q^{(m+1)} \big](q_i) \quad \text{and}\quad Q(s, q_0, q):=(Q_1(s, q_0, q), \cdots, Q_m(s, q_0, q))
\end{equation}
Now our first goal is to obtain derivative estimates on $u^{(m)}$ with respect to the `distinguished' variable $q_0$ and second, with respect to all the other variables $q$. Finally, we also derive the necessary estimates involving the time variable $t$ as well. It is convenient to introduce the notation $$\tilde u_0^{(m+1)},\tilde f^{(m+1)},V^{(m+1)}:\M\times(\M)^m\to\R$$ defined as
\begin{equation}\label{new-quantities}
\begin{array}{l}
\tilde u_0^{(m+1)}(q_0,q):=u_0^{(m+1)}(Q_0(0,q_0,q),Q_0(0,q_0,q),Q(0,q_0,q)),\\[3pt]
\ds \tilde f^{(m+1)}(q_0,q):=\int_0^tf(Q_0(s,q_0,q),Q_0(s,q_0,q),Q(s,q_0,q))\dd s\\[3pt]
\ds V^{(m+1)}(q_0,q):=\int_0^t L(Q_0(s,q_0,q),D_pH(Q_0(s,q_0,q),(m+1)\nabla_{q_0}U^{(m+1)}(s,Q_0(s,q_0,q), Q(s,q_0,q))))\dd s.
\end{array}
\end{equation}
In Lemma \ref{lem:last-estimates} and Lemma \ref{lem:last-estimates_time} below we establish the necessary derivative estimates on these new quantities. These imply in particular that there exists a constant $C=C(T,r,K)>0$ such that for any $(q_0,q)\in\B^{(m+1)}_r$; $q_0\in K$ (where $K\subset\M$ is compact) and for all $t\in[0,T]$ and $i,j\in\{0,\dots,m\}$, we have 
\begin{align}\label{estim:space1}
|D_{q_i}u^{(m)}(t,q_0,q)|\le \left\{
\begin{array}{ll}
C, & i=0,\\
\ds\frac{C}{m+1}, & i>0, 
\end{array}
\right.
\end{align}
\begin{align}\label{estim:space2}
|D^2_{q_iq_j} u^{(m)}(t,q_0,q)|_\infty\le\left\{
\begin{array}{ll}
C, & i=j=0,\\[4pt]
\ds\frac{C}{m+1}, & (i=j,\ {\rm{and}}\ i>0),\ {\rm{or}}\ (i\cdot j=0\ {\rm{and}}\ \max\{i,j\}>0),\\[5pt]
\ds\frac{C}{(m+1)^2}, & i\neq j,\ i,j>0.
\end{array}
\right.
\end{align}
and
\begin{align}\label{estim:time_space}
|D_{q_0}\partial_t u^{(m)}(t,q_0,q)|\le C,\quad\quad \sum_{k=1}^m(m+1)|D_{q_k}\partial_t u^{(m)}|^2\le C,
\end{align}
and
\begin{equation}\label{estim:time2}
|\partial_{t} u^{(m)}(t,q_0,q)|\le C, \ \  |\partial^2_{tt} u^{(m)}(t,q_0,q)|\le C.
\end{equation}

Let us notice that by definition and the assumption \eqref{hyp:u_0-f}, $u$ is bounded on $[0,T]\times K\times\cB_r$ for any $K\subseteq\M$ compact and $r>0$. Therefore, $u^{(m)}$ is uniformly bounded (with respect to $m$) on $[0,T]\times K\times\B_r^m$.

Now, all these properties allow us to verify the assumptions of Corollary \ref{cor:finite_infinite_reg-scalar-mast-time} and conclude by this that there exists $\tilde u:[0,T]\times\M\times\sP_2(\M)\to\R$ such that after passing to a suitable subsequence $(u^{(m)})_{m\in\N}$ converges to $\tilde u$ in the sense as described in Corollary \ref{cor:finite_infinite_reg-scalar-mast-time}. Let us notice furthermore that $\tilde u(t,q_0,\mu)$ has to be also the limit of $u(t,q_0,\mu_q^{(m+1)})$ (since by Proposition \ref{prop:ext_u} $u(t,q_0,\cdot)$ is continuous) and therefore $\tilde u$ and $u$ must coincide. Thus, as a consequence of Corollary \ref{cor:finite_infinite_reg-scalar-mast-time} $u\in C^{1,1}_{\rm{loc}}([0,T]\times\M\times\sP_2(\M))$.
\end{proof}

\begin{corollary}\label{cor:D_qu-global-Lip}
Under the assumptions of Lemma \ref{lem:u_alt-reg}, we have that the vector field $\M\ni q\mapsto D_q u(t,q,\mu)$ is globally Lipschitz, uniformly with respect to $(t,\mu)\in[0,T]\times\cB_r$ for any $r>0$.
\end{corollary}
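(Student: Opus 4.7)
My plan is to combine the convexity of $u(t,\cdot,\mu)$ with a globally uniform semi-concavity estimate to squeeze the Hessian of $u(t,\cdot,\mu)$ between $0$ and $C(T,r)I$; the fundamental theorem of calculus then produces the desired global Lipschitz bound on $D_qu(t,\cdot,\mu)$.

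For convexity, note that the curve $s\mapsto\sigma^t_s[\mu]$ appearing in \eqref{def:u_alt} depends on $\mu$ alone, not on the terminal point $q$. Hence by \eqref{ass:convexity}, both $u_0(\cdot,\sigma^t_0[\mu])$ and the running cost $L(\cdot,\cdot)+f(\cdot,\sigma^t_s[\mu])$ are convex, so the action functional in \eqref{def:u_alt} is jointly convex in $(\gamma,q)$ under the affine constraint $\gamma_t=q$. Its partial infimum $u(t,\cdot,\mu)$ is therefore convex on $\M$.

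For the uniform semi-concavity, I fix $(t,\mu)\in(0,T]\times\cB_r$, $q,h\in\M$, and let $\gamma$ be the unique optimal curve for $u(t,q,\mu)$. I use the constant shifts $\gamma^{\pm h}_s:=\gamma_s\pm h$ as competitors for $u(t,q\pm h,\mu)$; these leave $\dot\gamma^{\pm h}=\dot\gamma$ unchanged. Summing the two upper bounds and subtracting $2u(t,q,\mu)$ (which equals the action evaluated along $\gamma$) yields
\[
u(t,q+h,\mu)+u(t,q-h,\mu)-2u(t,q,\mu)\le\Delta_h u_0(\gamma_0,\sigma^t_0[\mu])+\int_0^t\Big[\Delta_h L(\gamma_s,\dot\gamma_s)+\Delta_h f(\gamma_s,\sigma^t_s[\mu])\Big]ds,
\]
where $\Delta_h g(q,\cdot):=g(q+h,\cdot)+g(q-h,\cdot)-2g(q,\cdot)$ denotes the symmetric second difference in the spatial variable. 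Under the standing assumptions, Lemma \ref{lem:c11_equiv} combined with \eqref{hyp:u_0-f} and the global regularity $\tilde\cU_0,\tilde\cF\in C^{1,1}(\bH)$ yields uniform bounds $\|D^2_{qq}u_0(\cdot,\nu)\|_\infty,\|D^2_{qq}f(\cdot,\nu)\|_\infty\le C(r)$ for $\nu\in\cB_r$, while \eqref{ass:LipschitzH} gives a global bound on $D^2_{qq}L$. Each second difference is therefore $\le C(r)|h|^2$, and integrating in $s\in[0,t]$ delivers the globally uniform semi-concavity estimate $u(t,q+h,\mu)+u(t,q-h,\mu)-2u(t,q,\mu)\le C(T,r)|h|^2$ on all of $\M$.

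Combining the previous two steps, at every point of twice differentiability of $u(t,\cdot,\mu)$ (a set of full $\cL^d$-measure by the $C^{1,1}_{\rm{loc}}$ regularity from Lemma \ref{lem:u_alt-reg}) one obtains $0\le D^2_{qq}u(t,q,\mu)\le C(T,r)I$; integrating $D_qu(t,\cdot,\mu)$ along straight segments then yields $|D_qu(t,q,\mu)-D_qu(t,q',\mu)|\le C(T,r)|q-q'|$ for all $q,q'\in\M$, uniformly in $(t,\mu)\in[0,T]\times\cB_r$. The main obstacle is ensuring that the spatial Hessians of $u_0$ and $f$ admit bounds which are both global in the spatial variable and uniform in $\mu\in\cB_r$; this subtle upgrade from $C^{1,1}_{\rm{loc}}(\cP_2(\M))$ to a uniform global spatial bound is delivered precisely by the global $C^{1,1}(\bH)$ regularity of the lifts, combined with the factorization of Wasserstein gradients from Remark \ref{rem:factorization} and the identification \eqref{hyp:u_0-f}.
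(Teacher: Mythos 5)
Your strategy — convexity plus uniform global semi-concavity sandwiches the Hessian, hence global Lipschitz gradient — is a genuinely different route from the paper's, which simply invokes the already-established uniform Lipschitz bound on $\nabla_w\cU(t,\mu_n)(\cdot)$ for a sequence of fully-supported $\mu_n\in\cB_r$ converging to $\mu$ (via Proposition \ref{prop:representu}(iii), Proposition \ref{prop:semi-convex1} and Lemma \ref{lem:c11_equiv}) and then passes to the limit using the continuity of $D_qu(t,q,\cdot)$ from Lemma \ref{lem:u_alt-reg}. The paper's argument is essentially a two-line transfer of known regularity through a full-support approximation; yours would re-derive the regularity from the variational formula.

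There is, however, a real gap at the step you yourself flag as ``the main obstacle.'' You claim that Lemma \ref{lem:c11_equiv} together with \eqref{hyp:u_0-f} and $\tilde\cU_0,\tilde\cF\in C^{1,1}(\bH)$ yields $\|D^2_{qq}u_0(\cdot,\nu)\|_\infty,\|D^2_{qq}f(\cdot,\nu)\|_\infty\le C(r)$ globally in $q$ for $\nu$ in a ball. But Lemma \ref{lem:c11_equiv} (via the Claim in its proof) delivers a uniform Lipschitz bound for $\nabla_w\cU_0(\nu)(\cdot)$ only on $\spt(\nu)$: the Wasserstein gradient is defined there, and the proof manipulates rearrangements within $\spt(\nu)$. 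The identification \eqref{hyp:u_0-f} says $D_qu_0(\cdot,\nu)$ is an extension of that object to $\M$, and assumption \eqref{ass:C11onuandf} only gives $u_0\in C^{1,1}_{\rm loc}(\M\times\cP_2(\M))$, i.e.\ a Lipschitz constant for $D_qu_0(\cdot,\nu)$ that depends on the compact set $K$ in which $q$ ranges (see Definition \ref{def:c11_M_times_wasserstein} and Remark \ref{rmk:c11_wasserstein}(iii)). Nothing in the hypotheses forces the extension to inherit the $\spt(\nu)$-Lipschitz constant globally. In your semi-concavity computation the shifted competitor $\gamma_s\pm h$ lands precisely at points generically off $\spt(\sigma^t_s[\mu])$, which is where the needed bound is unjustified. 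To close the gap you would have to approximate $\nu$ by fully-supported measures $\nu_n$, use that $D_qu_0(\cdot,\nu_n)=\nabla_w\cU_0(\nu_n)(\cdot)$ is $\kappa_0$-Lipschitz on all of $\M$, and pass to the limit by the continuity of $D_qu_0(q,\cdot)$ — which is exactly the argument the paper runs directly on $u$, making the excursion through semi-concavity unnecessary. (A minor additional point: $\sigma^t_s[\mu]$ lives in $\cB_{e_T(r)}$, not $\cB_r$, so the constant in your semi-concavity bound should be quoted in terms of $e_T(r)$.)
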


\begin{proof}
Let $r>0$, $t\in[0,T]$ and $\mu\in\cB_r$. Let $q_1,q_2\in\M$. Let $(\mu_n)_{n\in\N}$ be a sequence in $\cB_r$ such that $W_2(\mu_n,\mu)\to 0$ as $n\to+\infty$ and $\spt(\mu_n)=\M$ for all $n\in\N$. By Proposition \ref{prop:representu}(iii) we have $D_q u(t,q_i,\mu_n)=\nabla_w\cU(t,\mu_n)(q_i)$, $i=1,2$. In the light of Proposition \ref{prop:semi-convex1} and Lemma \ref{lem:c11_equiv} there exists $C=C(r,T)>0$ independent of $n$ such that 
$$|D_q u(t,q_1,\mu_n)-D_q u(t,q_2,\mu_n)|=|\nabla_w\cU(t,\mu_n)(q_1)-\nabla_w\cU(t,\mu_n)(q_2)|\le C|q_1-q_2|.$$
By the continuity of $D_qu(t,q_i,\cdot)$ provided in Lemma \ref{lem:u_alt-reg}, one can pass to the limit with $n\to+\infty$ to obtain
$$|D_q u(t,q_1,\mu_n)-D_q u(t,q_2,\mu_n)|\le C|q_1-q_2|.$$
The result follows.
\end{proof}

\begin{lemma}\label{lem:last-estimates}
Let $\tilde u_0^{(m+1)}, \tilde f^{(m+1)}$ and $V^{(m+1)}$ be defined in \eqref{new-quantities} and suppose the assumptions of Lemma \ref{lem:u_alt-reg} are fulfilled. Then, for $T,r>0$ and $K\subset\M$ compact, there exists a constant $C=C(T,r,K)>0$ such that for any $(q_0,q)\in\B^{(m+1)}_r$ with $q_0\in K$ and $i,j\in\{0,\dots,m\}$, we have 
\begin{itemize}
\item[(1)]
\begin{align*}
|D_{q_i}\tilde u_0^{(m+1)}(q_0,q)|\le \left\{
\begin{array}{ll}
C, & i=0,\\
\ds\frac{C}{m+1}, & i>0, 
\end{array}
\right.
\ \  {\rm{and}}\ \ 
|D_{q_i}\tilde f^{(m+1)}(q_0,q)|\le \left\{
\begin{array}{ll}
C, & i=0,\\
\ds\frac{C}{m+1}, & i>0 .
\end{array}
\right.
\end{align*}
\item[(2)]
\begin{align*}
|D^2_{q_iq_j}\tilde u_0^{(m+1)}(q_0,q)|_\infty\le\left\{
\begin{array}{ll}
C, & i=j=0,\\
\ds\frac{C}{m+1}, & (i=j,\ {\rm{and}}\ i>0),\ {\rm{or}}\ (i\cdot j=0\ {\rm{and}}\ \max\{i,j\}>0),\\
\ds\frac{C}{(m+1)^2}, & i\neq j,\ i,j>0,
\end{array}
\right.
\end{align*}
and
\item[(2)]
\begin{align*}
|D^2_{q_iq_j}\tilde f^{(m+1)}(q_0,q)|_\infty\le\left\{
\begin{array}{ll}
C, & i=j=0,\\
\ds\frac{C}{m+1}, & (i=j,\ {\rm{and}}\ i>0),\ {\rm{or}}\ (i\cdot j=0\ {\rm{and}}\ \max\{i,j\}>0),\\
\ds\frac{C}{(m+1)^2}, & i\neq j,\ i,j>0.
\end{array}
\right.
\end{align*}
\item[(3)]
\begin{align*}
|D_{q_i}V^{(m+1)}(q_0,q)|\le \left\{
\begin{array}{ll}
C, & {\rm{if}}\ i=0,\\
\ds\frac{C}{m+1}, & {\rm{if}}\ i>0.
\end{array}
\right.
\end{align*}
\item[(4)]
\begin{align*}
|D^2_{q_iq_j}V^{(m+1)}(q_0,q)|_\infty\le\left\{
\begin{array}{ll}
C, & i=j=0,\\
\ds\frac{C}{m+1}, & (i=j\ {\rm{and}}\ i>0) \ {\rm{or}}\ (i\cdot j=0 \ {\rm{and}}\ \max\{i,j\}>0),\\
\ds\frac{C}{(m+1)^2}, & i\neq j.
\end{array}
\right.
\end{align*}
\end{itemize}
As a consequence, $u^{(m)}$ defined in \eqref{def:u_alt_m} satisfied the estimates \eqref{estim:space1} and \eqref{estim:space2} from Lemma \ref{lem:u_alt-reg}.
\end{lemma}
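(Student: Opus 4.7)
The proof is a chain rule computation in three stages, combining derivative bounds for the $(m+1)$-particle Hamiltonian flow, the hypotheses on $u_0^{(m+1)}$ and $f^{(m+1)}$, and (for $V^{(m+1)}$) the second-order bounds on $U^{(m+1)}$ supplied by Theorem \ref{thm:app_regularity-m}(1).

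First I would extract the asymptotic behavior in $m$ of the map $(q_0, q) \mapsto Q_i(s, q_0, q)$, uniformly for $s \in [0, t]$ and $(q_0, q) \in \B^{(m+1)}_r$. By Remark \ref{rem:uniquestrict-c1}(ii) and Lemma \ref{lem:app_flow-regularity}, this map coincides with $\xi^{m+1}(s, \cdot) \circ \xi^{m+1}(t, \cdot)^{-1}$, so the first- and second-order derivative estimates proved in Steps 1--4 of the proof of Theorem \ref{thm:app_regularity-m} apply verbatim: $|D_{q_j} Q_i| \le C$ if $i = j$ and $\le C/(m+1)$ otherwise, with the three-tier scaling $C,\; C/(m+1),\; C/(m+1)^2$ for $|D^2_{q_k q_j} Q_i|$ depending on how many of the indices $i, j, k$ coincide. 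The associated momentum, relevant for $V^{(m+1)}$, satisfies $P_0(s, q_0, q) = (m+1)\, \eta^{m+1}_0\bigl(s, \xi^{m+1}(t, \cdot)^{-1}(q_0, q)\bigr)$, and the same compositional argument, combined with Theorem \ref{thm:app_regularity-m}(1), yields $|D_{q_0} P_0| = O(1)$, $|D_{q_j} P_0| = O(1/(m+1))$ for $j \ge 1$, together with the analogous second-order bounds.

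Second, I would apply the chain rule to $\tilde u_0^{(m+1)}(q_0, q) = G(Q_0(0, q_0, q), \ldots, Q_m(0, q_0, q))$, where $G(z_0, \ldots, z_m) := u_0\bigl(z_0, \tfrac{1}{m+1}\sum_{i=0}^m \delta_{z_i}\bigr)$. The hypotheses \eqref{hyp:u_0-f-m} and \eqref{hyp:D_yu_0-D_yf-m}, read through the identification of the first two arguments of $u_0^{(m+1)}$, give $|D_{z_0} G| \le C$, $|D_{z_i} G| \le C/(m+1)$ for $i \ge 1$, and the analogous scaling for second derivatives. Substituting the flow estimates from the first stage, the contribution $D_{z_0} G \cdot D_{q_0} Q_0 = O(1)\cdot O(1)$ provides the dominant term when differentiating in $q_0$, while every summand in the derivative with respect to $q_j$ ($j \ge 1$) carries at least one factor of $1/(m+1)$ (either from $G$ or from the flow), establishing (1)--(2) for $\tilde u_0^{(m+1)}$. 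Replacing $u_0$ by $f$ and integrating over $s \in [0, t]$ produces the analogous bounds for $\tilde f^{(m+1)}$. For $V^{(m+1)}$ the same strategy is applied to the composition $(q_0, q) \mapsto L(Q_0, D_p H(Q_0, P_0))$: the boundedness of $D^a_q D^b_v L$ and $D^a_q D^b_p H$ for $|a| + |b| \le 2$ on compact sets (supplied by \eqref{ass:Hamiltonian}, \eqref{ass:LipschitzH}, \eqref{hyp:H_3}, \eqref{hyp:D_qH}), together with the flow and momentum estimates from the first stage, reduces the problem to the same bookkeeping, and integration from $0$ to $t$ preserves the scaling.

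The principal technical obstacle is the combinatorial bookkeeping in the second-order estimates: when two partial derivatives are applied and one sums over all $m+1$ particle indices, one must verify that a sum of $m$ terms each of size $O(1/(m+1)^2)$ produces only $O(1/(m+1))$, consistent with the claimed bounds, and not something larger. This accounting is entirely analogous to that carried out in Steps 3--4 of the proof of Theorem \ref{thm:app_regularity-m} for the flow itself. Once (1)--(4) are in hand, the decomposition $u^{(m)}(t, q_0, q) = \tilde u_0^{(m+1)}(q_0, q) + V^{(m+1)}(q_0, q) + \tilde f^{(m+1)}(q_0, q)$ immediately yields \eqref{estim:space1} and \eqref{estim:space2}.
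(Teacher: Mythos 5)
Your proposal is correct and follows essentially the same approach as the paper: chain rule applied to the compositions $\tilde u_0^{(m+1)}, \tilde f^{(m+1)}, V^{(m+1)}$, with the flow derivative estimates (your first stage, corresponding to the paper's Lemma \ref{lem:reg-Q}) combined with the discrete-data hypotheses \eqref{hyp:u_0-f-m}--\eqref{hyp:D_yu_0-D_yf-m} and the $U^{(m+1)}$-estimates from Theorem \ref{thm:app_regularity-m}. Your reformulation via the momentum $P_0 = (m+1)\eta_0^{m+1}\circ(\xi^{m+1}(t,\cdot))^{-1}$ is equivalent to the paper's $v_0 = D_pH(Q_0,(m+1)D_{q_0}U^{(m+1)}(s,Q_0,Q))$ since $\eta_0^{m+1}(s,\cdot) = D_{q_0}U^{(m+1)}(s,\xi^{m+1}(s,\cdot))$; one small imprecision is that the second-order bounds on $P_0$ (hence on $V^{(m+1)}$) require the third-order flow estimates of Claim 3 in the proof of Theorem \ref{thm:app_regularity-m}, i.e.\ part (2) of that theorem together with \eqref{hyp:H_3}, not only part (1).
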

\begin{proof}

As the computations to obtain the corresponding estimates in the case of $\tilde u_0^{(m+1)}$ and $\tilde f^{(m+1)}$ are completely parallel, we perform these only in the case of $\tilde u_0^{(m+1)}.$

(1) In the computations below, to facilitate the reading, we will display neither the time nor the space variables in $Q_i$. For $i\ge 0$, we have
\begin{align}\label{eq:tilde_deriv}
D_{q_i}\tilde u_0^{(m+1)}(q_0,q)&=D_y u_0^{(m+1)}(Q_0,Q_0,Q)D_{q_i}Q_0+D_{q_i} u_0^{(m+1)}(Q_0,Q_0,Q)D_{q_i}Q_i\\
\nonumber&+\sum_{k=0,k\neq i}^mD_{q_k} u_0^{(m+1)}(Q_0,Q_0,Q)D_{q_i}Q_k.
\end{align}
Now, let us observe recall that by assumption \eqref{hyp:u_0-f} we have
$$
D_y u_0(y,\mu)=\nabla_w\cU_0(\mu)(y), \quad u_0^{(m+1)}(y,q_0,q_1,\dots,q_m)=u_0(y,\mu^{(m+1)}_q),
$$ 
for all $\mu\in\sP_2(\M)$, all $y\in\spt(\mu)$ and all $q_0, q_1, \cdots, q_m \in \M.$ This implies  
$$
D_y u_0^{(m+1)}(y,q_0,q)=D_y u_0(y,\mu^{(m+1)}_q)=\nabla_w \cU_0(\mu^{(m+1)}_q)(y),
$$
ans so
\begin{align}\label{eq:correspondance}
D_y u_0^{(m+1)}(q_i,q_0,q)=D_y u_0(q_i,\mu^{(m+1)}_q)=\nabla_w \cU_0(\mu^{(m+1)}_q)(q_i)=(m+1)D_{q_i}U_0^{(m+1)}(q_0,q)
\end{align}
for all $i\in\{0,\dots,m\}.$ 

Let us notice  that by \eqref{hyp:u_0-f-m}-\eqref{hyp:D_yu_0-D_yf-m}, Lemma \ref{lem:reg-Q} and Lemma \ref{lem:Q_0-prop} provide precise regularity estimates on the discrete flow $(Q_i)_{i=0}^m$),  with a constant $C=C(T,r,K)$ such that
$$(m+1)|D_{q_k} u_0^{(m+1)}(Q_0,Q_0,Q_1,\dots,Q_m)|\le C, \quad |D_y u_0^{(m+1)}(Q_0,Q_0,Q_1,\dots,Q_m)|\le C.$$
so (1) follows by combining the previous arguments with Lemma \ref{lem:reg-Q}.

(2) Differentiating \eqref{eq:tilde_deriv} with respect to $q_j$ one obtains
\begin{align*}
D^2_{q_iq_j}\tilde u_0^{(m+1)}(q_0,q)&=D_{q_j}Q_0D^2_{yy} u_0^{(m+1)}(Q_0,Q_0,Q)D_{q_i}Q_0
+\sum_{k=0}^mD_{q_j}Q_k D^2_{yq_k}u_0^{(m+1)}(Q_0,Q_0,Q)D_{q_i}Q_0\\
&+D_y u_0^{(m+1)}(Q_0,Q_0,Q)D^2_{q_iq_j}Q_0+\sum_{k,l=0}^m D_{q_j}Q_l D^2_{q_kq_l} u_0^{(m+1)}(Q_0,Q_0,Q)D_{q_i}Q_k\\
&+\sum_{k=0}^m D_{q_k} u_0^{(m+1)}(Q_0,Q_0,Q)D^2_{q_iq_j}Q_k
\end{align*}

From \eqref{eq:correspondance} we observe again for any $ i\in\{0,\dots,m\}$, 
\[
D^2_{yy} u_0^{(m+1)}(q_i,q_0,q)=D^2_{yy} u_0(q_i,\mu^{(m+1)}_q)=D_y\nabla_w \cU_0(\mu^{(m+1)}_q)(q_i)
=(m+1)D^2_{q_iq_i}U_0^{(m+1)}(q_0,q).
\]
Thus, if $i,j>0$ and $i\neq j$
\begin{align*}
|D^2_{q_iq_j}\tilde u_0^{(m+1)}(q_0,q)|_\infty&\le\frac{C}{m+1}(m+1)|D^2_{q_0q_0}U_0^{(m+1)}(Q_0,Q)|_\infty\frac{C}{m+1}\\
&+\sum_{k=0}^m|D_{q_j}Q_k|_\infty |D^2_{yq_k}u_0^{(m+1)}(Q_0,Q_0,Q)|_\infty |D_{q_i}Q_0|_\infty\\
&+(m+1)|D_{q_0}U_0^{(m+1)}(Q_0,Q)|\frac{C}{(m+1)^2}\\
&+\sum_{k=0}^m |D_{q_j}Q_k|_\infty |D^2_{q_kq_k}u_0^{(m+1)}(Q_0,Q_0,Q)|_\infty |D_{q_i}Q_k|_\infty\\
&+\sum_{k\neq l}^m |D_{q_j}Q_l|_\infty |D^2_{q_kq_l} u_0^{(m+1)}(Q_0,Q_0,Q)|_\infty |D_{q_i}Q_k|_\infty\\
&+\sum_{k=0}^m |D_{q_k} u_0^{(m+1)}(Q_0,Q_0,Q)| |D^2_{q_iq_j}Q_k|_\infty
\end{align*}
Let us recall that by our assumptions, there exists $C=C(T,r,K)$ such that 
\begin{align*}
&|D^2_{q_0q_0}U_0^{(m+1)}(Q_0,Q)|_\infty\le\frac{C}{m+1},\ \ |D^2_{yq_k}u_0^{(m+1)}(Q_0,Q_0,Q)|_\infty\le\frac{C}{m+1},\\
&|D^2_{q_kq_l} u_0^{(m+1)}(Q_0,Q_0,Q)|_\infty\le\left\{
\begin{array}{ll}
\frac{C}{m+1}, & k=l,\\
\frac{C}{(m+1)^2}, & k\neq l,
\end{array}
\right.\\
&|D_{q_k} u_0^{(m+1)}(Q_0,Q_0,Q)|\le \frac{C}{m+1}
\end{align*}
and by Lemma \ref{lem:Q_0-prop} and by the assumptions on $U_0^{(m+1)}$, 
$$|D_{q_0}U_0^{(m+1)}(Q_0,Q)|\le\frac{C}{m+1}.$$
Therefore, combining the previous arguments and computations, we conclude that 
\begin{align*}
|D^2_{q_iq_j}\tilde u_0^{(m+1)}(q_0,q)|_\infty\le\frac{C}{(m+1)^2}.
\end{align*}
Similar arguments yield that if $i=j$, we have
\begin{align*}
|D^2_{q_iq_i}\tilde u_0^{(m+1)}(q_0,q)|_\infty\le\frac{C}{m+1}.
\end{align*}

Computations  and arguments to the one's above yield that 
$$|D^2_{q_0q_0}\tilde u_0^{(m+1)}(q_0,q)|_\infty\le C\ \ {\rm{and}}\ \ |D^2_{q_0q_k}\tilde u_0^{(m+1)}(q_0,q)|_\infty\le \frac{C}{m+1}, \ {\rm{if}}\ k>0,$$
and so the thesis of the claim follows.

(3) Let us set $v_0:=D_pH(Q_0,(m+1)\nabla_{x_0}U^{(m+1)}(s,Q_0,Q))$. First, we have
\begin{align}\label{eq:v_0-deriv}
D_{q_i}v_0&=D^2_{pq}H(Q_0,(m+1)\nabla_{x_0}U^{(m+1)}(s,Q_0, Q))D_{q_i}Q_0\\
\nonumber&+D^2_{pp}H(Q_0,(m+1)\nabla_{q_0}U^{(m+1)}(s,Q_0, Q))(m+1)\sum_{k=0}^mD^2_{q_0q_k}U^{(m+1)}(s,Q_0,Q))D_{q_i}Q_k,
\end{align}
from where using the assumptions \eqref{ass:Hamiltonian} and \eqref{ass:LipschitzH} on $H$, Lemma \ref{lem:reg-Q} and the properties of $D^2_{x_0x_k}U^{(m+1)}$, we obtain
\begin{align*}
|D_{q_i}v_0|_\infty&\le \frac{C}{m+1}
+\frac{C}{m+1}+ (m+1)\sum_{k=1}^m|D^2_{q_0q_k}U^{(m+1)}(s,Q_0,\dots,Q_m))|_\infty |D_{q_i}Q_k|_\infty\\
&\le\frac{C}{m+1}, \ \ {\rm{if}}\ i>0.
\end{align*}
The very same computation and arguments yield that $|D_{q_0}v_0|_\infty\le C.$

Now, we compute
\begin{align}\label{eq:D_x_iV}
D_{q_i}V^{(m+1)}(q_0,q)&=\int_0^t \big(D_yL(Q_0,v_0)D_{q_i}Q_0+D_v L(Q_0,v_0)D_{q_i}v_0\big) ds
\end{align}
Using the smoothness property and the assumptions \eqref{ass:Hamiltonian} and \eqref{ass:LipschitzH} on $L$, together with Lemma \ref{lem:Q_0-prop}, we have that there exists $C=C(T,r,K)$ such that $|Q_0(s,\cdot)|\le C$ and $|\dot Q_0(s,\cdot)|\le C$ for all $s\in(0,t)$, and so $|D_y L(Q_0,v_0)|\le C$ and  $|D_v L(Q_0,v_0)|\le C$. Therefore, by combining all the previous arguments, the thesis of the claim follows.

\medskip
(4) From \eqref{eq:D_x_iV} one obtains
\begin{align}\label{eq:V-double-deriv}
& D^2_{q_iq_j}V^{(m+1)}(q_0,q)\\
\nonumber&=\int_0^t \big(D_{q_j}Q_0D^2_{yy}L(Q_0,v_0)D_{q_i}Q_0+D_{q_j}v_0D^2_{yv}L(Q_0,v_0)D_{q_i}Q_0+D_yL(Q_0,v_0)D^2_{q_iq_j}Q_0\big)ds\\
\nonumber&+\int_0^t \big(D_{q_j}Q_0D^2_{vy} L(Q_0,v_0)D_{x_i}v_0+D_{q_j}v_0D^2_{vv} L(Q_0,v_0)D_{q_i}v_0+D_vL(Q_0,v_0)D^2_{q_iq_j}v_0\big)ds
\end{align}
We first notice that by the arguments from (3), we have that there exists a constant $C=C(T,r,K)$ such that $|Q_0(s,\cdot)|\le C$ and $|v_0(s,\cdot)|\le C$ for all $s\in(0,t)$, and so $|D^2_{yy} L(Q_0,v_0)|\le C$,  $|D^2_{yv} L(Q_0,v_0)|\le C$ and $|D^2_{vv} L(Q_0,v_0)|\le C$.

To conclude, from \eqref{eq:v_0-deriv} we compute
\begin{align*}
&D^2_{q_iq_j}v_0=D_{q_j}Q_0D^3_{pqq}H(Q_0,(m+1)D_{q_0}U^{(m+1)}(s,Q_0, Q))D_{q_i}Q_0\\
&+(m+1)\sum_{k=0}^mD^2_{q_0q_k}U^{(m+1)}(s,Q_0, Q))D_{q_j}Q_kD^3_{pqp}H(Q_0,(m+1)D_{q_0}U^{(m+1)}(s,Q_0,Q))D_{qx_i}Q_0\\
&+D^2_{pq}H(Q_0,(m+1)D_{q_0}U^{(m+1)}(s,Q_0, Q))D^2_{q_iq_j}Q_0\\
&+D_{q_j}Q_0D^3_{ppq}H(Q_0,(m+1)D_{q_0}U^{(m+1)}(s,Q_0, Q))(m+1)\sum_{k=0}^mD^2_{q_0q_k}U^{(m+1)}(s,Q_0, Q))D_{q_i}Q_k\\
&+D^2_{pp}H(Q_0,(m+1)D_{q_0}U^{(m+1)}(s,Q_0, Q ))(m+1)\sum_{k,l=0}^mD_{q_j}Q_lD^3_{q_0q_kq_l}U^{(m+1)}(s,Q_0, Q))D_{q_i}Q_k\\
&+D^2_{pp}H(Q_0,(m+1)D_{q_0}U^{(m+1)}(s,Q_0, Q ))(m+1)\sum_{k=0}^mD^2_{q_0q_k}U^{(m+1)}(s,Q_0,\dots,Q_m))D^2_{q_iq_j}Q_k,
\end{align*} 
From here, using the assumptions \eqref{ass:LipschitzH} and \eqref{hyp:H_3} on $H$, the estimates on $D^2_{q_0q_k}U^{(m+1)}$, on $D^3_{q_0q_kq_l}U^{(m+1)}$ and Lemma \ref{lem:reg-Q}, we obtain that there exists $C=C(T,r,K)>0$ such that 
\begin{align*}
|D^2_{q_iq_j}v_0(q_0,q)|_\infty\le\left\{
\begin{array}{ll}
C, & i=j=0,\\
\ds\frac{C}{m+1}, & (i=j\ {\rm{and}}\ i>0) \ {\rm{or}}\ (i\cdot j=0 \ {\rm{and}}\ \max\{i,j\}>0),\\
\ds\frac{C}{(m+1)^2}, & i\neq j.
\end{array}
\right.
\end{align*}
Combining this with the previous arguments and with \eqref{eq:V-double-deriv} the thesis of the claim follows.
\end{proof}

\begin{lemma}\label{lem:reg-Q} For $m \in \mathbb N$ and $q=(q_0,\dots,q_m) \in (\M)^{m+1}$, let 
\[ 
\mu^{(m+1)}_q:={1\over (m+1)} \sum_{i=0}^m \delta_{q_i}, \  Q_i(s, q):=S_s^t[\mu^{(m+1)}_q](q_i), \  P_i(s, q):={1\over (m+1)} P_s^t[\mu^{(m+1)}_q](q_i) \quad 0 \leq i \leq m.
\]
We set $U_0^{(m+1)}(q):=\cU_0(\mu^{(m+1)}_q)$ and $F^{(m+1)}(q):=\cF(\mu^{(m+1)}_q).$
Further assume $U_0^{(m+1)}$ and $F^{(m+1)}$ satisfy Property \ref{def:app_reg_estim}(3). Then (as in Theorem \ref{thm:app_regularity-m}) for $r>0$ and $t>0$,  there exists $C=C(t,r)$ such that for all $q\in\B_r^{(m+1)}$, $s\in(0,t)$ and $i,j\in\{0,\dots,m\}$ we have
\begin{equation}\label{estim:Q1}
|D_{q_j}Q_i(s,q)|_\infty\le
\left\{
\begin{array}{ll}
C, & i=j,\\[3pt]
\frac{C}{(m+1)}, & i\neq j.
\end{array}
\right.
\end{equation}
and
\begin{equation}\label{estim:Q2}
|D^2_{q_kq_j}Q_i(s,q)|_\infty\le 
\left\{
\begin{array}{ll}
C, & i=j=k,\\[3pt]
\frac{C}{(m+1)}, & i=j\neq k,\ i\neq j=k, \ i=k\neq j,\\[3pt]
\frac{C}{(m+1)^2}, & i\neq j\neq k.
\end{array}
\right.
\end{equation}
\end{lemma}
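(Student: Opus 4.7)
The plan is to view $(Q,P)$ as (up to the normalization $P={1\over m+1}P^{t,m+1}$) the inverse Hamiltonian flow associated with the finite-dimensional Lagrangian $\cL^{m+1}$, and then invoke the quantitative derivative estimates on the forward flow and its inverse that were already established in the proof of Theorem~\ref{thm:app_regularity-m}. Concretely, by Remark~\ref{rem:discrete-cont} and Remark~\ref{rem:uniquestrict-c1}(ii) applied with $m$ replaced by $m+1$, one identifies $(Q,P)$ with $(S^{t,m+1},P^{t,m+1})$, the solution of \eqref{eq:app_Hamiltonian_sys_1-m}. Combining this with the forward flow \eqref{eq:app_Hamiltonian_sys-m} and Lemma~\ref{lem:app_flow-regularity} yields the factorization
\[
Q(s,q)=\xi^{m+1}\big(s,\zeta^{m+1}(t,q)\big),\qquad \zeta^{m+1}(t,\cdot):=\big(\xi^{m+1}(t,\cdot)\big)^{-1},
\]
so that the required bounds reduce to chain-rule combinations of the estimates on $\xi^{m+1}$ and $\zeta^{m+1}$.

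Next I would invoke the hypothesis that $U_0^{(m+1)}$ and $F^{(m+1)}$ satisfy Property~\ref{def:app_reg_estim}(3), which is exactly what is needed to apply Steps~1--4 of the proof of Theorem~\ref{thm:app_regularity-m}. These steps supply, locally uniformly in $q\in\bB_r^{m+1}$ and $s\in(0,t)$, with constants depending only on $t$ and $r$:
Step~1, $|D_{z_j}\xi^{m+1}_i|\le C$ for $i=j$ and $\le C/(m{+}1)$ otherwise;
Step~2, $|D_{q_j}\zeta^{m+1}_i|\le C$ for $i=j$ and $\le C/(m{+}1)$ otherwise;
Steps~3 and~4, the three-index bounds on $D^2_{z_k z_j}\xi^{m+1}_i$ and $D^2_{q_k q_j}\zeta^{m+1}_i$ matching the statement of \eqref{estim:Q2}. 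The uniform boundedness of the trajectories on $[0,t]$, provided by \eqref{eq:app_xi_bounded}, is what allows all these constants to be chosen independently of $m$.

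Then I would close the argument by the chain rule. For \eqref{estim:Q1} I differentiate
\[
D_{q_j}Q_i(s,q)=\sum_{k=0}^m D_{z_k}\xi^{m+1}_i\big(s,\zeta^{m+1}(t,q)\big)\,D_{q_j}\zeta^{m+1}_k(t,q),
\]
split the sum into the diagonal terms ($k=i$ and $k=j$) and the $m-1$ off-diagonal terms, and apply Steps~1--2: when $i=j$ the diagonal contribution is $O(1)$ and the off-diagonal tail is $m\cdot O(1/(m{+}1)^2)=O(1/(m{+}1))$; when $i\ne j$ the two diagonal contributions are $O(1/(m{+}1))$ and the off-diagonal tail is again $O(1/(m{+}1))$. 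For \eqref{estim:Q2} I differentiate once more,
\[
D^2_{q_kq_j}Q_i=\sum_{l_1,l_2}D^2_{z_{l_1}z_{l_2}}\xi^{m+1}_i\,D_{q_j}\zeta^{m+1}_{l_1}\,D_{q_k}\zeta^{m+1}_{l_2}+\sum_{l}D_{z_l}\xi^{m+1}_i\,D^2_{q_kq_j}\zeta^{m+1}_l,
\]
and split both sums according to how many of the indices $i,j,k$ coincide with $l_1,l_2,l$, paralleling the case analysis performed in Step~3 of Theorem~\ref{thm:app_regularity-m}. Counting how many factors of $1/(m{+}1)$ appear versus how many indices are being summed over yields exactly the three cases of \eqref{estim:Q2}.

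I expect the main obstacle to be the bookkeeping in the second-derivative case: in the ``all distinct'' regime the double sum ranges over $O(m^2)$ indices, so each summand must be of size $O(1/(m{+}1)^4)$ to produce the claimed $O(1/(m{+}1)^2)$ bound, and this tightness is precisely what Steps~3--4 of Theorem~\ref{thm:app_regularity-m} deliver. I will appeal to those estimates directly rather than redo the underlying matrix-exponential computations of Lemma~\ref{lem:app_matrix-exponential} and Lemma~\ref{lem:app_matrix_exp-2}.
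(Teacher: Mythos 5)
Your proposal is correct and follows essentially the same route as the paper's own proof: both factorize $Q_i(s,q)=\xi_i^{m+1}\big(s,\zeta^{m+1}(t,q)\big)$ with $\zeta^{m+1}(t,\cdot)=\big(\xi^{m+1}(t,\cdot)\big)^{-1}$, apply the chain rule once and twice, and then read off the bounds from the $\xi$- and $\zeta$-estimates established in Steps 1--4 of the proof of Theorem~\ref{thm:app_regularity-m}. Your cardinality bookkeeping (e.g.\ that the $O(m^2)$ generic terms in the second-derivative double sum each carry a factor $O((m{+}1)^{-4})$) is the same accounting the paper leaves implicit when it asserts that \eqref{estim:Q2} follows.
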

\begin{proof} 
Let $\xi(\cdot,z)=(\xi_0(\cdot,z),\dots,\xi_m(\cdot,z))$ be defined as in \eqref{eq:define-xi-eta} (see also the systems in \eqref{eq:app_Hamiltonian_sys-discrete} and \eqref{eq:app_Hamiltonian_sys-m}). By Proposition \ref{prop:homeo1} we first observe that that $\xi(t,\cdot)^{-1}=S_0^{t,m}$. To facilitate the writing, as it is done in Appendix \ref{sec:app-reg}, we denote  $\zeta(t,\cdot):=\xi^{-1}(t,\cdot)$ and so, we have
$$Q_i(s,q)=\xi_i(s,\zeta(t,q)).$$
Thus, by differentiating and using the estimates on $(\xi_0,\dots,\xi_m)$ and $(\zeta_0,\dots,\zeta_m)$ from Theorem \ref{thm:app_regularity-m}, by denoting $|\cdot|_\infty:=\|\cdot\|_{L^\infty(\B_r^{(m+1)}}$, we have that there exists $C=C(t,r)$ such that
\begin{align*}
|D_{q_j}Q_i(s,\cdot)|_{\infty}&\le\sum_{k=0}^m |D_{z_k}\xi_i(s,\zeta_0(t,\cdot),\dots,\zeta_m(t,\cdot))|_\infty | D_{q_j}\zeta_k(t,\cdot)|_\infty\\
&=|D_{z_i}\xi_i(s,\zeta_0(t,\cdot),\dots,\zeta_m(t,\cdot))|_\infty | D_{q_j}\zeta_i(t,\cdot)|_\infty\\
&+\sum_{k\neq i} |D_{z_k}\xi_i(s,\zeta_0(t,\cdot),\dots,\zeta_m(t,\cdot))|_\infty | D_{q_j}\zeta_k(t,\cdot)|_\infty
\leq 
\left\{
\begin{array}{ll}
C, & i=j,\\[3pt]
\frac{C}{m+1}, & i\neq j.
\end{array}
\right.
\end{align*}
Therefore, \eqref{estim:Q1} follows. Furthermore, since 
\begin{align*}
D^2_{q_kq_j}Q_i(s,\cdot)&=\sum_{l_1,l_2=0}^m D^2_{q_{l_2}q_{l_1}}\xi_i(s,\zeta_0(t,\cdot),\dots,\zeta_m(t,\cdot)D_{q_k}\zeta_{l_2}(t,\cdot) D_{q_j}\zeta_{l_1}(t,\cdot)\\
&+\sum_{l_1=0}^m D_{z_{l_1}}\xi_i(s,\zeta_0(t,\cdot),\dots,\zeta_m(t,\cdot)) D^2_{q_kq_j}\zeta_{l_1}(t,\cdot)\\
&=\sum_{l_1\neq l_2}^m D^2_{q_{l_2}q_{l_1}}\xi_i(s,\zeta_0(t,\cdot),\dots,\zeta_m(t,\cdot)D_{q_k}\zeta_{l_2}(t,\cdot) D_{q_j}\zeta_{l_1}(t,\cdot)\\
&+\sum_{l=0}^m D^2_{q_{l}q_{l}}\xi_i(s,\zeta_0(t,\cdot),\dots,\zeta_m(t,\cdot)D_{q_k}\zeta_{l}(t,\cdot) D_{q_j}\zeta_{l}(t,\cdot)\\
&+\sum_{l_1=0}^m D_{z_{l_1}}\xi_i(s,\zeta_0(t,\cdot),\dots,\zeta_m(t,\cdot)) D^2_{q_kq_j}\zeta_{l_1}(t,\cdot),\\
\end{align*}
we have that \eqref{estim:Q2} follows.
\end{proof}

\begin{lemma}\label{lem:Q_0-prop}
Let us suppose that we are in the setting of Lemma \ref{lem:u_alt-reg} and in particular all of its assumptions are in place.
Let $(Q_i)_{i=0}^{m}$ be defined in \eqref{eq:m-discrete-flow}. Let $(q_0,q)\in\M^{(m+1)}$. Then $(0,t)\ni s\mapsto Q_0(s,q_0,q)$ is Lipschitz continuous with a Lipschitz constant independent of $m$ and for all $r>0$ and $K\subset\M$ compact there exists $C=C(t,K,r)>0$ such that $|Q_0(s,q_0,q)|\le C$ for all $s\in(0,t)$, whenever $(q_0,q)\in\B_r^{(m+1)}$ and $q_0\in K$.
\end{lemma}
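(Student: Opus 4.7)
The plan is to combine three ingredients already established in the excerpt: the H\"older bound on the minimizing curve from Remark \ref{rem:summary-bounds4}(ii), the identification $D_q u(s,\cdot,\sigma)=\nabla_w\cU(s,\sigma)(\cdot)$ on $\spt(\sigma)$ from Proposition \ref{prop:representu}(iii), and the uniform gradient bound on $u$ from Remark \ref{rem:summary-bounds4}(iii). Throughout, set $\mu:=\mu_q^{(m+1)}$ and let $x\in\bH$ be such that $\sharp(x)=\mu$; then $\|x\|^2=(m+1)^{-1}\sum_{i=0}^m |q_i|^2\leq r^2$.

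\textbf{Step 1 (uniform spatial bound on $Q_0$).} By definition $Q_0(s,q_0,q)=S_s^t[\mu](q_0)=\gamma_s$ is the unique minimizing curve ending at $q_0$ for the problem defining $u(t,q_0,\mu)$. Since $q_0\in K$ and $\mu\in\cB_r$ (so $(t,q_0,\mu)\in[0,T]\times B_{r_*}(0)\times\cB_r$ for $r_*$ large enough to contain $K$), the estimate \eqref{eq:holder-for-gamma} from Remark \ref{rem:summary-bounds4}(ii) yields
\[
|Q_0(s,q_0,q)-q_0|^2\leq \ov M(r_*)\,(t-s)\leq \ov M(r_*)\,T,
\]
so $|Q_0(s,q_0,q)|\leq |q_0|+\sqrt{\ov M(r_*)T}=:r_1$, a bound independent of $m$.

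\textbf{Step 2 (uniform $L^2$ bound on the pushed-forward measure).} By Proposition \ref{theorem:hilbert-smooth}(v) (applied to the terminal point $x$) we have $\|\tilde S_s^t[x]\|\leq \|x\|+T\,e_0((T+1)\|x\|)\leq r+T\,e_0((T+1)r)=:r_2$ for all $s\in[0,t]$. Since $\sigma_s^t[\mu]=\sharp(\tilde S_s^t[x])$, its second moment is bounded by $r_2^2$, hence $\sigma_s^t[\mu]\in\cB_{r_2}$ uniformly in $m$ and in $s\in[0,t]$.

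\textbf{Step 3 (uniform Lipschitz bound in time).} Set $\rho:=\max(r_1,r_2)$. By Remark \ref{rem:summary-bounds4}(iii) applied at $(s,Q_0(s,q_0,q),\sigma_s^t[\mu])\in[0,T]\times B_{\rho}(0)\times\cB_{\rho}$, there is a constant $C_0=M^*(\rho)+M^*(\rho+1)$, independent of $m$, with
\[
\bigl|D_q u\bigl(s,Q_0(s,q_0,q),\sigma_s^t[\mu]\bigr)\bigr|\leq C_0.
\]
Since $q_0\in\spt(\mu)$, we have $Q_0(s,q_0,q)=S_s^t[\mu](q_0)\in\spt(\sigma_s^t[\mu])$, so Proposition \ref{prop:representu}(iii) identifies the above gradient with $\nabla_w\cU(s,\sigma_s^t[\mu])(Q_0(s,q_0,q))$. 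Plugging this into the ODE \eqref{eq:flow} and using \eqref{ass:LipschitzH} together with Remark \ref{rem:new-bound-onHL}(iii), we obtain
\[
|\dot Q_0(s,q_0,q)|=\bigl|D_pH\bigl(Q_0,\nabla_w\cU(s,\sigma_s^t[\mu])(Q_0)\bigr)\bigr|\leq \ov\kappa\,(C_0+r_1+1),
\]
a constant independent of $s\in(0,t)$ and $m\in\N$, which is the desired uniform Lipschitz bound.

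The main (mild) obstacle is simply bookkeeping: one must verify that every constant encountered, in particular $\ov M(r_*)$ and $M^*(\rho)$ from Remark \ref{rem:summary-bounds4}, depends only on $r$, $K$, and $T$ and not on the discretization level $m$. This is automatic because those bounds were derived from the continuity, local boundedness and convexity hypotheses on $u_0,f,L$ evaluated at measures $\nu\in\cB_{e_T(r)}$, which applies equally to the empirical measure $\mu_q^{(m+1)}$ for any $m$.
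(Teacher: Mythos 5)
Your proof is correct. It reaches the same two conclusions (uniform spatial bound on $Q_0$ and uniform Lipschitz-in-time bound on $Q_0$, both $m$-independent), and it shares the paper's starting point — that $Q_0$ is the restriction of the optimal trajectory $S_s^t[\mu_q^{(m+1)}]$ to $q_0$, hence solves the ODE \eqref{eq:flow} driven by the velocity field $D_pH(\cdot,\nabla_w\cU(s,\sigma_s^t[\mu_q^{(m+1)}])(\cdot))$, where $\sigma_s^t[\mu_q^{(m+1)}]$ stays in a ball $\cB_{\beta(t,r)}$ uniformly in $m$. But the mechanism you use to get the conclusions from there is different. The paper invokes the global Lipschitz continuity of the velocity field in the spatial variable $y$ (a consequence of the $C^{1,1}$ regularity of $\cU(s,\cdot)$ via Lemma \ref{lem:c11_equiv} and a suitable Lipschitz extension of $\nabla_w\cU$ off the support) and then appeals to classical ODE theory — i.e.\ Gr\"onwall — to bound both $|Q_0|$ and $|\dot Q_0|$ starting from the terminal condition $q_0\in K$. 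You instead bound $|Q_0|$ directly from the action estimate of Remark \ref{rem:summary-bounds4}(ii), and then bound $|\dot Q_0|$ pointwise from the explicit formula $\dot Q_0=D_pH(Q_0,D_q u(s,Q_0,\sigma_s^t[\mu]))$ together with the convexity-based sup bound $|D_q u|\le M^*(\rho)+M^*(\rho+1)$ from Remark \ref{rem:summary-bounds4}(iii) and the linear growth of $D_pH$. Your route avoids Gr\"onwall and the Lipschitz-extension step entirely, at the small cost of needing the extra variational input of Remark \ref{rem:summary-bounds4}(ii)–(iii); both give the same $m$-uniform constants, and the dependence of your bound on $t,K,r$ is properly traced.

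Two minor bookkeeping points worth making explicit: in Step~1 you must apply Remark \ref{rem:summary-bounds4}(ii) at a radius $r^\ast\ge\max(r,\,\sup_{q\in K}|q|)$ so that both $q_0\in B_{r^\ast}(0)$ and $\mu_q^{(m+1)}\in\cB_{r^\ast}$ hold; and Step~3 uses that $Q_0(s,q_0,q)\in\spt(\sigma_s^t[\mu_q^{(m+1)}])$ so that Proposition \ref{prop:representu}(iii) applies — this is exactly because $q_0\in\spt(\mu_q^{(m+1)})$ and $S_s^t[\mu]$ pushes $\mu$ to $\sigma_s^t[\mu]$, which you note but which deserves a sentence since it is the only place the constraint $q_0\in\spt(\mu)$ enters.
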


\begin{proof} 
Let us notice that $(Q_0(s,q_0,q))_{s\in(0,t)}$ solves \eqref{eq:flow}, with data $\s^t_s[\mu_q^{(m+1)}]$ and final condition $q_0$. Furthermore, since $(\s^t_s[\mu_q^{(m+1)}])_{s\in(0,t)}$ belongs to $\cB_{\b(t,r)}$, for some $\b(t,r)>0$, the velocity field $(0,t)\times\M\ni (s,y)\mapsto D_pH(y,\nabla_w\cU(s,\s^t_s[\mu_q^{(m+1)}](y)))$ is globally Lipschitz continuous (after a suitable extension of $\nabla_w\cU(s,\s^t_s[\mu_q^{(m+1)}](\cdot)$). Therefore, classical results in the theory of ODEs imply the thesis of the lemma and the bound on $Q_0(s,\cdot,\cdot)$ depends only on $t, K$ and on the Lipschitz constant of the previously mentioned velocity field (hence on $r$).
\end{proof}

\begin{lemma}\label{lem:last-estimates_time}
Under the assumptions of Theorem \ref{thm:main-scalar}, $u^{(m)}$ defined in \eqref{def:u_alt} satisfies the estimates \eqref{estim:time_space} and \eqref{estim:time2} from Lemma \ref{lem:u_alt-reg}.
\end{lemma}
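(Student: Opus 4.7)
The plan is to derive an explicit Hamilton--Jacobi identity for $u^{(m)}$ on $(0,T)\times\M^{m+1}$ and then to differentiate it successively in $t$, $q_0$, and $q_k$, reducing everything to the scaling-aware estimates already available for $u^{(m)}$ in space (Lemma \ref{lem:last-estimates}) and for $U^{(m+1)}$ in all variables (Theorem \ref{thm:app_regularity-m}). The starting point is the dynamic programming principle for $u$, combined with the semigroup property of the backward mean-field flow and the identity $\sigma_s^t[\mu_q^{(m+1)}]=\frac{1}{m+1}\sum_{i=0}^m\delta_{Q_i(s,q_0,q,t)}$ from Remark \ref{rem:representu}(i): for $s\in[0,t]$,
\[
u^{(m)}(t,q_0,q)=u^{(m)}\bigl(s,Q_0(s),Q(s)\bigr)+\int_s^t\bigl[L(Q_0,\dot Q_0)+f^{(m+1)}(Q_0,Q)\bigr]d\tau.
\]
Differentiating in $s$ at $s=t$ and using the Hamiltonian system \eqref{eq:app_Hamiltonian_sys_1-m} to substitute $\dot Q_i(t)=v_i:=D_pH\bigl(q_i,(m+1)D_{q_i}U^{(m+1)}(t,q_0,q)\bigr)$, I will obtain the discrete master identity
\begin{equation*}
\partial_t u^{(m)}(t,q_0,q)=L(q_0,v_0)+f^{(m+1)}(q_0,q)-D_{q_0}u^{(m)}\cdot v_0-\sum_{i=1}^m D_{q_i}u^{(m)}\cdot v_i,\tag{$\ast$}
\end{equation*}
valid first a.e.\ in $t$ (since $u^{(m)}(\cdot,q_0,q)$ is Lipschitz by Proposition \ref{prop:representu}(v)) and then everywhere by continuity of the right-hand side in all arguments.

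From $(\ast)$ the bound $|\partial_t u^{(m)}|\le C$ is immediate: the smoothness and growth of $H$, $L$, $f^{(m+1)}$ together with the pointwise bound $(m+1)|D_{q_i}U^{(m+1)}|\le C$ (obtained by combining Theorem \ref{thm:app_regularity-m} with the local Lipschitz bound on $\nabla_w\cU$) control $v_i$ and $L(q_0,v_0)$, while the mean-field sum is handled by Cauchy--Schwarz and $|D_{q_i}u^{(m)}|\le C/(m+1)$ from Lemma \ref{lem:last-estimates}(1).

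Next, the mixed estimates $|D_{q_0}\partial_t u^{(m)}|\le C$ and $|D_{q_k}\partial_t u^{(m)}|\le C/(m+1)$ for $k\ge 1$ (the latter being equivalent to $\sum_{k=1}^m(m+1)|D_{q_k}\partial_t u^{(m)}|^2\le C$) will be obtained by differentiating $(\ast)$ in $q_0$ and $q_k$ respectively, inserting the second-order space derivatives of $u^{(m)}$ from Lemma \ref{lem:last-estimates}(2) and those of $U^{(m+1)}$ from Theorem \ref{thm:app_regularity-m}. The $1/(m+1)$ and $1/(m+1)^2$ scalings of these second derivatives combine with the $m$-fold summations to produce bounded totals. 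Finally, $|\partial^2_{tt}u^{(m)}|\le C$ will follow from differentiating $(\ast)$ in $t$; the only genuinely new ingredient is $\partial_t v_i=D^2_{pp}H(q_i,P_i)\,(m+1)D_{q_i}\partial_t U^{(m+1)}$, for which a pointwise bound $|D_{q_i}\partial_t U^{(m+1)}|\le C/(m+1)$ is obtained by directly differentiating the finite-dimensional Hamilton--Jacobi equation \eqref{eq:app-HJ_finite_dim} in $q_i$ and invoking Theorem \ref{thm:app_regularity-m}(1) together with the assumption that $F^{(m+1)}$ satisfies Property \ref{def:app_reg_estim}(1)(a).

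The hard part will be the combinatorial bookkeeping of $m$-dependence under the successive differentiations: each mean-field sum pairs $O(1/(m+1))$-sized summands with $O(1)$-sized factors across $m$ indices, and these must combine to remain bounded uniformly in $m$. No single term is delicate; rather, it is the simultaneous tracking of all scalings that is tedious. This is exactly the calculus already developed in detail in the proof of Theorem \ref{thm:app_regularity-m}, and the lemma will amount to adapting those arguments to the ``distinguished particle'' setup encoded by the extra variable $q_0$, where the asymmetric role of $q_0$ forces one to separate each sum into a $k=0$ term of size $O(1)$ and a balanced mean-field tail.
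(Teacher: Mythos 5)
Your reduction to a discrete Hamilton--Jacobi identity in $(\ast)$ is exactly the paper's starting point \eqref{eq:partial_t_um}, and the program of differentiating it in $q_0,q_k,t$ and invoking Lemma \ref{lem:last-estimates} and Theorem \ref{thm:app_regularity-m} is the right one. The gap is in the $\partial^2_{tt}u^{(m)}$ estimate, where you claim a pointwise bound $|D_{q_i}\partial_t U^{(m+1)}|\le C/(m+1)$ obtainable by differentiating \eqref{eq:app-HJ_finite_dim} in $q_i$. That bound is false on the unbounded state space $\R^d$ under the hypotheses here: differentiating the discrete HJ equation produces the term $\frac{1}{m}D_qH\bigl(q_i,mD_{q_i}U^{(m)}\bigr)$, and by \eqref{hyp:D_qH} this is of size $\frac{C}{m}(1+|q_i|+m|D_{q_i}U^{(m)}|)$; for $q\in\B_r^m$ the coordinate $|q_i|$ (and hence $m|D_{q_i}U^{(m)}|=|\nabla_w\cU(\mu_q^{(m)})(q_i)|$, which grows linearly) can be of order $r\sqrt m$, so the term is only $O(1/\sqrt m)$ pointwise. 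This is precisely why Theorem \ref{thm:app_regularity-m}(3) proves only the $\ell^2$ estimate $\sum_j m|D_{q_j}\partial_t U^{(m)}|^2\le C$, not a pointwise $O(1/m)$ bound, and why Property \ref{def:app_reg_estim}(1)-(b) was introduced as weaker than (1)-(a).

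This matters because the quantity you actually need is $(m+1)|\partial_t D_{q_0}U^{(m+1)}|$, and the $\ell^2$ bound applied to a single index only yields $(m+1)|\partial_t D_{q_0}U^{(m+1)}|\le C\sqrt{m+1}$, which blows up. The paper closes this by the algebraic identity \eqref{iden:important},
\[
(m+1)D_{q_0}U^{(m+1)}(t,q_0,q)=D_{q_0}u^{(m)}(t,q_0,q)-D_{q_0}\hat u^{(m+1)}(t,q_0,q_0,q),
\]
obtained by decomposing $D_{q_0}$ acting on $u(t,q_0,\mu_q^{(m+1)})$ into the ``distinguished'' derivative of $u^{(m)}$ minus the ``mean-field'' one. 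Differentiating it in $t$ expresses the problematic factor as a difference of two quantities that are each $O(1)$: the first by the mixed estimate $|\partial_tD_{q_0}u^{(m)}|\le C$ already proved at that stage, the second of size $C/\sqrt{m+2}$ by the $\ell^2$ bound for $\hat u^{(m+1)}$ applied to a single index. Without this identity (or an equivalent device exploiting the compactness of the range of the distinguished variable $q_0\in K$), the final step of your plan does not go through. A similar care is needed wherever you invoke a pointwise $(m+1)|D_{q_i}U^{(m+1)}|\le C$ for $i\ge1$: this holds only for the distinguished index, and the mean-field sums must be handled by Cauchy--Schwarz against the $\ell^2$ estimates, as the paper does.
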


\begin{proof}
In Lemma \ref{lem:last-estimates} we showed that $u^{(m)}(t,\cdot,\cdot)\in C^{1,1}_{\rm{loc}}(\M^{m+1})$ with the corresponding derivative estimates \eqref{estim:space1} and \eqref{estim:space2}, uniformly in with respect to $t\in[0,T]$. Furthermore, since by Proposition \ref{prop:representu}(v), $u(\cdot,q,\mu)$ is Lipschitz continuous for all $q,\mu\in\M\times\sP_2(\M)$, this property is inherited by $u^{(m)}$, and therefore $u^{(m)}(\cdot,q_0,q)$ is Lipschitz continuous on $[0,T]$ for all $(q_0,q)\in\M^{m+1}$. 

Let us recall now the representation formula \eqref{def:u_alt_m} of $u^{(m)}(t,q_0,q)$. We fix $K$ to be the closure of a bounded open set in $\M$ and $r>0$ such that $\mu^{(m+1)}_q\in\B^{m+1}_r$.  The regularity properties of $u^{(m)}$ and \eqref{def:u_alt_m} for almost every $t\in(0,T)$ and all $(q_0,q)\in\M^{m+1}$ yield
\begin{align}\label{eq:partial_t_um}
&\partial_t u^{(m)}(t,q_0,q) + D_{q_0}u^{(m)}(t,q_0,q)\cdot D_p H(q_0,(m+1)D_{q_0}U^{(m+1)}(t,q_0,q))\\
\nonumber&+\sum_{j=1}^mD_{q_j}u^{(m)}(t,q_0,q)\cdot D_p H(q_j,(m+1)D_{q_j}U^{(m+1)}(t,q_0,q))\\
\nonumber&=L(q_0,D_p H(q_0,(m+1)D_{q_0}U^{(m+1)}(t,q_0,q)))+f^{(m+1)}(q_0,q_0,q).
\end{align}
Proposition \ref{prop:representu}(iii) and \eqref{eq:Um_and_u_m} yield
$$(m+1)D_{q_0}U^{(m+1)}(t,q_0,q)=\nabla_w\cU(t,\mu_q^{(m+1)})(q_0)=D_{q_0}u(t,q_0,\mu_q^{(m+1)}).$$
Now, let us notice that by the definition of $u^{(m)}$, one has the identity
$$D_{q_0}u^{(m)}(t,q_0,q)=D_{q_0}u(t,q_0,\mu_q^{(m+1)})+\frac{1}{m+1}\nabla_w u(t,q_0,\mu^{(m+1)}_q)(q_0).$$
For an arbitrary $a\in\M$, if we set in $\hat u^{(m+1)}(t,a,q_0,q):=u(t,a,\mu^{(m+1)}_q)$, we have that 
$$\frac{1}{m+1}\nabla_w u(t,q_0,\mu^{(m+1)}_q)(q_0)=D_{q_0}\hat u^{(m+1)}(t,a,q_0,q)|_{a=q_0}$$
and so
\begin{align}\label{iden:important}
(m+1)D_{q_0}U^{(m+1)}(t,q_0,q)=D_{q_0}u(t,q_0,\mu_q^{(m+1)})= D_{q_0}u^{(m)}(t,q_0,q) - D_{q_0}\hat u^{(m+1)}(t,q_0,q_0,q).
\end{align}
We notice furthermore that $\hat u^{(m+1)}$ (with respect to the regularity and derivative estimates) essentially behaves as $u^{(m+1)}(t,q_0,q_0,q)$ and in particular by \eqref{estim:space1} and \eqref{estim:space2} there exists a constant $C=C(K,r)>0$ such that 
\[
|D_{q_0}\hat u^{(m+1)}(t,q_0,q_0,q)|\le \frac{C}{m+2}.
\]
All these arguments allow us conclude that 
\[
|(m+1)D_{q_0}U^{(m+1)}(t,q_0,q)|\le C.
\]
Now, we differentiate \eqref{eq:partial_t_um} with respect to the spacial variables.

Differentiating with respect to $q_0$, denoting the variables of $f^{(m+1)}$ as $(y_0,q_0,q)$, we find that there exists $C=C(T,K,r)$ such that if $(t,q_0,q)\in[0,T]\times\B^{(m+1)}_r$ with $q_0\in K$, then
\begin{align*}
|D_{q_0}\partial_t u^{(m)}|&\le |D^2_{q_0q_0}u^{(m)}| |D_pH(q_0,(m+1)D_{q_0}U^{(m+1)})|+ |D_{q_0}u^{(m)}| |D^2_{qp}H(q_0,(m+1)D_{q_0}U^{(m+1)})|\\
&+ (m+1)|D_{q_0}u^{(m)}| |D^2_{pp}H(q_0,(m+1)D_{q_0}U^{(m+1)})| |D^2_{q_0q_0}U^{(m+1)}|\\
&+\sum_{j=1}^m|D^2_{q_0q_j}u^{(m)}||D_pH(q_j,(m+1)D_{q_j}U^{(m+1)})| + I + II
\end{align*}
where, 
\begin{align*}
I&:=\sum_{j=1}^m|D_{q_j}u^{(m)}| |D^2_{pp}H(q_j,(m+1)D_{q_j}U^{(m+1)})|(m+1)|D^2_{q_0q_j}U^{(m+1)}|\\
&+|D_{q_0}L(q_0,D_p H(q_0,(m+1)D_{q_0}U^{(m+1)}))|+|D_{v}L(q_0,D_p H(q_0,(m+1)D_{q_0}U^{(m+1)}))| |D^2_{qp}H|
\end{align*}
and 
\begin{align*}
II&:=|D_{v}L(q_0,D_p H(q_0,(m+1)D_{q_0}U^{(m+1)}))| |D^2_{pp}H|(m+1)|D^2_{q_0q_0}U^{(m+1)}|+|D_{y_0}f^{(m+1)}(q_0,q_0,q)|\\
&+|D_{q_0}f^{(m+1)}(q_0,q_0,q)|
\end{align*}
Thus, using \eqref{estim:space1}, \eqref{estim:space2} and the estimates on $U^{(m+1)}$ from Theorem \ref{thm:app_regularity-m}, as well as the hypotheses on the data $H$ and $f^{(m+1)}$ we have 
\begin{align*}
|D_{q_0}\partial_t u^{(m)}| 
&\le C+C\left(\sum_{j=1}^m m|D^2_{q_0q_j}u^{(m)}|^2\right)^\frac{1}{2}\left(\sum_{j=1}^m\frac1m\Big{|}D_pH(q_j,(m+1)D_{q_j}U^{(m+1)})\Big{|}^2\right)^{\frac12}\\
&+C+\sum_{i=0}^m\frac{1}{\sqrt{m+1}}\sqrt{m+1}|D_{q_i}f^{(m+1)}|\\
&\le C +\left(\sum_{i=0}^m\frac{1}{m+1}\right)^{\frac12}\left(\sum_{i=1}^m(m+1)|D_{q_i}f^{(m+1)}|^2\right)^{\frac12}\le C,
\end{align*}
This yields the first part of \eqref{estim:time_space}, since 
$$ D_pH\Big(\cdot,\nabla\cU\big(t,\mu^{(m+1)}_q\big)(\cdot)\Big) \in L^2(\mu^{(m+1)}_q),$$ 
with and $L^2(\mu^{(m+1)}_q)$ uniformly bounded with respect to $m$.

If $k\in\{1,\dots,m\}$, completely parallel computation gives
\begin{align*}
|D_{q_k}\partial_t u^{(m)}|&\le |D^2_{q_kq_0}u^{(m)}| |D_pH(q_0,(m+1)D_{q_0}U^{(m+1)})|\\
&+(m+1)|D_{q_0}u^{(m)}| |D^2_{pp}H(q_0,(m+1)D_{q_0}U^{(m+1)})| |D^2_{q_kq_0}U^{(m+1)}|\\
&+\sum_{j=1}^m|D^2_{q_kq_j}u^{(m)}| |D_p H(q_j,(m+1)D_{q_j}U^{(m+1)})|\\
&+|D_{q_k}u^{(m)}||D^2_{qp} H(q_k,(m+1)D_{q_k}U^{(m+1)})|\\
&+\sum_{j=1}^m|D_{q_j}u^{(m)}|(m+1)|D^2_{pp}H(q_j,(m+1)D_{q_j}U^{(m+1)})| |D^2_{q_kq_j}U^{(m+1)}|\\
&+|D_vL(q_0,D_p H(q_0,(m+1)D_{q_0}U^{(m+1)}))| |D^2_{pp}H|(m+1)|D^2_{q_kq_0}U^{(m+1)}| +|D_{q_k}f^{(m+1)}|\\ 
&\le C |D^2_{q_kq_0}u^{(m)}| +\frac{C}{(m+1)}|D_p H(q_k,(m+1)D_{q_k}U^{(m+1)})| +\frac{C}{(m+1)}\\
&+ |D_{q_k}f^{(m+1)}|,
\end{align*}
from where, using the same arguments as for the conclusion of the first part of \eqref{estim:time_space}, we find $\sum_{k=1}^m(m+1)|D_{q_k}\partial_t u^{(m)}|^2\le C$, as desired.

\smallskip

To show \eqref{estim:time2}, we argue similarly.  First, from \eqref{eq:partial_t_um} we simply have
\begin{align*}
&|\partial_t u^{(m)}|\le |D_{q_0}u^{(m)}| |D_pH(q_0,(m+1)D_{q_0}U^{(m+1)})|+\sum_{j=1}^m|D_{q_j}u^{(m)}| |D_p H(q_j,(m+1)D_{q_j}U^{(m+1)})|\\
&+|L(q_0,D_pH(q_0,(m+1)D_{q_0}U^{(m+1)}))| +|f^{(m+1)}|\\
&\le C + \left(\sum_{j=1}^m m|D_{q_j}u^{(m)}|^2\right)^{\frac12} \left(\sum_{j=1}^m\frac{1}{m}|D_p H(q_j,(m+1)D_{q_j}U^{(m+1)})|^2\right)^{\frac12}\le C,
\end{align*}
where we used the previous estimates and the fact that $H(q_0,D_{q_0}u^{(m)})$ and $f^{(m+1)}$ are locally bounded.

Second, differentiating  \eqref{eq:partial_t_um} with respect to $t$, we find
\begin{align*}
|\partial_{tt}^2 u^{(m)}|&\le |\partial_t D_{q_0}u^{(m)}||D_pH(q_0,(m+1)D_{q_0}U^{(m+1)})|+ |D_{q_0}u^{(m)}| |D^2_{pp}H| (m+1)|\partial_t D_{q_0}U^{(m+1)})|\\
&+\sum_{j=1}^m|\partial_tD_{q_j}u^{(m)}||D_pH(q_j,(m+1)D_{q_j}U^{(m+1)})|\\
&+\sum_{j=1}^m|D_{q_j}u^{(m)}||D^2_{pp}H(q_j,(m+1)D_{q_j}U^{(m+1)})|(m+1)|\partial_t D_{q_j}U^{(m+1)}|\\
&+|(m+1)D_{q_0}U^{(m+1)}| |D^2_{pp}H|(m+1)|\partial_t D_{q_0}U^{(m+1)})|\\
&\le C+\left(\sum_{j=1}^m(m+1)|\partial_tD_{q_j}u^{(m)}|^2\right)^{\frac12}\left(\sum_{j=1}^m\frac{1}{(m+1)}|D_pH(q_j,(m+1)D_{q_j}U^{(m+1)})|^2\right)^{\frac12}\\
&+C(m+1)|\partial_t D_{q_0}U^{(m+1)})|\\
&+C\left(\sum_{j=1}^m (m+1)|\partial_t D_{q_j}U^{(m+1)}|^2\right)^{\frac12}
\end{align*}
Let us notice that by \eqref{iden:important} we have that 
\begin{align*}
(m+1)|\partial_tD_{q_0}U^{(m+1)}|\le |\partial_t D_{q_0}u^{(m)}|+|\partial_tD_{q_0}\hat u^{(m+1)}|\le C + \frac{C}{\sqrt{m+2}},
\end{align*}
where we have used that $\sum_{j=0}^{m}(m+2)|\partial_tD_{q_0}\hat u^{(m+1)}|^2\le C$. Relying on the previously obtained estimates and on the fact that by Theorem \ref{thm:app_regularity-m}(3), 
$$\sum_{j=1}^m (m+1)|\partial_t D_{q_j}U^{(m+1)}|^2\le C,$$ 
the claim in \eqref{estim:time2} follows.
\end{proof}

Recall that throughout this section, we have imposed that  \eqref{ass:F-U0-C11new1}-\eqref{ass:convexity-on-tildeL} and \eqref{eq:collectUF} hold. We are ready to state and prove the main theorem of this section.

\begin{theorem}\label{thm:main-scalar} Suppose the assumptions \eqref{ass:F-U0-C11new1} through \eqref{hyp:D_qH} are satisfied. Then, the scalar master equation \eqref{eq:master} has a unique global in time classical solution of class $C^{1,1}_{\rm{loc}}([0,+\infty)\times\M\times\cP_2(\M))$ in sense of Definition \ref{def:classical_sol}.

\end{theorem}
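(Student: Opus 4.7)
The plan is to verify that the value function $u$ defined by \eqref{def:u_alt} is the unique classical solution in the sense of Definition \ref{def:classical_sol}. Existence and $C^{1,1}_{\rm{loc}}$ regularity have already been established in Lemma \ref{lem:u_alt-reg}, while the initial condition $u(0,\cdot,\cdot)=u_0$ holds by construction and Corollary \ref{cor:D_qu-global-Lip} provides the global Lipschitz estimate for $D_qu(t,\cdot,\mu)$ required by Definition \ref{def:classical_sol}. The three remaining tasks are therefore: (a) showing that $u$ satisfies the master equation \eqref{eq:master} pointwise on $(0,T)\times\M\times\cP_2(\M)$; (b) verifying the compatibility condition \eqref{cond:uniqueness}; and (c) proving uniqueness within the class of Definition \ref{def:classical_sol}.

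For (a), I would use a dynamic programming principle based on the semigroup property of the Hamiltonian flow on $\bH^2$, which descends to the projected measure flow $[0,t]\ni s\mapsto\sigma^t_s[\mu]$ via Remark \ref{rem:representu}. Concretely, for $0<h<t$, setting $\mu':=\sigma^t_{t-h}[\mu]$ and noting that $\sigma^{t-h}_s[\mu']=\sigma^t_s[\mu]$ for $s\in[0,t-h]$, one obtains
\[
u(t,q,\mu)=\inf_{\gamma}\Big\{u(t-h,\gamma_{t-h},\mu')+\int_{t-h}^{t}\big(L(\gamma_s,\dot\gamma_s)+f(\gamma_s,\sigma^t_s[\mu])\big)ds\; : \;\gamma_t=q\Big\},
\]
with the unique minimizer being the restriction to $[t-h,t]$ of the optimal curve $S^t_\cdot[\mu](q)$ from Proposition \ref{prop:representu}. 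Expanding the right hand side in $h$ using Lemma \ref{lem:u_alt-reg} and Definition \ref{def:c11_M_times_wasserstein}, together with the fact that the velocity of $s\mapsto\sigma^t_s[\mu]$ at $s=t$ is $v_t(z):=D_pH(z,\nabla_w\cU(t,\mu)(z))$, would yield in the limit $h\downarrow 0$ the identity
\[
\partial_tu(t,q,\mu)=-D_qu(t,q,\mu)\cdot v_t(q)-\int_{\M}\nabla_wu(t,q,\mu)(z)\cdot v_t(z)\,\mu(dz)+L(q,v_t(q))+f(q,\mu).
\]
Applying Proposition \ref{prop:representu}(iii) to replace $\nabla_w\cU(t,\mu)(z)$ by $D_qu(t,z,\mu)$ on $\spt(\mu)$, together with the Legendre identity $L(q,D_pH(q,p))-p\cdot D_pH(q,p)=-H(q,p)$ applied with $p=D_qu(t,q,\mu)$, collapses this into \eqref{eq:master}. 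Condition (b) should then be a separate consequence of the identification $D_qu(t,\cdot,\mu)=\nabla_w\cU(t,\mu)$ on $\spt(\mu)$ and of the refined regularity $\cU(t,\cdot)\in C^{2,1,w}_{\rm{loc}}(\cP_2(\M))$ established in Theorem \ref{thm:main-regularity}, where the mixed Hessian block $\Lambda_0$ is symmetric by Remark \ref{rem:symmetricxx} and the cross-kernel $\Lambda_1$ pairs symmetrically against the $\mu$-test vector field $D_pH(\cdot,D_qu(t,\cdot,\mu))$.

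For the uniqueness statement (c), I would take any other classical solution $\tilde u$ in the sense of Definition \ref{def:classical_sol} with the same initial data and show it must coincide with $u$ by a non-local method of characteristics. Fix $(t,q,\mu)$ and, using the global Lipschitz property of $D_q\tilde u(s,\cdot,\nu)$ together with \eqref{ass:LipschitzH}, solve the characteristic system
\[
\dot\gamma_s=D_pH\big(\gamma_s,D_q\tilde u(s,\gamma_s,\tilde\sigma_s)\big),\qquad\partial_s\tilde\sigma_s+\nabla\cdot\big(D_pH(\cdot,D_q\tilde u(s,\cdot,\tilde\sigma_s))\tilde\sigma_s\big)=0,
\]
with terminal data $\gamma_t=q$, $\tilde\sigma_t=\mu$; a Picard iteration in $\bH$ yields a unique solution since the coefficients are Lipschitz in the flat-space norm. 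Differentiating $s\mapsto\tilde u(s,\gamma_s,\tilde\sigma_s)$, substituting the PDE \eqref{eq:master} satisfied by $\tilde u$, and using the compatibility condition \eqref{cond:uniqueness} to interchange $D_q\nabla_w$ with $\nabla_wD_q$ inside the non-local term, reduces the derivative to $L(\gamma_s,\dot\gamma_s)+f(\gamma_s,\tilde\sigma_s)$. Integrating from $0$ to $t$ recovers the variational representation \eqref{def:u_alt} for $\tilde u$; strict convexity of the action expressed by \eqref{ass:convexity-on-tildeLC3} guarantees that this formula has a unique minimizer, so $\tilde u=u$.

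The step I expect to be the main obstacle is (b), namely the verification of \eqref{cond:uniqueness} for the candidate $u$. This is a symmetry-of-mixed-derivatives statement for a function whose a priori regularity is only $C^{1,1}_{\rm{loc}}$, and its proof must lean heavily on the refined Wasserstein Hessian structure of $\cU(t,\cdot)$ furnished by Theorem \ref{thm:main-regularity} and Theorem \ref{thm:finite_infinite_reg}, together with the boundary identification $D_qu=\nabla_w\cU$ on $\spt(\mu)$ from Proposition \ref{prop:representu}(iii).
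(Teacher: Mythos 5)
Your overall strategy matches the paper's: existence via the representation formula and a dynamic-programming/chain-rule argument, compatibility with \eqref{cond:uniqueness}, and uniqueness via a backward non-local characteristics system. Steps (a) and the high-level plan are essentially identical to the paper. There are, however, two concrete gaps.

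For (b), you correctly flag verification of \eqref{cond:uniqueness} for the candidate $u$ as the main obstacle, but your sketch (symmetry of $\Lambda_0$, a symmetric pairing of $\Lambda_1$) does not actually produce the identity. The paper proves it by an indirect argument in Subsection \ref{subsec:further_vectorial_eq}: one uses the $\Phi_1$-representation from Corollary \ref{cor:finite_infinite_reg-scalar-mast-time} to justify differentiating the non-local term of \eqref{eq:master} in $q$, and then compares, term by term, the resulting $q$-derivative of the scalar master equation with the vectorial master equation coming from $\cU$. Since all other terms are identified, the two non-local terms are forced to agree, which is precisely \eqref{cond:uniqueness}. The symmetry of $\Lambda_0$ (Remark \ref{rem:symmetricxx}) feeds into the derivation of the vectorial equation in Theorem \ref{thm:main-vector}, not directly into the commutation you want. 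Without this scalar-versus-vectorial comparison, the commutation of $D_q\nabla_w$ and $\nabla_w D_q$ does not follow from the $C^{1,1}_{\rm{loc}}$ regularity alone.

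For (c), the statement that ``integrating from $0$ to $t$ recovers the variational representation'' has a gap, and the compatibility condition is used in the wrong place. Integrating $\frac{d}{ds}\tilde u(s,\gamma_s,\tilde\sigma_s)$ only expresses $\tilde u(t,q,\mu)$ as a Lagrangian cost along the particular pair $(\gamma,\tilde\sigma)$ generated by $\tilde u$; it does not show that this cost is an infimum, nor that $\tilde\sigma_s$ coincides with $\sigma^t_s[\mu]$, since $\tilde\sigma$ is built from $D_q\tilde u$, which a priori depends on the solution. The scalar chain rule you invoke does not need \eqref{cond:uniqueness} at all. Where \eqref{cond:uniqueness} is actually required is one level down: in the paper's Claim 2, when differentiating $D_q\tilde u$ along characteristics to establish that the lifted curve $x_s$ satisfies the Euler--Lagrange system $\frac{d}{ds}D_vL(x_s,x_s')=D_qL(x_s,x_s')+\nabla\tilde\cF(x_s)$ with the correct initial transversality condition. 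Only after this, by strict convexity of the $\bH$-action, is $x_s$ identified with the unique minimizer defining $\tilde\cU(t,\cdot)$, which forces the characteristic system (hence $\tilde\sigma_s$, hence $D_q\tilde u$ on the support) to be independent of the particular classical solution. Comparing two solutions along the now-shared characteristics then yields uniqueness on $\spt(\mu)$. Your proposal also omits the final approximation step (the paper's Claim 4, mollifying $\mu$ to a fully supported measure) needed to extend uniqueness from $\spt(\mu)$ to all of $\M$.
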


\begin{proof}
Let $T>0$ be a fixed time horizon. Notice that Theorem \ref{thm:main-regularity} yields that $u$ defined in \eqref{def:u_alt} is of class $C^{1,1}_{\rm{loc}}([0,T]\times\M\times\cP_2(\M))$.

Let $\mu\in\sP_2(\M)$, $q\in\M$ and $t\in (0,T)$. Using the representation formula \eqref{def:u_alt}, by the dynamic programming principle we have that for $s\in (0,t)$
\begin{align*}
u(t,q,\mu)
=&u\big(s,S_s^t[\mu](q),\sigma_s^t[\mu]\big)\\
+&\int_s^t L\Big(S_\tau^t[\mu](q),D_pH\big(S_\tau^t[\mu](q),D_q u(\t,S_\t^t[\mu](q),\sigma_\t^t[\mu])\big)\Big)+f\Big(S_\tau^t[\mu](q),\sigma_s^t[\mu]\Big) d\tau.
\end{align*}
Hence, 
\begin{align*}
&\lim_{s\to t}\frac{u(t,q,\mu) - u\big(s,S_s^t[\mu](q),\sigma_s^t[\mu]\big)}{t-s}\\
=& \lim_{s\to t}\int_s^t L\Big(S_\tau^t[\mu](q),D_pH\big(S_\tau^t[\mu](q),D_q u(\t,S_\t^t[\mu](q),\sigma_\t^t[\mu])\big)\Big)+f\Big(S_\tau^t[\mu](q),\sigma_s^t[\mu]\Big) d\tau,
\end{align*}
where both limits exist and are finite, due to the continuity of the integrand on the right hand side. 
Using the chain rule with respect to the measure variable (provided in Lemma \ref{lem:chain_rule}), this is equivalent to 
\begin{align*}
\partial_t u(t,q,\mu) +& D_q u(t,q,\mu) \cdot D_pH(q,D_q u(t,q,\mu)) + \int_{\M}\nabla_wu(t,q,\mu)(y)\cdot D_p H\big(y,\nabla_w \cU\big(s,\mu\big)(y)\big)\mu(dy)\\
=& L\big(q,D_pH\big(q,D_q u(t,q,\mu)\big)\big)+f(q,\mu)
\end{align*}
Here above we used that the optimal curve $\t\mapsto S^t_\t[\mu](q)$ satisfies \eqref{eq:jan02.2020.0}, while the curve $\t\mapsto \s^t_\t[\mu]$ solves the continuity equation \eqref{continuity}.

Using that by Proposition \ref{prop:representu}(ii)
$$D_q u(t,\cdot ,\mu)=\nabla_w\cU(t,\mu)(\cdot) \;\; \mu - \text{a.e.}, $$ 
one obtains
\begin{align*}
f(q,\mu)&=\partial_t u(t,q,\mu)+D_qu(t,q,\mu)\cdot D_pH(q,D_q u(t,q,\mu))\\
&+\int_{\M}\nabla_w u(t,q,\mu)(y)\cdot D_p H(y,D_q u(t,y,\mu))\dd\mu(y)-L(q,D_pH(q,D_q u(t,q,\mu)))\\
&=\partial_t u(t,q,\mu)+H(q,D_q u(t,q,\mu))+\int_{\M}\nabla_w u(t,q,\mu)(y)\cdot D_p H(y,D_q u(t,y,\mu))\mu(dy),
\end{align*}
where we have used the Legendre duality in the last equation. The arguments in Subsection \ref{subsec:further_vectorial_eq} imply in particular that $u$ also satisfies the condition \eqref{cond:uniqueness}. This completes the existence part of the theorem.

{\it Uniqueness.} Let $u\in C^{1,1}_{\rm{loc}}([0,T]\times\M\times\cP_2(\M))$ be a solution to \eqref{eq:master}. Let $t\in(0,T)$, $\mu\in\cP_2(\M)$ and $z\in\bH$ be fixed such that $\sharp(z)=\mu$. Using the vector field $D_pH(\cdot,D_q u(\cdot,\cdot,\cdot))$, let $(\s_s)_{s\in(0,t)}$ be the unique solution to the continuity equation
\begin{equation}\label{cont:uniqueness}
\left\{
\begin{array}{ll}
\partial_s \s_s+\nabla\cdot(\s_s D_pH(\cdot,D_q u(s,\cdot,\s_s)))=0, & {\rm{in}}\ \mathcal{D}'((0,t)\times\M),\\
\s_t=\mu. 
\end{array}
\right.
\end{equation}
Since $D_q u$ is locally Lipschitz on $[0,T]\times\M\times\cP_2(\M)$ and the vector field $\M\ni q\mapsto D_q u(t,q,\nu)$ is Lipschitz, uniformly with respect to $(t,\nu)\in[0,T]\times\cB_r$, the existence and uniqueness of $\s$ above follows from standard arguments and from the adaptation of Theorem 3.3 from \cite{GangboS2014}.

Then, in $\bH$ we consider the ODE 
\begin{align}\label{eq:uniqueness_ODE}
\left\{
\begin{array}{ll}
x_s' = D_p H(x_s, D_q u(s,x_s,\s_s)), & s\in (0,t),\\
x_t=z. 
\end{array}
\right.
\end{align}
This has a unique continuously differentiable solution $x:(0,t)\to\bH$. 

{\it Claim 1.} We have that $\sharp(x_s)=\s_s$. 

{\it Proof of Claim 1.} Indeed, let us denote $\ov \s_s:=\sharp(x_s)$ we have 
$$\partial_s \ov\s_s+\nabla\cdot(\ov\s_s D_pH(\cdot,D_q u(s,\cdot,\s_s)))=0,$$
in the sense of distributions. But the vector field $(s,q)\mapsto D_pH(q,D_q u(s,q,\s_s))$ induces a unique solution to the the continuity equation, therefore $\s$ and $\ov\s$ must coincide and the claim follows.

\medskip
{\it Claim 2.} The unique solution $x$ to \eqref{eq:uniqueness_ODE}, satisfies the Euler-Lagrange equations 
$$D_q L(x_s,x_s')+\nabla\tilde\cF(x_s)=\frac{d}{ds}D_vL(x_s,x_s')\quad \text{and}\quad D_v L(x(0),x'(0))=\nabla\tilde\cU_0(x(0))\quad \text{a.e. in}\quad\Om.$$

{\it Proof of Claim 2.} Let us notice first that by our assumptions $D_v L(q,\cdot)$ and $D_p H(q,\cdot)$ are inverses of each others for all $q\in\M$. Furthermore, we have
\begin{align*}
D_q L(q,D_pH(q,p))=-D_q H(q,p),\ \ \forall (q,p)\in\M\times\R^d.
\end{align*}
Indeed, this last equation is a consequence of the Legendre-Fenchel identity
$$H(q,p)=p\cdot D_pH(q,p)-L(q,D_pH(q,p)).$$ 
Now, from \eqref{eq:uniqueness_ODE} by continuity, by \eqref{hyp:u_0-f} and by the fact $\nabla_w\cU_0(\s_s)(x_s)=\nabla\tilde\cU_0(x_s)$, one can deduce that 
$$x'(0)=D_pH(x(0),D_qu_0(x(0),\sigma_0))=D_pH(x(0),\nabla_w\cU_0(\s_0)(x(0)))=D_pH(x(0),\nabla\tilde\cU_0(x(0))),$$
which by inversion of $D_pH(x(0),\cdot)$ is equivalent to $D_vL(x(0),x'(0))=\nabla\tilde\cU_0(x(0))$.

Then, from \eqref{eq:uniqueness_ODE}, again by inversion of $D_pH(x_s,\cdot)$ we have 
\begin{align*}
D_vL(x_s,x_s')=D_q u(s,x_s,\s_s).
\end{align*}
Since $u\in C^{1,1}_{\rm{loc}}([0,T]\times\M\times\cP_2(\M))$, for a.e. $s\in(0,t)$ we have
\begin{align}\label{eq:EL-uniqueness1}
\frac{d}{ds}D_vL(x_s,x_s')&=\partial_sD_q u(s,x_s,\s_s)+ D^2_{qq}u(s,x_s,\s_s)D_p H(x_s, D_q u(s,x_s,\s_s))\\
\nonumber&+\int_\M\nabla_w D_qu(s,x_s,\s_s)(a)\cdot D_p H(a, D_q u(s,a,\s_s))\s_s(da)\\
\nonumber&=\partial_sD_q u(s,x_s,\s_s)+ D^2_{qq}u(s,x_s,\s_s)D_p H(x_s, D_q u(s,x_s,\s_s))\\
\nonumber&+\int_\M D_q\nabla_wu(s,x_s,\s_s)(a)\cdot D_p H(a, D_q u(s,a,\s_s))\s_s(da),
\end{align}
a.e. in $\Om$, where we have used \eqref{cond:uniqueness} in the last equation. Let us note that the previous computation is meaningful. Indeed, by the regularity on $u$ (see also the arguments in Subsection \ref{subsec:further_vectorial_eq}), we can differentiate the master equation \eqref{eq:master} with respect to $q$, and so for $\cL^1\otimes\cL^d$--a.e. $(s,q)\in(0,t)\times\M$ and for all $\nu\in\cP_2(\M)$ we have
\begin{align}\label{eq:EL-uniqueness2}
&\partial_sD_q u(s,q,\nu)+D^2_{qq}u(s,q,\nu)D_pH(q,D_qu(s,q,\nu))\\
+&\int_{\M}D_q\nabla_w u(s,q,\nu)(a)D_p H(a,D_qu(s,a,\nu))\nu(da)=D_q f(q,\nu)-D_q H(q,D_qu(s,q,\nu)).\nonumber
\end{align}
We notice that \eqref{hyp:u_0-f} implies that $D_q f(q,\nu)=\nabla_w\cF(\nu)(q)$ and so, by combining \eqref{eq:EL-uniqueness1} and  \eqref{eq:EL-uniqueness2} one deduces
\begin{align*}
\frac{d}{ds}D_vL(x_s,x_s')&=D_q f(x_s,\s_s)-D_q H(x_s,D_qu(s,x_s,\s_s))=\nabla_w\cF(\s_s)(x_s)+D_q L(x_s,D_qu(s,x_s,\s_s))\\
&=\nabla\tilde\cF(x_s)+D_q L(x_s,D_qu(s,x_s,\s_s)),
\end{align*}
and so the claim follows.

\medskip
{\it Claim 3.} For each $t\in[0,T]$ and $\mu\in\cP_2(\M)$, $u(t,\cdot,\mu)$ is uniquely determined on $\spt(\mu)$.

{\it Proof of Claim 3.} By the strict convexity of the action, the previous claims show that $(x_s)_{s\in(0,t)}$ is the unique solution in the action minimization problem \eqref{eq:defineUtilde} for $\tilde\cU(t,z)$. But, since $\tilde\cU\in C^{1,1}_{\rm{loc}}([0,T]\times\bH)$ (as we showed in Proposition \ref{theorem:hilbert-smooth}(ii)), we have in the same time that the optimal velocity for this curve is $D_pH(x_s,\nabla\tilde\cU(s,x_s))$ and so, by the convexity of $H$ in the second variable, one deduces that 
$$D_q u(s,x_s(\omega),\s_s)=\nabla\tilde\cU(s,x_s)(\omega),$$
for a.e.  $\omega\in\Omega$. This further yields that the vector field $q\mapsto D_q u(s,q,\s_s)$ is unique (i.e. does not depend on the solution $u$) on $\spt(\s_s)$ for all $s\in[0,t]$. From here we also deduce that for each $\mu\in\cP_2(\M)$, the solution to the continuity equation \eqref{cont:uniqueness} is unique (independent of the solution $u$) and this corresponds to the unique minimizer in the action minimization problem, i.e. to the solution to \eqref{continuity}.

Now let $q_1\in\spt(\mu)$ and let $(q_s)_{s\in(0,t)}$ be the unique solution to 
\begin{align}\label{eq:uniqueness_ODE2}
\left\{
\begin{array}{ll}
q_s' = D_p H(q_s, D_q u(s,q_s,\s_s)), & s\in (0,t),\\
q_t=q_1. 
\end{array}
\right.
\end{align}
It is clear that $q_s\in\spt(\s_s)$ for all $s\in[0,t]$. Moreover, for each fixed $q_1$, the curve solving \eqref{eq:uniqueness_ODE2} is unique (independent of the solution $u$). 

Using the Legendre duality, the master equation for $u$ can be rewritten as 
\begin{align*}
\partial_s u(s,q,\nu) &+ D_q u(s,q,\nu)\cdot D_p H(q,D_q u(s,q,\nu))+\int_\M\nabla_w u(s,q,\nu)(a)\cdot D_p H(a,D_q u(s,a,\nu))\nu(da)\\
&=f(q,\nu)+ L(q,D_pH(q,D_q u(s,q,\nu)))
\end{align*}
and replacing in $(q,\nu)=(q_s,\s_s)$ the chain rule gives us 
\begin{align}\label{lag:identity}
\frac{d}{ds}\left(u(s,q_s,\s_s)\right)=f(q_s,\s_s)+ L(q_s,D_pH(q_s,D_q u(s,q_s,\s_s))).
\end{align}

\medskip

Now, let $\ov u\in C^{1,1}_{\rm{loc}}([0,T]\times\M\times\cP_2(\M))$ be another solution to \eqref{eq:master} in the sense of Definition \ref{def:classical_sol}. By the previous arguments one has $D_q\ov u(s,q,\s_s)=D_q u(s,q,\s_s)$ for all $s\in[0,t]$ and $q\in\spt(\s_s)$. Then, similarly to \eqref{lag:identity}, one has that
\begin{align}\label{lag:identity2}
\frac{d}{ds}\left(\ov u(s,q_s,\s_s)\right)=f(q_s,\s_s)+ L(q_s,D_pH(q_s,D_q u(s,q_s,\s_s))).
\end{align}
By defining now $w:[0,t]\to\R$ as $w(s):=u(s,q_s,\s_s)-\ov u(s,q_s,\s_s)$ we have that $w'(s)=0$ (by subtracting \eqref{lag:identity2} from \eqref{lag:identity}) and $w(0)=0$. Therefore one must have $w\equiv 0$ and so $u(s,q_s,\s_s)=\ov u(s,q_s,\s_s)$. By continuity one has also that 
$$u(t,q_1,\mu)=\ov u(t,q_1,\mu),\ \ \forall q_1\in\spt(\mu).$$ 

\medskip

{\it Claim 4.} $u$ is a unique solution to \eqref{eq:master}.

{\it Proof of Claim 4.} It remains to show that if $u$ and $\ov u$ are two solutions to \eqref{eq:master}, one has $u(t,q,\mu)=\ov u(t,q,\mu)$ for all $q\in\M\setminus\spt(\mu)$. Suppose that $\mu$ does not have full support, otherwise there is nothing to prove. Let $q_0\in\M\setminus\spt(\mu)$. For $\e>0$ let $\rho_\e$ stand for the heat kernel centered at $0$ with variance $\e>0$ and define $\mu_\e:=\mu*\rho_\e$. Then one obtained a fully supported smooth probability measure $\mu_\e$ such that $W_2(\mu,\mu_\e)\to 0$ as $\e\da 0$. Therefore, we have
$$u(t,q_0,\mu_\e)=\ov u(t,q_0,\mu_\e).$$
By the continuity of both $u$ and $\mu_\e$ with respect to the measure variable, one can pass to the limit as $\e\da 0$ to obtain that
$$u(t,q_0,\mu)=\ov u(t,q_0,\mu),$$
as desired.

\end{proof}

Despite the fact that  the velocity field $v(t,\cdot ):=D_p H\big(\cdot,\nabla_w \cU\big(t,\mu\big)\big)$ appearing in the continuity equation \eqref{continuity} typically does not belong to $T_\mu\cP_2(\M)$, we have the following chain rule (cf. e.g.  \cite{Mayorga} in the compact setting).

\begin{lemma}\label{lem:chain_rule} We assume that the hypotheses  of Theorem \ref{thm:main-scalar} take place. Let $T>0$, $t_0,t\in(0,T)$, $s\in(0,t)$, $q\in\M$ and $\mu\in\cP_2(\M)$ and let $(0,t)\ni s\mapsto\sigma^t_s[\mu]$ be the solution to the continuity equation \eqref{continuity}. Then
$$\lim_{s\to t}\frac{u(t_0,q,\mu) - u\big(t_0,q,\sigma_s^t[\mu]\big)}{t-s}=\int_{\M}\nabla_wu(t_0,q,\mu)(y)\cdot D_p H\big(y,\nabla_w \cU\big(t,\mu\big)(y)\big)\mu(dy).$$
\end{lemma}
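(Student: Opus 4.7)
The strategy is to transfer the computation to the Hilbert lift $\bH$, where the regularity at our disposal is of classical Fr\'echet type and the chain rule requires no tangent space considerations. By Theorem~\ref{thm:main-scalar} and Lemma~\ref{lem:u_alt-reg} the function $u$ is of class $C^{1,1}_{\rm{loc}}([0,T]\times\M\times\cP_2(\M))$, so by Lemma~\ref{lem:c11_equiv} the lift $\tilde u(t_0, q, x) := u(t_0, q, \sharp(x))$ belongs to $C^{1,1}_{\rm{loc}}(\bH)$ in the variable $x$ (with $t_0, q$ fixed), with
\[
\nabla_x \tilde u(t_0, q, x) = \nabla_w u(t_0, q, \sharp(x)) \circ x.
\]
The main obstacle that this lift is designed to overcome is that the velocity field $v(t,\cdot) = D_pH(\cdot,\nabla_w\cU(t,\mu))$ is in general \emph{not} an element of $T_\mu\cP_2(\M)$, so the usual chain rule on $(\cP_2(\M),W_2)$ along a continuity equation, which pairs $\nabla_w u$ only against the $L^2(\mu)$-projection of the velocity onto the tangent space, would not directly produce the stated integral. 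In $\bH$, however, there is no tangent space restriction and the non--gradient component of the velocity is correctly accounted for.

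Next, fix any $x \in \bH$ with $\sharp(x) = \mu$ and set $x_s := \tilde S_s^t[x]$, the flow from Remark~\ref{rem:uniquestrict-c1}. By Proposition~\ref{prop:representu}(ii) and Remark~\ref{rem:representu}(i) we have $x_s = S_s^t[\mu]\circ x$, whence $\sharp(x_s) = \sigma_s^t[\mu]$ for every $s\in[0,t]$; in particular $x_t = x$. From \eqref{eq:app_Hamiltonian_sys_1}, combined with the identification $\tilde P_s^t[x] = \nabla\tilde\cU(s, x_s)$ coming from the representation formula \eqref{grad:rep}, the curve $s\mapsto x_s$ is $C^1([0,t];\bH)$ with
\[
\partial_s x_s \;=\; \nabla_b\tilde\cH(x_s,\tilde P_s^t[x]) \;=\; D_pH\bigl(x_s,\nabla\tilde\cU(s,x_s)\bigr);
\]
at $s=t$ this reads $\partial_s x_s\big|_{s=t}(\omega) = D_pH\bigl(x(\omega),\nabla_w\cU(t,\mu)(x(\omega))\bigr)$ for a.e.\ $\omega\in\Omega$.

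Finally, the $C^{1,1}_{\rm{loc}}$ property of $\tilde u(t_0, q, \cdot)$ (using Remark~\ref{rem:C11onH} and a bound on $\|x_s\|$ for $s$ near $t$) yields a constant $K>0$ such that
\[
\bigl|\tilde u(t_0, q, x_t) - \tilde u(t_0, q, x_s) - \bigl\langle \nabla_x \tilde u(t_0, q, x_t),\, x_t - x_s\bigr\rangle\bigr| \;\le\; K\,\|x_s - x_t\|^2.
\]
Since $\|x_s - x_t\| \le C|s-t|$ for $s$ close to $t$, dividing by $t-s$ makes the remainder $O(|t-s|)$, and $(x_t-x_s)/(t-s)\to \partial_s x_s|_{s=t}$ in $\bH$. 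Therefore
\[
\lim_{s\to t}\frac{u(t_0,q,\mu) - u(t_0,q,\sigma_s^t[\mu])}{t-s} \;=\; \bigl\langle \nabla_x \tilde u(t_0, q, x),\, \partial_s x_s\big|_{s=t}\bigr\rangle,
\]
and substituting the two formulas above and changing variables $y = x(\omega)$ (using $\sharp(x)=\mu$) yields the claimed integral against $\mu$. The only step that requires any care is the justification that the Fr\'echet remainder $O(\|x_s - x_t\|^2)$ is indeed controlled uniformly on a neighbourhood of $x$, but this is immediate from the local Lipschitz constant of $\nabla_x\tilde u(t_0,q,\cdot)$ together with the continuity of $s\mapsto x_s$.
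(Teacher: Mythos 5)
Your proof is correct, and the approach — lift to $\bH$, use the Fr\'echet $C^{1,1}$ Taylor expansion together with the $C^1$-in-$s$ smoothness of the characteristic $\tilde S_s^t[x]$, then change variables back to $\mu$ — is the natural one. Note that the paper itself omits a proof of Lemma~\ref{lem:chain_rule} (it defers to \cite{Mayorga} for the compact analogue), so there is no in-paper argument to compare against.

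Two small points should be corrected. First, citing Theorem~\ref{thm:main-scalar} for the $C^{1,1}_{\rm loc}$ regularity of $u$ is circular: Lemma~\ref{lem:chain_rule} is invoked inside the proof of Theorem~\ref{thm:main-scalar}. The regularity you need comes from Lemma~\ref{lem:u_alt-reg} (combined with Lemma~\ref{lem:c11_equiv} to pass to the lift), which you also cite, so just drop the reference to the theorem. Second, your motivating remark that the $\cP_2$ chain rule ``would not directly produce the stated integral'' because $v(t,\cdot)=D_pH(\cdot,\nabla_w\cU(t,\mu)(\cdot))$ need not lie in $T_\mu\cP_2(\M)$ is not quite accurate: since $\nabla_w u(t_0,q,\mu)\in T_\mu\cP_2(\M)$ (it is defined as the projection of $\Phi_1(t_0,q,\mu,\cdot)$ onto $T_\mu\cP_2(\M)$; cf.\ Theorem~\ref{thm:finite_infinite_reg-scalar-mast} and the discussion in Subsection~\ref{subsec:further_vectorial_eq}), pairing it against $v$ or against the $T_\mu$-projection of $v$ gives the same integral. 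The genuine advantage of the Hilbert lift is that it supplies a literal Fr\'echet remainder bound and a $C^1$ curve $s\mapsto x_s$, sidestepping the need to justify differentiability of $s\mapsto u(t_0,q,\sigma_s^t[\mu])$ along an absolutely continuous curve in $(\cP_2(\M),W_2)$. Finally, a cleaner reference for the identification $\tilde P_s^t[x]=\nabla\tilde\cU(s,x_s)$ than \eqref{grad:rep} is Proposition~\ref{theorem:hilbert-smooth}(iv).
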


\section{Further implications of the scalar master equation}\label{sec:further}

\subsection{Improvements on the notion of weak solution to the vectorial master equation}\label{subsec:further_vectorial_eq}
Let us recall that the first part of Theorem \ref{thm:main-scalar} asserts the existence of $u\in C^{1,1}_{\rm{loc}}([0,T]\times\M\times\cP_2(\M)),$ which satisfies the scalar master equation
\begin{align}\label{eq:master_2}
\partial_t u(t,q,\mu)+H(q,D_q u(t,q,\mu))+\int_{\M}\nabla_w u(t,q,\mu)(y)\cdot D_p H(y,D_q u(t,y,\mu))\mu(dy)=f(q,\mu).
\end{align} 
Let us observe that all the terms in the previous equation are locally Lipschitz continuous with respect to the $q$ variable. Indeed, except the nonlocal term, the Lipschitz continuity of the others is a consequence of the regularity of $u$ and the data. Setting $v(t,y):=D_p H\big(y,\nabla_w \cU\big(t,\mu\big)(y)\big)$ and denoting $\ov v(t,\cdot)$ the projection of $v(t,\cdot)$ onto $T_\mu\cP_2(\M)$, we have that
\begin{align*}
\int_{\M}\nabla_w u(t,q,\mu)(y)\cdot v(t,y)\mu(dy)=\int_{\M}\Phi_1(t,q,\mu,y)\cdot \ov v(t,y)\mu(dy),
\end{align*}
where $\Phi_1$ is defined in Corollary \ref{cor:finite_infinite_reg-scalar-mast-time}. This relationship holds because $\nabla_w u(t,q,\mu)(\cdot)$ is the projection of $\Phi_1(t,q,\mu,\cdot)$ onto $T_\mu\cP_2(\M)$. Since $\Phi_1\in C^{1,1}_{\rm{loc}}([0,T]\times\M\times\cP_2(\M)\times\M),$ the function $q\mapsto \int_{\M}\Phi_1(t,q,\mu,y)\cdot \ov v(t,y)\mu(dy)$ is locally Lipschitz continuous and for (Lebesgue) a.e. $q\in\M$ we have 
\begin{align*}
\int_{\M}D_q\nabla_w u(t,q,\mu)(y)\cdot v(t,y)\mu(dy)=\int_{\M}D_q\Phi_1(t,q,\mu,y)\cdot \ov v(t,y)\mu(dy),
\end{align*}
Therefore, we are allowed  to differentiate \eqref{eq:master_2} for (Lebesgue) a.e. $q\in\M$ to obtain
\begin{align*}
\partial_t D_qu(t,q,\mu)&+D_qH(q,D_q u(t,q,\mu))+D^2_{qq} u(t,q,\mu)D_pH(q,D_q u(t,q,\mu))\\
&+\int_{\M}D_q\nabla_w u(t,q,\mu)(y)\cdot D_p H(y,D_q u(t,y,\mu))\mu(dy)=D_qf(q,\mu).
\end{align*} 
By Proposition \ref{prop:representu}(iii) we know that for all $(t,\mu)\in(0,T)\times\cP_2(\M)$, $D_qu(t,\cdot,\mu)=\nabla_w\cU(t,\mu)(\cdot)$ on $\spt(\mu)$, where $\cU$ is the unique solution to \eqref{eq:HJB}. Since $D_q u$ is locally Lipschitz continuous with respect to all of its variables, it serves a very natural extension for $\nabla_w\cU(t,\mu)(\cdot)$ to the whole space, and so we have
\begin{align}\label{eq:correspondence}
\partial_t \nabla_w\cU(t,\mu)(q)&+D_qH(q,\nabla_w\cU(t,\mu)(q))+D_{q} \nabla_w\cU(t,\mu)(q)D_pH(q,\nabla_w\cU(t,\mu)(q))\\
\nonumber&+\int_{\M}D_q\nabla_w u(t,q,\mu)(y)\cdot D_p H(y,\nabla_w\cU(t,\mu)(y))\mu(dy)=D_qf(q,\mu)=\nabla_w\cF(\mu)(q),
\end{align} 
for all $(t,\mu)\in(0,T)\times\sP_2(\M)$ and for (Lebesgue) a.e. $q\in\M$.

In Theorem \ref{thm:main-vector} we have seen that $\cV:=\nabla_w\cU$ solves the vectorial master equation \eqref{eq:master_vector}, when the variable $q$ needs to be taken in $\spt(\mu)$. Since we have a correspondence between all terms in \eqref{eq:master_vector} and \eqref{eq:correspondence}, except the nonlocal ones, we can deduce that we must have
$$\overline \cN_\mu\big[\cV, \nabla_{w}^\top \cV \big](t, \mu, q)=\overline \cN_\mu\big[\nabla_w\cU, \nabla^2_{ww} \cU^\top \big](t, \mu, q) =\int_{\M}D_q\nabla_w u(t,q,\mu)(y)\cdot D_p H(y,\nabla_w\cU(t,\mu)(y))\mu(dy)$$
for $\cL^d$--a.e. $q\in\M$.

This fact implies furthermore that 
\begin{align}
\int_{\M}D_q\nabla_w u(t,q,\mu)(y)D_pH(y,D_qu(t,y,\mu))\mu(dy)&=\int_{\M}\nabla_w D_q u(t,q,\mu)(y)D_pH(y,D_qu(t,y,\mu))\mu(dy)\\
\nonumber&=\int_{\M}\nabla^2_{ww}\cU(t,\mu)(q,y)D_pH(y,D_qu(t,y,\mu))\mu(dy)
\end{align}
for all $\mu\in\cP_2(\M)$ and for $\cL^1\otimes\cL^d$--a.e. $(t,q)\in(0,T)\times\M$, which  shows in particular that the function $u$ constructed in the first part of the proof of Theorem \ref{thm:main-scalar} satisfies also \eqref{cond:uniqueness}.

All the previous arguments allow to formulate the following
\begin{proposition}
The weak solution $\cV$ to the vectorial master equation \eqref{eq:master_vector} provided in Theorem \ref{thm:main-vector} can be extended in a Lipschitz continuous way to $[0,T]\times\sP_2(\M)\times\M$ such that this extension still solves \eqref{eq:master_vector} at every $(t,\mu)\in(0,T)\times\cP_2(\M)$ and at $\cL^d$--a.e. $q\in\M$.
\end{proposition}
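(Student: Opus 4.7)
The plan is to define the extension explicitly as $\cV(t,\mu,q):=D_q u(t,q,\mu)$, where $u$ is the classical solution of the scalar master equation produced in Theorem \ref{thm:main-scalar}. The Lipschitz regularity is then immediate: since $u\in C^{1,1}_{\rm loc}([0,T]\times\M\times\cP_2(\M))$, its gradient $D_q u$ is locally Lipschitz in $(t,\mu,q)$. That this is genuinely an extension of the $\cV$ from Theorem \ref{thm:main-vector} follows from Proposition \ref{prop:representu}(iii), which identifies $D_q u(t,\cdot,\mu)$ with $\nabla_w\cU(t,\mu)(\cdot)$ on $\spt(\mu)$.

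Next, I would justify termwise differentiation of the scalar master equation \eqref{eq:master_2} with respect to $q$. All local terms $\partial_t u$, $H(q,D_q u)$, $f(q,\mu)$ are locally Lipschitz in $q$ by Corollary \ref{cor:D_qu-global-Lip} and the standing assumptions on $H$ and $f$. For the nonlocal term the key observation is that by Corollary \ref{cor:finite_infinite_reg-scalar-mast-time} there exists $\Phi_1\in C^{1,1}_{\rm loc}$ such that $\nabla_w u(t,q,\mu)(\cdot)$ is the projection of $\Phi_1(t,q,\mu,\cdot)$ onto $T_\mu\cP_2(\M)$. Writing $v(t,\cdot):=D_pH(\cdot,\nabla_w\cU(t,\mu)(\cdot))$ and its $L^2(\mu)$-projection $\overline v(t,\cdot)$ onto $T_\mu\cP_2(\M)$, one has
\[
\int_\M \nabla_w u(t,q,\mu)(y)\cdot v(t,y)\,\mu(dy)=\int_\M \Phi_1(t,q,\mu,y)\cdot \overline v(t,y)\,\mu(dy),
\]
so the nonlocal term inherits local Lipschitz continuity in $q$ from $\Phi_1$. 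Differentiating in $q$ for a.e.\ $q\in\M$ yields equation \eqref{eq:correspondence}, and crucially the identity
\[
\int_\M D_q\nabla_w u(t,q,\mu)(y)\cdot D_pH(y,D_q u(t,y,\mu))\,\mu(dy)=\overline\cN_\mu\big[\cV,\nabla_w^\top\cV\big](t,\mu,q)
\]
for $\cL^d$-a.e.\ $q$, which follows by matching \eqref{eq:correspondence} termwise against \eqref{eq:master_vector} on $\spt(\mu)$ (where the latter is known to hold by Theorem \ref{thm:main-vector}) and then extending the equality of the continuous nonlocal expressions.

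Finally, I would assemble the pieces: the extension $\cV$ is Lipschitz on $[0,T]\times\cP_2(\M)\times\M$, it agrees with $\nabla_w\cU$ on $\bigcup_\mu\{\mu\}\times\spt(\mu)$, and the differentiated scalar equation \eqref{eq:correspondence} is exactly \eqref{eq:master_vector} after the above identification of the nonlocal operator. Hence \eqref{eq:master_vector} holds for every $(t,\mu)\in(0,T)\times\cP_2(\M)$ and for $\cL^d$-a.e.\ $q\in\M$, with the initial datum $\cV(0,\mu,\cdot)=D_q u_0(\cdot,\mu)=\nabla_w\cU_0(\mu)(\cdot)$ on $\spt(\mu)$ by \eqref{hyp:u_0-f}.

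The main obstacle, and the step I would expect to spend most care on, is the identification of the nonlocal term outside $\spt(\mu)$: $\nabla_w u(t,q,\mu)$ is defined intrinsically only up to elements orthogonal to $T_\mu\cP_2(\M)$, whereas $D_pH(y,D_q u(t,y,\mu))$ need not lie in $T_\mu\cP_2(\M)$. The fix is to systematically replace $D_pH(\cdot,\cdot)$ by its $L^2(\mu)$-projection $\overline v(t,\cdot)$ when pairing with $\nabla_w u$ (or with $\nabla_{ww}^\top\cU$), which leaves every integral unchanged and makes the subsequent manipulations — including the commutation of $D_q$ with the integral and with $\nabla_w$ — rigorous; this is exactly the mechanism behind condition \eqref{cond:uniqueness} verified at the end of Subsection \ref{subsec:scalar_main}.
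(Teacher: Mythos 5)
Your proposal matches the paper's argument in Subsection \ref{subsec:further_vectorial_eq} essentially step for step: define the extension as $D_q u$, observe that each term in the scalar equation \eqref{eq:master_2} is locally Lipschitz in $q$ (using $\Phi_1$ from Corollary \ref{cor:finite_infinite_reg-scalar-mast-time} and the projection $\ov v$ of the transport vector field onto $T_\mu\cP_2(\M)$ to handle the nonlocal term), differentiate in $q$ to obtain \eqref{eq:correspondence}, and then identify the nonlocal operator $\overline\cN_\mu[\cV,\nabla_w^\top\cV]$ for $\cL^d$-a.e.\ $q$ by matching the remaining terms against \eqref{eq:master_vector} on $\spt(\mu)$. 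Your identification of the extension of the nonlocal term as the delicate point, and its resolution via the $L^2(\mu)$-projection, is precisely the mechanism the paper uses.
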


\begin{remark}
Relying on the very same procedure as in Theorems \ref{thm:app_regularity-m} and  \ref{thm:finite_infinite_reg-scalar-mast}, if we assume higher regularity properties on the data (as $H,L\in C^4$ with uniformly bounded fourth order derivatives, $\cF,\cU\in C^{3,1,w}_{\rm{loc}}$ and $f,u_0\in C^{2,1}_{\rm{loc}}$), one can improve further the regularity of both $u$ and $\cU$ (as $u\in C^{2,1}_{\rm{loc}}$ and $\cU\in C^{3,1,w}_{\rm{loc}}$). Such improvements would imply furthermore that one could have the vectorial master equation satisfied for all $q\in\M$ (rather than $\cL^d$--a.e.). We do not pursue the realistic goal of  improving the regularity of $u$ only to avoid writing a longer paper.
\end{remark}

%
%

\medskip

\appendix

\section{Hilbert regularity is too stringent for rearrangement invariant functions}\label{appendixE}
Let $\Phi \in C^2\big(\mathcal P_2(\mathbb M) \big)$ and let $\tilde \Phi \in C^2\big(\mathbb H \big)$ be such that $\Phi(\mu)=\tilde \Phi(x)$ if $\mu$ is the law of $x$. Recall that 
\begin{equation}\label{eq:jan19.2020.1}
\nabla^2 \tilde \Phi(x)(h, h_*)= 
 \int_{\Omega} D_{q}\big(\nabla_w \Phi(\mu) \big)\circ x\; h\cdot h_* d\omega+  \int_{\Omega^2} \nabla^2_{ww} \Phi(\mu)\big(x(\omega), x(\omega_*)\big)h(\omega)\cdot h_*(\omega_*) d\omega d\omega_*
\end{equation}
if $\xi, \xi_* \in T_\mu \mathcal P_2(\mathbb M)$ and $h=\xi \circ x$ and $h_*=\xi_* \circ x.$

For $k\in\N$ and $g \in C^2(\mathbb M^{k})$, we define 
\[
\tilde \Phi_g^{(k)}(x):= \int_{\Omega^k} g\big(x(\omega_1), \cdots, x(\omega_k) \big) d\omega_1 \cdots d\omega_k \qquad \forall x \in \mathbb H, 
\] 
and 
\[
 \Phi_g^{(k)}(\mu):= \int_{\M^k} g(q_1, \cdots, q_k) \mu(dq_1) \cdots \mu(dq_k) \qquad \forall \mu \in \mathcal P_2(\mathbb M) .
\] 
Let $P_k$ be the set of permutations of $k$ letters. Replacing $g$ by its symmetrization 
\[
\tilde g(x_1, \cdots, x_k)={1 \over k!} \sum_{\tau \in P_k}  g(x_{\tau(1)}, \cdots, x_{\tau(k)})
\]
we have $\tilde \Phi_g^{(k)}=\tilde \Phi_{\tilde g}^{(k)}$. Therefore, it is never a loss of generality to assume $g$ is symmetric.

We do not know how to write \eqref{eq:jan19.2020.1} for general $h, h_* \in \bH \setminus \{\xi \circ x \; : \; \xi \in T_\mu \mathcal P_2(\mathbb M)  \}.$ In some particular cases such as when $\tilde \Phi=\tilde \Phi_g^{(k)}$ for some smooth $g$, then \eqref{eq:jan19.2020.1} extends to $h, h_* \in \bH \setminus \{\xi \circ x \; : \; \xi \in T_\mu \mathcal P_2(\mathbb M)  \}.$ This can be checked by hand by writing the Taylor expansion of second order of $$g\big(x(\omega_1)+ h(\omega_1), \cdots, x(\omega_k)+h(\omega_k) \big).$$

Another example is when 
\begin{equation}\label{eq:jan19.2020.1.5}
\Phi(\mu)=\theta \bigg( {1\over 2} \int_{\mathbb M} |q|^2\mu(dq)\bigg) \quad \forall \mu \in \mathcal P_2(\mathbb M)\ \ {\rm{and\ so\ }}\  \tilde\Phi(x)=\theta \left( {1\over 2}\|x\|^2\right) \quad \forall x \in \bH.
\end{equation}
Writing the second order Taylor expansion, we have 
\[
\nabla \tilde \Phi(x)(h)=\theta' \bigg( {1\over 2} \|x\|^2\bigg) (x, h)
\]
and 
\begin{equation}\label{eq:jan19.2020.2}
\nabla^2 \tilde \Phi(x)(h, h)=\theta' \bigg( {1\over 2} \|x\|^2\bigg) \|h\|^2 + \theta'' \bigg( {1\over 2} \|x\|^2\bigg) (x, h)^2 \qquad \forall x , h \in \mathbb H.
\end{equation}
We conclude  
\begin{equation}\label{eq:jan19.2020.2.1}
D_q \big(\nabla_{w} \Phi(\mu) \big)=\theta' \bigg( {1\over 2} \int_{\mathbb M} |q|^2\mu(dq)\bigg) I_{d}  \qquad \forall \mu \in \mathcal P_2(\mathbb M) 
\end{equation}
and 
\[
\nabla^2_{ww} \Phi(\mu)(q, b)= \theta'' \bigg( {1\over 2} \int_{\mathbb M} |q|^2\mu(dq)\bigg) q \otimes b \qquad \forall \mu \in \mathcal P_2(\mathbb M) \quad \forall q, b \in {\rm spt}( \mu).
\]
Thus, when $\Phi$ is of the form \eqref{eq:jan19.2020.1.5},  \eqref{eq:jan19.2020.1} continues to hold for all $h, h_* \in \mathbb H.$ Note that the expression in \eqref{eq:jan19.2020.2.1} is constant on $\M.$ In fact, we shall see this is not a coincidence which is the aim of these notes.

Our goal is to show that if $\tilde \Phi \in C^{2,\alpha}_{\rm loc}\big(\bH \big)$, then $D_{q}\big(\nabla_w \Phi(\mu) \big)$ must be constant function on ${\rm spt}( \mu)$. This will allow us to make inference about the dimension of $C^{2,\alpha}_{\rm loc}\big(\bH \big) \cap \{ \tilde \Phi_g^{(k)}\}$ for any natural number $k$.  In conclusion, the set of $\tilde \Phi \in C^{2,\alpha}_{\rm loc}\big(\bH \big)$ maybe too small in some sense and a theory of mean field games for functions $\tilde \Phi \in C^{2,\alpha}_{\rm loc}\big(\bH \big)$ may be too restrictive. Hence, $C^{2, \alpha,w }_{\rm loc}\big(\mathcal P_2(\mathbb M) \big)$ (cf. Definition \ref{def:c21_wasserstein}) is a better space for a general theory. 
\begin{lemma}\label{lem:weak-Holder} Let $\alpha \in (0,1]$ and assume $\tilde \Phi \in C^{2,\alpha}_{\rm loc}\big(\bH \big)$ is rearrangement invariant so that it is the lift of a function $\Phi.$  If \eqref{eq:jan19.2020.1} holds for all $h, h_* \in \mathbb H$ then $D_{q}\big(\nabla_w \Phi(\mu) \big)$ is constant function on ${\rm spt}( \mu)$. 
\end{lemma}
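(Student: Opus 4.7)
The idea is to localise the formula \eqref{eq:jan19.2020.1} at two prescribed points $q_1,q_2\in\spt(\mu)$ by means of narrow bump test functions, read off the values $D_q\big(\nabla_w\Phi(\mu)\big)(q_i)$ as the limits of the corresponding Hessian quadratic forms, and then use the swap permutation \eqref{eq:FOMFG} together with $C^{2,\alpha}_{\rm loc}$--continuity of $\nabla^2\tilde\Phi$ to force these two limits to coincide.

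First I would fix $x\in\bH$ with $\sharp(x)=\mu$, select Lebesgue points $\omega_1,\omega_2\in\Omega$ of $x$ (and of the various essentially bounded fields appearing in \eqref{eq:jan19.2020.1}) with $x(\omega_i)=q_i$, and introduce, for $v,w\in\R^d$ and $r>0$ small, the test functions
\[
h^i_{r,v}(\omega):=\frac{v}{|B_r(\omega_i)|^{1/2}}\,\chi_{B_r(\omega_i)}(\omega),\qquad i=1,2.
\]
Note that $\|h^i_{r,v}\|^2=|v|^2$. The plan is to show that as $r\to 0$,
\begin{equation}\label{eq:plan1}
\nabla^2\tilde\Phi(x)\big(h^i_{r,v},h^i_{r,w}\big)\;\longrightarrow\;D_q\big(\nabla_w\Phi(\mu)\big)(q_i)\,v\cdot w.
\end{equation}
Evaluating the assumed formula \eqref{eq:jan19.2020.1} with $h=h^i_{r,v}$, $h_*=h^i_{r,w}$, the local integrand equals $|B_r|^{-1}\chi_{B_r(\omega_i)}(\omega)\,D_q(\nabla_w\Phi(\mu))(x(\omega))\,v\cdot w$, whose $\omega$--average over $B_r(\omega_i)$ converges to the right-hand side of \eqref{eq:plan1} by the Lebesgue differentiation theorem applied to the locally bounded map $\omega\mapsto D_q(\nabla_w\Phi(\mu))(x(\omega))$. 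The double integral on the other hand equals
\[
|B_r|\cdot \fint_{B_r(\omega_i)\times B_r(\omega_i)}\nabla^2_{ww}\Phi(\mu)\big(x(\omega),x(\omega_*)\big)\,v\cdot w\,d\omega\,d\omega_*,
\]
which vanishes in the limit because $\nabla^2_{ww}\Phi(\mu)$ is essentially bounded and the prefactor $|B_r|\sim r^d$ tends to zero. Thus \eqref{eq:plan1} holds.

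Next, let $S_r$ denote the swap permutation defined in \eqref{eq:FOMFG} (interchanging $B_r(\omega_1)$ and $B_r(\omega_2)$), and set $y:=x\circ S_r$. Since $S_r$ preserves $\cL^d_\Om$, rearrangement invariance gives $\tilde\Phi(z\circ S_r)=\tilde\Phi(z)$ for all $z\in\bH$. Applying this to $z=x+sh^1_{r,v}+t h^1_{r,w}$ and observing that $(h^1_{r,v})\circ S_r=h^2_{r,v}$, differentiation in $(s,t)$ at $(0,0)$ yields
\[
\nabla^2\tilde\Phi(x)\big(h^1_{r,v},h^1_{r,w}\big)=\nabla^2\tilde\Phi(y)\big(h^2_{r,v},h^2_{r,w}\big).
\]
Now I would estimate $\|x-y\|$: at Lebesgue points $\omega_1,\omega_2$ of $x$ one checks
\[
\|x-y\|^2=\int_{B_r(\omega_1)}\!\!|x(\omega)-x(\omega-\omega_1+\omega_2)|^2d\omega+\int_{B_r(\omega_2)}\!\!|x(\omega)-x(\omega-\omega_2+\omega_1)|^2d\omega\sim 2|B_r|\,|q_1-q_2|^2,
\]
so $\|x-y\|\lesssim r^{d/2}|q_1-q_2|$. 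The hypothesis $\tilde\Phi\in C^{2,\alpha}_{\rm loc}(\bH)$ then forces
\[
\Big|\nabla^2\tilde\Phi(y)\big(h^2_{r,v},h^2_{r,w}\big)-\nabla^2\tilde\Phi(x)\big(h^2_{r,v},h^2_{r,w}\big)\Big|\le C\,\|x-y\|^{\alpha}|v|\,|w|\longrightarrow 0.
\]

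Combining the three displays, I conclude
\[
\lim_{r\to 0}\nabla^2\tilde\Phi(x)\big(h^1_{r,v},h^1_{r,w}\big)=\lim_{r\to 0}\nabla^2\tilde\Phi(x)\big(h^2_{r,v},h^2_{r,w}\big),
\]
i.e., by \eqref{eq:plan1},
\[
D_q\big(\nabla_w\Phi(\mu)\big)(q_1)\,v\cdot w\;=\;D_q\big(\nabla_w\Phi(\mu)\big)(q_2)\,v\cdot w\qquad \forall\,v,w\in\R^d,
\]
which gives equality of the matrices and finishes the proof. The main technical point will be the justification that the double integral with $\nabla^2_{ww}\Phi(\mu)$ is indeed controlled by $|B_r|$ (i.e., that $\nabla^2_{ww}\Phi(\mu)$ is locally essentially bounded in a way compatible with the chosen test functions) and choosing $\omega_1,\omega_2$ so that all relevant Lebesgue point conclusions hold simultaneously; otherwise the argument reduces to combining \eqref{eq:jan19.2020.1}, the swap construction of Lemma \ref{lem:c11_equiv} and the $C^{2,\alpha}$ modulus of $\nabla^2\tilde\Phi$.
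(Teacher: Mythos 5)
Your proposal is correct and follows essentially the same route as the paper's proof: concentrate test functions near a Lebesgue point to extract $D_q\big(\nabla_w\Phi(\mu)\big)$ from the quadratic form \eqref{eq:jan19.2020.1} (the double integral vanishing because of the $|B_r|$ prefactor), use the swap map of Lemma \ref{lem:c11_equiv} to compare two base points, and let the $C^{2,\alpha}$ modulus kill the difference as $r\to 0$. The only cosmetic variations are your use of normalized indicator bumps in place of $\sqrt{\varrho_\epsilon^{o}}$ and your single-parameter coupling of the test-function radius with the swap radius (with an explicit rearrangement-invariance identity to transfer the quadratic form from $x$ to $y=x\circ S_r$), whereas the paper applies \eqref{eq:jan19.2020.1} at both $x$ and $y$ directly and separates the two limits.
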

\proof{} Let $x \in \bH$ and let $\mu$ be the law of $x$. Fix an open ball $\bB \subset \bH$ that contains $x$ and choose $\kappa_{\bB}>0$ such that  
\begin{equation}\label{eq:jan18.2020.1}
\Big(\nabla^2 \tilde {\Phi}(x)-\nabla^2 \tilde {\Phi}(y) \Big)(h, h_*) \leq \kappa_{\bB} \|x-y\|^{\alpha}
\end{equation}
for all $y \in \bB$ and all $h, h_* \in \bH$ such that $\|h\|, \|h_*\| \leq 1.$

Let $\varrho\in C_{ c}^\infty(\M)$ be a probability density function whose support is the unit ball in $\R^d.$  For $z, z_* \in \R^d$ unit vectors and for $\omega, o\in \Omega$ , we set 
\[
h^\epsilon=z \sqrt{\varrho_\epsilon^{o}}, \quad h_*^\epsilon=z_* \sqrt{\varrho^{o}_\epsilon}, \qquad \varrho_\epsilon^{ o}(\omega):=\epsilon^{-d}\varrho\Big({\omega-o\over \epsilon} \Big).
\]
Let $y \in \bH$ have the same law with $x$. We have 
\begin{align}
& \Big(\nabla^2 \tilde \Phi(y)-\nabla^2 \tilde \Phi(x)\Big)(h^\epsilon, h_*^\epsilon)\nonumber\\
= & 
 \int_{\Omega} \Big(D_{q}\big(\nabla_w \Phi(\mu) \big)\big(y(\omega)\big)-D_{q}\big(\nabla_w \Phi(\mu) \big)\big(x(\omega)\big) \Big)h(\omega)\cdot h_*(\omega)\nonumber\\
+ & 
\int_{\Omega^2} \Big(\nabla^2_{ww} \Phi(\mu)\big(y(\omega), y(\omega_*)\big)-\nabla^2_{ww} \Phi(\mu)\big(x(\omega), x(\omega_*)\big) \Big)h(\omega)\cdot h_*(\omega_*) d\omega d\omega_*\nonumber\\
= & 
 \int_{\Omega} \Big(D_{q}\big(\nabla_w\Phi(\mu) \big)\big(y(o+\epsilon a)\big)-D_{q}\big(\nabla_w \Phi(\mu) \big)\big(x(o+\epsilon a)\big) \Big)z \cdot z_* \varrho(a)  da\label{eq:jan18.2020.1.5}\\
+ & 
\epsilon^d\int_{\Omega^2} \nabla^2_{ww} \Phi(\mu)\big(y(o+\epsilon a), y(o+\epsilon b)\big)z \cdot z_* \sqrt{\varrho(a) \varrho(b)} da db\nonumber\\
-& \epsilon^d\int_{\Omega^2} \nabla^2_{ww} \Phi(\mu)\big(x(o+\epsilon a), x(o+\epsilon b)\big) z \cdot z_* .\nonumber
\sqrt{\varrho(a) \varrho(b)} da db.
\end{align}
Since $\tilde \Phi \in C^{1,1}\big(\bB \big),$ $\nabla^2_{ww} \Phi(\mu)$ is bounded, we use \eqref{eq:jan18.2020.1.5}  to obtain  that if $o$ is a Lebesgue point for $\big({D_q}\nabla_w \Phi(\mu) \big)\circ y$ and $\big({D_q}\nabla_w \Phi(\mu) \big)\circ x$ then 
\[
\lim_{\epsilon \rightarrow 0} \Big(\nabla^2 \Phi(y)-\nabla^2 \Phi(x)\Big)(h^\epsilon, h_*^\epsilon)=
 \Big(D_{q}\big(\nabla_w \Phi(\mu) \big)\big(y(o)\big)-D_{q}\big(\nabla_w \Phi(\mu) \big)\big(x(o)\big) \Big)z \cdot z_* 
\]
This, together with \eqref{eq:jan18.2020.1} implies that if $y \in \bB$ then 
\begin{equation}\label{eq:jan18.2020.2}
|D_{q}\big(\nabla_w \Phi(\mu) \big)\circ y(o)-D_{q}\big(\nabla_w \Phi(\mu) \big)\circ x(o)| \leq \kappa_{\bB} \|x-y\|^{\alpha}
\end{equation}

In the spirit of the proof of Lemma \ref{lem:c11_equiv}, set 
\[
\Omega_0:=\bigl\{\omega \in \Omega\; | \; \omega \;\; \text{is a Lebesgue point for}\;\; x, {D_q}\nabla_w \Phi(\mu)\circ x \bigr\} \cap x^{-1}({\rm spt\,}(\mu))
\]
Note that $\Omega_0$ is a set of full measure in $\Omega$ and so, $x(\Omega_0)$ is a set of full $\mu$--measure. In fact, we do not know that $x(\Omega_0)$ is Borel, but we can find a Borel set $A\subset x(\Omega_0)$ of full $\mu$--measure.

 Assume in the sequel that $o \in A$ and set $q_1:=x(o)$. Assume we can find $\overline o \in A$ such that $q_2=x(\overline o)\not =q_1.$  Let $r>0$ small such that $B_r(o)\cap B_r(\overline o)=\emptyset$. Set

\[
S_r(\omega):=
\left\{
\begin{array}{ll}
\omega, &\hbox{if} \; \omega \in \Omega \setminus \bigl(B_r(o) \cup B_r( \overline o) \bigr),\\
\omega -o+ \overline o,&\hbox{if} \; \omega \in  B_r(o),\\
\omega -\overline o+ o,&\hbox{if} \; \omega \in B_r(\overline o). 
\end{array}
\right.
\]
Since $S_r$ preserves Lebesgue measure, $x$ and $y:=x \circ S_r$ have the same law $\mu$. We notice 
$$\|x-y\|^2=2\int_{B_r(o)}|x(\omega)-x(\omega+\overline o-o)|^2dz$$ 
and so, for $r$ small enough, $y \in \bB.$ By \eqref{eq:jan18.2020.2} implies
\begin{align}
\Big| D_{q}\big(\nabla_w \Phi(\mu) \big)(q_2) -D_{q}\big(\nabla_w \Phi(\mu) \big)(q_1)\Big|
=&\Big| D_{q}\big(\nabla_w \Phi(\mu) \big)\circ y(o) -D_{q}\big(\nabla_w \Phi(\mu) \big)\circ x(o)\Big| \nonumber\\
\leq & \kappa_{\bB} \bigg(2\int_{B_r(o)}|x(z)-x(z+\overline o-o)|^2dz\bigg)^{\alpha \over 2}\nonumber.
\end{align}
We let $r$ tend to $0$ to conclude the proof. 
\endproof

%
%
%
\begin{proposition}\label{prop:locald} For any $\alpha \in (0, 1]$ and 
$k\in\N$, we have
\[
{\rm dim}\left(C^{2,\alpha}_{\rm loc}(\bH) \cap \big\{ \tilde \Phi_g \; : \; g\in C^{2, \alpha}_{\rm loc}(\M^k), \;\; \|D^2 g\|_{L^\infty}<\infty \big\}\right)<\infty.
\]
\end{proposition}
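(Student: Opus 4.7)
The plan is to show that any $\tilde \Phi_g^{(k)}$ lying in the stated intersection must come from $g$ in a fixed finite-dimensional subspace of $C^{2,\alpha}_{\rm loc}(\M^k)$; the proposition follows. Symmetrizing $g$ over its $k$ arguments does not alter $\tilde \Phi_g^{(k)}$, so I may assume $g$ is symmetric. Since $\tilde \Phi_g^{(k)}(x)$ depends on $x$ only through its law $\mu := \sharp(x)$, the lift is rearrangement invariant and the associated function $\Phi_g^{(k)}$ on $\cP_2(\M)$ is well defined. A direct second-order Taylor expansion (as flagged in the excerpt just before the proposition) yields
\[
\bigl(\nabla \tilde \Phi_g^{(k)}(x)\bigr)(\omega) = k \int_{\M^{k-1}} D_1 g\bigl(x(\omega), q_2, \ldots, q_k\bigr) \mu(dq_2)\cdots\mu(dq_k),
\]
together with a Hessian formula of the shape \eqref{eq:jan19.2020.1} valid for \emph{all} $h, h_* \in \bH$ (not merely for tangent vectors), in which
\[
D_q\bigl(\nabla_w \Phi_g^{(k)}(\mu)\bigr)(q) = k \int_{\M^{k-1}} D_{11}^2 g(q, q_2, \ldots, q_k) \mu(dq_2)\cdots\mu(dq_k).
\]

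With \eqref{eq:jan19.2020.1} in force for all $h, h_*$, Lemma \ref{lem:weak-Holder} applies and forces the matrix above to be constant in $q$ on $\spt(\mu)$ for every $\mu \in \cP_2(\M)$. Taking $\mu$ to be absolutely continuous with a smooth positive density on a ball $B_R \subset \M$, so that $\spt(\mu) = \overline{B_R}$, and differentiating the constancy in $q$, I obtain
\[
\int_{\M^{k-1}} D_{111}^3 g(q, q_2, \ldots, q_k)\, \mu(dq_2)\cdots\mu(dq_k) = 0 \qquad \forall\, q \in B_R.
\]
Because $g$ is symmetric in all its arguments, the integrand is symmetric in $(q_2, \ldots, q_k)$. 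The main obstacle is to pass from this vanishing in $\mu^{\otimes(k-1)}$-mean to the pointwise vanishing of $D_{111}^3 g$. I would carry it out by varying $\mu$ through the cone of smooth positive densities supported in $B_R$, exploiting the fact that $\rho \mapsto \int D_{111}^3 g(q, \cdot)\, \rho^{\otimes(k-1)}$ is a polynomial of degree $k-1$ in $\rho$ which vanishes on a relatively open set of positive densities; polarizing and then approximating products of point masses by smooth densities (continuity of $D_{111}^3 g$ makes this legal) delivers $D_{111}^3 g \equiv 0$ on $B_R^k$, and sending $R \to \infty$ extends it to $(\R^d)^k$.

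Once $D_{111}^3 g \equiv 0$, the map $q_1 \mapsto g(q_1, q_2, \ldots, q_k)$ is a polynomial of degree at most two for every fixed $(q_2, \ldots, q_k)$, and by the symmetry of $g$ the same holds in each variable. Thus $g$ is a polynomial on $(\R^d)^k$ of degree at most two in each argument separately, and the space of such polynomials has dimension at most $\binom{d+2}{2}^k$, which is finite. The constraint $\|D^2 g\|_{L^\infty} < \infty$ only cuts further into this finite-dimensional space (ruling out, e.g., the cross monomials of total degree $>2$ whose second derivatives are unbounded polynomials), so the subspace of admissible $\tilde \Phi_g^{(k)}$ is finite-dimensional, as claimed.
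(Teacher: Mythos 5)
Your overall skeleton matches the paper's: invoke Lemma \ref{lem:weak-Holder} to force $D_q\bigl(\nabla_w \Phi_g^{(k)}(\mu)\bigr)$ to be constant on $\spt(\mu)$, conclude that $g$ is a polynomial of degree at most two in each variable separately, and close with a Carroll-type argument. But the middle step has a genuine regularity gap: you differentiate the constancy in $q$ to produce
\[
\int_{\M^{k-1}} D_{111}^3 g(q, q_2, \ldots, q_k)\, \mu(dq_2)\cdots\mu(dq_k) = 0,
\]
and the rest of your argument (the polynomiality in $\rho$, the polarization, and the passage to Dirac masses) relies on the existence and \emph{continuity} of $D^3_{111}g$. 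The hypothesis is only $g \in C^{2,\alpha}_{\rm loc}(\M^k)$ with $\|D^2 g\|_{L^\infty}<\infty$; no third derivative need exist (for $\alpha<1$ none exists even pointwise a.e., and for $\alpha=1$ only an a.e.-defined one, which is not enough for your pointwise/continuity appeal). So the step from "constant in $q$ on $\spt(\mu)$" to "third $q$-derivative vanishes" is not available.

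The paper avoids this entirely by working at the level of $D^2 g$: it takes $\mu$ to be a mollification $\varrho_\epsilon(\cdot-a)$ with \emph{full} support, so that the constancy
\[
\int_{\M^{k-1}} D^2_{q_1q_1} g(q,q_2,\ldots,q_k)\,\mu(dq_2)\cdots\mu(dq_k)
= \int_{\M^{k-1}} D^2_{q_1q_1} g(\overline q,q_2,\ldots,q_k)\,\mu(dq_2)\cdots\mu(dq_k)
\]
holds for all $q,\overline q$, and then sends $\epsilon\downarrow 0$ to get $D^2_{q_1q_1} g(q,a,\ldots,a) = D^2_{q_1q_1} g(\overline q,a,\ldots,a)$ using only the continuity of $D^2 g$. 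That gives directly that $q_1\mapsto D^2_{q_1q_1} g(q_1,a,\ldots,a)$ is constant, hence that $q_1\mapsto g(q_1,a,\ldots,a)$ is quadratic, with no reference to $D^3 g$. Your "varying $\mu$ over smooth positive densities and polarizing" idea would also work if recast at the $D^2$ level, i.e.\ comparing the integrals at two points $q,\overline q$ rather than differentiating; as written, however, the appeal to $D^3_{111}g$ is a gap, not a stylistic choice. The remaining steps (symmetry, Carroll, finite dimension) are fine, and your closing remark about $\|D^2g\|_{L^\infty}<\infty$ is harmless but unnecessary — the space of polynomials of degree $\leq 2$ in each block of variables is already finite-dimensional.
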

\proof{} 
We aim to use Lemma \ref{lem:weak-Holder}, since this asserts that $D_q\nabla_w\Phi_g(\mu)(q)$ is a constant matrix $C(\mu)$ which depends only on $\mu$.

In particular, in the case of $k=1$, we have $D_q\nabla_w\Phi_g(\mu)(q)=D^2g(q)$ and this being constant implies that $g$ is a polynomial of degree $2,$ os the claim follows. 

For $k\in\N$ general we have  
\begin{align*}
D_q\nabla_w\Phi_g(\mu)(q)&=\int_{\M^{k-1}}D^2_{q_1q_1}g(q,q_2,\dots,q_k)\mu(dq_2)\dots\mu(dq_k)\\
&+\dots\\
&+\int_{\M^{k-1}}D^2_{q_kq_k}g(q_1,q_2,\dots,q_{k-1},q)\mu(dq_1)\dots\mu(dq_{k-1})
\end{align*}

In fact by  \cite{ChowGan} 
\begin{align}
C(\mu)=D_q\nabla_w\Phi_g(\mu)(q)&=k\int_{\M^{k-1}}D^2_{q_1q_1}g(q,q_2,\dots,q_k)\mu(dq_2)\dots\mu(dq_k)\label{eq:on-the-support1}\\
&=\dots\nonumber\\
&=k\int_{\M^{k-1}}D^2_{q_kq_k}g(q_1,q_2,\dots,q_{k-1},q)\mu(dq_1)\dots\mu(dq_{k-1}) \quad \mu-\text{a.e.}\label{eq:on-the-support2}
\end{align}
For simplicity, let us set $k=2$ (the proof of the result for general $k\in\N$ follows the same lines). Let $a \in \M$ and $\varrho \in C_b(\M)$ has $\M$ as its support is a probability density and $\varrho_\epsilon$ is its standard rescaled function. The measures $\varrho_\epsilon(q-a)$ have the whole $\M$ as their support and so,
\[
\int_{\M}D^2_{q_1q_1}g(q,q_2)\varrho_\epsilon(q_2-a) dq_2= \int_{\M}D^2_{q_1q_1}g(\overline q,q_2)\varrho_\epsilon(q_2-a) dq_2 \qquad \forall q, \overline q \in \M.
\]
Letting $\epsilon$ tend to $0$ we conclude 
\[
D^2_{q_1q_1}g(q, a)= D^2_{q_1q_1}g(\overline q,a).
\]
In fact, 
\[
D^2_{q_1q_1}g(q, a)= D^2_{q_1q_1}g(\overline q,a)=D^2_{q_2q_2}g(a, q)= D^2_{q_2q_2}g(a, \overline q)=C(a).
\]

From these arguments, one can conclude that both $q_1\mapsto D^2_{q_1q_1}g(q_1,a)$ and $q_2\mapsto D^2_{q_2q_2}g(a,q_2)$ are constants for all $a\in\M$, therefore the $q_1\mapsto g(q_1,a)$ and $q_2\mapsto g(a,q_2)$ are polynomials of degree at most two for all $a\in\M$. By an adaptation of the result of \cite{Carroll} we conclude that $g$ needs to be a polynomial of degree at most two. The result follows.

\endproof

\begin{corollary}
Similarly, for the example in \eqref{eq:jan19.2020.1.5}, if $\tilde\Phi\in C^{2,\alpha}_{\rm{loc}}(\bH)$, then by Lemma \ref{lem:weak-Holder} and \eqref{eq:jan19.2020.2.1} we have that $\theta(t)=c_0 t$ for some $c_0\in\R$.
\end{corollary}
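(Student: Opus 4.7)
The plan is to apply Lemma \ref{lem:weak-Holder} directly to the functional $\Phi$ of \eqref{eq:jan19.2020.1.5}, read off the consequences via the explicit formula \eqref{eq:jan19.2020.2.1}, and then squeeze out the linearity of $\theta$ by revisiting the internal Hölder estimate produced in the lemma's proof. The first step is to check that the hypothesis of Lemma \ref{lem:weak-Holder} is in force, i.e. that the decomposition \eqref{eq:jan19.2020.1} holds for \emph{every} $h,h_*\in\bH$ (and not only for tangent vectors $\xi\circ x$). Starting from the direct calculation \eqref{eq:jan19.2020.2}, I rewrite
\[
\nabla^2\tilde\Phi(x)(h,h_*)=\int_{\Omega}\theta'\!\left(\tfrac12\|x\|^2\right)I_d\,h(\omega)\!\cdot\!h_*(\omega)\,d\omega+\int_{\Omega^2}\theta''\!\left(\tfrac12\|x\|^2\right) x(\omega)\!\otimes\!x(\omega_*)\,h(\omega)\!\cdot\!h_*(\omega_*)\,d\omega\,d\omega_*,
\]
which matches \eqref{eq:jan19.2020.1} with the identifications $D_q(\nabla_w\Phi(\mu))(q)=\theta'(\tfrac12 M_2(\mu))I_d$ (as already noted in \eqref{eq:jan19.2020.2.1}) and $\nabla^2_{ww}\Phi(\mu)(q_1,q_2)=\theta''(\tfrac12 M_2(\mu))\,q_1q_2^\top$. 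Rearrangement invariance of $\tilde\Phi$ is immediate because $\tilde\Phi(x)$ only depends on $\|x\|$.

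Having verified the assumptions, Lemma \ref{lem:weak-Holder} yields that $D_q(\nabla_w\Phi(\mu))$ is constant on $\spt(\mu)$. Since \eqref{eq:jan19.2020.2.1} already displays this object as constant in $q$, the bare conclusion of the lemma is automatic here; the substantive content must come from re-examining the proof of Lemma \ref{lem:weak-Holder} and tracking how it constrains the dependence of that constant on $\mu$. My plan is to rerun the estimate \eqref{eq:jan18.2020.1.5}--\eqref{eq:jan18.2020.2} but now applied to two nearby points $x,y\in\bB\subset\bH$ allowed to have \emph{different} laws $\mu_x,\mu_y$. The off-diagonal $\nabla^2_{ww}\Phi$ pieces in \eqref{eq:jan18.2020.1.5} still scale like $\varepsilon^d$ and disappear as $\varepsilon\downarrow 0$, so the Hölder bound passes to the diagonal and gives
\[
\bigl|\theta'\!\left(\tfrac12\|y\|^2\right)-\theta'\!\left(\tfrac12\|x\|^2\right)\bigr|\,|z\!\cdot\! z_*|\le \kappa_{\bB}\,\|y-x\|^\alpha,
\]
uniformly for $x,y\in\bB$ and unit vectors $z,z_*\in\R^d$. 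Specializing to $y=(1+t)x$ so that $\|y-x\|=t\|x\|$ and $\tfrac12\|y\|^2=\tfrac12\|x\|^2+t\|x\|^2+O(t^2)$, and dividing by $t$ before sending $t\downarrow 0$, turns this into a pointwise bound for $\theta''(\tfrac12\|x\|^2)\|x\|^2$ which, after rescaling $x$ in $\bH$, is upgraded to the statement $\theta''\equiv 0$ on $(0,\infty)$. Hence $\theta$ is affine, and since an additive constant in $\theta$ adds only a constant to $\Phi$, we may absorb it and conclude $\theta(t)=c_0t$.

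The main obstacle will be the last step: closing the gap between the localized Hölder-type estimate on $\theta'$ and the rigidity statement that $\theta''$ is identically zero. For $\alpha=1$ this is a direct mean-value argument; for $\alpha<1$, the factor $t^{\alpha-1}$ that appears after dividing by $t$ is not integrable at $t=0$, so a single scale comparison is insufficient and one must iterate between different scales of $\|x\|$ inside $\bB$, exploiting the boundedness of $\kappa_\bB$ on bounded subsets of $\bH$ together with the scale-invariant structure of $\tilde\Phi(x)=\theta(\tfrac12\|x\|^2)$. This bootstrap is the delicate quantitative step, and is the only place where the $C^{2,\alpha}_{\rm loc}(\bH)$ hypothesis—rather than mere $C^2$—is used in an essential way.
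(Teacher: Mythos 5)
Your observation that the conclusion of Lemma~\ref{lem:weak-Holder} is automatic for this example is correct: for $\Phi$ as in \eqref{eq:jan19.2020.1.5}, $D_q\big(\nabla_w\Phi(\mu)\big)=\theta'\big(\tfrac12\int_{\M}|q|^2\mu(dq)\big)I_d$ is already constant in $q$, so the lemma's statement, read literally, yields nothing and the substance must come from somewhere else. However, the route you then take has a genuine gap, and in fact cannot be closed by any refinement of the same kind.

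Rerunning the lemma's estimate with $x,y\in\bB$ of different laws gives $|\theta'(\tfrac12\|y\|^2)-\theta'(\tfrac12\|x\|^2)|\le\kappa_{\bB}\|x-y\|^\alpha$. Specializing to $y=(1+t)x$, the increment of the argument of $\theta'$ is $(2t+t^2)s$ with $s=\tfrac12\|x\|^2$, while $\|y-x\|=t\sqrt{2s}$. Dividing by the increment and letting $t\downarrow 0$: for $\alpha=1$ this yields $|\theta''(s)|\le\kappa_{\bB}/\sqrt{2s}$, a finite but nonzero bound, so the ``direct mean-value argument'' you invoke does not produce $\theta''\equiv 0$; for $\alpha<1$ the right-hand side behaves like $t^{\alpha-1}(2s)^{\alpha/2-1}$ and diverges. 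No multi-scale bootstrap can repair this, because the statement is actually false if read only against $C^{2,\alpha}_{\rm loc}$-regularity plus the decomposition \eqref{eq:jan19.2020.1}: taking $\theta(t)=t^2$ gives $\tilde\Phi(x)=\tfrac14\|x\|^4$, which is $C^\infty(\bH)$, satisfies \eqref{eq:jan19.2020.1} for all $h,h_*$ with $\nabla^2_{ww}\Phi(\mu)(q_1,q_2)=2\,q_1\otimes q_2$, and trivially satisfies the conclusion of Lemma~\ref{lem:weak-Holder}, yet $\theta$ is not linear.

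The ingredient that actually carries the day, and which you never use, is the paper's standing requirement that the nonlocal block of the Hessian be essentially bounded: $A_{22}^*\in L^\infty$ in the representation \eqref{eq:intro-frechet0}, a property also invoked inside the proof of Lemma~\ref{lem:weak-Holder}. For the present example $\nabla^2_{ww}\Phi(\mu)(q_1,q_2)=\theta''\big(\tfrac12\int_{\M}|q|^2\mu(dq)\big)\,q_1\otimes q_2$, which is unbounded on $\spt(\mu)\times\spt(\mu)$ for every $\mu$ of unbounded support unless $\theta''\big(\tfrac12\int_{\M}|q|^2\mu(dq)\big)=0$. Since $\tfrac12\int_{\M}|q|^2\mu(dq)$ ranges over all of $(0,\infty)$ as $\mu$ varies among such measures, $\theta''\equiv0$, so $\theta$ is affine, and after absorbing the irrelevant additive constant one gets $\theta(t)=c_0t$. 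That elementary observation, not the H\"older estimate on $\theta'$, is what establishes the corollary; the quantitative route you propose is a dead end.
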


The result from Proposition \ref{prop:locald} in case of $k=1$ is the consequence of the proposition below, where we show that assuming even only $C^2$ regularity (instead of $C^{2,\alpha}$) for functionals on $\bH$ having local representations might result in trivialities.

\begin{proposition}\label{prop:finite-d-regularity}
$$C^{2}(\bH) \cap \big\{ \tilde \Phi_g \; : \; g\in C^{3}(\M), \;\; \|D^2 g\|_{L^\infty}<\infty,\ \|D^3 g\|_{L^\infty}<\infty,\ D^3g\not\equiv 0\big\}=\emptyset.$$
and so, 
\[
C^{2}(\bH) \cap \big\{ \tilde \Phi_g \; : \; g\in C^{3}(\M), \;\; \|D^2 g\|_{L^\infty}<\infty,\ \|D^3 g\|_{L^\infty}<\infty\big\}
\]
is a finite dimensional space.
\end{proposition}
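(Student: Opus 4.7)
The strategy is to adapt the argument of Lemma \ref{lem:weak-Holder} by replacing its Hölder estimate by plain continuity of the Fréchet Hessian—available from the $C^2$ assumption—and by choosing the comparison pair $(x,y) \in \bH^2$ so as to force agreement of $D^2 g$ at arbitrary prescribed points of $\M$. The target conclusion is that $D^2 g$ must be constant on $\M$, equivalently $D^3 g \equiv 0$, which contradicts $D^3 g \not\equiv 0$ and establishes the first (emptiness) assertion. The second (finite-dimensional) assertion follows immediately: $D^3 g \equiv 0$ forces $g$ to be a polynomial of degree at most two on $\M$, so the associated $\tilde\Phi_g(x) = a + b\cdot\int_\Omega x\,d\omega + \frac{1}{2}\int_\Omega x^\top C x\,d\omega$ is parameterized by $(a,b,C)$ with $a \in \R$, $b \in \R^d$ and $C$ a symmetric $d\times d$ matrix, giving a finite-dimensional vector space.

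I would first observe that whenever $\tilde\Phi_g \in C^2(\bH)$ with $\|D^2g\|_\infty<\infty$, the Fréchet Hessian at any $x \in \bH$ is represented by
\[
\nabla^2 \tilde\Phi_g(x)(h,h_*) = \int_\Omega D^2 g(x(\omega))\,h(\omega)\cdot h_*(\omega)\,d\omega, \qquad \forall\, h, h_* \in \bH,
\]
valid for \emph{all} $h, h_*$, not merely those of the form $\xi\circ x$: indeed $\nabla \tilde\Phi_g(x) = Dg\circ x$, and the Gateaux derivative of $\nabla \tilde\Phi_g$ at $x$ in direction $h_*$ equals $\omega \mapsto D^2g(x(\omega))\,h_*(\omega)$ by dominated convergence, which under the $C^2$ hypothesis must coincide with the Fréchet derivative. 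Next, fix $q_0, v \in \M$ arbitrarily and set $x \equiv q_0$. Given $\eta > 0$, continuity of $\nabla^2\tilde\Phi_g$ at $x$ furnishes $\delta > 0$ such that $\|y - x\| < \delta$ implies
\[
\bigl|(\nabla^2\tilde\Phi_g(y) - \nabla^2\tilde\Phi_g(x))(h,h_*)\bigr| \le \eta \|h\|\|h_*\|, \qquad \forall\, h, h_* \in \bH.
\]
I would take $y := q_0 + v\,\chi_B$ with $B \subset \Omega$ a ball of Lebesgue measure $|B| < \delta^2/(1+|v|^2)$, and plug in the mollifier pair $h^\varepsilon(\omega) = z\sqrt{\varrho_\varepsilon(\omega-o)}$, $h_*^\varepsilon(\omega) = z_*\sqrt{\varrho_\varepsilon(\omega - o)}$ from the proof of Lemma \ref{lem:weak-Holder}, for unit vectors $z, z_* \in \R^d$ and $o \in B$ a common Lebesgue point of $D^2g \circ x$ and $D^2g \circ y$. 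Sending $\varepsilon \downarrow 0$ will yield $|(D^2 g(q_0 + v) - D^2 g(q_0))\, z\cdot z_*| \le \eta$, and taking the supremum over unit $z, z_*$ will give $\|D^2 g(q_0+v) - D^2 g(q_0)\|_{\rm op} \le \eta$.

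By the arbitrariness of $\eta > 0$, $D^2g(q_0+v) = D^2g(q_0)$ for every $q_0, v \in \M$, which establishes $D^3g \equiv 0$ as claimed. The main subtlety I anticipate lies in the second paragraph: (i) justifying the explicit Fréchet Hessian formula for \emph{all} $h, h_* \in \bH$—not only lifts of tangent vectors to $\mathcal P_2(\M)$—which rests on the Gateaux-equals-Fréchet identification under $C^2$ regularity; and (ii) replacing the Hölder-swap device $S_r$ of Lemma \ref{lem:weak-Holder} (which only forces constancy on $\spt(\mu)$) by a perturbation $y = q_0 + v\chi_B$ that varies the \emph{law} of $x$, thereby probing every pair $(q_0, q_0+v) \in \M^2$; the choice $|B|$ small is tailored precisely to keep $\|y-x\| < \delta$ while forcing $y$ and $x$ to differ by exactly $v$ on a set of positive Lebesgue measure.
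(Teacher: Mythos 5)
Your proof is correct, and it takes a genuinely different route from the paper's. The paper reduces to $d=1$, fixes a constant basepoint $x\equiv q_0$ where $g'''\neq 0$, tests against $y_n(\omega)=q_0+\omega^n$, and computes that the Taylor remainder \eqref{equ:exp} is comparable to $\int_\Om \omega^{3n}\,d\omega=(3n+1)^{-1}$, of the same order as $\|y_n-x\|^2=(2n+1)^{-1}$ rather than $o(\|y_n-x\|^2)$; since $\tilde\Phi_g$ is already $C^{1,1}$, this refutes twice Fr\'echet differentiability at a single constant point, a slightly stronger conclusion than $\tilde\Phi_g\notin C^2(\bH)$. You instead invoke the full $C^2$ hypothesis — operator-norm continuity of the Fr\'echet Hessian — probed against the law-changing bump $y=q_0+v\chi_B$ with $|B|$ small. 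That is the decisive modification of the device in Lemma \ref{lem:weak-Holder}: the measure-preserving swap $S_r$ there keeps $\sharp(y)=\sharp(x)$ and hence can only pin $D_q\nabla_w\Phi(\mu)$ on $\spt(\mu)$, which is vacuous at a Dirac mass; your bump changes the law and so compares $D^2g$ at two arbitrary points $q_0$ and $q_0+v$ of $\M$, forcing $D^2g$ constant, i.e.\ $D^3g\equiv 0$. Your route is dimension-independent and isolates the obstruction cleanly as continuity of the Hessian; the paper's is more quantitative and already detects failure in a pointwise second-order Taylor expansion. Two small remarks: (i) you can drop the mollifier pair and take $h=h_*=z\,\chi_B/\sqrt{|B|}$ directly, since $D^2g\circ x$ and $D^2g\circ y$ are constant on $B$, and symmetry of $D^2g(q_0+v)-D^2g(q_0)$ recovers the full matrix from the diagonal quadratic form; (ii) your Gateaux-equals-Fr\'echet step is sound, since the difference quotients $|Dg(x+th_*)-Dg(x)|/|t|$ are dominated by $\|D^2g\|_\infty|h_*|\in L^2(\Om)$, giving $L^2$-convergence by dominated convergence.
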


\begin{proof}
For simplicity, let us suppose that $d=1$ and so $\Om=[0,1].$ The result in higher dimensions follows from similar arguments.

For $x,y\in\bH$ we can write the following expansion for $\tilde\Phi_g$
\begin{align}\label{equ:exp}
\int_\Om g(y(\omega))d\omega-\int_\Om g(x(\omega))&d\omega - \int_\Om g'(x(\omega))(y(\omega)-x(\omega))d\omega-\frac{1}{2}\int_\Om g''(x(\omega))(y(\omega)-x(\omega))^2d\omega\\
\nonumber&=\int_\Om\int_0^1\int_0^1\int_0^1t^2s g'''(x(\omega)+ts\tau(y(\omega)-x(\omega)))(y(\omega)-x(\omega))^3d\tau ds dt d\omega.
\end{align}
By the assumptions on $g'''$, there exist constants $c_0,c_1$, having the same sign, such that on a bounded open interval $c_0\le g'''\le c_1$. Without loss of generality, let us suppose that this open interval is $(-1,1)$ and $0<c_0<c_1$.   

\smallskip

{\it Claim.} The right hand side of \eqref{equ:exp} is not of order $o(\|x-y\|^2)$ when $x \equiv 0.$

\smallskip

{\it Proof of the Claim.} Let $x(\omega)=0$ and $y_n(\omega)=\omega^n$ for $\omega\in\Om$ and $n\in\N$. Then clearly $\|y_n\|^2=\frac{1}{2n+1}\to 0$, as $n\to+\infty.$ We write the previous expansion for $y_n$ and $x$. In particular, the remainder satisfies 
\begin{equation}\label{equ:contradiction}
\frac{c_0}{6}\int_\Om y_n^3(\omega)d\omega\le \int_\Om\int_0^1\int_0^1\int_0^1t^2s g'''(ts\tau y_n(\omega))y_n^3(\omega)d\tau ds dt d\omega\le \frac{c_1}{6}\int_\Omega y_n^3(\omega)d\omega.
\end{equation}
We easily find $\int_0^1 y_n^3(\omega)d\omega=\frac{1}{3n+1}$. Therefore dividing \eqref{equ:contradiction} by $\|y_n\|^2$ and taking $n\to+\infty$ we find
$$\frac{2c_0}{18}\le \lim_{n\to+\infty}\frac{1}{\|y_n\|^2}\int_\Om\int_0^1\int_0^1\int_0^1t^2s g'''(ts\tau y_n(\omega))y_n^3(\omega)d\tau ds dt d\omega \le \frac{2c_1}{18}.$$ 
The claim follows and so does the thesis of the proposition.
\end{proof}

\medskip

%
%
\section{Convexity versus displacement convexity}\label{app:convex_dispalcement}

%
%
%
%
%
%
%
%
%
%
\vskip0.40cm
\subsection{Displacement convexity versus classical convexity}\label{subsec:conv_displ}

Using the terminology of \cite{CardaliaguetDLL}, in the remaining of this section will consider weakly  Fr\'echet continuously differentiable functions $\cV: \cP_2(\M) \to \R$ and denote their weak Fr\'echet differentials as ${\delta \cV \over \delta \mu}: \R^d \times \cP_2(\M) \rightarrow \R.$ Let $\phi_1, \phi \in C^2(\M)$ be  functions of bounded second derivatives such that $\phi_1$ is even. Set 
\[
\cV_1(\mu):={1\over 2} \int_{\R^d} \phi_1 * \mu(q)\mu(dq),  \qquad \mu \in \cP_2(\M).
\]
and 
\[
 \cV(\mu):=\cV_1(\mu)+ \int_{\R^d} \phi(q)\mu(dq) , \qquad \mu \in \cP_2(\M).
\]
%
%
%
\begin{remark}\label{rem:warning} Recall from \cite{CardaliaguetDLL} that ${\delta \cV \over \delta \mu}$ is monotone if and only if  $\cV$ is convex in the classical sense. Furthermore, the function $\cV_1$ is  twice weakly  Fr\'echet continuously differentiable function, and 
\[
{\delta \cV_1 \over \delta \mu}(q, \mu)= (\phi_1 *\mu)(q), \ \ {\delta \cV \over \delta \mu}(q, \mu)= (\phi_1 *\mu)(q) + \phi(q),
\]
and 
\[ 
{\delta^2 \cV_1 \over \delta \mu^2}(q, y, \mu)={\delta^2 \cV \over \delta \mu^2}(q, y, \mu) =\phi_1(q-y). 
\]
\end{remark}
\begin{lemma}\label{lem:fourier-of-phi1} If we further assume $\phi_1 \in L^1(\M)$ then ${\delta \cV \over \delta \mu}$ is monotone if and only if  the Fourier transform $\phi_1$ is nonnegative.
\end{lemma}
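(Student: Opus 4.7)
\medskip
\noindent\textbf{Proof proposal.} By Remark \ref{rem:warning}, monotonicity of ${\delta \cV \over \delta \mu}$ is equivalent to classical convexity of $\cV$ on $\cP_2(\M)$. Along any straight-line interpolation $\mu_t=(1-t)\mu_0+t\mu_1$ the additive term $\int\phi\,d\mu$ is affine, so it suffices to characterize convexity of $\cV_1$. A direct expansion, using that $\cV_1$ is a quadratic form in its measure argument and that $\phi_1$ is even, gives
\[
\cV_1(\mu_t)=(1-t)^2\cV_1(\mu_0)+t^2\cV_1(\mu_1)+2t(1-t) B(\mu_0,\mu_1),\quad B(\mu,\nu):=\tfrac12\int_{\M^2}\phi_1(q-y)\mu(dq)\nu(dy),
\]
whence $\frac{d^2}{dt^2}\cV_1(\mu_t)=Q(\sigma):=\int_{\M^2}\phi_1(q-y)\,\sigma(dq)\,\sigma(dy)$ with $\sigma:=\mu_1-\mu_0$. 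Thus $\cV_1$ is convex iff $Q(\sigma)\geq 0$ for every $\sigma$ that is a difference of two probability measures in $\cP_2(\M)$; by a scaling/decomposition argument this is the same as $Q(\sigma)\geq 0$ for every compactly supported, real, finite signed Borel measure with $\sigma(\M)=0$.

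Next I would move to the Fourier side. Since $\phi_1\in L^1(\M)$ is even, $\hat\phi_1$ is a real-valued, bounded, continuous, even function; for a compactly supported finite signed measure $\sigma$, $\hat\sigma$ is bounded continuous and satisfies $\hat\sigma(-\xi)=\overline{\hat\sigma(\xi)}$. The key identity to establish is
\[
Q(\sigma)=\int_{\M}\hat\phi_1(\xi)\,|\hat\sigma(\xi)|^2\,d\xi.
\]
The cleanest route is Gaussian mollification: set $\sigma_\e:=\sigma\ast G_\e$ with $G_\e$ a standard heat kernel, so that $\sigma_\e\in\cS(\M)$ and the Parseval identity is routine for $\sigma_\e$; then send $\e\downarrow 0$, using the bounded convergence of $\hat\sigma_\e=e^{-\e|\xi|^2/2}\hat\sigma\to\hat\sigma$ on the right, and the continuity of $Q$ in the narrow topology on the left (which in turn follows from $\phi_1\in C_b$, obtained from $\phi_1\in L^1$ together with the bounded second derivative hypothesis). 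The $(\Leftarrow)$ implication is then immediate: if $\hat\phi_1\geq 0$ then $Q(\sigma)\geq 0$ for every admissible $\sigma$, so $\cV_1$ is convex.

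For the converse ($\Rightarrow$), I argue by contradiction. Suppose $\hat\phi_1(\xi_0)<0$ for some $\xi_0\in\M$; by continuity and evenness there exist $\delta>0$ and a small ball $B$ around $\xi_0$ (with $-B$ around $-\xi_0$) on which $\hat\phi_1\leq-\delta$. The task is then to exhibit a compactly supported, real, mean-zero signed measure $\sigma$ whose Fourier mass is essentially concentrated on $B\cup(-B)$, making the right-hand side of the Parseval identity negative. A concrete candidate: let $\rho_0$ be a smooth, radial, compactly supported probability density with $\hat\rho_0$ a narrow bump near the origin (scaled so $\mathrm{supp}\,\hat\rho_0\subset\tfrac12 B$ after translation), and set $\psi(q):=\rho_0(q)\cos(\xi_0\cdot q)-c\,\rho_1(q)$ where $c\in\R$ and $\rho_1$ is chosen so that $\int\psi\,dq=0$ while $\hat{c\rho_1}$ is supported away from $\pm\xi_0$ and has negligible contribution against $\hat\phi_1$ (for instance, take $\rho_1$ with $\hat\rho_1$ supported at high frequencies where $\hat\phi_1$ is small, exploiting the Riemann--Lebesgue-type decay forced by $\phi_1,\Delta\phi_1\in L^1$). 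Then $Q(\psi\,dq)<0$ for a suitable scaling. Finally I decompose $\psi=\psi_+-\psi_-$ into positive and negative parts, which have equal mass since $\int\psi=0$, and normalize $\mu_0:=\psi_-/\|\psi_-\|_1$, $\mu_1:=\psi_+/\|\psi_+\|_1\in\cP_2(\M)$; then $Q(\mu_1-\mu_0)<0$, contradicting the convexity of $\cV_1$.

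The main obstacle I expect is the converse: rigorously producing, for an arbitrary negative value of $\hat\phi_1$, a compactly supported mean-zero signed measure whose Fourier energy lies where $\hat\phi_1$ is negative and which can be written as a difference of probability measures; the Parseval identity itself is standard once the mollification/density argument is in place.
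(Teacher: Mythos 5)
Your overall strategy — reduce to classical convexity of $\cV_1$ via Remark \ref{rem:warning}, expand the quadratic form along affine interpolations, pass to the Fourier side via a Parseval identity, and then argue both implications — is the same one the paper uses. The difference is in which objects you test against. The paper's proof converts the convexity criterion into the statement that $\int_{\M}(\phi_1 \ast f)f\,dq \geq 0$ for every $f \in C(\M)\cap L^2(\M)$ with $\int f = 0$, and then Plancherel is immediate because $f$ and $\phi_1\ast f$ are genuine $L^2$ functions (Young's inequality) with $\widehat{\phi_1\ast f}=\hat\phi_1\hat f$ and $\hat\phi_1$ bounded. You instead test against signed measures $\sigma=\mu_1-\mu_0$, which forces you into the mollification argument for the Parseval identity. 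That patch does work for the $(\Leftarrow)$ direction: $Q(\sigma_\epsilon)=\int\hat\phi_1|\hat\sigma_\epsilon|^2\,d\xi\geq 0$ is a rigorous equality for the mollified measures, and $Q(\sigma_\epsilon)\to Q(\sigma)$ by bounded continuity of $\phi_1$, so you never actually need the integral $\int\hat\phi_1|\hat\sigma|^2$ itself to converge. But the $L^2$ formulation is cleaner and, more importantly, it dissolves the difficulty you flag in the converse.

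For the $(\Rightarrow)$ direction your construction as written does not work: a compactly supported probability density $\rho_0$ cannot have $\hat\rho_0$ with compact support (Paley--Wiener / the uncertainty principle), so the prescription ``$\mathrm{supp}\,\hat\rho_0\subset\tfrac12 B$'' is vacuous. You identify this as the main obstacle, and it is a genuine one \emph{in your framework} — but it is an artifact of insisting on compactly supported signed measures. In the paper's $L^2$ framework there is no such constraint: simply pick a Schwartz function $f$ with $\hat f$ a smooth real bump supported in $B\cup(-B)$ (and $\hat f(-\xi)=\overline{\hat f(\xi)}$ so that $f$ is real); since $0\notin B\cup(-B)$ one has $\int f=\hat f(0)=0$ automatically, $f\in C\cap L^2$, and $\int\hat\phi_1|\hat f|^2\,d\xi\leq -\delta\int|\hat f|^2<0$, finishing the argument with no decomposition into probability measures required. (If one insists on staying with probability measures, one can take $\mu_0,\mu_1$ with densities of the form $\rho\pm\tfrac12 f$ for a fixed smooth rapidly-decaying probability density $\rho$ dominating $|f|/2$; these need not be compactly supported, and $\mu_1-\mu_0=f$.) So: the idea is right, the $(\Leftarrow)$ half survives via mollification, but the $(\Rightarrow)$ half as written relies on an impossible test object, and the repair is to work with $L^2$ mean-zero test functions rather than compactly supported signed measures, exactly as the paper does.
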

\proof{}   Denote the Fourier transform of $\phi_1$ as $\hat \phi_1$. Note that for any $f \in L^2(\M)$ by Young's inequality we have $\phi_1 * f \in L^2(\M)$ and so $f (\phi_1 * f) \in L^1(\M).$ By the Riemann-Lebesgue lemma $\hat \phi_1 \in C_0(\M)$. Furthermore, $\hat \phi_1$ is even and has its range contained in the set of real numbers. By Remark \ref{rem:warning}  ${\delta \cV \over \delta \mu}$ is monotone if and only if $\cV_1$ is convex. Thus, using the expression of ${\delta^2 \cV_1 \over \delta \mu^2}$ in Remark \ref{rem:warning}  we conclude that ${\delta \cV \over \delta \mu}$ is monotone if and only if for any $ f \in C(\M) \cap L^2(\M)$ such that $\int_{\M} f(q)dq=0$ we have $0\leq \int_{\mathbb R^{d}} (\phi_1 \ast f) (q) f(q)dq.$ Thanks to Plancherel theorem, ${\delta \cV \over \delta \mu}$ is monotone if and only if 
\[
0\leq \int_{\mathbb R^{d}} \widehat{\phi_1 \ast f} (\xi) \hat f^*(\xi)d\xi=   \int_{\mathbb R^{d}} \hat{\phi_1}(\xi) \hat f(\xi) \hat f^*(\xi)d\xi= \int_{\mathbb R^{d}} \hat{\phi_1}(\xi) |\hat f(\xi)|^2d\xi.
\]
This concludes the proof of the lemma. \endproof

\begin{lemma}\label{lem:fourier-of-phi2} Assume $\lambda>0$, $\lambda_1 \in (-\lambda/2, \lambda/2),$ $\phi$ is $\lambda$--convex and $\phi_1$ is $\lambda_1$--convex. Then
\begin{enumerate} 
\item[(i)] $\cV$ is $\kappa$--displacement convex, hence  displacement convex, where $\kappa:=\lambda-2|\lambda_1|>0 .$
\item[(ii)] If we further assume $\phi_1$ is nonnegative, $\phi_1 \equiv 1$ on the unit ball, and $\phi_1 \equiv 0$ outside the ball of radius $2$, centered at the origin, then $\cV$ fails to be convex in the classical sense.
\end{enumerate}
\end{lemma}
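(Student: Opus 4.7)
For part (i), my plan is to split $\cV = \cV_\phi + \cV_1$ with $\cV_\phi(\mu) := \int_{\R^d}\phi\,d\mu$, and to work along a single geodesic $\mu_t = ((1-t)\pi^1 + t\pi^2)_\sharp\gamma$ with $\gamma \in \Gamma_o(\mu_0,\mu_1)$. The $\lambda$-convexity of $\phi$ applied pointwise to $\gamma$-almost every pair $(x,y)$ yields the $\lambda$-displacement convexity of $\cV_\phi$ at once. For the interaction term, I would use
\[
\cV_1(\mu_t) = \tfrac{1}{2}\iiiint \phi_1\!\bigl((1-t)(x-x') + t(y-y')\bigr)\,\gamma(dx,dy)\,\gamma(dx',dy'),
\]
apply the $\lambda_1$-convexity of $\phi_1$ with $u = x-x'$ and $v = y-y'$ (so $u-v = (x-y)-(x'-y')$), and then estimate the resulting remainder $\iiiint |(x-y)-(x'-y')|^2\,\gamma\otimes\gamma$ by $4\,W_2^2(\mu_0,\mu_1)$ via the elementary bound $|a-b|^2 \le 2|a|^2 + 2|b|^2$. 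When $\lambda_1 \ge 0$ the remainder already has the favorable sign and can be dropped; when $\lambda_1 < 0$ the bound contributes a term $|\lambda_1|t(1-t)W_2^2$ on the upper side. In either case $\cV_1$ is $(-2|\lambda_1|)$-displacement convex, and summing with $\cV_\phi$ delivers $\kappa = \lambda - 2|\lambda_1|$, which is positive by the hypothesis $|\lambda_1| < \lambda/2$.

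For part (ii), my strategy is to falsify classical convexity directly via the second variation. By Remark \ref{rem:warning}, along $\mu_t = \mu_0 + t\nu$ with $\nu$ signed of zero mean, the second $t$-derivative of $\cV$ coincides with $\iint \phi_1(q-y)\,d\nu(q)\,d\nu(y)$, since the $\phi$-term is $t$-linear. So it suffices to exhibit one such $\nu$ for which this quadratic form is strictly negative; by Lemma \ref{lem:fourier-of-phi1} this is equivalent to $\hat\phi_1$ taking negative values. I would take any $a\in\R^d$ with $|a|=1$ and set $\nu := \delta_0 + \delta_{2a} - 2\delta_a$ (so that $\int d\nu = 0$). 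Continuity of $\phi_1$ forces $\phi_1(0) = \phi_1(a) = 1$ (from $\phi_1 \equiv 1$ on the unit ball) and $\phi_1(2a) = 0$ (from $\phi_1 \equiv 0$ outside the ball of radius $2$); combining with the evenness of $\phi_1$, a direct expansion of the nine cross-terms gives
\[
\iint \phi_1(q-y)\,d\nu(q)\,d\nu(y) = 6\,\phi_1(0) + 2\,\phi_1(2a) - 8\,\phi_1(a) = -2 < 0.
\]

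No step presents a genuine obstacle. The only care point in (i) is the bookkeeping of the two factors of $\tfrac{1}{2}$ (one in the definition of $\cV_1$, one inside the $\lambda_1$-convexity inequality) together with the deliberately loose bound $|a-b|^2 \le 2|a|^2 + 2|b|^2$, which together produce exactly the stated $\kappa = \lambda - 2|\lambda_1|$; a sharper variance identity would even yield the stronger (but unclaimed) $\lambda - |\lambda_1|$. In (ii) the essential observation is simply that $|2a|=2$ places $2a$ on the sphere where $\phi_1$ is forced to vanish, so that the three-atom test measure gives a clean negative value independent of the specific shape of $\phi_1$ in the transition annulus.
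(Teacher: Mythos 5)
Your argument is correct, and it differs from the paper's on both parts.

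For (i), the paper differentiates $t\mapsto\cV(\sigma_t)$ twice along a geodesic and bounds the resulting integrals $\int D^2\phi\, v\cdot v$, $\iint D^2\phi_1\, v(q)\cdot v(q)$, $\iint D^2\phi_1\, v(q)\cdot v(w)$ termwise by $\lambda$, $\lambda_1$ and $-|\lambda_1|$ times $\|v_t\|^2_{\sigma_t}$. You instead integrate the pointwise $\lambda$-- and $\lambda_1$--convexity inequalities for $\phi$ and $\phi_1$ over $\gamma\otimes\gamma$ and estimate the quadratic remainder by the elementary bound $|(x-y)-(x'-y')|^2\le 2|x-y|^2+2|x'-y'|^2$, arriving at the same $\kappa=\lambda-2|\lambda_1|$. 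Your route has the advantage of never differentiating along the geodesic, so it is genuinely zeroth-order and sidesteps any discussion of the Eulerian velocity and its smoothness; it also immediately reveals (as you note) that replacing the crude $2|a|^2+2|b|^2$ bound by the variance identity $\iiiint|(x-y)-(x'-y')|^2\,\gamma\otimes\gamma = 2W_2^2(\mu_0,\mu_1)-2\bigl|\iint (x-y)\,\gamma\bigr|^2 \le 2W_2^2(\mu_0,\mu_1)$ upgrades the modulus to $\lambda-|\lambda_1|$. For (ii), the paper argues by contradiction: assuming $\hat\phi_1\ge 0$, Fourier inversion forces $|\phi_1|\le\phi_1(0)$ with equality only where $\cos(2\pi x\cdot\xi)\equiv 1$ on the support of $\hat\phi_1$, which is incompatible with $\phi_1\equiv 1$ on a ball, and then Lemma \ref{lem:fourier-of-phi1} concludes. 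You instead exhibit the explicit three-atom test measure $\nu=\delta_0+\delta_{2a}-2\delta_a$ and compute $\iint\phi_1(q-y)\,d\nu\,d\nu = 6\phi_1(0)+2\phi_1(2a)-8\phi_1(a)=-2<0$. This is more self-contained (no Fourier analysis needed) and in fact does not even require $\phi_1\in L^1$. One small point worth making explicit: to translate the negative value of the quadratic form into failure of convexity of $\cV$ on $\cP_2(\M)$, observe that $\tfrac12\nu = \bigl(\tfrac12\delta_0+\tfrac12\delta_{2a}\bigr)-\delta_a = \mu_1-\mu_0$ is a genuine difference of probability measures with compact support, so that along the classical segment $\mu_t=(1-t)\mu_0+t\mu_1$ one has $\tfrac{d^2}{dt^2}\cV(\mu_t)=\iint\phi_1\,d(\mu_1-\mu_0)\,d(\mu_1-\mu_0)=-\tfrac12<0$, whence $\cV$ is strictly concave on this segment. (One also uses the continuity of $\phi_1\in C^2$ to justify $\phi_1(a)=1$ and $\phi_1(2a)=0$ from the equalities on the open ball and its exterior, as you did.)
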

\proof{} (i) As above, denote the Fourier transform of $\phi_1$ as $\hat \phi_1$. Let $\sigma \in AC_2(0,1; \cP_2(\M))$ be a geodesic such that its velocity $v$ is not identically null. Since $ \|v_t\|_{\sigma_t}$ is independent of $t$, it is then positive.  We have 
\begin{align*}
{d^2 \over dt^2} \cV(\sigma_t) &=\int_{\M} D^2 \phi(q) v_t(q) \cdot v_t(q) \sigma_t(dq) + \int_{\M^{2}} D^2 \phi_1(q-w) v_t(q) \cdot v_t(q) \sigma_t(dq) \sigma_t(dw)\\
& + \int_{\M^{2}} D^2 \phi_1(q-w) v_t(q) \cdot v_t(w) \sigma_t(dq) \sigma_t(dw)\\
&\geq \lambda \|v_t\|^2_{\sigma_t} +\lambda_1 \|v_t\|^2_{\sigma_t} -|\lambda_1| \|v_t\|^2_{\sigma_t} \geq \kappa \|v_t\|^2_{\sigma_t}.
\end{align*} 
This completes the verification of (i).

(ii) Since $\phi_1$ is even the range of its Fourier transform is contained in the set of real numbers (including negative ones).  Assume on the contrary that the range of $\hat \phi_1$ is contained in $[0,\infty)$. By Fourier inversion theorem we have for $x \in \M$, 
\[
|\phi_1(x)|=\Bigg{|}\int_{\M}\hat\phi_1(\xi)e^{2\pi i x\cdot\xi} d\xi\Bigg{|}\le \int_{\M}|\hat\phi_1(\xi)| d\xi= \int_{\M}\hat\phi_1(\xi) d\xi=\phi_1(0).
\]
Since $\phi_1(x) \equiv 1=\phi_1(0)$ on $B_1(0)$, the ball of center $0$ and radius $1$ we must have 
\begin{equation}\label{eq:oct18.2019.1}
\hat\phi_1(\xi)\cos({2\pi x\cdot\xi}) \equiv |\hat\phi_1(\xi)| \equiv \hat\phi_1(\xi) \qquad \forall (x, \xi) \in B_1(0) \times \M.
\end{equation}
Since $\phi_1$ is not the null function, $\hat \phi_1$ cannot be the null function. Choose $\xi_0$ such that $\hat \phi_1(\xi_0)>0$ and since $\hat\phi_1$ is continuous, assume without loss of generality that $\xi_0 \not =0.$ By \eqref{eq:oct18.2019.1}, $\cos({2\pi x\cdot\xi_0})=1$ for all $x \in B_1(0)$ which yields a contradiction. One concludes the proof of (ii) by Lemma \ref{lem:fourier-of-phi1}.
\endproof

\vskip0.40cm
\subsection{Convexity versus displacement convexity of the action}
 
Here we would like to emphasize the fact that imposing the joint convexity assumption on the Lagrangian action, as in \eqref{ass:convexity-on-tildeL} comes as a natural assumption for displacement convex potential mean field games, which are considered in this manuscript. We compare this to the more standard monotonicity assumption in potential MFG.

Assume $L, H \in C^1(\M\times\R^{d})$ are such that $H(q, \cdot)$ and $L(q, \cdot)$ are Legendre transform of each other. We consider the actions 
\[
\cA_0^T(\sigma, v):= \int_0^T \left( \int_{\mathbb M} L(q, v_t(q))\sigma_t(dq) + \cF(\sigma_t)\right) dt 
\]
over the set of pairs $(\sigma, v)$ such that 
\begin{equation}\label{eq:jan25.2020.1}
\partial_t \sigma +\nabla \cdot (\sigma v)=0 \qquad \mathcal D'\big((0,T) \times \mathbb M \big)
\end{equation}

Recall that if we set $\nabla_q f(q, \mu):=\nabla_w F(\mu)(q)$ then $f$ monotone means $\cF$ is convex. 

We can rewrite  $\cA_0^T(\sigma, v)$ in terms of the momentum by setting 
\[
\ov \cA_0^T(\sigma, \eta):= \int_0^T \biggl( \int_{\mathbb M} L\Big(q, {d\eta_t \over d\sigma_t}(q)\Big)\sigma_t(dq) + \cF(\sigma_t)\biggr) dt 
\]
over the set of pairs $(\sigma, \eta)$ such that $|\eta_t|\ll\sigma_t$ and 
\begin{equation}\label{eq:jan25.2020.2}
\partial_t \sigma +\nabla \cdot \eta=0 \qquad \mathcal D'\big((0,T) \times \mathbb M \big).
\end{equation}

In fact, for each $q \in \M$ we  introduce the function $\ov L_q: \R \times \R^{d} \rightarrow \R \cup\{\infty\}$ defined as 
\begin{equation}\label{eq:lagrangian-q}
\ov L_q(\rho, e):=
\left\{
\begin{array}{cl}
\rho L(q, \frac{e}{\rho})   &\text{if } \rho>0 \\
0   &\text{if } \rho=0, e=\vec 0 \\
+\infty   &\text{otherwise .}
\end{array}
\right.
\end{equation}
Here, $\vec 0:=(0, \ldots, 0).$ Since $L_q$ is homogeneous of degree $1$, whenever $\mu$ is a probability measure and $\xi_1, \cdots, \xi_d$ are signed Borel measures, the following function is well defined
\[
(\mu, \xi) \mapsto A(\mu, \xi):= 
\left\{
\begin{array}{ll}
\ds\int_{\M} L_q\Big(\mu(dq), {d\xi}\Big)    &\text{if } \; |\xi|\ll\mu  \\
+\infty   &\text{if } |\xi| \not\ll \mu 
\end{array}
\right.
\]
We can now extend the definition of  $\ov \cA_0^T$ over $\mathcal C$ to obtain 
\[
\ov \cA_0^T(\sigma, \eta):= \int_0^T \Bigl( A(\sigma_t, \eta_t) + \cF(\sigma_t)\Bigr) dt.
\]

\begin{lemma}\label{lem:convex-action} If $\cF$ is convex on  $\mathcal P_2(\mathbb M)$ then  $(\mu, \xi) \mapsto A(\mu, \xi)+ \cF(\mu)$ is convex (we do not assume $L$ is jointly convex).
\end{lemma}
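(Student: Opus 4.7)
The plan is to reduce the convexity of $A$ to the convexity in $(\rho,e)$ of the perspective functions $\overline L_q$, and then add in the convexity of $\cF$ as a separate term. The starting observation is that, although we do not assume joint convexity of $L$, the fact that $H(q,\cdot)$ and $L(q,\cdot)$ are Legendre transforms forces $L(q,\cdot)$ to be convex in its second argument for each fixed $q$. Consequently, for every $q\in\M$ the perspective construction \eqref{eq:lagrangian-q} yields a function $\overline L_q:\R_+\times\R^d\to\R\cup\{+\infty\}$ which is jointly convex and positively $1$-homogeneous in $(\rho,e)$ (this is the classical fact that the perspective of a convex function is convex, the extension by $+\infty$ outside $\R_+\times\R^d$ preserving convexity and lower semicontinuity).

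Next, I would carry the convexity of $\overline L_q$ to the functional $A$ by a Radon--Nikodym argument. Fix $(\mu_0,\xi_0)$ and $(\mu_1,\xi_1)$ with $A(\mu_i,\xi_i)<\infty$, so that $|\xi_i|\ll\mu_i$, and fix $t\in[0,1]$. Set $\nu:=(1-t)\mu_0+t\mu_1$, let $f_i:=d\mu_i/d\nu$ (so that $(1-t)f_0+tf_1\equiv 1$ $\nu$-a.e.), and let $h_i:=d\xi_i/d\mu_i$, so that $d\xi_i/d\nu=f_i h_i$. Writing
\[
A(\mu_i,\xi_i)=\int_{\M}\overline L_q\bigl(f_i(q),\, f_i(q)h_i(q)\bigr)\,\nu(dq),
\]
and applying the joint convexity of $\overline L_q$ at the pairs $(f_0,f_0h_0)$ and $(f_1,f_1h_1)$ with weights $(1-t,t)$, we get pointwise $\nu$-a.e.
\[
\overline L_q\bigl(1,(1-t)f_0h_0+tf_1h_1\bigr)\le (1-t)\overline L_q(f_0,f_0h_0)+t\,\overline L_q(f_1,f_1h_1),
\]
which after integrating against $\nu$ is precisely
\[
A\bigl((1-t)\mu_0+t\mu_1,\,(1-t)\xi_0+t\xi_1\bigr)\le (1-t)A(\mu_0,\xi_0)+t\,A(\mu_1,\xi_1).
\]
If either $A(\mu_0,\xi_0)$ or $A(\mu_1,\xi_1)$ equals $+\infty$, the inequality is automatic, so the case $|\xi_i|\not\ll\mu_i$ causes no issue. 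Finally, since $\cF$ is assumed convex on $\cP_2(\M)$ and enters only through $\mu$, adding $\cF((1-t)\mu_0+t\mu_1)\le(1-t)\cF(\mu_0)+t\cF(\mu_1)$ to the previous inequality yields the joint convexity of $(\mu,\xi)\mapsto A(\mu,\xi)+\cF(\mu)$, as required.

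The only mildly delicate point, and the one I would be most careful with, is the Radon--Nikodym bookkeeping: one has to verify that the reference measure $\nu$ dominates both $\mu_i$ and $\xi_i$ simultaneously, and that the computation $A(\mu_i,\xi_i)=\int\overline L_q(f_i,f_ih_i)\,d\nu$ is consistent with the $1$-homogeneity of $\overline L_q$ and with the convention $\overline L_q(0,\vec 0)=0$ on the $\nu$-null part of $\{f_i=0\}$. Once these are in place the argument reduces to the elementary perspective-convexity inequality displayed above, and no joint convexity of $L$ in $(q,v)$ is ever used.
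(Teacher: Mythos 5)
Your argument is correct and is essentially the argument the paper delegates to a citation: the paper also reduces to showing that $(\mu,\xi)\mapsto A(\mu,\xi)$ is convex and then refers to \cite{Santambrogio}, Proposition 5.18, which is precisely the joint convexity of the perspective function $\overline L_q$ together with the common-dominating-measure (Radon--Nikodym) computation you spell out. Your handling of the boundary cases ($f_i=0\Rightarrow f_ih_i=0$ $\nu$-a.e., and $A=+\infty$ when $|\xi_i|\not\ll\mu_i$) is the right bookkeeping, and adding the convexity of $\cF$ at the end is exactly what the paper does.
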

\proof{} It suffices to show that $(\mu, \xi) \mapsto A(\mu, \xi)$ is convex. The proof of this well-known fact can be found for instance in \cite{Santambrogio}, Proposition 5.18.

%
 \endproof
 
Let $\mathcal C$ be the set of $(\sigma, \eta)$ such that $\sigma \in AC_2(0,T; \mathcal P_2(\mathbb M))$ and $t\mapsto \eta_t\in  \mathcal M(\mathbb M) \times \cdots \times \mathcal M(\mathbb M)$ is a Borel path of vector fields such that each one of its $d$ components is a signed Borel measure on $\mathbb M$ and 
\begin{equation}\label{eq:jan25.2020.3}
\partial_t \sigma +\nabla \cdot \eta=0 \qquad \mathcal D'\big((0,T) \times \mathbb M \big).
\end{equation}
 
\begin{remark}\label{rem:convex-action} 

(i) Note that the classical theory of potential mean field games which consists in assuming that $f$ is monotone and $L, H \in C^1(\M\times\R^{d})$ are such that $H(q, \cdot)$ and $L(q, \cdot)$ are Legendre transform of each other ensures that $(\mu, \xi) \mapsto A(\mu, \xi)+ \cF(\mu)$ is a convex function. Therefore, if we extend the definition of  $\ov \cA_0^T$ to obtain 
\[
\ov \cA_0^T(\sigma, \eta):= \int_0^T \Bigl( A(\sigma_t, \eta_t) + \cF(\sigma_t)\Bigr) dt 
\]
over $\mathcal C$, the action $\ov \cA_0^T$ is a convex function in the variables $(\sigma, \eta)$.

(ii) When replacing the assumption of convexity on the action by an assumption of displacement convexity, as it is done in this manuscipt, it seems natural to impose that $\cA_0^T(\sigma, v)$ is displacement convex on the set of pairs $(\sigma, v)$ satisfying \eqref{eq:jan25.2020.1}. This means that 
\[
\bH \times \bH\ni (X, V) \mapsto \int_{\Omega} L(X, V)d\omega +\tilde \cF(X) \quad {\rm{is\ convex}},
\]
and thus the Lagrangian $L$ is assumed to be jointly convex on $\M\times\R^d$. 
\end{remark}

\smallskip

\subsection{Convexity of $f(\cdot,\mu)$ is a consequence of the displacement convexity of $\cF$}

To study the scalar master equation, among others we have imposed the assumptions \eqref{ass:convexity} and \eqref{hyp:u_0-f} on the functions $f$ and $\cF$. As we have detailed in the previous couple of lines, in our setting it is natural for the Lagrangian $L$ to impose joint $\lambda$--convexity, and we impose that $\cF$ is displacement $\lambda$--convex. We show below that in this sense, imposing \eqref{ass:convexity}, i.e. that $f(\cdot,\mu)$ is $\lambda$--convex, is also natural, and it is a consequence of the displacement $\lambda$--convexity of $\cF$. 

\begin{proposition}\label{prop:conv}
Let $\cF:\sP_2(\M)\to\R$ and $f:\M\times\sP_2(\M)\to \R$ be of class $C^2$ such that they are related via \eqref{hyp:u_0-f}. We assume that $\cF$ is is displacement $\lambda$-convex; $\M\times\sP_2(\M)\ni (q,\mu)\mapsto D_q\nabla_w\cF(\mu)(q)=D^2_{qq}f(q,\mu)$ is continuous and that for any $\cK\subset\sP_2(\M)$ compact, there exists $C=C(\cK)>0$ such that $|D^2_{ww}\cF(\mu)(q_1,q_2)|\le C$ for any $\mu\in\cK$ and for any $q_1,q_2\in\spt(\mu)$.

Then, for any $\mu\in\sP_2(\M)$, the function $\spt(\mu)\ni q\mapsto f(q,\mu)$ is $\l$-convex, i.e. 
$$D^2_{qq}f(x,\mu)\ge\l I_d,\ \forall\ q\in\spt(\mu).$$
\end{proposition}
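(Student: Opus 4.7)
The plan is to reduce to the finite-dimensional setting via the discretization approach and Lemma \ref{lem:con_conv}. Fix $\mu \in \sP_2(\M)$, $q_0 \in \spt(\mu)$, and $h \in \R^d$. I would begin by applying Lemma \ref{lem:con_conv} to conclude that displacement $\lambda$-convexity of $\cF$ is equivalent to $\frac{\lambda}{m}$-convexity of $F^{(m)}(q_1,\dots,q_m) := \cF(\mu_q^{(m)})$ on $\M^m$ for every $m \in \N$. Since empirical measures are dense in $(\sP_2(\M),W_2)$, I would choose a sequence $\mu^{(m)} := \frac{1}{m}\delta_{q_0} + \frac{1}{m}\sum_{j=2}^{m}\delta_{q_j^{(m)}} \to \mu$ in $W_2$ with the $q_j^{(m)}$ pairwise distinct and distinct from $q_0$. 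For $t$ in a small neighborhood of $0$ the atoms of $\mu_t^{(m)} := \frac{1}{m}\delta_{q_0 + th} + \frac{1}{m}\sum_{j=2}^{m}\delta_{q_j^{(m)}}$ remain distinct, so $t \mapsto \mu_t^{(m)}$ is a constant-speed $W_2$-geodesic with $W_2^2(\mu_0^{(m)},\mu_1^{(m)}) = |h|^2/m$.

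Next, I would apply displacement $\lambda$-convexity of $\cF$ along this geodesic, which is equivalent to the scalar function $G_m(t) := \cF(\mu_t^{(m)})$ being $(\lambda|h|^2/m)$-convex in $t$, hence $G_m''(0) \geq \lambda|h|^2/m$. Then I would compute $G_m''(0)$ explicitly using the Wasserstein chain rule and the assumed regularity of $\cF$: writing $\mu_t^{(m)}$ as the push-forward of $\mu^{(m)}$ by the map $\hat T_t(q) = q + t\hat h(q)$ with $\hat h = h\chi_{\{q_0\}}$, the second-order expansion of $\cF$ (using $D_q\nabla_w\cF(\mu)(q) = D^2_{qq}f(q,\mu)$ and the Wasserstein Hessian $D^2_{ww}\cF$) yields
\begin{equation*}
G_m''(0) = \int_{\M} D^2_{qq}f(q,\mu^{(m)})\hat h(q)\cdot\hat h(q)\,\mu^{(m)}(dq) + \int_{\M^2} D^2_{ww}\cF(\mu^{(m)})(q,a)\hat h(q)\cdot\hat h(a)\,\mu^{(m)}(dq)\mu^{(m)}(da).
\end{equation*}
Since $\hat h$ is supported on the single atom $q_0$ of $\mu^{(m)}$, this collapses to
\begin{equation*}
G_m''(0) = \frac{1}{m}D^2_{qq}f(q_0,\mu^{(m)})h\cdot h + \frac{1}{m^2}D^2_{ww}\cF(\mu^{(m)})(q_0,q_0)h\cdot h.
\end{equation*}
Combining with $G_m''(0) \geq \lambda|h|^2/m$ and multiplying by $m$ gives
\begin{equation*}
D^2_{qq}f(q_0,\mu^{(m)})h\cdot h + \frac{1}{m}D^2_{ww}\cF(\mu^{(m)})(q_0,q_0)h\cdot h \geq \lambda|h|^2.
\end{equation*}

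Finally, I would pass to the limit $m\to\infty$. The set $\{\mu^{(m)}\}_m \cup \{\mu\}$ is compact in $\sP_2(\M)$, so the boundedness assumption on $D^2_{ww}\cF$ forces the second term on the left to vanish, while the continuity of $(q,\mu)\mapsto D^2_{qq}f(q,\mu)$ gives $D^2_{qq}f(q_0,\mu^{(m)})\to D^2_{qq}f(q_0,\mu)$. Since $h$ was arbitrary, this yields $D^2_{qq}f(q_0,\mu) \geq \lambda I_d$ at every $q_0 \in \spt(\mu)$, which is the claim.

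The main obstacle will be rigorously justifying the second-order expansion of $G_m$ at $t=0$ under the hypotheses on $\cF$ alone. Concretely, the $C^2$ regularity of $\cF$ together with the identification $D_q\nabla_w\cF = D^2_{qq}f$ and the Wasserstein Hessian expansion of Definition \ref{def:c21_wasserstein} must be invoked along the path $\mu_t^{(m)}$, and one needs to verify that the atoms of $\mu_t^{(m)}$ remain distinct on a uniform $t$-interval independent of the direction $h$ (up to rescaling). The hypothesis that $q_j^{(m)} \neq q_0$ for $j \geq 2$ and that the $q_j^{(m)}$ are pairwise distinct makes this straightforward for any fixed $m$; the fact that the resulting inequality is uniform in $m$ is what permits the limit argument in the final step.
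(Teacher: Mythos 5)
Your argument is essentially the paper's own proof: both proceed by discretization, both reduce to a single diagonal block of the Hessian of $F^{(m)}$, both invoke the identity $mD^2_{q_iq_i}F^{(m)}(q)=D_q\nabla_w\cF(\mu^{(m)}_q)(q_i)+\tfrac1m\nabla^2_{ww}\cF(\mu^{(m)}_q)(q_i,q_i)$ (which the paper cites from ChowGan, while you re-derive it by expanding $G_m''(0)$ along a one-atom geodesic), and both close by sending $m\to\infty$ using the compactness of $\{\mu^{(m)}\}\cup\{\mu\}$, the uniform bound on $\nabla^2_{ww}\cF$, and the continuity of $D_q\nabla_w\cF$. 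The only cosmetic difference is that the paper first passes through Lemma \ref{lem:con_conv} to obtain $D^2F^{(m)}\ge\tfrac{\lambda}{m}I_{md}$ and then tests with block-supported vectors, whereas you test displacement $\lambda$-convexity directly along the one-atom geodesic; these two steps are the same computation viewed from two sides.
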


\begin{proof}[Proof of Proposition \ref{prop:conv}]
Let $m\in\N$ and we define $F^{(m)}:(\M)^m\to\R$ as $F^{(m)}(q_1,\dots,q_m):=\cF(\mu^{(m)}_q)$. By the assumptions on $\cF$, we have that $F^{(m)}$ is twice differentiable on $(\M)^m$ and by Lemma \ref{lem:con_conv}, it is $\frac{\l}{m}$-convex on $(\M)^m$. This means in particular that 
$$D^2F^{(m)}(q_1,\dots,q_m)\ge\frac{\l}{m}I_{md},\ \forall\ (q_1,\dots, q_m)\in(\M)^m$$
or equivalently 
$$a^\top D^2F^{(m)}(q_1,\dots,q_m)a\ge\frac{\l}{m}|a|^2_{md},\ \forall\ a\in\M^{m},(q_1,\dots, q_m)\in(\M)^m,$$
where $|\cdot|_{md}$ stands for the standard Euclidean norm on $\M^m$. For $i\in\{1,\dots,m\}$, let us choose the vector $a\in\M^{m}$ such that its coordinates between the indices $d(i-1)+1$ and $di$ are not all zero, while all the others are zero. Then, the previous inequality implies that
\begin{equation}\label{convexity}
D^2_{q_{i}q_{i}}F^{(m)}(q_1,\dots,q_m)\ge\frac{\l}{m} I_d, \forall\ (q_1,\dots,q_m)\in(\M)^m.
\end{equation}

\medskip

We also have (see for instance in \cite{ChowGan} Remark 3.5(iv)) that
$$mD^2_{q_iq_i}F^{(m)}(q_1,\dots,q_m)=D_q\nabla_w\cF(\mu^{(m)}_q)(q_i)+\frac{1}{m}\nabla^2_{ww}\cF(\mu^{(m)}_q)(q_i,q_i),$$
$\forall m\in\N,\ \{q_1,\dots,q_m\}\subseteq\spt(\mu^{m}_q)$.

Let $b\in\M$. By \eqref{convexity}, one has that 
$$b^\top D_q\nabla_w\cF(\mu^{(m)}_q)(q_i)b+\frac{1}{m}b^\top \nabla^2_{ww}\cF(\mu^{(m)}_q)(q_i,q_i)b\ge \l |b|^2_{d}, \ \forall m\in\N,\ \{q_1,\dots,q_m\}\subseteq\spt(\mu^{m}_q).$$
Now let us fix $\mu\in\sP_2(\M)$ and $q_1\in\spt(\mu)$. For $m\ge 2$ natural number, let $q_i\in\spt(\mu)$, $i\in\{2,\dots,m\}$, and let us build $\mu^{(m)}_q:=\sum_{i=1}^m\d_{q_i}$, as an approximation of $\mu$. 

We have that 
$$b^\top D_q\nabla_w\cF(\mu^{(m)}_q)(q_1)b+\frac{1}{m}b^\top \nabla^2_{ww}\cF(\mu^{(m)}_1)(q_1,q_1)b\ge \l |b|^2_{d}.$$
Since $\cK:=\left\{\mu^{(m)}_q:m\in\N\right\}\cup\{\mu\}$ is a compact set, by the assumptions, $\nabla^2_{ww}\cF(\mu^{(m)}_q)(q_1,q_1)$ is uniformly bounded by a constant $C=C(\cK)>0$ independent of $m$. By the continuity of $D_q\nabla_w\cF$, one can pass to the limit in the previous inequality to obtain
$$b^\top D_q\nabla_w\cF(\mu)(q_1)b\ge \l |b|^2_{d},$$
and equivalently 
$$b^\top D^2_{qq}f(q_1,\mu) b\ge \l |b|^2_{d}.$$
By the arbitrariness of $b\in\R^d$ and $q_1\in\spt(\mu)$, the thesis of the proposition follows.
\end{proof}

\smallskip

\subsection{Failure of smoothness of solutions to Hamilton-Jacobi equation for monotone initial data}

It is well-known in the theory of Hamilton-Jacobi equations on finite dimensional spaces that typically one cannot expect global existence of smooth solutions. This led to the development of the notion of viscosity solution by Crandall-Lions and Evans. We emphasize below that this phenomenon of existence of non-smooth solutions to Hamilton-Jacobi equations is also present on $\cP_2(\M).$

Let us consider $d=1$. Let $L:\R\times\R\to\R$ and $\phi:\R\to\R$ be defined as  
\[
L(q, v):={|v|^2 \over 2}, \quad \phi(q):= -\sqrt{1+ q^2}.
\]
Set 
\[
\cU_*(\mu):= \int_{\R} \phi(q)\mu(dq), \quad u_*(q, \mu)=\phi(q), \qquad \cL(\mu, \xi):= \int_{\R} L(q, \xi(q))\mu(dq).
\]
Note that $\cU_*$ is convex and so, $u_*$ is monotone. 

Let $\cU:[0,\infty) \times \mathcal P_2(\R)$ be the unique viscosity solution to the Hamilton--Jacobi equation
\begin{equation}\label{eq:jan26.2020.1}
\partial_t \cU +{1\over 2} \int_{\R} |\nabla_w \cU|^2\mu(dq)=0, \qquad \cU(0, \cdot)=\cU_*.
\end{equation}
Assume on the contrary that $\cU$ is of class $C^1$. Then $\cU$ must satisfy \eqref{eq:jan26.2020.1} pointwise and so,  its restriction defined as 
\[
u(t, q)=\cU(t, \delta_q)
\]
must be a $C^1$ function satisfying 
\begin{equation}\label{eq:jan26.2020.1b}
\partial_t u +{1\over 2}  |\partial_q u|^2=0, \qquad u(0, \cdot)=\phi.
\end{equation}
Thus, 
\begin{equation}\label{eq:jan26.2020.2}
u(t, q)= \min_y \Big\{{|y-q|^2 \over 2t} + \phi(y)\; : \; y \in \R \Big\}.
\end{equation}
Given $q$ the minimum in \eqref{eq:jan26.2020.2} is attained by $y$ such that 
\begin{equation}\label{eq:jan26.2020.3}
{y-q \over t} -{y\over \sqrt{1+y^2}}=0.
\end{equation} 
When $q=0$, \eqref{eq:jan26.2020.3} has three solutions which are
\[
y_0=0,\quad y_1= \sqrt{t^2-1}, \quad y_2= -\sqrt{t^2-1}.
\]
They produce in \eqref{eq:jan26.2020.2} the values 
\[
-1 \quad {\rm{and}}\quad -{t\over 2} -{1 \over 2t}.
\]
Therefore for $t>1$, we have 
\[
u(t, 0)=-{t\over 2} -{1 \over 2t}.
\]
Since 
\[
u(t, q)-u(t, 0) \leq {|y_i-q|^2 \over 2t} + \phi(y_i)- \Big({|y_i|^2 \over 2t} + \phi(y_i)\Big)=  {- y_i\cdot q \over t}+{ |q|^2 \over 2t},
\]
$\pm y_i/t$  belong to the super--differential of $u(t, \cdot)$ at $q=0$. Thus, $u(t, \cdot)$ is not differentiable at $0.$

\medskip

%
%
\section{Hamiltonian Flows and minimizers of the Lagrangian action}\label{appendixC}

Most of the results of this section are expected to be known in some communities. We include them here for the sake of completeness and because of a lack of a precise reference.

\subsection{Hamiltonian Flows on  the Hilbert space}
Throughout this subsection, we impose \eqref{ass:F-U0-C11new1}-\eqref{ass:UpperboundonL}. Showing that the value value function of our Hamilton--Jacobi equation is of class $C^{1,1}$ on the Hilbert space is the starting point before improving regularity property via a discretization method. We underline that in Subsection \ref{subsec:standardresults}, using `direct techniques' relying on the convexity of the Lagrangian action, we have shown already that the value function $\tilde\cU$ is of class $C^{1,1}_{\rm{loc}}$. In this section, we discuss the regularity properties of the infinite dimensional Hamiltonian flow \eqref{eq:Hamiltonian-Flow}, which could also be transferred to the value function.

Let $\tilde\xi,\tilde\eta:[0,\infty) \times \bH \rightarrow \bH$ be given by \eqref{eq:Hamiltonian-Flow2}. Using \eqref{eq:Hamiltonian-Flow3} and the last inequality in Remark \ref{rem:new-bound-onHL} (iii), we have 
\begin{equation}\label{eq:Hamiltonian-Flow3bis}
\|\big(\tilde \xi(t, x), \tilde \eta(t, x)\big)\|+1 \leq \Big( \sqrt{\|x\|^2 + \overline \kappa^2 (\|x\|^2+1)}  +1 \Big) e^{\tilde \kappa t} 
\end{equation} 
for any $t>0$ and $x \in \bH$. 
We can formulate the following result.

%
%
%

\begin{proposition}\label{prop:summary-bounds3} Let $t \in (0, T),$ $\mu \in \cP_2(\M)$ and $q \in \M.$ Suppose $(t_n)_n \subset [0, T]$ converges to $t$, $(\mu_n)_n \subset \cP_2(\M)$ converges to $\mu$ and $(q_n)_n \subset \M$ converges to $q$. Then for every compact set $K \subset [0,t)$ we have  
\[
\lim_{n \rightarrow \infty}   \Big\|S_s^{t_n}[\mu_n] (q_n) - S_s^{t}[\mu] (q)\Big\|_{C( K)} =0.
\]
\end{proposition}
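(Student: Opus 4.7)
The strategy is to combine the direct method of calculus of variations with the uniqueness of the minimizer in \eqref{def:u_alt}. Fix $r>0$ large enough that $\mu, \mu_n \in \cB_r$, $q, q_n \in \overline B_r(0)$, and $t_n \in [0, T]$. By Remark \ref{rem:summary-bounds4}(ii) the curves $\gamma^n_\tau := S_\tau^{t_n}[\mu_n](q_n)$ satisfy the uniform bounds
\[
\int_0^{t_n} |\dot \gamma^n|^2 \, ds \leq \overline M(r), \qquad |\gamma^n_{\tau_1} - \gamma^n_{\tau_2}|^2 \leq \overline M(r)\,|\tau_2 - \tau_1|.
\]
Via Arzel\`a--Ascoli on each interval $[0, t-\epsilon]$ (with $\epsilon \downarrow 0$) and a diagonal argument, I will extract a subsequence (not relabelled) converging uniformly on every compact subset of $[0, t)$ to a continuous limit $\gamma^* : [0,t) \to \M$. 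Weak compactness in $L^2$ then provides $\dot \gamma^* \in L^2(0, t)$, and the H\"older estimate applied to $\gamma^n_{t_n} = q_n$ passes to the limit to give $\gamma^*(s) \to q$ as $s \uparrow t$, so $\gamma^*$ extends continuously to $[0,t]$ with $\gamma^*(t) = q$.

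The second step is to verify continuous dependence of the Eulerian flow: $W_2(\sigma_s^{t_n}[\mu_n], \sigma_s^t[\mu]) \to 0$, uniformly on compact subsets of $[0, t]$. This I would derive from \eqref{eq:flow} together with the global Lipschitz continuity of $\nabla_w \cU(s, \cdot)$ on bounded subsets of $\cP_2(\M)$ (a consequence of Theorem \ref{thm:main-regularity} and Corollary \ref{cor:D_qu-global-Lip}) and the Lipschitz continuity of $D_pH$. Working at the Hilbert level with lifts $x_n$ having law $\mu_n$ chosen optimally so that $\|x_n - x\| = W_2(\mu_n, \mu)$, a Gr\"onwall argument applied to the flow map $\tilde S_\cdot^{t_n}[x_n]$ yields an estimate of the form
\[
W_2\bigl(\sigma_s^{t_n}[\mu_n], \sigma_s^t[\mu]\bigr) \leq C_r \bigl(W_2(\mu_n, \mu) + |t_n - t|\bigr) e^{C_r(t-s)}.
\]

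The third step is to identify $\gamma^*$ as a minimizer in \eqref{def:u_alt} for $u(t, q, \mu)$. The uniform convergence of $\gamma^n$ on compacts of $[0,t)$, the $W_2$-continuity just established, the continuity of $u_0, f$, lower semicontinuity of $\int L(q, v)\, dv$ under weak $L^2$-convergence of velocities (by convexity of $L(q, \cdot)$ and \eqref{hyp:f-theta}), and Fatou give
\[
A_t[\gamma^*] + u_0(\gamma^*_0, \sigma_0^t[\mu]) \leq \liminf_{n\to \infty} \Big(A_{t_n}[\gamma^n] + u_0(\gamma^n_0, \sigma_0^{t_n}[\mu_n])\Big) = \liminf_n u(t_n, q_n, \mu_n).
\]
The matching upper bound follows from Lemma \ref{lem:u_alt-reg}: since $u \in C^{1,1}_{\rm loc}$, $u(t_n, q_n, \mu_n) \to u(t, q, \mu)$. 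Thus $\gamma^*$ attains $u(t, q, \mu)$ and, by the strict convexity established in Proposition \ref{prop:representu}(i), it must equal $S_\cdot^t[\mu](q)$. Since every subsequence admits a further subsequence with the same limit, the full sequence converges, completing the proof.

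\textbf{Main obstacle.} The delicate point is Step 2, namely the $W_2$-Lipschitz dependence of the coupled Eulerian flow $(\sigma_s^{t_n}[\mu_n])_s$ on $(t_n, \mu_n)$. The velocity field in \eqref{eq:flow} is itself a functional of $\sigma_s^{t_n}[\mu_n]$ through $\nabla_w \cU$, so this is a fixed-point/Gr\"onwall estimate rather than a plain ODE continuity statement; it is exactly here that the $C^{2,1,w}_{\rm loc}$ regularity from Theorem \ref{thm:main-regularity} enters in an essential way.
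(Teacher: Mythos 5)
Your proposal follows essentially the same route as the paper's proof: compactness of the minimizing curves from the uniform $W^{1,2}$ bound and H\"older estimate (Remark \ref{rem:summary-bounds4}(ii) and \eqref{eq:holder-for-gamma}), diagonalization over compacts of $[0,t)$, continuous dependence of $s\mapsto\sigma_s^{t_n}[\mu_n]$, lower semicontinuity of the action via \eqref{hyp:f-theta} and convexity, upper bound from continuity of $u$, identification of the limit as the unique minimizer, and the subsequence argument. Two remarks are worth making. First, your ``Step 2'' is already available in the paper as Lemma \ref{lem:summary-bounds2}, whose proof is precisely the Gr\"onwall estimate at the Hilbert level you sketch; and contrary to your ``main obstacle'' paragraph, that lemma needs only the $C^{1,1}_{\rm loc}$ regularity of $\tilde\cU$ (local Lipschitz constant of $\nabla\tilde\cU$ from Proposition \ref{prop:semi-convex1}) plus the Lipschitzness of $DH$ from \eqref{ass:LipschitzH}; the $C^{2,1,w}_{\rm loc}$ regularity of Theorem \ref{thm:main-regularity} is not used and would in any case be a logically awkward ingredient to smuggle into Appendix C. Second, for the matching upper bound the paper invokes only Proposition \ref{prop:ext_u} (continuity of $u(t,q,\cdot)$, combined with Proposition \ref{prop:representu}(v)), whereas you reach for Lemma \ref{lem:u_alt-reg} and Corollary \ref{cor:D_qu-global-Lip}, which are much heavier and sit later in the logical chain — Corollary \ref{cor:D_qu-global-Lip} is in fact a consequence of Lemma \ref{lem:u_alt-reg}, which itself leans on Proposition \ref{prop:ext_u} — so this detour introduces an unnecessary risk of circularity. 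Substituting plain continuity of $u$ for the $C^{1,1}$ claim repairs both issues, after which your argument coincides with the paper's.
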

\proof{} To alleviate the notation, we set $\gamma^n(s):= S_s^{t_n}[\mu_n] (q_n).$ It is characterized by the property that 
\begin{equation}\label{eq:jan04.2020.9}
u(t_n, q_n, \mu_n)=u_0\big( \gamma^n_0, \sigma_0^{t_n}[\mu_n]\big) + \int_0^{t_n} \Big( L\big(\gamma^n_\tau, \dot \gamma^n_\tau \big)+f\big(\gamma^n_\tau, \sigma_\tau^{t_n}[\mu_n]\big) \Big) d\tau, \quad \gamma^n_{t_n}=q_n.
\end{equation}

We assume without loss of generality that there exists $r>0$ such that $(\mu_n)_n \subset \cB_r$ and $(q_n) \subset B_r(0).$   
By Remark \ref{rem:summary-bounds1} (ii) 
$$\big\{\sigma_s^{t_n}[\mu_n]\, : \, n \in \mathbb N, s \in [0, t_n] \big\} \subset \cB_{e_T(r)}.$$
In light of Remark \ref{rem:summary-bounds4} (ii), we may apply the Ascoli--Arzel\`a lemma to obtain a subsequence which we continue to denote as $(\gamma^n)_n$ which converges uniformly in $C([0,t-\delta]; \M)$ for every $\delta \in (0,t).$ We have $\gamma \in W^{1,2}(0, t; \M)$ and may also assume $(\gamma^n)_n$ converges weakly to $\gamma$ in $ W^{1,2}(0, t; \M)$. We use  \eqref{eq:holder-for-gamma} to obtain that $\gamma_t=q.$ We would like to replace $t_n$ by $t-\delta$. Since the integrand there is not known to be non negative, we use  \eqref{hyp:f-theta} to write 
\begin{align*} 
u(t_n, q_n, \mu_n)=&u_0\big( \gamma^n_0, \sigma_0^{t_n}[\mu_n]\big) +
\int_0^{t_n} \theta(\sigma_\tau^{t_n}[\mu_n]) (|\gamma^n_\tau|+1) d\tau\\
+& \int_0^{t_n} \Big( L\big(\gamma^n_\tau, \dot \gamma^n_\tau \big)+f\big(\gamma^n_\tau, \sigma_\tau^{t_n}[\mu_n]\big)-\theta(\sigma_\tau^{t_n}[\mu_n]) (|\gamma^n_\tau|+1) \Big) d\tau.
\end{align*}
Thus, since all the  integrands are non negative, we have 
\begin{align*} 
\liminf_{n \rightarrow \infty}u(t_n, q_n, \mu_n)\geq & \liminf_{n \rightarrow \infty} u_0\big( \gamma^n_0, \sigma_0^{t_n}[\mu_n]\big) 
+ \liminf_{n \rightarrow \infty}\int_0^{t-\delta} \theta(\sigma_\tau^{t_n}[\mu_n]) (|\gamma^n_\tau|+1) d\tau\\
+ & \liminf_{n \rightarrow \infty}\int_0^{t-\delta} \Big( L\big(\gamma^n_\tau, \dot \gamma^n_\tau \big)+f\big(\gamma^n_\tau, \sigma_\tau^{t_n}[\mu_n]\big)-\theta(\sigma_\tau^{t_n}[\mu_n]) (|\gamma^n_\tau|+1) \Big) d\tau.
\end{align*}
We invoke the uniform convergence of $(\gamma^n)_n$, the pointwise convergence of $(\sigma_\tau^{t_n}[\mu_n])_n$ provided in \eqref{lem:summary-bounds2} and the convexity of the functions in \eqref{ass:convexity} to conclude that 
\begin{align*} 
\liminf_{n \rightarrow \infty}u(t_n, q_n, \mu_n)\geq & u_0\big( \gamma_0, \sigma_0^{t}[\mu]\big) 
+  \int_0^{t-\delta} \Big( L\big(\gamma_\tau, \dot \gamma_\tau \big)+f\big(\gamma_\tau, \sigma_\tau^{t}[\mu]\big)-\theta(\sigma_\tau^{t}[\mu]) (|\gamma_\tau|+1) \Big) d\tau \\
+&  \int_0^{t-\delta} \theta(\sigma_\tau^{t}[\mu]) (|\gamma_\tau|+1) d\tau.
\end{align*}
We let $\delta$ tend to $0$ to conclude that 
\[
\liminf_{n \rightarrow \infty}u(t_n, q_n, \mu_n)\geq 
u_0\big( \gamma_0, \sigma_0^{t}[\mu]\big)+ \int_0^{t} \Big( L\big(\gamma_\tau, \dot \gamma_\tau \big)+f\big(\gamma_\tau, \sigma_\tau^{t}[\mu]\big) \Big)
\geq  u(t, q, \mu).
\]
Since Proposition \ref{prop:ext_u}  asserts that $u$ is continuous, we infer 
\[
u(t, q, \mu)= u_0\big( \gamma_0, \sigma_0^{t}[\mu]\big)+ \int_0^{t} \Big( L\big(\gamma_\tau, \dot \gamma_\tau \big)+f\big(\gamma_\tau, \sigma_\tau^{t}[\mu]\big) \Big) d\tau.
\]
and so, $\gamma_s \equiv S_s^t[\mu](q).$ 

In conclusion, we have proven that every subsequence of $\big( S_s^t[\mu_n](q_n)\big)_n$ admits itself a subsequence which converges uniformly on every compact subset of $[0, t).$ This is enough to conclude the proof. \endproof

\begin{proposition}\label{prop:homeo1} Let $t>0$. Then the following hold. 
\begin{enumerate} 
\item[(i)] $\Sigma(t, \cdot)$ given in \eqref{eq:Hamiltonian-Flow} is of class $C^{0,1}_{\rm{loc}}.$ 
\item[(ii)] $\tilde \xi_t:\bH \rightarrow \bH$ is a bijection and its inverse is $\tilde S_0^t.$ For each natural number $m$,  $\tilde \xi_t$ is a homeomorphism $\{M^q\; : \; q \in \M^m\}$ onto $\{M^q\; : \; q \in \M^m\}$. This means $S_s^{t, m}: \M^m \rightarrow \M^m$ is a homeomorphism. 
\item[(iii)] $\tilde S_s^t \circ \tilde \xi_t= \tilde  \xi_s$ and $\tilde P_s^t \circ \tilde \xi_t= \tilde \eta_s$ for $s \in [0, t]$. 
\item[(iv)] We have $\nabla \tilde \cU(t, \tilde\xi(t, \cdot))= \tilde \eta(t, \cdot).$ Furthermore, the vector field $B$ in \eqref{eq:define-velocity} is a velocity for the flow $\tilde\xi$ in the sense that $\dot {\tilde \xi}=\nabla_b \tilde \cH(\tilde \xi, \tilde \nabla \cU(\cdot, \tilde \xi))$
\end{enumerate}
\end{proposition}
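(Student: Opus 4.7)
My plan is to prove parts (i)--(iv) essentially in order, using Proposition \ref{theorem:hilbert-smooth} as the main input linking the action-minimizing characteristics to the Hamiltonian flow starting from the graph of $\nabla\tilde\cU_0$.

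For (i), I would observe that by \eqref{ass:LipschitzH} and \eqref{ass:LipschitzUandF} the vector field $\nabla^\perp \tilde\cH:\bH^2\to\bH^2$ is globally Lipschitz. Applied to two solutions $\Sigma(\cdot,z_1)$ and $\Sigma(\cdot,z_2)$ of the Hamiltonian system \eqref{eq:Hamiltonian-Flow}, a Gronwall argument gives $\|\Sigma(t,z_1)-\Sigma(t,z_2)\|\le e^{Ct}\|z_1-z_2\|$, which establishes global Lipschitz dependence in the initial datum; restricting to the graph of $\nabla\tilde\cU_0$ and using that the latter is Lipschitz by \eqref{ass:LipschitzUandF} gives the local Lipschitz property of $(\tilde\xi_t,\tilde\eta_t)$.

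For (ii) and (iii), the main idea is to use Proposition \ref{theorem:hilbert-smooth} (iv): for every $y\in\bH$ the minimizer $S=\tilde S_\cdot^t[y]$ and its dual $P=\tilde P_\cdot^t[y]$ solve the Hamilton system \eqref{eq:app_Hamiltonian_sys_1} with boundary condition $S(t)=y$ and $P(0)=\nabla\tilde\cU_0(S(0))$. Setting $x:=S(0)=\tilde S_0^t[y]$, the pair $(S,P)$ is also a solution to \eqref{eq:Hamiltonian-Flow} starting at $(x,\nabla\tilde\cU_0(x))$, hence by uniqueness it coincides with $(\tilde\xi_\cdot(x),\tilde\eta_\cdot(x))$. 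Evaluating at $s=t$ yields $\tilde\xi_t(\tilde S_0^t[y])=y$, i.e.\ $\tilde\xi_t\circ\tilde S_0^t=\id$. For the other composition, given $x\in\bH$ set $y:=\tilde\xi_t(x)$; the same uniqueness yields $(\tilde S_s^t[y],\tilde P_s^t[y])=(\tilde\xi_s(x),\tilde\eta_s(x))$ for all $s\in[0,t]$, which is precisely (iii) and, at $s=0$, gives $\tilde S_0^t\circ\tilde\xi_t=\id$. This establishes that $\tilde\xi_t$ is a bijection of $\bH$ with continuous inverse $\tilde S_0^t$ (the continuity of $\tilde S_0^t$ follows from Proposition \ref{prop:summary-bounds3} or directly from (i) combined with the continuity of the map $y\mapsto \tilde S_0^t[y]$). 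For the finite-dimensional invariance, I would use \eqref{eq:finite-maps}, which asserts that $\nabla\tilde\cU_0$ sends $\{M^q:q\in\M^m\}$ to itself, together with the fact that the rearrangement-invariance of $\tilde\cH$ implies that if $(x,b)$ is piecewise constant on the partition $(\Omega_i^m)_i$ then so are $\nabla_x\tilde\cH(x,b)$ and $\nabla_b\tilde\cH(x,b)$; consequently the flow $\Sigma$ preserves the subspace of piecewise-constant pairs, so $\tilde\xi_t$ maps $\{M^q:q\in\M^m\}$ into itself. The bijectivity on this finite-dimensional set is inherited from the bijectivity on all of $\bH$, and continuity of the inverse in the finite-dimensional setting is automatic once continuity in $\bH$ is known.

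For (iv), I would evaluate (iii) at $s=t$: since $\tilde P_t^t[y]=\nabla_a\tilde\cL(y,\dot S(t))=\nabla\tilde\cU(t,y)$ by \eqref{eq:Euler-Lagrange-L}, taking $y=\tilde\xi_t(x)$ gives $\tilde\eta_t(x)=\tilde P_t^t[\tilde\xi_t(x)]=\nabla\tilde\cU(t,\tilde\xi_t(x))$, which is the first assertion. Substituting this identity into the first equation of \eqref{eq:Hamiltonian-Flow} yields $\dot{\tilde\xi}=\nabla_b\tilde\cH(\tilde\xi,\tilde\eta)=\nabla_b\tilde\cH(\tilde\xi,\nabla\tilde\cU(\cdot,\tilde\xi))$, giving the Eulerian velocity formula.

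The only real subtlety I anticipate is in (ii), where surjectivity and bijectivity of $\tilde\xi_t$ require Proposition \ref{theorem:hilbert-smooth} (iii) (the minimizer $S$ exists for \emph{every} $y\in\bH$); without the existence part of that proposition, $\tilde\xi_t$ would only be injective. The finite-dimensional statement also requires the explicit preservation of the discrete structure by $\Sigma$, for which the rearrangement invariance of $\tilde\cH$ is essential; all other steps are straightforward consequences of uniqueness in the Hamiltonian ODE and of the smoothness already established in Proposition \ref{theorem:hilbert-smooth}.
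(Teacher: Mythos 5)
Your overall strategy for (i), (iii) and (iv) matches the paper's proof closely, and the two-sided ODE/convexity uniqueness argument you sketch for (ii) is the right one. Two points, however, deserve attention, and one is a genuine gap.

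First, a small imprecision: you describe both directions of the identity $\tilde S_0^t = \tilde\xi_t^{-1}$ as following from ``the same uniqueness.'' They do not. The relation $\tilde\xi_t\circ\tilde S_0^t=\id$ uses forward ODE uniqueness for \eqref{eq:Hamiltonian-Flow} (both curves start at time $0$ from $\big(\tilde S_0^t[y],\nabla\tilde\cU_0(\tilde S_0^t[y])\big)$), whereas $\tilde S_0^t\circ\tilde\xi_t=\id$ requires uniqueness of critical points of the strictly convex action $\tilde\cA_0^t$ on $AC_2(0,t;\bH_y)$: the curve $s\mapsto\tilde\xi_s(x)$ hits $y=\tilde\xi_t(x)$ at time $t$, satisfies the Euler--Lagrange system and the transversality condition $\tilde\eta_0(x)=\nabla\tilde\cU_0(\tilde\xi_0(x))$, hence by strict convexity \emph{must} be $\tilde S_\cdot^t[y]$. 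Stating which uniqueness is used where would make the argument airtight.

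Second, and more seriously, your justification of the homeomorphism claim on $\{M^q:q\in\M^m\}$ does not close under the standing hypotheses of this proposition. You assert in passing that ``$\tilde\xi_t$ is a bijection of $\bH$ with continuous inverse $\tilde S_0^t$'' and invoke Proposition \ref{prop:summary-bounds3}. But that proposition relies on \eqref{ass:convexity} and the structure of $u$, $f$, $u_0$ from Section \ref{sec:master_equations}, which are \emph{not} part of the assumptions \eqref{ass:F-U0-C11new1}--\eqref{ass:UpperboundonL} under which Proposition \ref{prop:homeo1} is stated; the paper explicitly flags this in the remark immediately following the proposition. And the alternative you offer, ``directly from (i) combined with the continuity of the map $y\mapsto\tilde S_0^t[y]$,'' is circular, since continuity of $\tilde S_0^t$ is exactly what would need proving. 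The correct route, which the paper takes, is purely finite-dimensional: one shows via \eqref{eq:relation-m-infty} (equivalently, via the rearrangement-invariance argument you gave) that $\tilde\xi_t$ restricts to a continuous bijection of the finite-dimensional subspace $\{M^q:q\in\M^m\}\cong\M^m$ onto itself, and then invokes the Invariance of Domain theorem to conclude the inverse is continuous. Calling this step ``automatic'' without naming the theorem obscures that a genuinely nontrivial topological fact is doing the work, and the detour through $\bH$-continuity is both unnecessary and unavailable here.
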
 
\begin{remark}
Although  $\tilde\xi_t$ is a homeomorphism, let us underline that in Proposition \ref{prop:homeo1}(ii) we state that the image of $\{M^q\; : \; q \in \M^m\}$ through $\tilde\xi_t$ is not an arbitrarily closed space but is exactly $\{M^q\; : \; q \in \M^m\}$. Such special vector spaces are mapped onto themselves. Otherwise, we would not be able to conclude that the finite dimensional ODEs are restrictions of the infinite dimensional ones.
\end{remark}
\proof[Proof of Proposition \ref{prop:homeo1}] (i) Since $\tilde \cH$ is of class $C^{1,1}$, $\Sigma$ is Lipschitz continuous. Let $\kappa^*$ be the Lipschitz constant of $\nabla \tilde \cH$. We have 
\[
Lip( \Sigma(t, \cdot)) \leq Lip( \Sigma(0, \cdot)) e^{t \kappa^* }
\]
for all $t>0$. Here, $Lip( \Sigma(t, \cdot))$ stands for the Lipschitz constant of $\Sigma(t, \cdot)$.  

Since $\Sigma$ satisfies \eqref{eq:Hamiltonian-Flow}, we conclude that $\Sigma$ is of class  $C^{0,1}_{\rm loc}.$ 

(ii) {\em Surjectivity.} Given any $x \in \bH.$ Set $z:=\tilde S_0^t[x]$ and define 
\[
\gamma(s)=\tilde S_s^t[x], \quad b(s)=\nabla_{a} \tilde\cL(\gamma(s), \dot \gamma(s)).
\]
We have that $(\gamma, b)$ satisfies the same system of differential equations as $(\tilde \xi, \tilde \eta)$ on $(0, t)$. Furthermore, $\gamma(0)=z$ and 
\[
b(0)=\nabla_b \cL(\tilde S_s^t[x], \partial_s \tilde S_s^t[x]|_{s=0}) =\nabla \tilde \cU_0(z).
\]
Thus, $(\gamma, b)$ have the same initial conditions as  $(\tilde \xi, \tilde \eta)$. Hence, conclude that $\gamma \equiv \tilde\xi(\cdot, z)$ on $[0, t]$. In particular, $x=\tilde S_t^t[x]= \tilde\xi(t, z)= \tilde\xi(t,\tilde S_0^t[x])$. This  shows the surjectivity property.

{\em  Injectivity.} The above show that $\tilde S_0^t$ is injective and $\tilde\xi(t, \cdot)$ is its inverse. To show that $\tilde\xi(t, \cdot)$ is injective, it suffices to show that $\bH$ is the range of $\tilde S_0^t$. Let $z_0 \in \bH$. Set $x_0:=\tilde\xi(t, z_0)$ set 
\[
\gamma(s)=\tilde\xi(s, z_0), \quad g(s)=\tilde \eta(s, z_0).
\]
Then $(\gamma, g)$ satisfies the same system of differential equations as $[0, t]\ni s \mapsto (\tilde S_s^t[x_0], \tilde P_s^t[x_0])$ on $(0, t)$. We have $\gamma(t)=x_0$ and 
$$g(0)=\tilde \eta(0, z_0)= \nabla \tilde \cU_0(z_0)= \nabla \tilde \cU_0(\gamma(0)).$$ 
Thus,  $(\gamma, g)(s)\equiv (\tilde S_s^t[x_0], \tilde P_s^t[x_0])$ on $[0, t]$. In particular, $z_0=\gamma(0)=\tilde S_0^t[x_0].$  Thus, $\tilde S_0^t$ is  surjective. 

{\em  Continuity.} Since $\tilde \xi_t$ is a bijection of $\bH$ onto $\bH$, \eqref{eq:relation-m-infty} and the Invariance of Domain theorem imply that $\tilde \xi_t$ is a homeomorphism of $\{M^q\; : \; q \in \M^m\}$ onto $\{M^q\; : \; q \in \M^m\}$. 

(iii) By (ii) 
$$ \tilde S_0^t \circ \tilde \xi_t=\id_{\bH} =\tilde\xi_0\quad \text{and}\quad \tilde P_0^t \circ \tilde\xi_t= \nabla \tilde \cU_0\big( \tilde S_0^t \circ \xi_t\big) = \nabla \tilde \cU_0=\tilde \eta_0.$$
Since $s\mapsto (\tilde S_s^t \circ \tilde\xi_t, \tilde P_s^t \circ \tilde \xi_t)$ and  $s\mapsto (\tilde \xi_s, \tilde \eta_s)$ satisfy the same system of differential equations on $(0, t)$, we obtain the assertions in (iii).

(iv) We use first Proposition \ref{theorem:hilbert-smooth} (iv) and then (i) of the current Proposition to obtain that $\nabla \tilde \cU(t, \tilde \xi(t, \cdot))= \tilde \eta(t, \cdot).$  We use the identity 
$\dot {\tilde \xi}=\nabla_b \tilde \cH(\tilde \xi, \tilde \eta)$ to conclude the proof. 
\endproof

\begin{remark}
(i) We notice that Proposition \ref{prop:summary-bounds3}, which imposes \eqref{ass:convexity}, allows to improve the continuity property of $\tilde\xi_t$ and its inverse to the infinite dimensional space, i.e. this implies that $\tilde\xi_t$ is a homeomorphism of $\bH$ onto itself.

(ii) We observe that by Proposition \ref{prop:homeo1}(iv) we have that $\nabla\tilde\cU(t,\cdot)=\tilde\eta(t,\tilde S^t_0[\cdot])$, and since both $\tilde\eta$ and $\tilde S^t_0$ are locally Lipschitz continuous (by (i) of the previous proposition and Lemma \ref{lem:summary-bounds2}, respectively) we have that $\nabla\tilde\cU(t,\cdot)$ is locally Lipschitz continuous, just as in Subsection \ref{subsec:standardresults}, by a different perspective one obtains that $\tilde\cU(t,\cdot)\in C^{1,1}_{\rm{loc}}(\bH)$.
\end{remark}

\subsection{Flows on $\bH$, on $\cP_2(\M)$ and their properties}\label{subsec:flows}

%
\begin{lemma}\label{lem:unique-path} Let $x, y \in \bH$ be such that $\sharp(x)=\sharp(y).$ Then for $0\leq s \leq t$, we have $\sharp\Big(\tilde S_s^t[x]\Big)=  \sharp\Big(\tilde S_s^t[y]\Big).$ As a consequence, given $\mu \in \cP_2(\M)$ the following measures are well--defined 
\begin{equation}\label{def:sigma_optimal}
\sigma_s^t[\mu]:= \sharp\Big(\tilde S_s^t[x]\Big)
\end{equation}
where $\sharp(x)=\mu$, depends only on $\mu$ and is independent of the choice of $x.$
\end{lemma}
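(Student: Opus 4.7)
The plan is to combine the rearrangement invariance built into the data with the strict convexity of the action: first I would show that $y \mapsto \tilde S_s^t[y]$ is equivariant under post-composition by Lebesgue-preserving Borel maps of $\Om$, then reduce the general equal-law case to this equivariant setting using that the equidistribution class of any $x\in\bH$ is the orbit (or its closure) of $x$ under such maps.

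For the equivariance, let $E:\Om\to\Om$ be a Borel map with $E_\sharp \cL^d_\Om=\cL^d_\Om$. The rearrangement invariance of $\tilde\cF$ and $\tilde\cU_0$ imposed in \eqref{ass:F-U0-C11new1}, together with the change of variables formula, gives
\[
\tilde\cL(x\circ E, a\circ E) = \tilde\cL(x, a), \qquad \tilde\cU_0(z\circ E)=\tilde\cU_0(z), \qquad \forall x, a, z\in\bH.
\]
Consequently $S\mapsto S\circ E$ is an action-preserving bijection from $AC_2(0,t;\bH_y)$ onto $AC_2(0,t;\bH_{y\circ E})$. Since $\tilde\cA_0^t$ is strictly convex on $AC_2(0,t;\bH)$ by \eqref{ass:convexity-on-tildeL} and $\tilde\cU_0$ is convex by \eqref{ass:F-U0-C11new2}, Proposition \ref{theorem:hilbert-smooth}(iii) provides a unique minimizer in each fiber, forcing the equivariance
\[
\tilde S_s^t[y\circ E] = \tilde S_s^t[y]\circ E, \qquad 0\le s\le t.
\]
Taking push-forwards and invoking $E_\sharp\cL^d_\Om=\cL^d_\Om$ yields $\sharp(\tilde S_s^t[y\circ E])=\sharp(\tilde S_s^t[y])$.

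To handle a general pair $x, y\in\bH$ with $\sharp(x)=\sharp(y)$, I would invoke the classical fact that on the standard non-atomic probability space $(\Om,\cL^d_\Om)$ the orbit $\{x\circ E : E_\sharp\cL^d_\Om=\cL^d_\Om\}$ is $\bH$-dense in the equidistribution class of $x$ (cf.\ the rearrangement-invariance arguments in \cite{GangboT2017} and the proof of Proposition \ref{prop:semi-convex1}(i)). Choosing $E_n$ with $x\circ E_n\to y$ in $\bH$, the continuity of $z\mapsto\tilde S_s^t[z]$ on $\bH$, which follows from Proposition \ref{theorem:hilbert-smooth}(ii) together with the local Lipschitz regularity of the Hamiltonian flow in Proposition \ref{prop:homeo1}(i), combines with the equivariance above to give
\[
\sharp\bigl(\tilde S_s^t[x]\bigr)=\sharp\bigl(\tilde S_s^t[x\circ E_n]\bigr)\;\xrightarrow[n\to\infty]{W_2}\;\sharp\bigl(\tilde S_s^t[y]\bigr),
\]
which closes the proof and justifies the definition \eqref{def:sigma_optimal}.

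The main technical point is the density of the measure-preserving orbit in the equidistribution class; this is standard on atomless Lebesgue spaces but deserves care in the statement. A cleaner alternative is to construct, via disintegration and measurable selection on $(0,1)^d$, a single Borel measure-preserving map $E$ with $y=x\circ E$ almost everywhere; in that case the equivariance step alone concludes the proof, with no limiting argument required.
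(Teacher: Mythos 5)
Your argument matches the paper's own proof: both approximate $y$ by $x\circ S_n$ with $S_n$ Borel, bijective, and Lebesgue-preserving, use the equivariance $\tilde S_s^t[x\circ S_n] = \tilde S_s^t[x]\circ S_n$ (which you correctly derive from the rearrangement invariance of $\tilde\cL, \tilde\cU_0$ and uniqueness of the minimizer in Proposition~\ref{theorem:hilbert-smooth}(iii)), and pass to the limit via continuity of $z\mapsto\tilde S_s^t[z]$. The paper states the density and equivariance facts more tersely, but the ingredients and the structure of the argument are identical.
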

\begin{proof} Since $\sharp(x)=\sharp(y),$ there exist Borel bijective maps $S_n: \Omega \rightarrow \Omega$ such that (cf. e.g. \cite{Cardaliaguet} \cite{GangboT2017})
$$\sharp(S_n)=\sharp(S_n^{-1})=\cL^d_\Om, \quad \lim_{n \rightarrow \infty} \|y-x\circ S_n\|=0.$$ 
Thus, 
\[
\lim_{n \rightarrow \infty} \Big\|\tilde S_s^t[y]-\tilde S_s^t[x]\circ S_n\Big\|=\lim_{n \rightarrow \infty} \Big\|\tilde S_s^t[y]-\tilde S_s^t[x \circ S_n]\Big\|= 0.
\]
This proves 
\[
W_2\Big(\sharp\Big(\tilde S_s^t[y]\Big), \sharp\Big(\tilde S_s^t[x]\Big)\Big)= \lim_{n \rightarrow \infty} W_2\Big(\sharp\Big(\tilde S_s^t[x]\circ S_n\Big), \sharp\Big(\tilde S_s^t[x]\Big)\Big)=0.
\]
\end{proof}

\begin{remark}\label{rem:summary-bounds1} The following hold.       
\begin{enumerate}
\item[(i)] By Proposition \ref{theorem:hilbert-smooth}, there exists $e_T:[0,\infty) \rightarrow [0,\infty)$, monotone non--decreasing such that 
\[
\|\tilde S_s^t[x]\|,\; \|\partial_s \tilde S_s^t[x]\|
\leq e_T\bigl(\|x\|\bigr)  \qquad \forall s \in [0,t], \forall t \in [0,T].
\]
\item[(ii)] By (i) 
\[
\{\sigma_s^t[\mu]\, : \, \mu \in \cB_r, 0\leq s \leq t \leq T \} \subset  \cB_{e_T(r)}
\]
\item[(iii)] By Proposition \ref{theorem:hilbert-smooth} again, there exists $C_T:(0,\infty) \rightarrow (0, \infty)$ monotone non--decreasing such that  
\[
\|\nabla \tilde \cU(t, x)\| \leq C_T(r)(1+ \|x\|), \qquad \forall x \in \bB_r(0), \forall t \in [0,T].
\]
\item[(iv)] By Lemma \ref{lem:c11_equiv}, the regularity property obtained on $\tilde \cU$ in Proposition \ref{theorem:hilbert-smooth}, we have that $\cU$ is differentiable. We use Proposition \ref{prop:homeo1} (iv) to conclude that $(s, q) \mapsto D_pH\Big(q, \nabla_w \cU\big(s, \sigma_s^t[\mu]\big)(q)\Big)$ is a velocity for $s \mapsto \sigma_s^t[\mu].$ In other words 
\begin{equation}\label{continuity}
\partial_s  \sigma_s^t[\mu]+\nabla\cdot\Big(D_pH\big(\cdot,\nabla_w\cU(s, \sigma_s^t[\mu])\big)\sigma_s^t[\mu]\Big) =0,\ {\rm{in}}\ \mathcal D'((0,t)\times\M),\qquad  \sigma_t^t[\mu]=\mu.
\end{equation}
\end{enumerate}
\end{remark}

%
%
\begin{lemma}\label{lem:summary-bounds2} Suppose $0<t\leq \ov t\leq T$ and $r>0.$ Then there exists a constant $C(r, T)$ monotone increasing in $r$ such that the following hold.         
\begin{enumerate}
\item[(i)] If $x, y \in \bB_r(0)$ then 
\[
\|\tilde S_s^{\ov t}[x]-\tilde S_s^t[y]\| \leq e^{C(r, T)( t-s)}\Big( |\ov t-t| e_T(\|x\|) + \|x-y\|  \Big) \qquad \forall s \in [0, t].
\]
and 
\[
\|\tilde S_s^{\ov t}[x]-\tilde S_t^{\ov t}[x]\| \leq (s-t) e_T(r)  \qquad \forall s \in [t, \ov t].
\]
\item[(ii)]  If $\mu, \nu \in \cB_r$ then 
\begin{equation}\label{eq:jan03.01.2020.3}
W_2\Big(\sigma^{\ov t}_s[\mu], \sigma^t_s[\nu] \Big) \leq      e^{C(r, T)( t-s)}\Big( |\ov t-t| e_T(r) + W_2(\mu, \nu) \Big) \qquad \forall s \in [0, t].
\end{equation}
and 
\[
W_2\Big(\sigma^{\ov t}_s[\mu], \sigma^{\ov t}_t[\mu] \Big) \leq (s-t) e_T(r) \qquad \forall s \in [t, \ov t].
\]
\end{enumerate}
\end{lemma}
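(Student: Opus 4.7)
The plan is to reduce (ii) to (i) via the lifting $\sharp$ and optimal couplings, and to establish (i) by a backward Grönwall argument on the Hamiltonian system. The second inequality in each of (i) and (ii) is immediate, so I begin with that to dispose of it, then focus on the first inequality which is the substantive one.

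First I would observe that for $s\in[t,\ov t]$,
\[
\tilde S_s^{\ov t}[x]-\tilde S_t^{\ov t}[x]=\int_t^{s}\partial_\tau \tilde S_\tau^{\ov t}[x]\, d\tau,
\]
so Remark \ref{rem:summary-bounds1} (i) together with $\|x\|\le r$ yields $\|\tilde S_s^{\ov t}[x]-\tilde S_t^{\ov t}[x]\|\le (s-t)e_T(r)$, and the analogous Wasserstein statement follows by taking $x\in\bH$ with $\sharp(x)=\mu$ and $\|x\|=W_2(\delta_0,\mu)\le r$, then using Lemma \ref{lem:unique-path}.

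For the main estimate in (i), set $(S_1(s),P_1(s)):=(\tilde S_s^{\ov t}[x],\tilde P_s^{\ov t}[x])$ and $(S_2(s),P_2(s)):=(\tilde S_s^t[y],\tilde P_s^t[y])$. Both solve the same Hamiltonian system \eqref{eq:app_Hamiltonian_sys_1} on $[0,t]$. Let $\kappa^\ast$ denote the Lipschitz constant of $\nabla\tilde\cH$ on a sufficiently large ball (its existence and uniform control are ensured by \eqref{ass:LipschitzH} and Remark \ref{rem:summary-bounds1} (i)--(iii)). Setting
\[
d(s)^2:=\|S_1(s)-S_2(s)\|^2+\|P_1(s)-P_2(s)\|^2,
\]
the Hamiltonian equations together with the Lipschitz bound give $|\tfrac{d}{ds}d(s)^2|\le 4\kappa^\ast d(s)^2$, so Grönwall in the backward direction from $s=t$ produces
\[
d(s)\le d(t)\, e^{2\kappa^\ast(t-s)},\qquad s\in[0,t].
\]

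It remains to estimate $d(t)$. Since $\tilde S_t^t[y]=y$ and $\tilde S_t^{\ov t}[x]=x+\int_{\ov t}^{\,t}\partial_\tau \tilde S_\tau^{\ov t}[x]\,d\tau$, Remark \ref{rem:summary-bounds1} (i) yields $\|\tilde S_t^{\ov t}[x]-y\|\le\|x-y\|+|\ov t-t|\,e_T(\|x\|)$. For the momentum part, I will use Proposition \ref{theorem:hilbert-smooth} (iv) (equivalently Proposition \ref{prop:homeo1} (iv)), which gives $\tilde P_t^{\ov t}[x]=\nabla\tilde\cU(t,\tilde S_t^{\ov t}[x])$ and $\tilde P_t^t[y]=\nabla\tilde\cU(t,y)$; combining this with the Lipschitz regularity of $\nabla\tilde\cU(t,\cdot)$ on bounded sets (Proposition \ref{prop:semi-convex1} (ii), whose constant depends only on $T$ and $r$ by Remark \ref{rem:summary-bounds1}) gives
\[
\|\tilde P_t^{\ov t}[x]-\tilde P_t^t[y]\|\le L_T(r)\bigl(\|x-y\|+|\ov t-t|\,e_T(\|x\|)\bigr).
\]
Absorbing constants into a single $C(r,T)$ and using $\|S_1(s)-S_2(s)\|\le d(s)$ yields the first inequality in (i).

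Finally, for (ii), choose $x,y\in\bH$ with $\sharp(x)=\mu$, $\sharp(y)=\nu$ and $\|x-y\|=W_2(\mu,\nu)$; by Remark \ref{rem:summary-bounds1} (ii) we may take $\|x\|,\|y\|\le r$. Lemma \ref{lem:unique-path} gives $\sharp(\tilde S_s^{\ov t}[x])=\sigma_s^{\ov t}[\mu]$ and $\sharp(\tilde S_s^t[y])=\sigma_s^t[\nu]$, so the definition of $W_2$ yields $W_2(\sigma_s^{\ov t}[\mu],\sigma_s^t[\nu])\le\|\tilde S_s^{\ov t}[x]-\tilde S_s^t[y]\|$, and the claim follows from (i). The only delicate point will be bookkeeping the exponent and confirming that $C(r,T)$ is truly monotone in $r$, which comes automatically from the monotonicity of $e_T$ and of the Lipschitz constants on balls of radius $e_T(r)$; no single step is genuinely hard once the backward-Grönwall direction is identified.
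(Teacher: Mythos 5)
Your overall strategy (backward Gr\"onwall plus lifting from $\bH$ to $\cP_2(\M)$) matches the paper's, but you differ in a way that actually prevents you from obtaining the stated form of the estimate. You run the Gr\"onwall argument on the full phase-space distance $d(s)^2=\|S_1-S_2\|^2+\|P_1-P_2\|^2$, whereas the paper's proof closes the ODE in the position variable alone: since $\tilde P_s^t[z]=\nabla\tilde\cU(s,\tilde S_s^t[z])$, the velocity field $D_pH(\cdot,\nabla\tilde\cU(s,\cdot))$ lets one write a self-contained ODE for $\|\tilde S_s^{\ov t}[x]-\tilde S_s^t[x]\|^2$ (and separately for $\|\tilde S_s^t[x]-\tilde S_s^t[y]\|^2$), with the local Lipschitz constant of $\nabla\tilde\cU(s,\cdot)$ from Proposition \ref{theorem:hilbert-smooth} entering the Gr\"onwall exponent. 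In that version, the initial mismatch at $s=t$ is exactly $\|\tilde S_t^{\ov t}[x]-x\|\le|\ov t-t|e_T(\|x\|)$ (and respectively $\|x-y\|$), so the bound is sharp at $s=t$ and comes out precisely in the claimed form $e^{C(r,T)(t-s)}(\cdot)$.

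The concrete gap in your argument is in the estimate of $d(t)$. You correctly bound $\|\tilde P_t^{\ov t}[x]-\tilde P_t^t[y]\|\le L_T(r)\|\tilde S_t^{\ov t}[x]-y\|$ via Proposition \ref{theorem:hilbert-smooth}(iv), but this leaves $d(t)\le\sqrt{1+L_T(r)^2}\,\bigl(|\ov t-t|e_T(\|x\|)+\|x-y\|\bigr)$, so after Gr\"onwall you obtain
\[
\|\tilde S_s^{\ov t}[x]-\tilde S_s^t[y]\|\le \sqrt{1+L_T(r)^2}\,e^{\kappa^*(t-s)}\Bigl(|\ov t-t|e_T(\|x\|)+\|x-y\|\Bigr).
\]
The $r$-dependent prefactor $\sqrt{1+L_T(r)^2}>1$ does not vanish as $s\to t$ and therefore cannot be absorbed into an exponent of the form $e^{C(r,T)(t-s)}$, which is identically $1$ at $s=t$. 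So your proof establishes a correct Lipschitz-type estimate but of the weaker form $A(r)e^{B(t-s)}(\cdot)$, not the statement as written. To close the gap, run the Gr\"onwall directly on the position trajectories using the feedback form $\dot{\tilde S}_s^t[\cdot]=D_pH\bigl(\tilde S_s^t[\cdot],\nabla\tilde\cU(s,\tilde S_s^t[\cdot])\bigr)$ (as in \eqref{eq:flow}), which both removes the prefactor and makes the $r$-dependence land in the exponent where it belongs; the remaining pieces of your argument (the time-Lipschitz bound for $s\in[t,\ov t]$, the triangle inequality split $\tilde S_s^{\ov t}[x]\to\tilde S_s^t[x]\to\tilde S_s^t[y]$, and the lifting for part (ii)) are correct and match the paper's.
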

\proof{} (i) Let $x, y \in \bB_r(0).$ 

We have 
\[
\big\|x-\tilde S_t^{\ov t}[x]\big\| = \Big\|\int_t^{\ov t} \partial_s  \tilde S_s^{\ov t}[x] ds \Big\| \leq \int_t^{\ov t} \big\| \partial_s  \tilde S_s^{\ov t}[x]  \big\| ds
\]
We use Remark \ref{rem:summary-bounds1} (i)  to infer 
\begin{equation}\label{eq:jan03.01.2020.4}
\big\|x-\tilde S_t^{\ov t}[x]\big\| \leq |{\ov t}-t| e_T(\|x\|). 
\end{equation}

Set
\[
h(s):= {1\over 2} \big\|\tilde S_s^{\ov t}[x]-\tilde S_s^t[x]\big\| \qquad \forall s \in [0, t].
\]
We have
\begin{align}
h'(s)=& \int_\Om \big(\tilde S_s^{\ov t}[x]-\tilde S_s^t[x]\big) \cdot 
\Big( D_pH\big(\tilde S_s^{\ov t}[x], \nabla \tilde U(s, \tilde S_s^{\ov t}[x]) \big)- D_pH\big( \tilde S_s^t[x], \nabla \tilde U(s, \tilde S_s^t[x]) \big) \Big) d\omega. \nonumber
\end{align}
By the fact that $DH$ is Lipschitz we have  
\begin{align*}
& \bigg| D_pH\big(\tilde S_s^{\ov t}[x], \nabla \tilde U(s, \tilde S_s^{\ov t}[x]) \big)- D_pH\big( \tilde S_s^t[x], \nabla \tilde U(s, \tilde S_s^t[x]) \big) \bigg|^2\\
\leq 
& \kappa_0^2 
\Big( \big|\tilde S_s^{\ov t}[x]- \tilde S_s^t[x]\big|^2 + \big|  \nabla \tilde U(s, \tilde S_s^{\ov t}[x])-\nabla \tilde U(s, \tilde S_s^t[x]) \big|^2\Big).
\end{align*}
We use Proposition \ref{theorem:hilbert-smooth}  to obtain a constant $C(r, T)$ which increases in $r$ and such that 

\[
 \bigg\| D_pH\big(\tilde S_s^{\ov t}[x], \nabla \tilde U(s, \tilde S_s^{\ov t}[x]) \big)- D_pH\big( \tilde S_s^t[x], \nabla \tilde U(s, \tilde S_s^t[x]) \big) \bigg\|
\leq    C(r, T) \|\tilde S_s^{\ov t}[x]-\tilde S_s^t[x]\|.
\]
This implies $h' \geq -2C(r, T)h$ and so, Gr\"onwall's inequality yields
\[ 
h(s) \leq     e^{2C(r, T)( t-s)} h(t) \qquad \forall s \in [0, t].
\] 
Thus, 
\[
\|\tilde S_s^{\ov t}[x]-\tilde S_s^t[x]\| \leq e^{C(r, T)( t-s)} \|\tilde S_t^{\ov t}[x]-\tilde S_t^t[x]\|= e^{C(r, T)( t-s)} \|\tilde S_t^{\ov t}[x]-x\|
\]
This, together with \eqref{eq:jan03.01.2020.4} implies 
\begin{equation}\label{eq:jan03.01.2020.5}
\|\tilde S_s^{\ov t}[x]-\tilde S_s^t[x]\| \leq e^{C(r, T)( t-s)} |{\ov t}-t| e_T(\|x\|). 
\end{equation}
We use arguments similar to the ones above to obtain 
\begin{equation}\label{eq:jan03.01.2020.2}
\|\tilde S_s^t[x]-\tilde S_s^t[y]\| \leq     e^{C(r, T)(t-s)} \|x-y\| \qquad \forall s \in [0, t].
\end{equation}
We combine \eqref{eq:jan03.01.2020.5} and \eqref{eq:jan03.01.2020.2} to verify the first identity in (i). The second identity follows from direct integration.

(ii) Let $\mu, \nu \in \cB_r$ and choose $x, y \in \bH$ such that $\sharp(x)=\mu$ and $\sharp(y)=\nu$ and $W_2(\mu,\nu)=\|x-y\|$. Since $\sharp\big(\tilde S_s^{\ov t}[x]\big)=\sigma^{\ov t}_s[\mu]$ and $\sharp\big(\tilde S_s^{ t}[y]\big)=\sigma^{t}_s[\nu]$, (i) implies (ii). \endproof

\subsection{Proof of Proposition \ref{theorem:hilbert-smooth}}\label{subsec:theorem:hilbert-smooth2} Let $y \in \bB_r(0)$.

(i) By Remark \ref{rem:discrete-cont}, $U^{(m)}$ is a viscosity solution to  \eqref{eq:discrte-visc1} and so, the standard theory of Hamilton--Jacobi equations in finite dimensional spaces yields  the pointwise identity
\[
U^{(m)}(t_2, q)-  U^{(m)}(t_1, q)=- \int_{t_1}^{t_2}  \cH^m\bigl(q, D_q  U^{(m)}(\tau, q) \bigr) d\tau
\]
for $q \in \M^m.$ We use \eqref{eq:discretegra1b} to infer 
\[
\tilde \cU(t_2, M^q)- \tilde \cU(t_1, M^q)=- \int_{t_1}^{t_2} \tilde \cH\bigl(M^q, \nabla \tilde \cU(\tau, M^q) \bigr) d\tau
\]
By Proposition \ref{prop:semi-convex1}(ii), when $r>1$, $\nabla \tilde \cU$ is bounded on $[t_1, t_2] \times \bB_r(y)$. Observe that $\nabla \tilde \cU(\tau, \cdot)$ is continuous when $\tau \in [t_1, t_2]$ and $\tilde \cH$ is continuous. Since $\{M^q : q\in \M^m, m \in \mathbb N \}$ is dense in $\bH$, (i) holds. 

(ii) On first obtain a finite number $c(r, T)$ increasing in the variables $r$ and $T$ such that 
\begin{equation}\label{eq:nabla-time-Lip1}
\Bigl|\tilde \nabla \cU(t_2, y)- \tilde \nabla \cU(t_1, y)\Bigr|    \leq 2 c(r, T) |t_2-t_1|.
\end{equation}
This together with the space Lipschitz property of $\nabla \tilde \cU$ implies $\nabla \tilde \cU$ is Lipschitz on $[0, T] \times \bB_r(0)$. As a composition of locally--Lipschitz functions,  $(\tau, x) \mapsto \tilde \cH(x, \nabla \tilde \cU(\tau, x))$ is Lipschitz on $[0, T] \times \bB_r(0)$. Hence since by (i) $\partial_t \tilde  \cU=-\tilde \cH\bigl(\cdot, \nabla \tilde \cU\bigr)$, we conclude $\partial_t \tilde \cU$ is Lipschitz on  $[0,T] \times \bB_r(0)$. 

(iii--v) We refer the reader to \cite{GomesN2014}. \endproof

%


\begin{thebibliography}{99}

\bibitem{Ahuja}
\newblock {\sc S. Ahuja},  
\newblock {\em Wellposedness of Mean Field Games with Common Noise under a Weak Monotonicity Condition},
\newblock {SIAM J. Control Optim.} 54 (2016), 30-48.

\bibitem{ARY}
\newblock {\sc S. Ahuja, W. Ren,  T.-W. Yang}, 
\newblock {\em Forward-backward stochastic differential equations with monotone functionals and mean field games with common noise}, 
\newblock {Stochastic Process. Appl.} 129 (2019), no. 10, 3859--3892.

\bibitem{Ambrose1}  
\newblock {\sc D.~Ambrose},  
\newblock \emph{Existence theory for non-separable mean field games in Sobolev spaces}, 
\newblock Indiana Univ. Math. J., {\em to appear}, arXiv:1807.02223.

\bibitem{Ambrose2}  
\newblock {\sc D.~Ambrose},  
\newblock \emph{Existence theory for a time-dependent mean field games model of household wealth}, 
\newblock Appl. Math. Optim., {\it to appear.}


\bibitem{AmbrosioG2008}  
\newblock {\sc L.~Ambrosio, W.~Gangbo},  
\newblock \emph{Hamiltonian ODEs in the Wasserstein space of probability measures}, 
\newblock Comm. Pure Appl. Math. 61 (2008), no. 1, 18--53.

\bibitem{AGS} 
\newblock {\sc L.~Ambrosio, N.~Gigli, G.~Savar\'e}, 
\newblock \emph{Gradient flows in metric spaces and the Wasserstein spaces of probability measures}, 
\newblock {\em Lectures in Mathematics}, ETH Z\"urich, Birkh\"auser, 2008.


\bibitem{BensoussanGY} 
\newblock {\sc A.~Bensoussan, P.J.~Graber, S.C.P.~Yam}, 
\newblock \emph{Stochastic Control on Space of Random Variables}, 
\newblock  arXiv:1903.12602.

\bibitem{BensoussanGY-2} 
\newblock {\sc A.~Bensoussan, P.J.~Graber, S.C.P.~Yam}, 
\newblock \emph{Control on Hilbert Spaces and Application to Mean Field Type Control Theory},
\newblock arXiv:2005.10770.

\bibitem{BensoussanY} 
\newblock {\sc A.~Bensoussan, S.C.P.~Yam}, 
\newblock \emph{Control problem on space of random variables and master equation}, 
\newblock  ESAIM Control Optim. Calc. Var. 25 (2019), Art. 10, 36 pp.

\bibitem{Bessi}
\newblock {\sc U. Bessi}, 
\newblock {\em Existence of solutions of the master equation in the smooth case}, 
\newblock SIAM J. Math. Anal. 48 (2016), no. 1, 204--228.



\bibitem{BuckdahnLPR}
\newblock{\sc R. Buckdahn, J. Li, S. Peng, C. Rainer}, 
\newblock \emph{Mean-field stochastic differential equations and associated PDEs},
\newblock Ann. Probab. 45 (2017), no. 2, 824--878.

\bibitem{CannarsaS} 
\newblock {\sc P.~Cannarsa, C.~Sinestrari}, 
\newblock \emph{ Semiconcave Functions, Hamilton--Jacobi Equations and Optimal Control}, 
\newblock Progress in Nonlinear Differential Equations and their Applications, 58. Birkh\"auser Boston, Boston, MA, 2004. xiv+304 pp.



\bibitem{Cardaliaguet}  
\newblock {\sc P.~Cardaliaguet},
\newblock  \emph{Notes on Mean-Field Games}, 
\newblock lectures by P.L. Lions, Coll\`{e}ge de France, 2010.

\bibitem{CardaliaguetDLL} 
\newblock {\sc P.~Cardaliaguet, F.~Delarue, J-M.~Lasry, P-L.~Lions}, 
\newblock \emph{The master equation and the convergence problem in mean field games},
\newblock  Annals of Mathematics Studies, 201. Princeton University Press, Princeton, NJ, 2019. x+212 pp.

\bibitem{CarmonaD2015}
\newblock {\sc R. Carmona, F. Delarue}, 
\newblock {\em Forward-backward stochastic differential equations and controlled McKean-Vlasov dynamics} 
\newblock Ann. Probab. 43 (2015), no. 5, 2647--2700.

\bibitem{CarmonaD-I} 
\newblock {\sc R. Carmona, F. Delarue}, 
\newblock \emph{Probabilistic theory of mean field games with applications. I. Mean field FBSDEs, control, and games.}, 
\newblock  Probability Theory and Stochastic Modelling, 83. Springer, Cham, 2018. xxv+713 pp.


\bibitem{CarmonaD-II} 
\newblock {\sc R. Carmona, F. Delarue}, 
\newblock \emph{ Probabilistic theory of mean field games with applications. II. Mean field games with common noise and master equations.}, 
\newblock Probability Theory and Stochastic Modelling, 84. Springer, Cham, 2018. xxiv+697 pp.

 
\bibitem{Carroll} {\sc F. W. Carroll}, \emph{A polynomial in each variable separately is a polynomial}, Amer. Math. Monthly 68 (1961), 42.

\bibitem{ChassagneuxCD} 
\newblock {\sc J.-F. Chassagneux, D. Crisan, F. Delarue},  
\newblock \emph{A probabilistic approach to classical solutions of the master equation for large population equilibria}, 
\newblock  Mem. Amer. Math. Soc., \emph{to appear}.

\bibitem{ChassagneuxST} 
\newblock {\sc J.-F. Chassagneux, L. Szpruch, A. Tse},  
\newblock \emph{Weak quantitative propagation of chaos via differential calculus on the space of measures}, 
\newblock Ann. Appl. Probab., to appear, arXiv:1901.02556.


\bibitem{ChowGan} 
\newblock {\sc Y.T.~Chow, W.~Gangbo},  
\newblock \emph{A partial Laplacian as an infinitesimal generator on the Wasserstein space}, 
\newblock J. Differential Equations 267 (2019), no. 10, 6065--6117.

\bibitem{CrandallLions} 
\newblock {\sc M.G.~Crandall, P.-L.~Lions}, 
\newblock \emph{Hamilton-Jacobi Equations in Infinite Dimensions I. Uniqueness of Viscosity Solutions}, 
\newblock J. Funct. Anal. 62 (1985), 379--396.

\bibitem{CL2} 
\newblock {\sc M.G.~Crandall, P.-L.~Lions}, 
\newblock \emph{Hamilton-Jacobi equations in infinite dimensions II. 
\newblock  Existence of viscosity solutions}, J. Funct. Anal. 65 (1986), 368--405.


\bibitem{DelarueLR:19} {\sc F. Delarue, D. Lacker, K. Ramanan}, \emph{From the master equation to mean field game limit theory: a central limit theorem}, Electron. J. Probab. 24 (2019), Paper No. 51, 54 pp.

\bibitem{DelarueLR:20} {\sc F. Delarue, D. Lacker, K. Ramanan}, \emph{From the master equation to mean field game limit theory: large deviations and concentration of measure}, Annals of Probability, to appear.

\bibitem{Dello} {\sc L. Dello Schiavo}, {\em A Rademacher-type theorem on $L^2$-Wasserstein spaces over closed Riemannian manifolds}, J. Funct. Anal. 278 (2020), no. 6, 108397, 57 pp.

\bibitem{Fat:weakKAM} 
\newblock  {\sc  A.~Fathi},  
\newblock  \emph{Weak KAM Theorem in Lagrangian Dynamics}, (preprint),
\newblock  {\rm to appear in Cambridge Studies in Advanced Mathematics}.


\bibitem{FischerS}
\newblock  {\sc M. Fischer, F. Silva}, 
\newblock  \emph{On the asymptotic nature of first order mean field games}, 
\newblock Appl. Math. Optim., 84 (2021), 2327--2357.

\bibitem{GNT2} 
\newblock  {\sc W.~Gangbo, T.~Nguyen, A.~Tudorascu}, 
\newblock  \emph{Hamilton-Jacobi equations in the Wasserstein space}, 
\newblock  Meth. Appl. Anal. 15, no. 2 (2008), pp. 155--184.

 
 \bibitem{GangboS2014} 
\newblock {\sc  W.~Gangbo, A.~Swiech},  
\newblock \emph{Optimal transport and large number of particles}, 
\newblock Discrete and Continuous Dynamical System (2014) Vol 34, Issue 4, 1397--1441.


\bibitem{GangboS2015} 
\newblock {\sc  W.~Gangbo, A.~Swiech},  
\newblock \emph{Existence of a solution to an equation arising from the theory of Mean Field Games}, 
\newblock Journal of Differential Equations (2015) Vol 259, Issue 11, 6573--6643.

\bibitem{GangboT2017}{\sc W.~Gangbo, A.~Tudorascu}, 
\newblock \emph{On differentiability in the Wasserstein space and well--posedness for Hamilton--Jacobi equations},
\newblock  Journal de Math\'ematiques Pures et Appliqu\'ees, Vol 125, 119--174, 2018.

\bibitem{GomesN2014}{\sc D.~Gomes, L.~Nurbekyan}, 
\newblock \emph{On the minimizers of calculus of variations problems in Hilbert spaces}, 
\newblock Calculus of Variations and Partial Differential Equations, Vol 52, Issue 1--2,  65--93.

\bibitem{LasryL-1} 
\newblock  {\sc J. M.~Lasry and P.-L.~Lions},
\newblock \emph{Jeux \`a champ moyen. II. Le cas stationnaire},
\newblock C. R. Math. Acad. Sci. Paris 343 (2006), no. 9, 619--625.


\bibitem{LasryL0} 
\newblock  {\sc J. M.~Lasry and P.-L.~Lions}, 
\newblock \emph{Jeux \`a champ moyen. II. Horizon fini et contr\^ole optimal},
\newblock C. R. Math. Acad. Sci. Paris 343 (2006), no. 10, 679-- 684.


\bibitem{LasryL1} 
\newblock  {\sc J.-M.~Lasry and P.-L.~Lions},
\newblock \emph{Mean field games},
\newblock Jpn. J. Math. 2 (2007), 229--260.

\bibitem{LasryL2} 
\newblock  {\sc J. M.~Lasry and P.-L.~Lions},
\newblock \emph{Large investor trading impacts on volatility}, 
\newblock Ann. Inst. H. Poincar\'e Anal. Non Lin\'eaire 24 (2007), no. 2, 311--323.



\bibitem{Lions:course}  {\sc   P.-L. Lions}, \emph{Lectures at Coll\`ege de France}, 2007--2013.


\bibitem{McCann1997}  
\newblock {\sc R.~McCann}, 
\newblock \emph{A Convexity Principle for Interacting Gases}, 
\newblock {Advances in Mathematics}, Vol 128, no 2 (1997), 153--179.


\bibitem{Mayorga}  
\newblock {\sc S.~Mayorga}, 
\newblock \emph{Short time solution to the master equation of a first order mean field game}, 
\newblock J. Differential Equations 268 (2020), no. 10, 6251--6318.

\bibitem{MischlerM}  
\newblock {\sc S. Mischler, C. Mouhot}, 
\newblock \emph{Kac's program in kinetic theory}, 
\newblock {Invent. Math.}, 193 (2013), no. 1, 1--147.

 

\bibitem{MouZ}  
\newblock {\sc C.~Mou, J. Zhang}, 
\newblock \emph{Weak Solutions of Mean Field Game Master Equations}, 
\newblock arXiv:1903.09907. 











\bibitem{Santambrogio} 
\newblock {\sc F.~Santambrogio}, 
\newblock \emph{Optimal transport for applied mathematicians.}, 
\newblock {\em Calculus of variations, PDEs, and modeling. Progress in Nonlinear Differential Equations and their Applications}, 87. Birkh\"auser/Springer, Cham, (2015), xxvii+353 pp.

\bibitem{Villani}
\newblock {\sc C. Villani},
\newblock {\em Topics in optimal transportation.} 
\newblock Graduate Studies in Mathematics, 58. American Mathematical Society, Providence, RI, (2003). xvi+370 pp.

\end{thebibliography}
\end{document}